\newtheorem*{thm3}{Theorem}
\newtheorem*{conj3}{Conjecture}
\newtheorem{thm2}{Theorem}
\newtheorem{conj2}{Conjecture}
\newtheorem{cor2}{Corollary}
\newtheorem{thm}{Theorem}[section]
\newtheorem{cor}[thm]{Corollary}
\newtheorem{lem}[thm]{Lemma}
\newtheorem{prop}[thm]{Proposition}
\newtheorem{conj}[thm]{Conjecture}
\theoremstyle{definition}
\newtheorem{defn}[thm]{Definition}
\newtheorem{ex}[thm]{Example}
\newtheorem{rmk}[thm]{Remark}
\newtheorem{ques}[thm]{Question}
 \DeclareMathOperator{\Spec}{Spec}
\DeclareMathOperator{\id}{id}
\DeclareMathOperator{\End}{End} 
\DeclareMathOperator{\Hom}{Hom} 
 \DeclareMathOperator{\rk}{rk}
\DeclareMathOperator{\DHom}{DHom}
\newcommand{\Z}{\ensuremath\mathds{Z}}
\newcommand{\N}{\ensuremath\mathds{N}}
\newcommand{\Q}{\ensuremath\mathds{Q}}
\newcommand{\fS}{\ensuremath\mathfrak{S}}
\newcommand{\h}{\ensuremath\mathfrak{h}}
\newcommand{\PP}{\ensuremath\mathds{P}}
\newcommand{\calO}{\ensuremath\mathcal{O}}
\newcommand{\calN}{\ensuremath\mathscr{N}}
\newcommand{\CH}{\ensuremath\mathrm{CH}}
\newcommand{\CHM}{\ensuremath\mathrm{CHM}}
\newcommand{\NumM}{\ensuremath\mathrm{NumM}}
\newcommand{\DCH}{\ensuremath\mathrm{DCH}}
\renewcommand{\bar}{\overline}
\newcommand{\inj}{\hookrightarrow}
\newcommand{\surj}{\twoheadrightarrow}
\newcommand{\lra}{\xrightarrow}
\newcommand{\cart}{\ar@{}[dr]|\square} 
\newcommand{\isom}{\simeq} 
\renewcommand{\tilde}{\widetilde}
\renewcommand{\1}{\mathop{\mathds{1}}\nolimits} 
\newcommand{\M}{\ensuremath\mathscr{M}}
\newcommand{\im}{\ensuremath\mathrm{Im}}
\newcommand{\pr}{\ensuremath\mathrm{pr}}
\newcommand{\starM}{\star_{\mathrm{Mult}}}
\newcommand{\starC}{\star_{\mathrm{Chern}}}
\begin{document}

	\title[Distinguished cycles and Section Property]{Distinguished cycles on
		varieties with motive of abelian type and the Section Property}
	\author{Lie Fu}
	\author{Charles Vial}

	\thanks{2010 {\em Mathematics Subject Classification.} 14C05, 14C25, 14C15}
	
	\thanks{{\em Key words and phrases.} Abelian varieties, Hilbert scheme of
		points,  Generalized
		Kummer varieties, holomorphic symplectic varieties, Algebraic cycles,
		Motives, Chow ring, Chow--K\"unneth decomposition, Bloch--Beilinson
		filtration.}
	\thanks{Lie Fu is supported by the Agence Nationale de la Recherche (ANR)
		through ECOVA (ANR-15-CE40-0002), HodgeFun (ANR-16-CE40-0011), LABEX MILYON
		(ANR-10-LABX-0070) of Universit\'e de Lyon and  \emph{Projet Inter-Laboratoire}
		2017, 2018 by F\'ed\'eration de Recherche en Math\'ematiques Rh\^one-Alpes/Auvergne
		CNRS 3490.}
	
	%
	
	\address{Institut Camille Jordan, Universit\'e Claude Bernard Lyon 1, France}
	\email{fu@math.univ-lyon1.fr}
	\address{
		Universit\"at Bielefeld, Germany}
	\email{vial@math.uni-bielefeld.de}
	
	\date{\today}
	
	\begin{abstract} 
		A remarkable result of Peter O'Sullivan asserts that the algebra epimorphism
		from the rational Chow ring of an abelian variety to its rational Chow ring
		modulo numerical equivalence admits a (canonical) section. Motivated by
		Beauville's splitting principle, we formulate a conjectural Section Property
		which predicts that for smooth projective holomorphic symplectic varieties there
		exists such a section of algebra whose image contains all the Chern classes of
		the variety. In this paper, we investigate this property for (not necessarily
		symplectic) varieties with Chow motive of abelian type. We introduce the notion
		of symmetrically distinguished abelian motive and use it to provide a sufficient
		condition for a smooth projective variety to admit such a section. We then give
		series of examples of varieties for which our theory works. For instance, we
		prove the existence of such a section for arbitrary products of varieties with
		Chow groups of finite rank, abelian varieties, hyperelliptic curves, Fermat
		cubic hypersurfaces, Hilbert schemes of points on an abelian surface or a Kummer
		surface or a K3 surface with Picard number at least 19, and generalized Kummer
		varieties. The latter cases provide evidence for the conjectural Section
		Property and exemplify the mantra that the motives of holomorphic symplectic
		varieties should behave as the motives of abelian varieties, as algebra objects. 
	\end{abstract}
	
	\maketitle
	\setcounter{tocdepth}{2}

	\tableofcontents
	
	
	\vspace{-30pt}
	\section*{Introduction}
	
	Let $X$ be a smooth projective variety over a field $k$. We denote by $\CH(X)$ its Chow ring
	with rational coefficients, and by $\overline{\CH}(X)$ the quotient of $\CH(X)$
	by numerical equivalence of algebraic cycles. 
	The aim of this work is to build upon a recent result of O'Sullivan
	\cite{MR2795752} and give sufficient conditions on a smooth projective variety
	$X$ for the $\Q$-algebra epimorphism $\CH(X) \twoheadrightarrow
	\overline{\CH}(X)$ to admit a section that contains the Chern classes of $X$.
	This amounts to \emph{lift} numerical cycle classes to cycle classes in the Chow groups
	such that the lifted cycles form a subalgebra and the lifting of the Chern
	classes are the corresponding Chow-theoretic Chern classes.
	
	\subsection{Motivation\,: the motives of holomorphic symplectic varieties}
	It is an insight of Beauville that the motives of smooth projective holomorphic
	symplectic varieties should behave in a similar way to the motives of abelian
	varieties as algebra objects in the category of Chow motives. Following the
	seminal work \cite{BVK3}, Beauville \cite{MR2187148} (meta-)conjectured that the
	conjectural Bloch--Beilinson filtration on the Chow ring of holomorphic
	symplectic varieties should split. This will subsequently be referred to as the
	\emph{splitting principle}. That the conjectural Bloch--Beilinson filtration on
	the Chow ring of abelian varieties should split was established by Beauville
	\cite{MR826463}. 
	
	\subsubsection{The conjecture of Beauville} A first verifiable consequence of
	this splitting principle for \emph{simply connected} holomorphic symplectic
	varieties is the following concrete conjecture, called \emph{weak splitting
		property}\,; see \cite{MR2187148} for details.
	
	\begin{conj3}[Beauville \cite{MR2187148}] Let $X$ be a simply connected
		\footnote{This condition ensures that $\CH^1(X) \twoheadrightarrow
			\overline{\CH}^1(X)$ is an isomorphism.} smooth projective holomorphic
		symplectic variety, and denote by $R(X)$ the subalgebra of $\CH(X)$ generated by
		divisors. Then the composition of the following natural maps is injective
		$$R(X) \hookrightarrow \CH(X) \twoheadrightarrow \overline{\CH}(X).$$
	\end{conj3}

	This conjecture was checked for K3 surfaces in the seminal work of Beauville
	and Voisin \cite{BVK3}, and in \cite{MR2187148} Beauville checked it for Hilbert
	schemes of length-2 and length-3 subschemes on a K3 surface. The conjecture was
	later strengthened by Voisin \cite{MR2435839} who added the Chern classes of $X$
	to the set of generators of $R(X)$ (see also \cite{MR3524175}). Since then, the
	strengthened conjecture has been shown to hold in a number of cases\,; see
	\cite{MR2435839}, \cite{MR3356741}, \cite{MR3351754}, \cite{Ries2016}, \cite[\S
	10]{MHRCK3} and \cite{FranchettaHK}.
	
	\subsubsection{Multiplicative Chow--K\"unneth decompositions}
    Beauville's splitting principle was reformulated in \cite{SV}
	directly on the level of Chow motives, without pre-supposing the existence of the Bloch--Beilinson filtration. In the case of abelian varieties, Deninger and Murre \cite{DM} constructed
	a canonical Chow--K\"unneth decomposition of the motive of an abelian variety,
	lifting to the motivic level the decomposition of Beauville on the level of the
	Chow ring \cite{MR826463}. It can be checked that the decomposition of
	Deninger--Murre is compatible with the algebra structure on the Chow motives of
	abelian varieties\,; following \cite{SV}, we say that abelian varieties admit a \emph{multiplicative
		Chow--K\"unneth decomposition}. We refer to Section~\ref{S:mck} for definitions
	and properties of (multiplicative) Chow--K\"unneth decompositions. Similarly, for holomorphic symplectic varieties, the splitting principle suggests the following case-by-case verifiable
	
	\begin{conj3}[Multiplicative Chow--K\"unneth decomposition \cite{SV}]
		A holomorphic symplectic variety $X$ admits a multiplicative self-dual
		Chow--K\"unneth decomposition with the additional property that the Chern
		classes $c_i(X)$ belong to $\CH(X)_{(0)}$.\footnote{See \eqref{E:grading} for
			the definition of the grading $\CH(X)_{(*)}$.}
	\end{conj3}
	
	The decomposition of the small diagonal for K3 surfaces of Beauville--Voisin
	\cite{BVK3} in fact establishes the existence of a multiplicative
	Chow--K\"unneth decomposition for K3 surfaces\,; see \cite[Proposition
	8.14]{SV}. The existence of a multiplicative Chow--K\"unneth decomposition was
	established for the Hilbert scheme of length-$2$ subschemes on a K3 surface in
	\cite{SV}, more generally  for the Hilbert scheme of length-$n$ subschemes on a
	K3 or abelian surface in \cite{VialHilb}, and for generalized Kummer varieties in
	\cite{MHRCKummer}.

	\subsubsection{O'Sullivan's theorem}
	There is yet another verifiable consequence of Beauville's splitting principle, which
	will be our main focus here. The Bloch--Beilinson conjectures (or Murre's
	conjecture (D) \cite{Murre}) predict that for any smooth projective variety, the
	composition $\CH^i(X)_{(0)} \hookrightarrow \CH^i(X) \twoheadrightarrow
	\overline{\CH}^i(X)$ is an isomorphism of $\Q$-vector spaces for all $i$. In the
	case where the conjectural Bloch--Beilinson filtration splits, $\CH(X)_{(0)}$ is
	a $\Q$-subalgebra of $\CH(X)$ and we would therefore expect that $\CH(X)_{(0)}$
	provides a section to the $\Q$-algebra epimorphism $\CH(X) \twoheadrightarrow
	\overline{\CH}(X)$. In the case of abelian varieties, this was conjectured by
	Beauville \cite{MR826463}. A breakthrough in that direction is the following
	result due to O'Sullivan.
	
	\begin{thm3}[O'Sullivan \cite{MR2795752}]
		Let $A$ be an abelian variety. Then the
		$\Q$-algebra epimorphism $$\CH(A) \twoheadrightarrow \overline\CH(A)$$
		admits a  section (as $\Q$-algebras), whose image consists of
		\emph{symmetrically distinguished cycles} in the sense of Definition
		\ref{def:SD}.
	\end{thm3}
	
	See Theorems \ref{thm:section} and \ref{thm:SD} for a more precise version of
	O'Sullivan's theorem. In particular, O'Sullivan's theorem establishes the
	following version\footnote{This question was asked by Voisin as a more
		accessible consequence of Beauville's more general conjecture in
		\cite{MR826463}.} of Beauville's conjecture for abelian varieties (see
	\cite{Ancona} and \cite{MR3456759} for alternative proofs)\,: \emph{if $A$ is an
		abelian variety, then the subalgebra of $\CH(A)$ generated by symmetric divisors
		injects into cohomology \emph{via} the cycle class map.}
	In this paper, inspired by the work of O'Sullivan \cite{MR2795752} on the Chow
	rings of abelian varieties, we would like to address the following consequence
	of Beauville's splitting principle.
	
	\begin{conj2}[Section Property] \label{C:section}
		Let $X$ be a smooth projective holomorphic symplectic variety. Then the
		$\Q$-algebra epimorphism $$\CH(X) \twoheadrightarrow \overline\CH(X)$$
		admits a  section (as $\Q$-algebras) whose image contains the Chern classes of
		$X$.
	\end{conj2}
	
	Conjecture \ref{C:section} implies Beauville's \emph{weak splitting property}
	Conjecture \cite{MR2187148}, as well as its refinement due to Voisin
	\cite{MR2435839}, because $\CH^1(X) \twoheadrightarrow \overline\CH^1(X)$ is an
	isomorphism for smooth projective varieties $X$ with vanishing irregularity.
	We prove the following result (Propositions \ref{P:products}, \ref{cor:BirHK}, \ref{prop:kummer},
	\ref{prop:k3large}, \ref{P:hilb} and \ref{P:genK}) in support of
	Conjecture~\ref{C:section}.

	\begin{thm2}\label{T:genK}
		Let $X$ be a product of holomorphic symplectic varieties that are birational
		to either the Hilbert scheme of length-$n$ subschemes on an abelian surface or a
		Kummer surface or a K3 surface with Picard number $\geq 19$, or a generalized
		Kummer variety. Then Conjecture \ref{C:section}
		holds for $X$.
	\end{thm2}
	
Finally, we note that the notion of symmetrically distinguished cycles on an abelian variety $A$ depends on the choice of an origin for $A$, and in particular that there are at least as many sections to the algebra epimorphism $\CH(A) \to \overline{\CH}(A)$ as the number of rational equivalence classes of points on $A$. However, in the case of smooth projective irreducible holomorphic symplectic (\emph{i.e.}, hyper-K\"ahler) varieties, we expect that a section as in Conjecture \ref{C:section}, if it exists, is unique\,; and we also expect that cycles that are either classes of co-isotropic subvarieties (see~\cite{MR3524175}) or restrictions of cycles defined on the \emph{universal family} belong to the image of the section (we refer to \cite{FranchettaHK} for some evidence).

	\subsection{Distinguished cycles on varieties with motive of abelian type}
	Although our primary motivation for this work was to establish Theorem
	\ref{T:genK}, we were led to consider the following broader question (see
	Question \ref{Question})\,:
	Suppose $X$ is a smooth projective variety whose Chow motive is isomorphic to
	a direct summand of the motive of an abelian variety (such varieties are said to
	have \emph{motive of abelian type}, see Definition \ref{def:Mab}).  
	Are there sufficient conditions on $X$ that ensure that the epimorphism
	$\CH(X) \twoheadrightarrow \overline{\CH}(X)$ admits a section that is
	compatible with the intersection product\,? 
	For that purpose we introduce the notion of \emph{distinguished cycles} on varieties with motive of abelian type\,; see Definition
	\ref{def:distinguished}. 
	Precisely, distinguished cycles depend \emph{a priori} on the choice of a \emph{marking}\,:
	a  marking for a variety $X$ (see Definition~\ref{def:marking}) is an isomorphism $\phi:
	\h(X)\xrightarrow{\isom} M$ of Chow motives, where $M$ is\footnote{Strictly
		speaking, $M$ should be an object in the category $\M^{ab}_{sd}$ introduced in Definition
		\ref{def:SDMotive}.} a direct summand of a
	Chow motive of the form $\oplus_{i} \h(A_{i})(n_{i})$ cut out by an idempotent
	matrix $P$ of \emph{symmetrically distinguished} cycles, where $A_{i}$ is an
	abelian variety,  and $n_{i}\in \Z$. Given such a marking,  the group of
	\emph{distinguished cycles} $\DCH_{\phi}(X)$ consists of the image under $P_{*}$ of the
	symmetrically distinguished cycles on each $A_i$, in the sense of O'Sullivan
	(see Definition \ref{def:SD}), transported \emph{via} the induced isomorphism
	$\phi_{*}:\CH(X)\xrightarrow{\isom}\CH(M)$.  The question becomes\,: What are
	sufficient conditions on  a marking $\phi$ for $\DCH_{\phi}(X)$ to be a
	subalgebra of $\CH(X)$\,? In Proposition \ref{prop:Disting}, we show that it
	suffices that the following condition holds
	$$(\starM) \text{ The small diagonal  }  \delta_{X} \text{  belongs to }
		\DCH_{\phi^{\otimes 3}}(X^{3}). $$
	%
	Since it is natural to expect that the Chern classes are distinguished, we will
	also require that the Chern classes of $X$ are transported  to symmetrically
	distinguished cycles \emph{via} $\phi$, \emph{i.e.}, that the marking $\phi$ also
	satisfies the condition
		$$(\starC) \text{ All Chern classes } c_{1}(X), c_{2}(X), \cdots \text{ belong to } \DCH_{\phi}(X). $$
	These two conditions are gathered to Condition $(\star)$ in Definition
	\ref{def:Star}, where we also consider the more general situation where $X$ is endowed
	with the action of a finite group $G$. The condition $(\starC)$ is not only esthetically pleasing, it is also essential to establish that the condition $(\star)$ is stable under natural constructions such as blow-ups (Proposition~\ref{prop:Blup}).
	\medskip
	
	Therefore in order to prove Theorem \ref{T:genK}, it is enough to exhibit a
	suitable marking for $X$ such that the Chern classes and the small
	diagonal are distinguished with respect to the (product) markings. If such a
	suitable marking for $X$ exists, we will say that $X$ satisfies $(\star)$\,; see
	Definition \ref{def:Star}. This condition is strictly stronger than the
	condition of having motive of abelian type\,; see Section \ref{S:counterex} for
	examples of varieties with motive of abelian type that do not satisfy $(\star)$
	and/or 
	are such that the $\Q$-algebra epimorphism $\CH(X) \twoheadrightarrow
	\overline\CH(X)$ does not admit a section. Thus that smooth projective
	holomorphic symplectic varieties should satisfy the \emph{Section Property} in
	Conjecture \ref{C:section} is remarkable. We also want to stress that the
	original Section Property, \emph{i.e.}, the existence of section of the algebra
	epimorphism $\CH(X)\surj \bar{\CH}(X)$, does not behave well enough under basic
	operations, for instance, products, blow-ups, quotients \emph{etc.}\,; however,
	the closely related Condition $(\star)$ is essentially motivic and behaves much
	better, see Section \ref{sect:examples}.
	In view of Proposition \ref{prop:Disting}, one could also be optimistic and go
	as far as proposing\,:
	\begin{conj2}[Distinguished Marking]
A  smooth projective holomorphic symplectic variety admits a marking that satisfies $(\star)$.
	\end{conj2} 
\noindent 	In particular,  this conjecture stipulates that smooth projective holomorphic symplectic varieties have
	motives of abelian type. Some evidence towards the latter is provided by recent
	work of Kurnosov--Soldatenkov--Verbitsky \cite{KSV} on Kuga--Satake
	constructions.
	\medskip
	
	Although holomorphic symplectic varieties seem to play a central role, we
	provide many other examples of smooth projective varieties $X$ that satisfy
	$(\star)$ and hence are such that the
	$\Q$-algebra epimorphism $\CH(X) \twoheadrightarrow \overline\CH(X)$
	admits a section whose image contains the Chern classes of $X$. The building
	blocks (see Section \ref{sect:Examples}) are given by abelian varieties
	(O'Sullivan's theorem), varieties with Chow groups of finite rank
	(Proposition~\ref{prop:trivial}), hyperelliptic curves (Corollary
	\ref{cor:hyperelliptic}), cubic Fermat hypersurfaces (Proposition
	\ref{prop:Fermat}), K3 surfaces with Picard rank $\geq 19$
	(Proposition~\ref{prop:k3large}), and generalized Kummer varieties
	(Proposition~\ref{P:genK}). One can then construct new examples (see
	Section~\ref{sect:examples}) of varieties satisfying $(\star)$ by taking
	products (Proposition \ref{P:products}), certain projective bundles and blow-ups
	(Example \ref{ex:Schur}, Propositions \ref{prop:PB} and \ref{prop:Blup}, here that the Chern classes are
	distinguished plays a central role), certain \'etale or cyclic quotients
	(Propositions \ref{prop:cover}, \ref{prop:etale} and \ref{P:quotient}), Hilbert
	squares and the first two nested Hilbert schemes (Propositions \ref{prop:Hilb2}
	and \ref{prop:NestHilb}), Hilbert schemes and nested Hilbert schemes of curves
	or surfaces satisfying $(\star)$ (Remark \ref{rem:hyperelliptic} and Proposition
	\ref{P:hilb}), and birational transforms of irreducible symplectic varieties
	(Corollary \ref{cor:BirHK}). Combining the above-mentioned results, we obtain
	
	\begin{thm2} \label{thm2 CK} Let $E$ be the smallest collection of isomorphism classes of smooth
		projective complex varieties that contains varieties with Chow groups of
		finite rank (as $\Q$-vector spaces), abelian varieties, hyperelliptic curves,
		cubic Fermat hypersurfaces, K3 surfaces with Picard rank $\geq 19$, and
		generalized Kummer varieties, 
		and that is
		stable under the following operations\,:
		\begin{enumerate}[(i)]
			\item if $X$ and $Y$ belong to $E$, then $X\times Y$ belongs to $E$\,;
			\item if $X$ belongs to $E$, then $\PP(\oplus_{i}\mathbb
			S_{\lambda_{i}}{T}_X)$ belongs to $E$, where
			${T}_X$ is the tangent bundle of $X$, the $\lambda_{i}$'s are non-increasing
			sequences of integers and $\mathbb S_{\lambda_{i}}$ is the Schur functor
			associated to $\lambda_{i}$\,;
			\item if $X$ belongs to $E$, then the Hilbert scheme of length-$2$
			subschemes $X^{[2]}$, as well as the nested Hilbert schemes $X^{[1,2]}$ and
			$X^{[2,3]}$ belong to $E$\,;
			\item if $X$ is a curve or a surface that belongs to $E$, then for any $n\in \N$, the Hilbert
			scheme of length-$n$
			subschemes $X^{[n]}$, as well as the nested Hilbert schemes $X^{[n, n+1]}$
			belong to $E$.
			\item if one of two birationally equivalent irreducible holomorphic
			symplectic varieties belongs to $E$, then so does the other.
		\end{enumerate}
		If $X$ is a smooth projective variety whose isomorphism class belongs to $E$, then $X$ admits a marking that satisfies $(\star)$, so that the
		$\Q$-algebra epimorphism $\CH(X) \twoheadrightarrow \overline\CH(X)$
		admits a  section (as $\Q$-algebras) whose image contains the Chern classes
		of $X$.
	\end{thm2}
	
	It is further shown in \cite{LatVial}  that a certain complete family of Calabi--Yau varieties and certain rigid Calabi--Yau varieties, constructed by Cynk and Hulek, as well as certain varieties constructed by Schreieder satisfy the condition $(\star)$, so that these varieties can be added to the set $E$ of Theorem \ref{thm2 CK}.\medskip
	
	An immediate consequence of Theorem \ref{thm2 CK} is the following concrete result related to
	Beauville's weak splitting property and Beauville--Voisin conjecture (but beyond
	the hyper-K\"ahler context)\,:
	\begin{cor2}
		Let $X$ be a smooth projective variety that belongs to the collection $E$ of
		Theorem \ref{thm2 CK}. Assume that $X$ is regular \footnote{A smooth projective
			variety $X$ over an algebraically closed field $k$ is called \emph{regular}, if
			its Picard variety is trivial, so that the projection morphism $\CH^1(X) \to
			\overline{\CH}^1(X)$ is an isomorphism. Note that the \emph{irregularity},
			\emph{i.e.}, the dimension of the Picard variety, is always less or equal to
			$\dim H^{1}(X, \mathcal O_{X})$, and equal to $\dim H^{1}(X, \mathcal O_{X})$
			when $char(k)=0$ by Hodge theory.} and denote $R(X)$ the $\Q$-subalgebra of
		$\CH(X)$ generated by divisors and Chern classes. Then the natural composition $$R(X)
		\hookrightarrow \CH(X) \twoheadrightarrow \overline{\CH}(X)$$ is injective.
	\end{cor2}

	Note that all smooth projective varieties which we can show satisfy $(\star)$
	were already shown to admit a self-dual multiplicative Chow--K\"unneth
	decomposition\,; see \cite[Theorem 2]{SV2}, and \cite{MHRCKummer} for the case
	of generalized Kummer varieties. In fact, condition $(\star)$ implies the
	existence of a multiplicative Chow--K\"unneth decomposition (Proposition
	\ref{prop:multmarking}).  Note also that the structure of 
	Section~\ref{sect:examples} is similar to the structure of \cite[Section
	3]{SV2}. We refer to Section \ref{S:mck} for more on multiplicative
	Chow--K\"unneth decompositions and links to this work. Finally, we note that
	while the result of Beauville--Voisin \cite{BVK3} shows that the
	$\Q$-algebra epimorphism $\CH(S) \twoheadrightarrow \overline\CH(S)$
	admits a  section whose image contains the Chern classes of $S$, for a K3
	surface $S$, and while it can be shown \cite{VialHilb} that the Hilbert scheme
	of length-$n$ subschemes on a K3 surface has a self-dual multiplicative
	Chow--K\"unneth decomposition, we do not know how to show in general that a K3
	surface satisfies the condition $(\star)$, nor do we know how to show that the
	Hilbert scheme of length-$n$ subschemes on a K3 surface satisfies the
	\emph{Section Property} (Conjecture \ref{C:section}). In fact it is even an open
	problem to show in general that K3 surfaces have motive of abelian type.

	\medskip
	\noindent \textbf{Conventions and Notations.}	We work throughout the paper over
	an arbitrary algebraically closed field $k$, except in \S\S \ref{s:birational}, \ref{s:Fermat}, \ref{s:k3large} and \ref{S:counterex} where $k$ is assumed to be the field of complex numbers . Chow groups $\CH^i$ are always
	understood to be with rational coefficients. For a smooth projective variety
	$X$, we will write $\CH(X)$ for the (graded) rational Chow ring $\bigoplus_i
	\CH^i(X)$. We will denote by $\overline\CH^i(X)$  the rational Chow group modulo
	numerical equivalence and $\overline{\CH}(X)$ the rational Chow ring modulo
	numerical equivalence.
	%
	%
	An \emph{abelian variety} is always assumed to be connected and with a fixed
	origin.

	\medskip
	
	\noindent \textbf{Acknowledgments.} We thank Bruno Kahn for useful discussions.
	We are especially grateful to Peter O'Sullivan for his comments and for
	explaining his work to us with patience and clarity, and to the excellent referee for his or her pertinent remarks.

	\section{Symmetrically distinguished cycles}\label{sect:SD}
	In this section, we review the theory of symmetrically distinguished cycles
	developed by O'Sullivan in \cite{MR2795752} and, with a view towards
	applications, extend it slightly following the authors' previous work
	\cite{MHRCKummer} joint with Zhiyu Tian.

	\subsection{Motives of abelian type}
	Let $\CHM:=\CHM(k)_{\Q}$ and $\NumM:=\NumM(k)_{\Q}$ be respectively the
	category of rational Chow motives and that of rational numerical motives over
	the base field $k$. By definition, there is a natural (full) projection
	functor\,: $$\CHM\to \NumM,$$ which sends a Chow motive to the corresponding
	numerical motive and sends any cycle/correspondence modulo rational equivalence
	to its class modulo numerical equivalence. A typical object in these two
	categories is a triple $(X, p, n)$ with $X$ a smooth projective variety
	over $k$, $p\in \CH^{\dim X}(X\times X)$ or $\bar\CH^{\dim X}(X\times X)$ a
	projector (\emph{i.e.}, $p\circ p=p$) and $n\in \Z$. See \cite{MR2115000} for the basic notions.\medskip
	
	Let us introduce the following subcategories of $\CHM$ and $\NumM$ that will be
	relevant to our work.
	
	\begin{defn}[Motives of abelian type]\label{def:Mab}
		Let $\M^{ab}$ (\emph{resp.} $\bar {\M^{ab}}$) be the strictly\footnote{A full
			subcategory is called \emph{strictly full}, if it is closed under isomorphisms.}
		full, thick and rigid tensor subcategory of $\CHM$ (\emph{resp.} $\NumM$) generated by
		the motives of abelian varieties. A motive is said to be \emph{of abelian type}
		if it belongs to $\M^{ab}$; equivalently, if one of its Tate twists is
		isomorphic to the direct summand of the motive of an abelian variety.  We have
		the restriction of the projection functor\,:
		$$\pi:  \M^{ab}\to \bar{\M^{ab}}.$$
	\end{defn}
	
	\begin{ex}\label{ex}
		The Chow (\emph{resp.} numerical) motives of the following algebraic varieties
		belong to the category $\M^{ab}$ (\emph{resp.} $\bar{\M^{ab}}$)\,:
		\begin{enumerate}[(i)]
			\item projective spaces, Grassmannian varieties and more generally projective homogeneous
			varieties under a linear algebraic group and toric varieties\,;
			\item smooth projective curves\,;
			\item Kummer K3 surfaces\,; K3 surfaces with Picard numbers at least 19 as well
			as their (nested) Hilbert schemes\,;
			\item abelian torsors\,;
			\item Hilbert schemes of abelian surfaces\,; 
			\item generalized Kummer varieties\,; 
			\item Fermat hypersurfaces\,;
			\item projective bundles over and products of the examples above.
		\end{enumerate}
		As far as the authors know, all examples of motives that have been proven to be 
		(Kimura) finite dimensional (\cite{MR2107443})  belong\footnote{When $k$ has characteristic zero, there
			are many varieties whose motive is not in $\M^{ab}$, while
			conjecturally all varieties have finite dimensional motive.} to the category
		$\M^{ab}$.
	\end{ex}

	Let us state the following result of \cite{MR2795752}, which is built upon
	\cite{MR2107443} and \cite{MR1956434}\,:
	\begin{thm}[O'Sullivan {\cite[Theorem 5.5.3]{MR2795752}}]\label{thm:section}
		The projection $\otimes$-functor $\pi: \M^{ab}\to \bar{\M^{ab}}$ has a
		right-inverse $T$, which is unique up to a unique tensor isomorphism above the
		identity.
	\end{thm}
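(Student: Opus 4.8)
The plan is to deduce O'Sullivan's theorem from a general principle about sections of monoidal functors with nilpotent kernel (in the spirit of André--Kahn), fed by the two deep structural inputs that are special to motives of abelian type. \textbf{Step 1: the kernel of $\pi$ is a nil $\otimes$-ideal, and the target is semisimple.} I would first note that $\pi:\M^{ab}\to\bar{\M^{ab}}$ is $\Q$-linear, full, and essentially surjective by the very definition of $\bar{\M^{ab}}$, so that it exhibits $\bar{\M^{ab}}$ as the quotient $\M^{ab}/\calN$, where $\calN$ denotes the $\otimes$-ideal of numerically trivial correspondences. The crucial fact is that \emph{on $\M^{ab}$ the ideal $\calN$ is nil}: the motive of an abelian variety is finite-dimensional in Kimura's sense (its $\h^{1}(A)$ is odd finite-dimensional, with $\Sym^{n}\h^{1}(A)=0$ for $n$ large, and $\h(A)=\bigoplus_{i}\wedge^{i}\h^{1}(A)$), and the class of finite-dimensional motives is stable under $\oplus$, $\otimes$, duals and direct summands, so every object of $\M^{ab}$ is finite-dimensional; by Kimura's nilpotence theorem it follows that $\calN$ coincides with the monoidal radical on $\M^{ab}$ and that $\calN(M,M)$ is a \emph{nilpotent} two-sided ideal of $\End(M)$ for every object $M$ (the index of nilpotence depending on $M$). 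Finally, by Jannsen's semisimplicity theorem, $\bar{\M^{ab}}$ is a semisimple $\Q$-linear abelian rigid $\otimes$-category.

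\textbf{Step 2: construction of the section.} With these inputs I would build $T$ by successive approximation along the filtration $\M^{ab}\supset\calN\supset\calN^{2}\supset\cdots$, which terminates on each object. One first produces a $\Q$-linear section $T_{0}$: because $\calN$ is nil, idempotents and morphisms lift along $\pi$, and semisimplicity of $\bar{\M^{ab}}$ lets one make these lifts coherent — this is a categorical Wedderburn--Malcev splitting. One then upgrades $T_{0}$ to a $\otimes$-functor by an obstruction-theoretic induction: assuming the unit and associativity/commutativity constraints have been arranged modulo $\calN^{n}$, the obstruction to arranging them modulo $\calN^{n+1}$ lies in a $\Hom$-group with values in $\calN^{n}/\calN^{n+1}$, and one removes it by modifying the monoidal structure of $T_{0}$; the symmetry constraint can be preserved because the correction may be carried out compatibly with the even/odd Kimura decomposition. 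Concretely, on the generators this section is the one given by O'Sullivan's explicit formula: $T$ sends $\pi\h(A)$ to the direct summand of $\h(A)$ cut out by the idempotent of symmetrically distinguished cycles, and the image of $T$ on Chow groups consists exactly of the symmetrically distinguished cycles of Definition~\ref{def:SD}.

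\textbf{Step 3: uniqueness up to a unique $\otimes$-isomorphism above the identity.} Given two $\otimes$-sections $T,T'$, for each object $\bar M$ one has $\pi(T\bar M)=\pi(T'\bar M)=\bar M$, and since $\Hom(T\bar M,T'\bar M)\to\Hom(\bar M,\bar M)$ is surjective with nil kernel, the identity of $\bar M$ lifts to an isomorphism $T\bar M\xrightarrow{\isom}T'\bar M$ (any such lift has the form $\id+(\text{nilpotent})$, hence is invertible). That these can be assembled into a $\otimes$-natural isomorphism $T\Rightarrow T'$ over $\id_{\bar{\M^{ab}}}$, and that there is exactly one such, I would prove by the same approximation up the filtration $\{\calN^{n}\}$: at each stage, naturality with respect to composition together with compatibility with the monoidal and symmetry constraints pins down the required transformation uniquely modulo $\calN^{n}$, so that in the limit it exists and is unique.

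\textbf{Main obstacle.} The formal existence of a linear splitting is cheap once one has Kimura nilpotence and Jannsen semisimplicity; the real difficulty — and the whole point of O'Sullivan's work — is Step 2, namely organizing the inductive correction of the monoidal and symmetry constraints so that it converges. Equivalently, one must show that the explicit projectors built from the $\fS_{m}$-actions on the powers $A^{m}$ are simultaneously idempotent, multiplicative and $\otimes$-compatible modulo the nilpotent part; this is where the bulk of the genuine computation lies.
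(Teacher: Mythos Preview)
The paper does not itself prove this theorem; it is quoted from O'Sullivan's paper and only commented on in the surrounding remarks. Your Steps~1--2 line up well with what the paper says about the proof: the remark following the statement attributes the \emph{existence} of $T$ to the general Andr\'e--Kahn result on Wedderburn categories, with the inputs being Kimura finite-dimensionality (hence nilpotence of the numerically trivial ideal) and Jannsen semisimplicity of $\bar{\M^{ab}}$ --- exactly the ingredients you assemble.

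Your Step~3, however, has a genuine gap. The paper is explicit that the Andr\'e--Kahn mechanism by itself yields a section ``unique only up to a \emph{non-unique} tensor conjugacy.'' Your argument --- lifting $\id_{\bar M}$ and then correcting along the filtration $\{\calN^{n}\}$ --- is precisely that general mechanism, and it does \emph{not} force a unique $\otimes$-natural isomorphism $T\Rightarrow T'$: at each stage the ambiguity in the choice lives in Hom-groups with values in $\calN^{n}/\calN^{n+1}$ that have no reason to vanish. What upgrades ``unique up to some conjugacy'' to ``unique up to a \emph{unique} conjugacy above the identity'' is an extra rigidifying structure that you never invoke: as the paper's remark says, it is the Hopf algebra structure on $\h(A)$ --- the diagonal together with the group law, and in particular the $(-1)$-involution acting as antipode --- that O'Sullivan uses to eliminate the remaining freedom. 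Your allusions to the even/odd Kimura decomposition and to $\fS_{m}$-actions on $A^{m}$ are in the right neighbourhood but do not supply this Hopf/antipode constraint, so the uniqueness claim in your sketch is unsupported as written.
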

	
	\begin{rmk}
		See Theorem \ref{thm:SD} together with Remark \ref{rmk:Section}, for a
		down-to-earth understanding of Theorem \ref{thm:section}.
	\end{rmk}
	
	\begin{rmk}
		The existence of the right-inverse $\otimes$-functor $T$ is ensured by a general
		result of Andr\'e--Kahn \cite{MR1956434} concerning the so-called Wedderburn
		categories, and such a section is unique only up to a \emph{non-unique} tensor
		conjugacy. The Hopf algebra structure on the motive of an abelian variety, given
		by the diagonal embedding and the group structure (in particular the
		$(-1)$-involution), allows O'Sullivan to make the section $T$ unique up to a
		\emph{unique} tensor conjugacy above the identity.
	\end{rmk}
	
	\begin{rmk}\label{warning}
		The section $T$ in Theorem \ref{thm:section} cannot be defined uniquely.
		Indeed, let $B$ be a torsor under an abelian variety $A$ of dimension $g$.
		Obviously $A$ and $B$ have isomorphic Chow motives. If a canonical section $T$
		were constructed for morphisms between $\1(-g)$ and $\h(B)$, then we would have
		a canonical 1-dimensional subspace $\DCH_{0}(B)$ inside the infinite-dimensional
		space $\CH_{0}(B)$, hence a canonical degree-one 0-cycle of $B$. However, as the
		origin of $B$ is not fixed, there is neither a privileged point nor a privileged
		non-trivial  0-cycle. 
	\end{rmk}

	\subsection{Symmetrically distinguished cycles on abelian varieties}
	O'Sullivan defines the following concrete notion of symmetrically distinguished
	cycles on an abelian variety $A$, and shows (Theorem \ref{thm:SD}) that these
	provide a section to $$\CH(A) \twoheadrightarrow \overline{\CH}(A)$$ that is
	compatible with the intersection product. 
	
	\begin{defn}[Symmetrically distinguished cycles on abelian varieties
		\cite{MR2795752}]\label{def:SD}
		Let $A$ be an abelian variety and $\alpha\in \CH(A)$. For each integer $m\geq
		0$, denote by $V_{m}(\alpha)$ the $\Q$-vector subspace of $\CH(A^{m})$
		generated
		by elements of the form
		$$p_{*}(\alpha^{r_{1}}\times \alpha^{r_{2}}\times \cdots\times
		\alpha^{r_{n}}),$$
		where $n\leq m$, $r_{j}\geq 0 $ are integers, and where $p : A^{n}\to A^{m}$ is a
		closed immersion each component $A^{n}\to A$ of which is either a projection or
		the composite of a projection with the involution $[-1]: A\to A$. Then $\alpha$ is
		\emph{symmetrically distinguished} if for every $m$ the restriction of the
		projection $\CH(A^{m})\to \overline\CH(A^{m})$ to $V_{m}(\alpha)$ is
		injective.
		The subgroup of symmetrically distinguished cycles is denoted by $\DCH(A)$.
		
	\end{defn}
	
	Here is the main result of O'Sullivan \cite{MR2795752}, which is the most important ingredient that we use throughout this paper\,:
	\begin{thm}[O'Sullivan {\cite[Theorem 6.2.5]{MR2795752}}]\label{thm:SD}
		Let $A$ be an abelian variety. Then the symmetrically distinguished cycles in
		$\CH(A)$ form a graded $\Q$-subalgebra $\DCH(A)$ that contains symmetric
		divisors and that is
		stable under pull-backs and push-forwards along homomorphisms of abelian
		varieties. Moreover
		the composition $$\DCH(A)\hookrightarrow \CH(A)\twoheadrightarrow \overline
		\CH(A)$$ is an isomorphism of $\Q$-algebras.
	\end{thm}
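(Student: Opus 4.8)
The plan is to treat the several assertions of the theorem in increasing order of depth, isolating the one genuinely non-formal input --- finite-dimensionality of motives of abelian type in the sense of Kimura--O'Sullivan --- and feeding it in through O'Sullivan's canonical section functor of Theorem~\ref{thm:section}.

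First the formal part. Injectivity of $\DCH(A)\to\overline{\CH}(A)$ is the case $m=1$ of Definition~\ref{def:SD}. The collection of closed immersions $A^{n}\hookrightarrow A^{m}$ occurring there is closed under composition and is matched, under any homomorphism $f\colon A\to A'$, with the corresponding collection for $A'$; since push-forward and pull-back along such maps commute with the projection $\CH\to\overline{\CH}$, it follows that $f^{*}$ and $f_{*}$ preserve symmetrically distinguished cycles, and likewise $V_{m}(\alpha^{k})\subseteq V_{m}(\alpha)$, so $\DCH(A)$ is stable under powers. For the ring structure I would write $\alpha\cdot\beta=\Delta^{*}(\alpha\times\beta)$ with $\Delta\colon A\to A\times A$ the (homomorphic) diagonal, reducing --- via the stability just proved --- to the statement that exterior products of symmetrically distinguished cycles are symmetrically distinguished, which one checks from a K\"unneth-style inclusion $V_{m}(\alpha\times\beta)\subseteq V_{m}(\alpha)\otimes V_{m}(\beta)$ together with injectivity of the K\"unneth map for numerical equivalence. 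Keeping track of mixed monomials, the same manipulation reduces closure of $\DCH(A)$ under addition --- and ultimately the rest of the theorem --- to the \emph{joint} statement: for symmetrically distinguished $\alpha_{1},\dots,\alpha_{k}$, the span in $\CH(A^{m})$ of push-forwards of exterior products of monomials in the $\alpha_{i}$ injects into $\overline{\CH}(A^{m})$.

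The joint statement and the surjectivity of $\DCH(A)\to\overline{\CH}(A)$ are the substance of the theorem, and I would prove them together by an induction on codimension organised around the canonical Chow--K\"unneth and Beauville decompositions of the powers $A^{m}$. The crucial input is that $\h(A)=\bigoplus_{i}\wedge^{i}\h^{1}(A)$, hence every object of $\M^{ab}$, is finite-dimensional: numerically trivial correspondences on the $A^{m}$ are then nilpotent in a compatible way, idempotents lift, and the Andr\'e--Kahn theory of Wedderburn categories yields a right-inverse $\otimes$-functor $T\colon\bar{\M^{ab}}\to\M^{ab}$, rendered canonical by the Hopf-algebra structure of $\h(A)$ (this is Theorem~\ref{thm:section}). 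At each codimension one uses nilpotence to correct a naive lift by a numerically trivial symmetrically distinguished cycle, which then vanishes; the outcome is the identification of the concretely-defined group $\DCH(A)$ with $T(\overline{\CH}(A))\subseteq\CH(A)$. The inclusion $T(\overline{\CH}(A))\subseteq\DCH(A)$ is comparatively soft, since $T$ commutes with the motivic operations building the $V_{m}$ and $\pi\circ T=\mathrm{id}$; the reverse inclusion is where the induction and nilpotence are really used, and this is the step I expect to be the main obstacle --- it is the only non-formal ingredient and is exactly where finite-dimensionality is indispensable. Granting the identification, everything else follows formally: $\pi\circ T=\mathrm{id}$ makes $\DCH(A)\to\overline{\CH}(A)$ an isomorphism; $T$ monoidal makes $\DCH(A)$ a subalgebra, graded by compatibility with Tate twists; $T$ functorial for homomorphisms of Hopf algebra objects re-proves stability under $f^{*}$ and $f_{*}$; and the Deninger--Murre projectors $\pi_{i}$, polynomials under composition in the graphs $\Gamma_{[n]}=(\mathrm{id},[n])_{*}[A]$, come out symmetrically distinguished.

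Finally, $\DCH(A)$ contains the symmetric divisors: one has $\CH^{1}(A)=\mathrm{NS}(A)_{\Q}\oplus\Pic^{0}(A)_{\Q}$, in which the symmetric divisors are exactly the $\mathrm{NS}$-summand, that is, the $(+1)$-eigenspace of the antipode $[-1]^{*}$, and $\mathrm{NS}(A)_{\Q}\to\overline{\CH}^{1}(A)$ is an isomorphism; since $T$ is compatible with $\h([-1])$ it carries a numerical divisor class into that eigenspace, hence to its own symmetric representative, so $\DCH^{1}(A)$ is precisely the space of symmetric divisors. Together with the subalgebra property, this recovers the assertion quoted in the introduction that the subalgebra of $\CH(A)$ generated by symmetric divisors injects into $\overline{\CH}(A)$.
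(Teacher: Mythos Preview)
The paper does not give its own proof of this statement: it is quoted as O'Sullivan's Theorem 6.2.5 and used as a black box throughout. So there is nothing in the paper to compare your proposal against; what follows is an assessment of the proposal on its own merits, with reference to how O'Sullivan's argument actually goes.

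Your overall architecture is the right one and is indeed O'Sullivan's: first establish the categorical section $T$ of Theorem~\ref{thm:section} (Andr\'e--Kahn Wedderburn theory plus the Hopf-algebra structure on $\h(A)$ for canonicity), then identify $\DCH(A)$ with the image of $T$ on Chow groups. The remarks on symmetric divisors via the $[-1]^*$-eigenspace decomposition of $\CH^1(A)$ are also correct.

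There are, however, two genuine gaps. First, you overstate what is ``formal''. Closure of $\DCH(A)$ under addition, and stability under $f_*$ and $f^*$ for homomorphisms $f$, do \emph{not} drop out of Definition~\ref{def:SD} by bookkeeping. Your argument for $f_*$ in particular breaks: $(f_*\alpha)^r$ is an intersection product on the target and is not simply related to $f_*(\alpha^r)$, so there is no evident inclusion of $V_m(f_*\alpha)$ into a push-forward of $V_m(\alpha)$. In O'Sullivan's treatment these stability statements are consequences of the identification with the section $T$, not preliminaries to it. Your K\"unneth argument for exterior products is fine, but it only handles part of the story.

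Second, and more seriously, the step you yourself flag as ``the main obstacle'' --- the reverse inclusion $\DCH(A)\subseteq T(\overline{\CH}(A))$ --- is not actually argued. You write that an ``induction on codimension organised around the canonical Chow--K\"unneth and Beauville decompositions'' together with nilpotence will do it, but you do not say what the inductive statement is, how nilpotence is applied, or why a numerically trivial ``symmetrically distinguished correction'' must vanish before one already knows the theorem. This is exactly where O'Sullivan's work lies (his Sections~5--6), and it is substantially more intricate than your sketch suggests; in particular, $T$ is only determined up to tensor conjugacy, so even making sense of ``$T(\overline{\CH}(A))\subseteq\CH(A)$'' as a fixed subspace requires the canonicity coming from the Hopf structure, which you invoke but do not use. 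As written, the proposal is a plausible plan rather than a proof.
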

	
	\begin{rmk} \label{R:symdistbeauville}
		Given an abelian variety $A$, thanks to Theorem \ref{thm:SD}, it is easy to see by looking at the eigenvalues of multiplication-by-$m$ endomorphisms ($m\in \Z$) that
		$\DCH(A)$ is a subalgebra of $\CH(A)_{(0)}$, where
		$\CH(A)_{(*)}$ refers to Beauville's decomposition\footnote{Beauville's
			decomposition coincides with the decomposition induced, as in \eqref{E:grading},
			by the Chow--K\"unneth decomposition of Deninger--Murre \cite{DM}.}
		\cite{MR826463}. 
		Moreover, the inclusion $\DCH^i(A) \subseteq \CH^i(A)_{(0)}$ is an equality for
		$i\leq 1$ as well as for $i\geq \dim A - 1$ by the Fourier transform
		\cite{MR726428}.
		Beauville's conjectures on abelian varieties in \cite{MR826463} would imply
		that the subalgebra $\DCH(A)$ is equal to the direct summand $\CH(A)_{(0)}$. In
		this sense, O'Sullivan's work \cite{MR2795752} can be viewed as a step towards
		Beauville's conjectures. 
	\end{rmk}

	\subsection{... on abelian torsors with torsion structures}\label{subsect:atts}
	For later use, we give here a minor extension of O'Sullivan's theory. The main
	idea appeared in our previous work \cite{MHRCKummer}\,: to treat the Chow
	motives of some algebraic varieties like Hilbert schemes of abelian surfaces and
	generalized Kummer varieties, it is inevitable to deal with `disconnected
	abelian varieties' where there is no natural choice for the origins on the
	components, whence the notion of symmetrically distinguished cycles \emph{a
		priori} fails. However, a simple but crucial observation made in
	\cite{MHRCKummer} is that we have a canonical notion of torsion points on these
	components. 
	\begin{defn}[Abelian torsors with torsion structure
		\cite{MHRCKummer}]\label{def:atts}
		An \emph{abelian torsor with torsion structure}, or an \emph{a.t.t.s} for
		short, is a pair $(X, Q_{X})$ where $X$ is a connected smooth projective
		variety
		and $Q_{X}$ is a subset of closed points of $X$ such that there exists an isomorphism, as
		algebraic varieties, $f: X\xrightarrow{\isom} A$ from $X$ to an abelian variety
		$A$ which
		induces a bijection between $Q_{X}$ and $\mathrm{Tor}(A)$, the set of all
		torsion points of $A$. A choice of such an isomorphism $f$ is called a
		\emph{marking}. A
		morphism of a.t.t.s's $(X,Q_X) \to (Y,Q_Y)$ consists of a morphism of
		algebraic varieties $f : X\to Y$
		such that $f(Q_X) \subseteq Q_Y$.
	\end{defn}
	This notion of \emph{a.t.t.s} sits in between the notion of abelian variety
	(with fixed origin) and that of abelian torsor (without origin).
	
	\begin{defn}[Symmetrically distinguished cycles on a.t.t.s's]\label{def:SD2}
		Given an a.t.t.s $(X,Q_{X})$, an algebraic cycle $\gamma\in \CH(X)$ is called
		\emph{symmetrically distinguished} if, for a marking $f: X\xrightarrow{\isom}
		A$, the cycle
		$f_{*}(\gamma)\in \CH(A)$ is symmetrically distinguished in the sense of
		O'Sullivan (Definition \ref{def:SD}). By \cite[Lemma 6.7]{MHRCKummer}, this
		definition
		is independent of the choice of marking. An algebraic cycle on a disjoint
		union
		of a.t.t.s's is symmetrically distinguished if it is so on each component. We
		denote by $\DCH(X)$ the subspace of symmetrically distinguished cycles.
	\end{defn}
	
	We have the following generalization of Theorem \ref{thm:SD}\,; see
	\cite[Proposition 6.9]{MHRCKummer}.
	Its proof uses the fact that torsion points on an abelian variety are all
	rationally equivalent (with $\Q$-coefficients).
	
	\begin{thm}\label{thm:SD2}
		Let $(X,Q_{X})$ be an a.t.t.s. Then the symmetrically distinguished cycles in
		$\CH(X)$ form a graded $\Q$-subalgebra that  is stable under pull-backs and
		push-forwards along morphisms of a.t.t.s's. Moreover
		the composition $\DCH(X) \hookrightarrow \CH(X)\twoheadrightarrow \overline
		\CH(X)$ is an isomorphism of $\Q$-algebras.
	\end{thm}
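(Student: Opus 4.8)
The plan is to deduce everything from O'Sullivan's Theorem~\ref{thm:SD} by transporting it along a marking; the only substantial point will be the behaviour of symmetrically distinguished cycles under translation by a torsion point.

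First I would fix a marking $f\colon X\xrightarrow{\isom}A$. Since $f$ is an isomorphism of varieties, $f_{*}=(f^{-1})^{*}\colon \CH(X)\xrightarrow{\isom}\CH(A)$ is an isomorphism of graded $\Q$-algebras which moreover identifies numerical equivalence on the two sides, and hence induces an isomorphism $\overline{\CH}(X)\xrightarrow{\isom}\overline{\CH}(A)$ compatible with the projections $\CH\surj\overline{\CH}$. By Definition~\ref{def:SD2} one has $\DCH(X)=f_{*}^{-1}(\DCH(A))$, so Theorem~\ref{thm:SD} immediately yields that $\DCH(X)$ is a graded $\Q$-subalgebra of $\CH(X)$ and that the composition $\DCH(X)\hookrightarrow\CH(X)\twoheadrightarrow\overline{\CH}(X)$ is an isomorphism of graded $\Q$-algebras; for a disjoint union of a.t.t.s's one argues componentwise. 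This settles the algebra and numerical statements.

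It then remains to prove stability under pull-back and push-forward along a morphism of a.t.t.s's $g\colon (X,Q_{X})\to (Y,Q_{Y})$. Here I would choose markings $f\colon X\xrightarrow{\isom}A$ and $h\colon Y\xrightarrow{\isom}B$ and set $\phi:=h\circ g\circ f^{-1}\colon A\to B$. Because $g(Q_{X})\subseteq Q_{Y}$ and the markings identify $Q_{X}$ and $Q_{Y}$ with $\mathrm{Tor}(A)$ and $\mathrm{Tor}(B)$, the morphism $\phi$ sends $\mathrm{Tor}(A)$ into $\mathrm{Tor}(B)$; in particular $b:=\phi(0_{A})$ is a torsion point of $B$, so that $\psi:=t_{-b}\circ\phi\colon A\to B$ sends $0_{A}$ to $0_{B}$ and is therefore, by rigidity of abelian varieties, a homomorphism. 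Thus $\phi=t_{b}\circ\psi$ with $\psi$ a homomorphism and $t_{b}$ the translation by the torsion point $b$, and the reduction is to two cases: homomorphisms of abelian varieties, handled by Theorem~\ref{thm:SD}, and translations by torsion points.

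The key observation for the latter is that translation by a torsion point acts as the identity on Chow groups with $\Q$-coefficients. Indeed, writing $m\colon B\times B\to B$ for the group law, one has $(t_{b})_{*}\alpha=m_{*}(\alpha\times[b])$ for every $\alpha\in\CH(B)$; since $b$ is torsion we have $[b]=[0_{B}]$ in $\CH_{0}(B)$ — the fact, used throughout \cite{MHRCKummer}, that torsion points on an abelian variety are rationally equivalent with $\Q$-coefficients — and as $m$ restricts to the identity on $B\times\{0_{B}\}$ this gives $(t_{b})_{*}\alpha=\alpha$; likewise $(t_{b})^{*}=(t_{-b})_{*}$ is the identity since $-b$ is again torsion. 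Consequently $\phi_{*}=\psi_{*}$ and $\phi^{*}=\psi^{*}$ on rational Chow groups, and both preserve symmetrically distinguished cycles by Theorem~\ref{thm:SD}. Transporting back through the markings — using $f_{*}f^{*}=\id$, $(h^{-1})^{*}=h_{*}$, $h_{*}g_{*}=\phi_{*}f_{*}$ and $f_{*}g^{*}=\phi^{*}h_{*}$ — shows that $g_{*}$ carries $\DCH(X)$ into $\DCH(Y)$ and $g^{*}$ carries $\DCH(Y)$ into $\DCH(X)$, which finishes the proof. I expect the translation factor to be the only genuine obstacle; it is precisely the rational triviality of torsion $0$-cycles that makes it vanish, everything else being a formal transport of O'Sullivan's theorem along isomorphisms of varieties (cf.\ \cite[Proposition~6.9]{MHRCKummer}).
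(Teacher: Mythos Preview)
Your proposal is correct and follows essentially the same approach as the paper: the paper's proof consists solely of a reference to \cite[Proposition~6.9]{MHRCKummer} together with the remark that torsion points on an abelian variety are all rationally equivalent with $\Q$-coefficients, and you have correctly reconstructed precisely that argument, factoring an a.t.t.s.\ morphism (via markings) as a homomorphism followed by translation by a torsion point and using the rational triviality of torsion $0$-cycles to make the translation act trivially on $\CH$.
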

	
	We refer to \cite[\S 6.2]{MHRCKummer} for more properties of symmetrically
	distinguished cycles on a.t.t.s's.

	\section{Symmetrically distinguished abelian motives}
	To make a more flexible use of O'Sullivan's Theorem \ref{thm:SD} in the
	language of motives, we introduce the following category $\M_{sd}^{ab}$. We refer to Remarks
	\ref{rmk:Section} and \ref{rmk:Envelope} for some motivations.
	
	\begin{defn}[The category $\M_{sd}^{ab}$]\label{def:SDMotive}
		The category of \emph{symmetrically distinguished abelian motives},
		denoted $\M^{ab}_{sd}$, is defined as follows\,:
		\begin{enumerate}[$(i)$]
			\item An object consists of the data of
			\begin{itemize}
				\item a positive integer $r\in \N^{*}$\,;
				\item a length-$r$ sequence of abelian varieties (thus with fixed
				origins) $A_{1},\ldots, A_{r}$\,;
				\item a length-$r$ sequence of integers $n_{1}, \ldots, n_{r}\in \Z$\,;
				\item an $(r\times r)$-matrix $P:=\left(p_{i,j}\right)_{1\leq i, j\leq
					r}$ with $p_{i,j}\in \DCH^{\dim A_{i}+n_{j}-n_{i}}(A_{i}\times A_{j})$ a
				\emph{symmetrically distinguished} cycle (Definition \ref{def:SD}), such that
				$P\circ P=P$, that is, for all $1\leq i, j\leq r$, we have
				$$\sum_{k=1}^{r}p_{k,j}\circ p_{i,k}=p_{i,j} \text{   in    } \CH^{\dim
					A_{i}+n_{j}-n_{i}}(A_{i}\times A_{j})$$
			\end{itemize}
			Such an object is denoted in the sequel by a triple
			$$\left(A_{1}\sqcup\cdots \sqcup A_{r}, P=\left(p_{i,j}\right), (n_{1}, \ldots,
			n_{r})\right).$$
			\item The group of morphisms from $\left(A_{1}\sqcup\cdots \sqcup A_{r},
			P=\left(p_{i,j}\right), (n_{1}, \ldots, n_{r})\right)$ to another object
			$\left(B_{1}\sqcup\cdots \sqcup B_{s}, Q=\left(q_{i,j}\right), (m_{1}, \ldots,
			m_{s})\right)$ is defined to be the subgroup of 
			$$\bigoplus_{i=1}^{r}\bigoplus_{j=1}^{s} \CH^{\dim A_i +
				m_{j}-n_{i}}(A_i\times B_j)$$ (whose elements are viewed as an $(s\times
			r)$-matrix) given by $$Q\circ \left(\bigoplus_{i=1}^{r}\bigoplus_{j=1}^{s}
			\CH^{\dim A_i + m_{j}-n_{i}}(A_i\times B_j)\right)\circ P,$$
			where the multiplication law is the one between matrices.
			\item The composition is defined as usual by composition of
			correspondences.
			\item The category $\M^{ab}_{sd}$ is an additive category where the
			direct sum is given by 
			\begin{eqnarray*}
				&&\left(\bigsqcup_{i=1}^{r}A_{i}, P, (n_{1}, \ldots, n_{r})\right)\oplus
				\left(\bigsqcup_{j=1}^{s}B_{j}, Q, (m_{1}, \ldots, m_{s})\right)\\
				&=& \left(\bigsqcup_{i=1}^{r}A_{i}\sqcup \bigsqcup_{j=1}^{s}B_{j},
				P\oplus Q:=\begin{pmatrix} P & 0\\ 0 &Q\end{pmatrix}, (n_{1}, \ldots, n_{r},
				m_{1}, \ldots, m_{s}) \right)
			\end{eqnarray*}
			\item The category  $\M^{ab}_{sd}$ is a symmetric mono\"idal category
			where the tensor product is defined by 
			\begin{eqnarray*}
				&&\left(\bigsqcup_{i=1}^{r}A_{i}, P, (n_{1}, \ldots, n_{r})\right)\otimes
				\left(\bigsqcup_{j=1}^{s}B_{j}, Q, (m_{1}, \ldots, m_{s})\right)\\
				&=& \left(\bigsqcup_{i=1}^{r}\bigsqcup_{j=1}^{s}A_{i}\times B_{j},
				P\otimes Q, (n_{i}m_{j}; 1\leq i\leq r, 1\leq j\leq s) \right)
			\end{eqnarray*}
			where $P\otimes Q$ is the Kronecker product of two matrices.\\
			In particular, for any $m\in \Z$, the $m$-th \emph{Tate twist},
			\emph{i.e.}, the tensor product with the Tate object $\1(m):=(\Spec k, \Spec k,
			m)$ sends $\left(A_{1}\sqcup\cdots \sqcup A_{r}, P, (n_{1}, \ldots,
			n_{r})\right)$ to $\left(A_{1}\sqcup\cdots \sqcup A_{r}, P, (n_{1}+m, \cdots,
			n_{r}+m)\right)$. All Tate objects are $\otimes$-invertible.
			\item The category  $\M^{ab}_{sd}$ is rigid\,; the dual of  
			$\left(A_{1}\sqcup\cdots \sqcup A_{r}, P=\left(p_{i,j}\right), (n_{1},
			\ldots, n_{r})\right)$ is given by $\left(A_{1}\sqcup\cdots \sqcup A_{r},
			{}^{t}P:=({}^{t}p_{j,i}), (d_{1}-n_{1}, \ldots, d_{r}-n_{r})\right)$,
			where $d_{k}=\dim A_{k}$ and the $(i,j)$-th entry of ${}^{t}P$ is
			${}^{t}p_{j,i}\in \CH^{d_{i}+(d_{j}-n_{j})-(d_{i}-n_{i})}(A_{i}\times A_{j})$,
			the transpose of $p_{j,i}\in \CH^{d_{j}+n_{i}-n_{j}}(A_{j}\times A_{i})$.
		\end{enumerate}
		
		In a similar way, one can define the rigid symmetric mono\"idal additive
		category $\M^{atts}_{sd}$ by replacing in the above definition abelian varieties
		(thus with origin fixed) by abelian torsors with torsion structure (thus with
		only the subset of torsion points fixed, \emph{cf.} \S \ref{subsect:atts}). 
		With the notion and basic properties of symmetrically distinguished cycles
		extended to the case of abelian torsors with torsion structure in \S
		\ref{subsect:atts}, all the above constructions go through. It is important to point out
		that\footnote{We thank Peter O'Sullivan for reminding us of this subtle point.}
		$\M^{ab}_{sd}$ and $\M^{atts}_{sd}$ are \emph{not} subcategories of $\CHM$ since
		in the definition of motives, one uses varieties instead of \emph{pointed} varieties or
		varieties with additional structures. See however Lemma \ref{lemma:Equivalence} below.
		
		There are natural fully faithful additive tensor functors $$F:
		\M^{ab}_{sd}\to \M^{ab}\quad \text{and} \quad F': \M^{atts}_{sd}\to \M^{ab},$$ which send an object
		$\left(A_{1}\sqcup\cdots \sqcup A_{r}, P=\left(p_{i,j}\right), (n_{1}, \ldots,
		n_{r})\right)$ to the Chow motive $\im\left(P: \oplus_{i=1}^{r}\h(A_{i})(n_{i})
		\to \oplus_{i=1}^{r}\h(A_{i})(n_{i})\right)$. Here we use the facts that $\CHM$
		is pseudo-abelian and that $P$ induces an idempotent endomorphism of
		$\oplus_{i=1}^{r}\h(A_{i})(n_{i})$ by construction.
		
		For any object $M$ in $\M^{ab}_{sd}$ or $\M^{atts}_{sd}$ and any $i\in
		\Z$, the $i$-th \emph{Chow group} $\CH^{i}(M)$ is defined to be $\CH^{i}(F(M))$
		which is nothing but $\Hom_{\M^{ab}_{sd}}\left((\Spec k, \Spec k, -i),
		M\right)$.
		
	\end{defn}
	Despite the technical construction of the categories $\M^{ab}_{sd}$ and
	$\M^{atts}_{sd}$, they are, after all, not so different from the category
	$\M^{ab}$ of abelian motives (Definition \ref{def:Mab})\,:
	\begin{lem}[Relation with Chow motives of abelian type]\label{lemma:Equivalence}
		The functors $F: \M^{ab}_{sd}\to \M^{ab}$ and $F': \M^{atts}_{sd}\to
		\M^{ab}$ are equivalences of categories. 
	\end{lem}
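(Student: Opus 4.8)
The plan is to show that $F$ (and identically $F'$) is essentially surjective, since full faithfulness is already asserted in Definition~\ref{def:SDMotive}; an additive functor between additive (indeed pseudo-abelian) categories that is fully faithful and essentially surjective is an equivalence. So the entire content is essential surjectivity: every object of $\M^{ab}$ is isomorphic, \emph{in} $\M^{ab}$, to the image under $F$ of some triple $\left(A_{1}\sqcup\cdots\sqcup A_{r},P=(p_{i,j}),(n_{1},\ldots,n_{r})\right)$ with $P$ a matrix of \emph{symmetrically distinguished} correspondences.

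First I would unwind what an object of $\M^{ab}$ looks like. By definition $\M^{ab}$ is the strictly full, thick (= idempotent-complete) rigid tensor subcategory of $\CHM$ generated by motives of abelian varieties. Since the motive of a point and all Tate twists lie there (cut out of the motive of an abelian variety, or of $\PP^1$ whose motive is a summand of that of an elliptic curve), and since the subcategory is closed under $\oplus$, $\otimes$, duals and direct summands, a standard generation argument shows that every object of $\M^{ab}$ is isomorphic to a direct summand of a finite direct sum $\bigoplus_{i=1}^{r}\h(A_{i})(n_{i})$ for abelian varieties $A_{i}$ and integers $n_{i}$. Concretely, such a summand is the image of an idempotent $P=(p_{i,j})$ with $p_{i,j}\in\CH^{\dim A_{i}+n_{j}-n_{i}}(A_{i}\times A_{j})$ and $P\circ P=P$ — exactly the shape of an object of $\M^{ab}_{sd}$, \emph{except} that a priori the entries $p_{i,j}$ are arbitrary cycles rather than symmetrically distinguished ones. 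So the problem reduces to: given an idempotent $P$ of (arbitrary) correspondences on $\bigoplus\h(A_{i})(n_{i})$, find an idempotent $P'$ of \emph{symmetrically distinguished} correspondences on the same object whose image is isomorphic, in $\CHM$, to the image of $P$.

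The key step invokes O'Sullivan's functorial section. Set $N:=\bigoplus_{i=1}^{r}\h(A_{i})(n_{i})\in\M^{ab}$. By Theorem~\ref{thm:section}, the projection $\pi\colon\M^{ab}\to\bar{\M^{ab}}$ admits a tensor section $T$. The numerical projector $\bar P:=\pi(P)\in\End_{\bar{\M^{ab}}}(\pi N)$ is idempotent; apply $T$ to get $P':=T(\bar P)\in\End_{\M^{ab}}(N)$, which is again idempotent since $T$ is a functor, and which maps to $\bar P$ under $\pi$. Now I claim the entries of the matrix $P'$ are symmetrically distinguished. This is precisely the content of the "down-to-earth" description of $T$ promised in Remark following Theorem~\ref{thm:section} (referenced as Theorem~\ref{thm:SD} together with Remark~\ref{rmk:Section}): on $\Hom$-groups between motives of abelian varieties, $T$ is a section of $\CH(A_i\times A_j)\surj\bar\CH(A_i\times A_j)$ whose image is exactly the symmetrically distinguished cycles $\DCH(A_i\times A_j)$, by Theorem~\ref{thm:SD}; more precisely, because the block $\h(A_i)(n_i)$-to-$\h(A_j)(n_j)$ component of $\Hom_{\M^{ab}}(N,N)$ is $\CH^{\dim A_i+n_j-n_i}(A_i\times A_j)$ and $T$ respects this block decomposition, each entry $p'_{i,j}=T(\bar p_{i,j})$ is symmetrically distinguished. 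Hence $\left(A_{1}\sqcup\cdots\sqcup A_{r},P'=(p'_{i,j}),(n_{1},\ldots,n_{r})\right)$ is a genuine object of $\M^{ab}_{sd}$, and $F$ sends it to $\im(P'\colon N\to N)$.

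It remains to identify $\im(P')$ with $\im(P)$ in $\CHM$ (equivalently in $\M^{ab}$). Both are summands of $N$ whose associated numerical projectors agree: $\pi(P')=\bar P=\pi(P)$. The standard lifting argument then applies: two idempotents in an algebra that become equal (more generally, conjugate) modulo a nil ideal, or modulo the radical in a suitable sense, are conjugate; here the relevant fact is that the kernel of $\End_{\CHM}(N)\to\End_{\NumM}(\pi N)$ is a nil (indeed, for finite-dimensional motives, nilpotent) ideal by Kimura's nilpotence theorem together with Example~\ref{ex} which places abelian motives among finite-dimensional ones — so $P$ and $P'$ are conjugate idempotents in $\End_{\CHM}(N)$, whence $\im(P)\isom\im(P')=F\left(A_{1}\sqcup\cdots\sqcup A_{r},P',(n_{1},\ldots,n_{r})\right)$. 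This proves essential surjectivity, and combined with full faithfulness gives that $F$, and by the identical argument $F'$ (using Theorem~\ref{thm:SD2} in place of Theorem~\ref{thm:SD} to see the entries are symmetrically distinguished on a.t.t.s.'s), are equivalences of categories. The main obstacle is the one genuine input — that O'Sullivan's section $T$, restricted to morphism groups, really does land in symmetrically distinguished cycles blockwise — but this is exactly Theorem~\ref{thm:SD} packaged functorially, so once its statement is granted the argument is a clean chain of: generation $\Rightarrow$ every object is a summand of $\bigoplus\h(A_i)(n_i)$; apply $T$ to numerical projectors; Kimura nilpotence to transport the summand.
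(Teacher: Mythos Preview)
Your proof is correct and follows the same overall architecture as the paper's: full faithfulness is by definition, and essential surjectivity is obtained by replacing an arbitrary idempotent by a numerically equivalent one with symmetrically distinguished entries, then identifying the two images. The difference is in how that replacement is produced. You route through the abstract section functor $T$ of Theorem~\ref{thm:section}, set $P':=T(\bar P)$, and then argue that the entries of $P'$ are symmetrically distinguished by appealing to the concrete description of $T$; this step is a little delicate, since Theorem~\ref{thm:section} only gives $T$ up to tensor isomorphism and does not literally assert $T(\pi N)=N$, so one has to know (from O'Sullivan's construction, not from the bare statement) that $T$ can be normalized to act as the identity on objects $\h(A_i)(n_i)$ and to land in $\DCH$ on morphisms. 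The paper avoids this detour entirely: it applies Theorem~\ref{thm:SD} directly to pick the unique symmetrically distinguished $q$ in the numerical class of $p$, and then uses the \emph{uniqueness} clause of Theorem~\ref{thm:SD} once more to see that $q\circ q=q$ (both sides are symmetrically distinguished and numerically equal). This is shorter and sidesteps any discussion of $T$. Conversely, your write-up is more careful on two points the paper leaves implicit: the reduction of an arbitrary object of $\M^{ab}$ to a summand of $\bigoplus_i \h(A_i)(n_i)$, and the invocation of Kimura nilpotence to conclude that two idempotents with the same numerical class cut out isomorphic summands (the paper simply asserts ``$(A,p,n)$ is isomorphic to $(A,q,n)$'').
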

	\begin{proof}
		These two functors are fully faithful by definition and we only have to show
		that they are essentially surjective. Consider  an object  in $\CHM$ isomorphic
		to $(A,p,n)$ with $A$ a $g$-dimensional abelian torsor, $p\in \CH^{g}(A\times
		A)$ a projector and $n\in \Z$. First we choose an origin for $A$ so that the
		symmetric distinguishedness makes sense in the rest of the proof. Using the existence of
		symmetrically distinguished cycles in each numerical cycle class (Theorem
		\ref{thm:SD}), one can find a symmetrically distinguished element $q\in
		\DCH^{g}(A\times A)$ such that $q$ is numerically equivalent to $p$. As $p$ is a
		projector, we know that $q\circ q$ is numerically equivalent to $q$. However, as
		$q\circ q$ and $q$ are both symmetrically distinguished, they must be equal by
		the uniqueness of symmetrically distinguished lifting in Theorem \ref{thm:SD},
		\emph{i.e.}, $q$ is a projector. Therefore $(A, p, n)$ is isomorphic, in $\CHM$,
		to $(A,q,n)$ which is in the image of the functor $F$. Finally, since $F$
		factorizes through $F'$,  $F'$ is also essentially surjective.
	\end{proof}

	Now we extend the notion of symmetrical distinguishedness from cycles on abelian
	varieties (Definition \ref{def:SD}) to morphisms in the category $\M^{ab}_{sd}$
	(and $\M^{atts}_{sd}$).
	\begin{defn}[Symmetrically distinguished morphisms in
		$\M^{ab}_{sd}$]\label{def:SDmorph}
		Given two objects in $\M^{ab}_{sd}$, say, $M:=\left(A_{1}\sqcup\cdots \sqcup
		A_{r}, P=\left(p_{i,j}\right), (n_{1}, \ldots, n_{r})\right)$ and\\ 
		$N:=\left(B_{1}\sqcup\cdots \sqcup B_{s}, Q=\left(q_{i,j}\right), (m_{1},
		\ldots, m_{s})\right)$, the subspace of \emph{symmetrically distinguished
			morphisms} from $M$ to $N$, denoted by $\DHom(M,N)$,  is defined to be
		$$\DHom(M,N):=Q\circ \bigoplus_{i,j} \DCH^{\dim A_i+m_{j}-n_{i}}(A_i\times
		B_j)\circ P\subseteq \Hom(M,N).$$ Similarly, one can define symmetrically
		distinguished morphisms in $\M^{atts}_{sd}$.
		Here $\DCH(A_i\times B_j)$ is in the sense of Definition \ref{def:SD} or
		\ref{def:SD2}.\\
		In particular, for any object $M$ in $\M^{ab}_{sd}$ (or $\M^{atts}_{sd}$) and any
		integer $i$,  $\DHom(\1(-i), M)$ is a canonical subgroup of
		$\CH^{i}(M)=\CH^{i}(F(M))$. We denote\footnote{Beware that our notation slightly
			conflicts with the notation of \cite{MR2187148}, where $\DCH^*(X)$ stands for
			the subalgebra generated by divisors, which is denoted by $R(X)$ in the present
			paper.} $$\DCH^{i}(M):=\DHom(\1(-i), M)$$ and call its elements
		\emph{symmetrically distinguished cycles} of $M$. 
	\end{defn}
	
	We collect some basic properties of symmetrically distinguished morphisms in the
	following lemma. Recall that $\pi: \M^{ab}\to \bar{\M^{ab}}$ is the natural
	projection functor (Definition \ref{def:Mab}).
	\begin{lem}[Relation with numerical motives of abelian type]\label{lemma:EquivalenceNum}
		
		In $\M^{ab}_{sd}$,
		\begin{enumerate}[$(i)$]
			\item the composition and the tensor product of two symmetrically distinguished
			morphisms is again symmetrically distinguished. Hence we have a tensor
			subcategory $\left(\M^{ab}_{sd},   \text{     s.d.\,morphisms}\right)$.
			\item For any two objects $M, N\in \M^{ab}_{sd}$, the functor $\pi\circ F$
			induces an isomorphism\,:
			$$\DHom(M, N)\xrightarrow{\isom} \Hom_{\NumM}(\pi(F(M)), \pi(F(N))).$$
			In particular, for any object $M\in \M^{ab}_{sd}$, the composition of the
			natural map $\DCH^{i}(M)\inj \CH^{i}(M)\surj \bar\CH^{i}(M):=\bar\CH^{i}(F(M))$
			is an isomorphism.
			\item The composed functor $\pi\circ F: \M^{ab}_{sd}\to \bar{\M^{ab}}$ induces
			an equivalence of categories $$\bar{F}: \left(\M^{ab}_{sd},   \text{    
				s.d.\,morphisms}\right) \xrightarrow{\isom} \bar{\M^{ab}}.$$
		\end{enumerate}
		Similar properties also hold for the category $\M^{atts}_{sd}$.
	\end{lem}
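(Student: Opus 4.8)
The plan is to deduce all three statements from O'Sullivan's Theorems~\ref{thm:section} and~\ref{thm:SD} (together with their a.t.t.s.\ analogues Theorem~\ref{thm:SD2}) and from the already-established equivalence $F\colon \M^{ab}_{sd}\xrightarrow{\isom}\M^{ab}$ of Lemma~\ref{lemma:Equivalence}. For $(i)$, I would argue directly from Definition~\ref{def:SDmorph}: a symmetrically distinguished morphism $M\to N$ is, by construction, a matrix of cycles each entry of which lies in some $\DCH(A_i\times B_j)$ (in the sense of Definition~\ref{def:SD} or~\ref{def:SD2}), pre- and post-composed with the defining projectors $P,Q$ (themselves matrices of symmetrically distinguished cycles). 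Since by Theorem~\ref{thm:SD} (resp.\ Theorem~\ref{thm:SD2}) the groups $\DCH(A_i\times A_j)$ form a subalgebra of $\CH(A_i\times A_j)$ that is stable under push-forward and pull-back along homomorphisms of abelian varieties (resp.\ morphisms of a.t.t.s.'s), composition of correspondences—being built out of products, pull-backs along projections, and push-forwards along projections—preserves symmetric distinguishedness entry by entry; the same holds for the tensor (Kronecker) product. The composition $P\circ(-)\circ Q'$-type bookkeeping is then routine matrix algebra.

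For $(ii)$, the key input is that $\pi\circ F$ sends a symmetrically distinguished morphism to its numerical class, and the claim is that this is a bijection onto $\Hom_{\NumM}(\pi(F(M)),\pi(F(N)))$. Surjectivity and injectivity both reduce, via the matrix description of morphisms, to the corresponding statement for a single pair of abelian varieties (or a.t.t.s.'s), i.e.\ to the assertion that $\DCH^\ast(A\times B)\hookrightarrow\CH^\ast(A\times B)\twoheadrightarrow\overline{\CH}^\ast(A\times B)$ is an isomorphism—which is exactly the content of Theorem~\ref{thm:SD} applied to the abelian variety $A\times B$ (resp.\ Theorem~\ref{thm:SD2}). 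Concretely: given a numerical correspondence $N\to M$, lift each matrix entry to its unique symmetrically distinguished representative; one must check that the lift is compatible with the projectors $P,Q$, i.e.\ that $Q\circ(\text{lift})\circ P$ is again that same lift—this follows from the uniqueness clause in Theorem~\ref{thm:SD} exactly as in the proof of Lemma~\ref{lemma:Equivalence} (the cycle $Q\circ(\text{lift})\circ P$ is symmetrically distinguished and numerically equivalent to the lift, hence equal to it). For injectivity, a symmetrically distinguished morphism that is numerically trivial has all matrix entries numerically trivial, hence zero by the injectivity in Theorem~\ref{thm:SD}. The displayed special case $\DCH^i(M)\xrightarrow{\isom}\overline{\CH}^i(F(M))$ is the case $M'=\1(-i)$.

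For $(iii)$, the functor $\bar F$ is the restriction of $\pi\circ F$ to the subcategory with symmetrically distinguished morphisms; it is fully faithful by $(ii)$ and essentially surjective because $\pi\circ F$ already is: $F$ is essentially surjective onto $\M^{ab}$ by Lemma~\ref{lemma:Equivalence}, and $\pi\colon\M^{ab}\to\overline{\M^{ab}}$ is essentially surjective by Definition~\ref{def:Mab} (it is a full projection functor hitting all generators). The analogous statements for $\M^{atts}_{sd}$ follow verbatim, replacing Theorem~\ref{thm:SD} by Theorem~\ref{thm:SD2} and Lemma~\ref{lemma:Equivalence}'s functor $F$ by $F'$. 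The only real subtlety—and the step I expect to require the most care—is verifying in $(ii)$ that the entrywise symmetrically distinguished lift of a numerical correspondence is automatically fixed by conjugation with the idempotents $P$ and $Q$; but this is handled cleanly by the uniqueness of symmetrically distinguished lifts, so there is no genuine obstacle, only bookkeeping.
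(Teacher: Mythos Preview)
Your proposal is correct and follows essentially the same route as the paper: part $(i)$ from the stability of $\DCH$ under products and correspondence composition (Theorem~\ref{thm:SD}/\ref{thm:SD2}), part $(ii)$ by reducing to the entrywise isomorphism $\DCH(A_i\times B_j)\xrightarrow{\isom}\overline{\CH}(A_i\times B_j)$ and using that $P,Q$ are symmetrically distinguished, and part $(iii)$ from $(ii)$ plus essential surjectivity of $F$ and $\pi$. Your extra verification in $(ii)$ that the entrywise lift is fixed under $Q\circ(-)\circ P$ is correct (via uniqueness) but slightly more than needed: it suffices to observe that $Q\circ(\text{lift})\circ P$ is symmetrically distinguished and has the right numerical class, which already exhibits a preimage.
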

	\begin{proof}
		$(i)$ is a consequence of Theorem \ref{thm:SD}, which implies that symmetrically
		distinguished cycles on abelian varieties are closed under tensor product and
		symmetrically distinguished correspondences between abelian varieties are closed
		under composition.
		
		For $(ii)$, let $M:=\left(\sqcup_{i=1}^{r}A_{i}, P, (n_{1}, \ldots,
		n_{r})\right)$ and $N:=\left(\sqcup_{j=1}^{s}B_{j}, Q, (m_{1}, \ldots,
		m_{s})\right)$. Then, on the one hand, we have by definition $$\DHom(M,
		N):=Q\circ \bigoplus_{i,j} \DCH^{\dim A_i+m_{j}-n_{i}}(A_i\times B_j)\circ P\,;$$
		and, on the other hand, 
		\begin{eqnarray*}
			\Hom_{\NumM}\left(\pi(F(M)),
			\pi(F(N))\right) &=&\Hom_{\NumM}\left(\bar{P}\left(\oplus\bar{\h}(A_{i})(n_{i})\right),\bar{Q}\left(\oplus\bar{\h}(B_{j})(m_{j})\right)\right)\\
			&=& \bar{Q}\circ \bigoplus_{i,j} \bar{\CH}^{\dim A_i+m_{j}-n_{i}}(A_i\times
			B_j)\circ \bar{P}.
		\end{eqnarray*}
		By Theorem \ref{thm:SD}, for any $1\leq i\leq r, 1\leq j\leq s$,  the natural
		map induced by $\pi\circ F$ $$\DCH^{\dim A_i+m_{j}-n_{i}}(A_i\times
		B_j)\xrightarrow{\isom} \bar{\CH}^{\dim A_i+m_{j}-n_{i}}(A_i\times B_j)$$ is an
		isomorphism. Now the fact that $P$ and $Q$ are matrices of symmetrically
		distinguished cycles allows us to conclude.
		
		For $(iii)$, the full faithfulness is the content of $(ii)$ while the essential
		surjectivity follows from that of $F$ (Lemma \ref{lemma:Equivalence}) and
		$\pi$.
		
		The same argument also works for the category $\M^{atts}_{sd}$ by using Theorem
		\ref{thm:SD2} in place of Theorem \ref{thm:SD}.
	\end{proof}
	
	\begin{rmk}\label{rmk:Section}
		Lemma \ref{lemma:Equivalence} and Lemma \ref{lemma:EquivalenceNum} are
		contrasting\,: on the one hand, the whole category $\M^{ab}_{sd}$ is equivalent
		to $\M^{ab}$, the category of abelian Chow motives\,; on the other hand, the
		subcategory with same objects as $\M_{sd}^{ab}$ and with symmetrically distinguished
		morphisms (Definition \ref{def:SDmorph}) is equivalent to $\bar{\M^{ab}}$, the
		category of abelian numerical motives. Thus $\M^{ab}_{sd}$ fulfills exactly our
		purpose to make a bridge between $\M^{ab}$ and $\bar{\M^{ab}}$. More precisely,
		we have the commutative diagram
		\begin{equation*}
		\xymatrix{
			\M^{ab}_{sd} \ar[r]^{F}_{\isom}& \M^{ab} \ar@{->>}[d]^{\pi}\\
			(\M^{ab}_{sd}, \text{ s.d.\,morph}) \ar@{^{(}->}[u] \ar[r]^-{\isom}_-{\bar F}&
			\bar{\M^{ab}}
		}
		\end{equation*}
		which gives an explicit way to understand O'Sullivan's categorical Theorem
		\ref{thm:section} via his more down-to-earth Theorem \ref{thm:SD}. Namely, we no
		longer deal with the right-inverse tensor functor $T$, whose existence is proven in a
		somehow abstract way and whose uniqueness is up to a tensor conjugacy, but
		instead we have, via the equivalences $F$ and $\bar F$, a concrete subcategory
		of symmetrically distinguished morphisms inside $\M^{ab}_{sd}$, which plays the
		role of the section functor $T$. We think the construction and basic properties
		of $\M^{ab}_{sd}$ and its subcategory of symmetrically distinguished morphisms
		would have independent interest in the future study of algebraic cycles on
		abelian varieties, or more generally, varieties with motives of abelian type.
	\end{rmk}

	Finally, let us note the following simple consequence of Lemma \ref{lemma:EquivalenceNum} $(iii)$ , which will be crucial when dealing
	with quotients (or more generally, generically finite surjective morphisms) in
	\S \ref{subsect:quotient}.
	\begin{lem}\label{lemma:PsAb}
		The category $(\M^{ab}_{sd}, \text{  s.d.\,morphisms})$, with objects as in
		$\M^{ab}_{sd}$ but with morphisms restricted to symmetrically distinguished
		morphisms, is pseudo-abelian.
	\end{lem}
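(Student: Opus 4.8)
We must show that the category $(\M^{ab}_{sd}, \text{ s.d.\,morphisms})$ — same objects as $\M^{ab}_{sd}$, but with morphisms restricted to symmetrically distinguished ones — is pseudo-abelian, i.e.\ that every idempotent endomorphism admits an image (a direct summand).

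The plan is to leverage Lemma~\ref{lemma:EquivalenceNum}$(iii)$, which furnishes an equivalence of additive tensor categories $\bar F\colon (\M^{ab}_{sd}, \text{ s.d.\,morph}) \xrightarrow{\isom} \bar{\M^{ab}}$. Since pseudo-abelianness is preserved under equivalence of categories, it suffices to observe that $\bar{\M^{ab}}$ is pseudo-abelian. First I would recall that $\bar{\M^{ab}}$ was defined (Definition~\ref{def:Mab}) as a \emph{thick} (hence pseudo-abelian / Karoubi-closed) full subcategory of $\NumM$, and that $\NumM$ itself is pseudo-abelian, being the category of numerical motives built from a pseudo-abelian completion. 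So every idempotent in $\bar{\M^{ab}}$ splits there. Transporting a splitting back along the equivalence $\bar F$ — more precisely, using a quasi-inverse functor and the fact that an additive equivalence carries splittings of idempotents to splittings of idempotents — yields the splitting of any idempotent in $(\M^{ab}_{sd}, \text{ s.d.\,morph})$, and one checks the resulting object is again one of the prescribed objects (it is, since $\bar F$ is essentially surjective and the objects of the two categories are in the declared bijection). This completes the proof.

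Alternatively, and perhaps more transparently, one can argue directly: given an idempotent symmetrically distinguished endomorphism $e \in \DCH(M)$ of an object $M = (\sqcup_i A_i, P, (n_i))$, note that $e$, viewed inside $\CH(\sqcup_i A_i \times \sqcup_i A_i)$ via $P$, is a symmetrically distinguished cycle which is a projector. Enlarging $P$ to the block matrix on $\sqcup_i A_i \sqcup \sqcup_i A_i$ (two copies) that realizes the subobject cut out by $e$ — concretely, one takes the new projector $P' = e$ sitting in the appropriate matrix slot — produces a new object of $\M^{ab}_{sd}$ whose defining cycles are symmetrically distinguished (here one uses that $\DCH$ is closed under composition, by Theorem~\ref{thm:SD}, so products and sums of the relevant correspondences stay symmetrically distinguished). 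The inclusion and projection morphisms between $M$ and this new object are then symmetrically distinguished by construction, and they exhibit the splitting of $e$ in $(\M^{ab}_{sd}, \text{ s.d.\,morph})$.

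The main (and really only) subtlety is bookkeeping: one must verify that after restricting Hom-groups to symmetrically distinguished morphisms, the structure morphisms of the splitting genuinely land in those restricted Hom-groups — i.e.\ that the idempotent $e \in \DCH(M)$, its associated inclusion and projection, and the identity on the new summand are all symmetrically distinguished. This is immediate from Theorem~\ref{thm:SD} (closure of $\DCH$ under composition and the fact that $P \circ \DCH \circ P \subseteq \DCH$), together with the defining shape of morphisms in $\M^{ab}_{sd}$. Everything else is formal. I expect no genuine obstacle here; the cleanest write-up simply cites Lemma~\ref{lemma:EquivalenceNum}$(iii)$ and the elementary fact that pseudo-abelianness is an equivalence-invariant property of additive categories.
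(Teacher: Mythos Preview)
Your proposal is correct and takes essentially the same approach as the paper: the paper's proof simply invokes the equivalence of categories from Lemma~\ref{lemma:EquivalenceNum}$(iii)$ together with the fact that $\bar{\M^{ab}}$ is pseudo-abelian. Your additional direct argument is fine but unnecessary for the paper's purposes.
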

	\begin{proof}
	This follows from the equivalence of categories in Lemma \ref{lemma:EquivalenceNum}$(iii)$ and the fact that $\bar{\M^{ab}}$ is pseudo-abelian.
%
	\end{proof}
	
	\begin{rmk}\label{rmk:Envelope}
		In fact, the category $(\M^{ab}_{sd}, \text{  s.d.\,morphisms})$ is the
		pseudo-abelian additive envelop of the category $Corr^{ab}_{sd}$ of
		symmetrically distinguished correspondences between abelian varieties\,: more
		precisely, an object of $Corr^{ab}_{sd}$ is a couple $(A, n)$ with $A$ an abelian variety (with
		fixed origin) and $n\in \Z$ and morphisms between two objects $(A,n), (B,m)$ are
		given by
		$$\Hom((A,n), (B, m)):= \DCH^{\dim A+m-n}(A\times B)\,;$$ and the composition is
		the usual one for correspondences.
	\end{rmk}


	\section{Distinguished cycles} \label{sect:distinguish}
	
	\subsection{Definitions and basic properties}


	%
	%

	Here come the key notions of this paper\,:
	
	\begin{defn}[Marking]\label{def:marking}
		Let $X$ be smooth projective variety such that its Chow motive $\h(X)$ belongs
		to $\M^{ab}$.  
		A \emph{marking} for $X$ consists of an object  $M\in \M^{ab}_{sd}$ together
		with an isomorphism $$\phi: \h(X)\xrightarrow{\isom} F(M) \quad \text{in }
		\CHM,$$ where $F:\M^{ab}_{sd}\xrightarrow{\isom}\M^{ab}$ is the equivalence in Definition \ref{def:SDMotive}.
	\end{defn}
	By Lemma \ref{lemma:Equivalence}, a marking for a smooth projective variety $X$
	with motive of abelian type always exists. In practice, starting from
	Section \ref{sect:examples}, we will abusively ignore the difference between
	$\M^{ab}_{sd}$ and its image in $\M^{ab}$ by $F$ and write a marking as an
	isomorphism $\phi: \h(X)\xrightarrow{\isom} M$ for $M\in \M^{ab}_{sd}$.

	\begin{defn}[Distinguished cycles]\label{def:distinguished}
		Let $X$ be a smooth projective variety such that its Chow motive $\h(X)$
		belongs to $\M^{ab}$.  
		Given a marking $\phi: \h(X)\xrightarrow{\isom} F(M)$ with $M\in \M^{ab}_{sd}$,
		we define the subgroup of \emph{distinguished cycles} of codimension~$i$ of $X$,
		denoted by $\DCH^i_{\phi}(X)$, or sometimes $\DCH^i(X)$ if $\phi$ is clear from
		the context, to be the pre-image of $\DCH^i(M)$ (see Definition
		\ref{def:SDmorph}) \emph{via} the induced isomorphism
		$\phi_{*}:\CH^i(X)\xrightarrow{\isom}\CH^i(M)$.
	\end{defn}
	
	Almost by construction, we have\,:
	\begin{lem}\label{lemma:section}
		For any smooth projective variety $X$ such that $\h(X)\in \M^{ab}$ and any
		marking $\phi: \h(X)\xrightarrow{\isom} F(M)$ with $M\in \M^{ab}_{sd}$, the
		composition $$\DCH^i_{\phi}(X)\inj \CH^i(X)\surj \bar\CH^i(X)$$ is an
		isomorphism. In other words, $\phi$ provides a section (as graded vector spaces)
		of the natural projection $\CH(X)\surj \bar\CH(X)$.
	\end{lem}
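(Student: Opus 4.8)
The plan is to push all the content into Lemma~\ref{lemma:EquivalenceNum}$(ii)$ and then perform a short diagram chase, using crucially that a marking is, by definition, an isomorphism of Chow motives and hence survives the passage to numerical equivalence.

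First I would recall that the isomorphism $\phi\colon \h(X)\xrightarrow{\isom} F(M)$ in $\CHM$ induces, by the functoriality of Chow groups under the action of correspondences, the isomorphism $\phi_{*}\colon \CH^{i}(X)\xrightarrow{\isom}\CH^{i}(F(M))=\CH^{i}(M)$ used in Definition~\ref{def:distinguished}. Applying the projection $\otimes$-functor $\pi\colon \CHM\to\NumM$ (recalled before Definition~\ref{def:Mab}) to $\phi$ then yields an isomorphism $\pi(\phi)\colon \bar\h(X)\xrightarrow{\isom}\pi(F(M))$ of numerical motives, hence an isomorphism $\bar\phi_{*}\colon \bar\CH^{i}(X)\xrightarrow{\isom}\bar\CH^{i}(M):=\bar\CH^{i}(F(M))$. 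Since $\pi$ is by construction the identity on objects and sends a cycle class modulo rational equivalence to its class modulo numerical equivalence, the square
\[
\begin{CD}
\CH^{i}(X) @>>> \bar\CH^{i}(X)\\
@V\phi_{*}VV @VV\bar\phi_{*}V\\
\CH^{i}(M) @>>> \bar\CH^{i}(M)
\end{CD}
\]
commutes, the horizontal maps being the canonical projections.

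Finally, by Definition~\ref{def:distinguished} we have $\DCH^{i}_{\phi}(X)=\phi_{*}^{-1}\bigl(\DCH^{i}(M)\bigr)$, so $\phi_{*}$ restricts to an isomorphism $\DCH^{i}_{\phi}(X)\xrightarrow{\isom}\DCH^{i}(M)$. Together with the commutativity of the square above and the bijectivity of $\bar\phi_{*}$, this identifies the composition $\DCH^{i}_{\phi}(X)\hookrightarrow\CH^{i}(X)\twoheadrightarrow\bar\CH^{i}(X)$ with the composition $\DCH^{i}(M)\hookrightarrow\CH^{i}(M)\twoheadrightarrow\bar\CH^{i}(M)$. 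The latter is an isomorphism by the ``in particular'' clause of Lemma~\ref{lemma:EquivalenceNum}$(ii)$, whence so is the former; taking the direct sum over $i$ and inverting produces the asserted section (as graded $\Q$-vector spaces) of $\CH(X)\twoheadrightarrow\bar\CH(X)$.

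I do not expect any real obstacle here: everything has been arranged so that the statement follows formally from Lemma~\ref{lemma:EquivalenceNum}$(ii)$, and the only point that warrants a line of justification is the commutativity of the displayed square, i.e.\ that an isomorphism of Chow motives induces a compatible isomorphism of numerical motives, which is immediate from the definition of the projection functor $\pi$. (It is worth keeping in mind, though, that this section is only a section of graded vector spaces and depends on the chosen marking $\phi$; upgrading it to a section of $\Q$-algebras requires the additional conditions on $\phi$, such as $(\starM)$, introduced in the sequel.)
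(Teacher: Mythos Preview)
Your proof is correct and follows essentially the same approach as the paper: both arguments use the commutative diagram with vertical isomorphisms $\phi_{*}$ and $\bar\phi_{*}$ to reduce to the bottom row, and then invoke Lemma~\ref{lemma:EquivalenceNum}$(ii)$ to conclude that $\DCH^{i}(M)\hookrightarrow\CH^{i}(M)\twoheadrightarrow\bar\CH^{i}(M)$ is an isomorphism. Your version is slightly more verbose in justifying the commutativity of the square and the passage to numerical equivalence, but the content is the same.
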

	\begin{proof}
		In the commutative diagram
		\begin{equation*}
		\xymatrix{
			\DCH^i_{\phi}(X) \ar@{^{(}->}[r] \ar[d]_{\isom}^{\phi_{*}}&
			\CH^i(X)\ar@{->>}[r]\ar[d]_{\isom}^{\phi_{*}} &\bar\CH^i(X)
			\ar[d]_{\isom}^{\bar\phi_{*}}\\
			\DCH^i(M) \ar@{^{(}->}[r]& \CH^i(M)\ar@{->>}[r] &\bar\CH^i(M)
		}
		\end{equation*}
		the composition of the bottom line is an isomorphism by Lemma
		\ref{lemma:EquivalenceNum}. Therefore the composition of the top line is also an
		isomorphism, hence $\DCH^i_{\phi}(X)$ provides a section.
	\end{proof}

	\begin{rmk}[Fundamental class]\label{rmk:FundCl}
Given a smooth projective variety $X$, its fundamental class $\1_{X}$ is always distinguished for any choice of marking. Indeed, we can
		assume that $X$ is connected, thus $\CH^{0}(X)=\Q\cdot\1_{X}$, and Lemma
		\ref{lemma:section} ensures that $\1_X$ is distinguished.
	\end{rmk}

	Distinguished cycles behave well with respect to tensor products and projections\,:
	\begin{prop}[Tensor products and projections]\label{prop:tensor}
		Let $X, Y$ be two smooth projective varieties with motive of abelian type,
		endowed with markings $\phi: \h(X)\xrightarrow{\isom} F(M)$ and $\psi:
		\h(Y)\xrightarrow{\isom} F(N)$. Then $$\phi\otimes \psi :\h(X\times Y)
		\xrightarrow{\isom} F(M\otimes N)$$ provides a marking for $X\times Y$, and the
		exterior product $\CH^i(X)\times \CH^j(Y)\xrightarrow{\otimes} \CH^{i+j}(X\times
		Y)$ respects distinguished cycles\,:
		$$\DCH^i_{\phi}(X)\times \DCH^j_{\psi}(Y)\xrightarrow{\otimes}
		\DCH^{i+j}_{\phi\otimes \psi}(X\times Y).$$
		Moreover, denoting $p:X\times Y\to X$ the natural projection, we have
		$$p_*\DCH_{\phi\otimes \psi}^i(X\times Y) \subseteq \DCH_{\phi}^{i-\dim Y}(X) \quad \mbox{and}\quad
		p^* \DCH_{\phi}^{i}(X) \subseteq \DCH_{\phi\otimes \psi}^i(X\times Y),$$
		and similarly for the natural projection $q: X\times Y\to Y$.  
	\end{prop}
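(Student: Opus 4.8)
The plan is to unwind all the definitions and reduce every assertion to the corresponding statement about symmetrically distinguished morphisms in $\M^{ab}_{sd}$, where it becomes either immediate or an application of Lemma~\ref{lemma:EquivalenceNum}. First I would check that $\phi\otimes\psi$ is a marking: by Proposition~\ref{prop:tensor}'s hypothesis $\phi$ and $\psi$ are isomorphisms in $\CHM$, hence so is $\phi\otimes\psi:\h(X\times Y)=\h(X)\otimes\h(Y)\xrightarrow{\isom}F(M)\otimes F(N)$, and since $F$ is a tensor functor (Definition~\ref{def:SDMotive}) we have $F(M)\otimes F(N)\isom F(M\otimes N)$; composing gives the desired marking, and $M\otimes N$ is indeed an object of $\M^{ab}_{sd}$ by the construction of the tensor product there.

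Next, for the exterior product compatibility, I would transport the question through $\phi_*,\psi_*$ to the motivic side: unravelling Definition~\ref{def:distinguished}, it suffices to show that if $a\in\DCH^i(M)=\DHom(\1(-i),M)$ and $b\in\DCH^j(N)=\DHom(\1(-j),N)$, then $a\otimes b\in\DHom(\1(-i-j),M\otimes N)$. But $\1(-i)\otimes\1(-j)=\1(-i-j)$, and $a\otimes b$ is literally the tensor product of two symmetrically distinguished morphisms, which is symmetrically distinguished by Lemma~\ref{lemma:EquivalenceNum}$(i)$. One should also verify the bookkeeping that the map $\CH^i(X)\times\CH^j(Y)\to\CH^{i+j}(X\times Y)$ on Chow groups really corresponds under the $F$-equivalence to the tensor product of morphisms $\1(-i)\to M$ and $\1(-j)\to N$; this is the standard identification of exterior product of cycles with tensor product of correspondences, compatible with the identification $\CH^i(M)=\Hom_{\M^{ab}_{sd}}(\1(-i),M)$.

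For the push-forward and pull-back along $p:X\times Y\to X$, the key observation is that $p$ induces morphisms of motives: $p^*=\Gamma_p^*$ corresponds to $\id_{\h(X)}\otimes\eta_Y$ where $\eta_Y:\1\to\h(Y)$ is the unit (structure morphism), and $p_*$ corresponds to $\id_{\h(X)}\otimes\varepsilon_Y$ where $\varepsilon_Y:\h(Y)\to\1(-\dim Y)$ is the counit (degree map), up to the appropriate Tate twist. The point is that both $\eta_Y$ and $\varepsilon_Y$ are symmetrically distinguished morphisms in $\M^{ab}_{sd}$: indeed $\eta_Y$ corresponds to the fundamental class of $Y$ and $\varepsilon_Y$ to the class of a point, both of which are symmetrically distinguished cycles on the abelian-type pieces (the structure and degree maps of $\h(A)$ for an abelian variety $A$ are given by symmetrically distinguished cycles — the fundamental class and any torsion point / rationally-trivial $0$-cycle of degree one). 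Then $p^*(\text{distinguished})$ is the composition of a distinguished cycle $\1(-i)\to M$ with the distinguished morphism $\id_M\otimes\eta_N$, landing in $\DHom(\1(-i),M\otimes N)$ by Lemma~\ref{lemma:EquivalenceNum}$(i)$; and similarly $p_*$ composes with $\id_M\otimes\varepsilon_N$ to land in $\DHom(\1(-i+\dim Y),M)=\DCH^{i-\dim Y}(M)$. The statement for $q:X\times Y\to Y$ is symmetric.

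The main obstacle I anticipate is not any single hard step but rather keeping the identifications honest: one must make sure that under the marking $\phi$ the pushforward/pullback maps on Chow groups genuinely correspond to composition with $\id\otimes\varepsilon_Y$ and $\id\otimes\eta_Y$ on the motivic side — i.e.\ that the standard functorialities of Chow groups are compatible with the $F$-equivalence of Definition~\ref{def:SDMotive} — and that the unit and counit of $\h(Y)$ really are symmetrically distinguished when $Y$ has motive of abelian type (this uses that $\DCH(A)$ contains the fundamental class and, via Theorem~\ref{thm:SD} together with Remark~\ref{R:symdistbeauville}, the right $0$-cycle class, plus stability of symmetric distinguishedness under the projector $P$ cutting out $M$). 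Once these compatibilities are in place, every assertion of the proposition reduces to Lemma~\ref{lemma:EquivalenceNum}$(i)$, so I would state the reduction cleanly and then invoke that lemma.
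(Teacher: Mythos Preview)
Your approach is essentially the same as the paper's: reduce everything to Lemma~\ref{lemma:EquivalenceNum}$(i)$ by identifying the exterior product with tensor product of morphisms and the projection maps with $\id\otimes(\text{structure morphism})$. There is one slip worth correcting: the counit $\varepsilon_Y:\h(Y)\to\1(-\dim Y)$ is \emph{not} given by the class of a point but again by the fundamental class $\1_Y\in\CH^0(Y)$, since $\Hom(\h(Y),\1(-d))=\CH^{d-d}(Y)=\CH^0(Y)$. So both $\eta_Y$ and $\varepsilon_Y$ are handled by the same fact, namely Remark~\ref{rmk:FundCl}, and there is no need to invoke torsion points or degree-one $0$-cycles at all; the paper's proof simply observes that the morphism $N\to\1(-d)$ induced via $\psi$ from the fundamental class of $Y$ is symmetrically distinguished by Remark~\ref{rmk:FundCl}, then tensors with $\id_M$ and applies Lemma~\ref{lemma:EquivalenceNum}$(i)$.
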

	\begin{proof}
		That $\phi \otimes \psi$ provides a marking for $X\times Y$ such that exterior product respects distinguished cycles follows directly from Lemma
		\ref{lemma:EquivalenceNum}$(i)$, which says that the tensor product of two
		symmetrically distinguished morphisms is symmetrically distinguished.
		To see that push-forwards and pull-backs along projections respect distinguished cycles,  
		it is enough, by Lemma
		\ref{lemma:EquivalenceNum}$(i)$, to see that $ \id_{M} \otimes f: M\otimes
		N \to M\otimes \1(-d)$ is a symmetrically distinguished
		morphism (Definition \ref{def:SDmorph}), where $d:=\dim Y$ and $f:N\to \1(-d)$
		is induced by the morphism $\h(Y)\to \1(-d)$ determined by the fundamental class
		of $Y$. By Lemma \ref{lemma:EquivalenceNum}$(i)$, we only have to see that $f$
		is a symmetrically distinguished morphism, which is explained in Remark~\ref{rmk:FundCl}. 
	\end{proof}

	\subsection{The main questions and the key condition $(\star)$}
	
	\begin{ques}\label{Question}
		Here are the most important properties of the distinguished cycles that we are
		going to investigate\,:
		\begin{itemize}
			\item When does $\bigoplus_i\DCH^i_{\phi}(X)$ form a (graded) $\Q$-subalgebra
			of $\CH(X)$ ?
			\item When do the Chern classes of $X$ belong to $\bigoplus_i\DCH^i_{\phi}(X)$
			?
		\end{itemize}
	\end{ques}

	To this end, let us introduce the following condition for smooth projective
	varieties whose Chow motive is of abelian type\,:
	\begin{defn}\label{def:Star}
		We say that a smooth projective variety $X$ with $\h(X)\in \M^{ab}$ satisfies
		the condition $(\star)$ if\,:\\ There exists a marking $\phi:
		\h(X)\xrightarrow{\isom} F(M)$ (with $M\in \M^{ab}_{sd}$) such that 
		\begin{itemize}
			\item[$(\starM)$] \textbf{(Multiplicativity)} the small diagonal $\delta_{X}$
			belongs to $\DCH_{\phi^{\otimes 3}}(X^{3})$, that is, under the induced
			isomorphism $\phi^{\otimes 3}_{*}: \CH(X^{3})\xrightarrow{\isom} \CH(M^{\otimes
				3})$, the image of $\delta_{X}$ is symmetrically distinguished, \emph{i.e.}, in
			$\DCH(M^{\otimes 3})$\,;
			\item[$(\starC)$] \textbf{(Chern classes)} all Chern classes of $T_{X}$ belong
			to $\DCH_{\phi}(X)$.
		\end{itemize}
		More generally, if $X$ is a smooth projective variety equipped with the action
		of a finite group $G$, we say that $(X,G)$ satisfies $(\star)$ if there exists a
		marking $\phi: \h(X)\xrightarrow{\isom} F(M)$ that satisfies, in addition to
		$(\starM)$ and $(\starC)$ above\,:
		\begin{itemize}
			\item[$(\star_G)$] \textbf{($G$-invariance)} the graph $g_X$ of $g: X\to X$
			belongs to $\DCH_{\phi^{\otimes 2}}(X^{2})$ for any $g\in G$. 
		\end{itemize}
	\end{defn}
	
	We will see in Corollary \ref{cor:TopChern} that the condition $(\starM)$ implies that the \emph{top} Chern class of $T_{X}$ is distinguished.	
	
	\begin{lem}[Diagonal]\label{lemma:Diagonal}
	Notation is as before.
		\begin{enumerate}[$(i)$]
			\item The condition $(\starM)$ implies that the diagonal $\Delta_{X}$ belongs to
			$\DCH_{\phi^{\otimes 2}}(X^{2})$.
			\item The condition that $\Delta_{X}$ is distinguished is equivalent to saying
			that the isomorphism $\sigma: M\xrightarrow{\isom} M^{\vee}(-d_{X})$, given by
			the commutativity of the following diagram\footnote{Recall that $F$ is an
				equivalence (Lemma \ref{lemma:Equivalence}), so $F(\sigma)$ determines
				$\sigma$.}, is symmetrically distinguished in the sense of Definition
			\ref{def:SDmorph}, where the top morphism is the Poincar\'e duality in $\CHM$
			(induced by $\Delta_{X}$).
			\begin{equation}\label{diag:star1}
			\xymatrix{
				\h(X)\ar[d]^{\phi}_{\isom}\ar[r]^-{PD_{X}}_-{\isom} & \h(X)^{\vee}(-d_{X})   \\
				F(M)\ar[r]_-{F(\sigma)}^-{\isom}&F(M^{\vee}(-d_{X}))
				\ar[u]_{\phi^{\vee}(-d_{X})}^{\isom} 
			}
			\end{equation}
		\end{enumerate}
		
	\end{lem}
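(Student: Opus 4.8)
The plan is to treat the two parts separately: part $(i)$ is a short push-forward computation, while part $(ii)$ consists in transporting the cycle $\Delta_{X}$ through the marking and recognising it, by way of the canonical rigid structure of $\CHM$, as the morphism $\sigma$.

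For part $(i)$, consider the projection $p\colon X^{3}=X^{2}\times X\to X^{2}$ onto the first two factors. The diagonal embedding $X\to X^{2}$ factors as $X\xrightarrow{\isom}\delta_{X}\inj X^{3}\xrightarrow{p}X^{2}$, which is a closed immersion of degree one onto $\Delta_{X}$, so that $p_{*}\delta_{X}=\Delta_{X}$ in $\CH^{d_{X}}(X^{2})$. Now $X^{3}$ carries the marking $\phi^{\otimes3}=\phi^{\otimes2}\otimes\phi$, and by Proposition~\ref{prop:tensor} push-forward along the projection $p$ respects distinguished cycles, i.e.\ $p_{*}\DCH^{*}_{\phi^{\otimes3}}(X^{3})\subseteq\DCH^{*-d_{X}}_{\phi^{\otimes2}}(X^{2})$. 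Applying this to $\delta_{X}\in\DCH_{\phi^{\otimes3}}(X^{3})$ — which is exactly condition $(\starM)$ — yields $\Delta_{X}\in\DCH_{\phi^{\otimes2}}(X^{2})$.

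For part $(ii)$, I would begin from the standard fact that, under the rigidity isomorphism $\CH^{d_{X}}(M^{\otimes2})=\Hom_{\CHM}(\1(-d_{X}),F(M)^{\otimes2})\xrightarrow{\isom}\Hom_{\CHM}(F(M)^{\vee}(-d_{X}),F(M))$ built out of the evaluation and coevaluation morphisms of $F(M)$, the diagonal of $X$ corresponds to the Poincar\'e duality morphism $PD_{X}$ (up to inversion, according to the chosen normalisation of the rigidity isomorphism). Transporting this identification along the marking $\phi$ by means of the commutative diagram~\eqref{diag:star1} defining $\sigma$, one finds that $\phi^{\otimes2}_{*}(\Delta_{X})$ corresponds to $F(\sigma)$ (respectively $F(\sigma)^{-1}$). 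Granting, as explained next, that this isomorphism carries the subgroup of symmetrically distinguished morphisms onto the subgroup of symmetrically distinguished morphisms, one then obtains the chain of equivalences: $\Delta_{X}$ distinguished $\iff\phi^{\otimes2}_{*}(\Delta_{X})\in\DCH^{d_{X}}(M^{\otimes2})\iff F(\sigma)$ symmetrically distinguished $\iff\sigma$ symmetrically distinguished; here the last step uses the equivalence $F$ of Lemma~\ref{lemma:Equivalence}, together with the observation that, $\sigma$ being an isomorphism in $\M^{ab}_{sd}$, $F(\sigma)$ and $F(\sigma)^{-1}$ are symmetrically distinguished simultaneously, since the subcategory of symmetrically distinguished morphisms is $\otimes$-equivalent to $\bar{\M^{ab}}$ (Lemma~\ref{lemma:EquivalenceNum}$(iii)$) and hence reflects isomorphisms.

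The step I expect to require the most care is checking that the evaluation and coevaluation morphisms of every object of $\M^{ab}_{sd}$ are symmetrically distinguished in the sense of Definition~\ref{def:SDmorph}; once this is established, the stability of symmetrically distinguished morphisms under composition, tensor product and direct sum (Lemma~\ref{lemma:EquivalenceNum}$(i)$) guarantees that the rigidity isomorphism above and its inverse both preserve distinguishedness. For $F(M)=\im(P)$ the relevant structure morphisms are obtained from those of $\bigoplus_{i}\h(A_{i})(n_{i})$ by pre- and post-composing with the symmetrically distinguished correspondence $P$, so the claim reduces to the assertion that the diagonal $\Delta_{A}\in\CH^{g}(A\times A)$ of an abelian variety $A$ is symmetrically distinguished. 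This I would deduce from O'Sullivan's Theorem~\ref{thm:SD}: writing $d\colon A\times A\to A$, $(x,y)\mapsto x-y$, for the difference homomorphism and $\iota\colon\{0\}\inj A$ for the inclusion of the origin, one has $[0_{A}]=\iota_{*}(1)\in\DCH^{g}(A)$ and $\Delta_{A}=d^{*}[0_{A}]$, so $\Delta_{A}$ is symmetrically distinguished by stability of $\DCH$ under pull-back and push-forward along homomorphisms of abelian varieties. Beyond this, the remaining work is essentially bookkeeping: pinning down the Tate twists and the left/right duality conventions that make the identification of $\phi^{\otimes2}_{*}(\Delta_{X})$ with $F(\sigma)$ entirely precise.
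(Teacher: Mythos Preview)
Your argument for part $(i)$ is correct and coincides with the paper's: both observe that $\Delta_{X}=(\pr_{1,2})_{*}\delta_{X}$ and invoke Proposition~\ref{prop:tensor}.

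For part $(ii)$ your proof is also correct, but it is considerably more elaborate than the paper's, which simply declares the statement ``tautological''. What you are doing is making explicit the content behind that word: the rigidity adjunction
\[
\Hom\bigl(\1(-d_{X}),M\otimes M\bigr)\;\xrightarrow{\ \isom\ }\;\Hom\bigl(M^{\vee}(-d_{X}),M\bigr)
\]
sends $\phi^{\otimes 2}_{*}(\Delta_{X})$ to $F(\sigma)^{-1}$, and this adjunction respects symmetrically distinguished morphisms because the evaluation and coevaluation for an object of $\M^{ab}_{sd}$ are built from the projectors $P$, ${}^{t}P$ and the classes $\Delta_{A_{i}}$, all of which are symmetrically distinguished. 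Your reduction to the fact that $\Delta_{A}=d^{*}[0_{A}]\in\DCH(A\times A)$ via Theorem~\ref{thm:SD} is a clean way to justify this. The paper evidently regards all of this as implicit in the construction of the rigid structure on $\M^{ab}_{sd}$ in Definition~\ref{def:SDMotive}, but your unpacking is legitimate and perhaps more transparent.

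One small point of phrasing: your appeal to the subcategory of symmetrically distinguished morphisms ``reflecting isomorphisms'' is slightly off; what you actually need (and what Lemma~\ref{lemma:EquivalenceNum}$(ii)$--$(iii)$ gives) is that if $\sigma$ is symmetrically distinguished and an isomorphism in $\M^{ab}_{sd}$, then $\sigma^{-1}$ is also symmetrically distinguished. This follows since $\bar{\sigma}$ is invertible in $\bar{\M^{ab}}$, its inverse lifts uniquely to a symmetrically distinguished $\tau$, and then $\sigma\tau$ and $\tau\sigma$ are symmetrically distinguished lifts of the identity, hence equal to the identity.
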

	\begin{proof}
		Statement $(ii)$ is tautological, and statement $(i)$ follows from Proposition~\ref{prop:tensor} together with the observation that
		 $\Delta_{X}$ is the push-forward of $\delta_{X}$ along the projection $\pr_{1,2}:
		X\times X\times X\to X\times X$.
	\end{proof}

	%

	\begin{lem}[Equivalent formulation of
		$(\starM)$]\label{lemma:InterpretationStar}
		Let $\phi: \h(X)\xrightarrow{\isom} F(M)$ be a marking as above and $d_{X}$ be the
		dimension of $X$.
		The condition $(\starM)$ is equivalent to saying that the morphism $\mu:
		M^{\otimes 2}\to M$, determined by the commutativity of the following
		diagram\footnote{Recall that $F$ is an equivalence (Lemma
			\ref{lemma:Equivalence}), so $F(\mu)$ determines $\mu$.}, is a symmetrically
		distinguished morphism, where the top morphism is the intersection product in
		$\CHM$ induced by the small diagonal.
		\begin{equation}\label{diag:starM}
		\xymatrix{
			\h(X)^{\otimes 2}\ar[d]_{\phi^{\otimes 2}}^{\isom} \ar[r]^{\delta_{X}} &
			\h(X)\ar[d]^{\phi}_{\isom}\\
			F(M^{\otimes 2}) \ar[r]_{F(\mu)}& F(M)
		}
		\end{equation}
	\end{lem}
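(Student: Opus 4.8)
The plan is to deduce the equivalence from Lemma~\ref{lemma:Diagonal}. The informal content is that the multiplication morphism $\mu$ and the cycle $\delta_{X}$ determine one another through a chain of canonical isomorphisms in $\CHM$; the subtlety — and the main obstacle — is that this chain involves Poincar\'e duality on $X$, so it becomes compatible with symmetric distinguishedness only once one knows that the isomorphism $\sigma\colon M\xrightarrow{\isom}M^{\vee}(-d_{X})$ of Lemma~\ref{lemma:Diagonal}$(ii)$ is itself symmetrically distinguished. I would first record a preliminary fact: in $\M^{ab}_{sd}$ the evaluation and coevaluation morphisms of every object are symmetrically distinguished. Indeed, writing $M=(\bigsqcup_{i}A_{i},P,(n_{i}))$, by Definition~\ref{def:SDMotive}$(vi)$ its duality morphisms are built, by composing with the (symmetrically distinguished) entries of the structural projector $P$, from those of $\bigoplus_{i}\h(A_{i})(n_{i})$, for which the evaluation is $\bigoplus_{i}$ of the diagonal classes $\Delta_{A_{i}}=\iota_{*}[A_{i}]$, where $\iota\colon A_{i}\to A_{i}\times A_{i}$ is a homomorphism of abelian varieties, so that $\Delta_{A_{i}}$ is symmetrically distinguished by Theorem~\ref{thm:SD}; the claim then follows from Lemma~\ref{lemma:EquivalenceNum}$(i)$. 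We may assume $X$ connected and set $d:=d_{X}$.

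Next I would prove: \emph{if $\sigma$ is symmetrically distinguished, then $(\starM)$ holds if and only if $\mu$ is symmetrically distinguished}. Consider the canonical chain
$$\Hom_{\CHM}\!\big(\h(X)^{\otimes 2},\h(X)\big)\ \xrightarrow{\ \isom\ }\ \Hom_{\CHM}\!\big(\1,\h(X)\otimes(\h(X)^{\otimes 2})^{\vee}\big)\ \xrightarrow{\ \isom\ }\ \CH^{2d}(X^{3}),$$
the first arrow being the rigidity adjunction in $\CHM$ and the second being induced by Poincar\'e duality for $X^{2}$; a direct unwinding shows that this chain carries the multiplication correspondence (the top arrow of~\eqref{diag:starM}) to the cycle $\delta_{X}$. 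Transporting through the marking $\phi$ and the equivalence $F$: the first arrow is a composite of evaluation and coevaluation morphisms of $M$, hence by the preliminary fact and Lemma~\ref{lemma:EquivalenceNum}$(i)$ it restricts to an isomorphism $\DHom(M^{\otimes 2},M)\xrightarrow{\isom}\DHom(\1,M\otimes(M^{\otimes 2})^{\vee})$; and the second arrow, up to the (symmetrically distinguished) associativity and symmetry constraints, is built from $\sigma$ by tensor product, hence under our hypothesis it restricts to an isomorphism onto $\DCH^{2d}(M^{\otimes 3})$. Since $\mu$ is carried to $\phi^{\otimes 3}_{*}\delta_{X}$ along this chain, we obtain $\mu\in\DHom(M^{\otimes 2},M)$ if and only if $\phi^{\otimes 3}_{*}\delta_{X}\in\DCH^{2d}(M^{\otimes 3})$, that is, if and only if $(\starM)$ holds.

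It remains to establish that $\sigma$ is symmetrically distinguished under each of the two hypotheses; this is where I expect the real work to lie. If $(\starM)$ holds, then $\Delta_{X}$ is distinguished by Lemma~\ref{lemma:Diagonal}$(i)$, hence $\sigma$ is symmetrically distinguished by Lemma~\ref{lemma:Diagonal}$(ii)$. If instead $\mu$ is symmetrically distinguished, I would argue as follows. The morphism $\epsilon_{X}\colon\h(X)\to\1(-d)$ given by $\int_{X}$ (equivalently, the pushforward along $X\to\Spec k$) defines an element of $\Hom_{\M^{ab}_{sd}}(M,\1(-d))$, a group identified via the marking with $\CH^{0}(X)$; since rational and numerical equivalence coincide on $\CH^{0}$, Lemma~\ref{lemma:EquivalenceNum}$(ii)$ forces $\DHom(M,\1(-d))=\Hom(M,\1(-d))$, so $\epsilon_{X}$ is symmetrically distinguished. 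Therefore the Poincar\'e pairing $p_{X}:=\epsilon_{X}\circ\mu\colon M^{\otimes 2}\to\1(-d)$ is symmetrically distinguished (Lemma~\ref{lemma:EquivalenceNum}$(i)$), and since $\sigma$ is the isomorphism $M\xrightarrow{\isom}M^{\vee}(-d)$ associated with the perfect pairing $p_{X}$ — built from $p_{X}$ and the coevaluation of $M$ by tensor product and composition — it is symmetrically distinguished as well. Combining the two directions with the equivalence of the second paragraph finishes the proof. The crux, as indicated, is that the identification $\CH^{2d}(X^{3})\cong\Hom_{\CHM}(\h(X)^{\otimes 2},\h(X))$ is not a priori compatible with symmetric distinguishedness — it involves Poincar\'e duality on $X$, which, unlike Poincar\'e duality on abelian varieties, is a genuinely nontrivial condition — so the argument must sidestep this by deriving the distinguishedness of $\sigma$ from each hypothesis separately.
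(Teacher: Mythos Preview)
Your proof is correct and follows essentially the same approach as the paper. Both arguments hinge on the same three ingredients: (a) the coevaluation $\eta_{M}$ (and evaluation) of $M$ are symmetrically distinguished; (b) under either hypothesis the Poincar\'e-duality isomorphism $\sigma$ is symmetrically distinguished --- from $(\starM)$ via Lemma~\ref{lemma:Diagonal}, and from $\mu$ being s.d.\ via the fact that the structural morphism $\h(X)\to\1(-d)$ (your $\epsilon_{X}$, the paper's $\nu$) is automatically s.d.\ by the fundamental-class argument, so that $\sigma=(\id_{M^{\vee}}\otimes(\epsilon_{X}\circ\mu))\circ(\eta_{M}\otimes\id_{M})$ is s.d.; and (c) once $\sigma$ is s.d., the identification $\CH^{2d}(M^{\otimes 3})\cong\Hom(M^{\otimes 2},M)$ induced by $\sigma^{\otimes 2}\otimes\id_{M}$ preserves symmetric distinguishedness and carries $\phi^{\otimes 3}_{*}\delta_{X}$ to $\mu$. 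Your only difference is organizational: you first isolate the conditional equivalence and then verify the hypothesis on $\sigma$ from each side, whereas the paper first proves $\mu$ s.d.\ $\Rightarrow$ $\sigma$ s.d.\ and then runs the identification.
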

	\begin{proof}
		First we claim\footnote{We thank Peter O'Sullivan for mentioning this to us.}
		that the condition that $\mu$ is symmetrically distinguished implies that
		$\sigma$ in Diagram (\ref{diag:star1}) is symmetrically distinguished (or
		equivalently, $\Delta_{X}$ is distinguished by Lemma \ref{lemma:Diagonal}$(ii)$). Indeed, 
		consider the commutative diagram 
		\begin{equation*}
		\xymatrix{
			\h(X)^{\otimes 2}\ar[d]_{\phi^{\otimes 2}}^{\isom} \ar[r]^{\delta_{X}} &
			\h(X)\ar[d]^{\phi}_{\isom}  \ar[r]^-{\1_{X}}& \1(-d_{X})\ar@{=}[d]\\
			F(M^{\otimes 2}) \ar[r]_{F(\mu)}& F(M) \ar[r]_-{F(\nu)}& F(\1(-d_{X}))
		}
		\end{equation*}
		where the left square is (\ref{diag:starM}), the top right morphism is induced
		by the fundamental class of $X$ and $\nu$ is the morphism determined by the
		commutativity of the right square. By Remark \ref{rmk:FundCl}, $\nu$ is a
		symmetrically distinguished morphism. Now the outer square of the previous
		diagram gives the right square in the following diagram
		\begin{equation*}
		\xymatrix{
			\h(X)  \ar[d]_{\phi}^{\isom} \ar[rr]^-{\eta_{\h(X)}\otimes
				\id_{\h(X)}}&&\h(X)^{\vee}\otimes \h(X)^{\otimes
				2}\ar[d]_{(\phi^{\vee})^{-1}\otimes\phi^{\otimes 2}}^{\isom}
			\ar[rr]^-{\id_{h(X)^{\vee}}\otimes (\1_{X}\circ\delta_{X})} &&
			\h(X)^{\vee}(-d_{X}) \\
			F(M)\ar[rr]_-{F(\eta_{M}\otimes \id_{M})} && F(M^{\vee}\otimes M^{\otimes 2})
			\ar[rr]_{F(\id_{M^{\vee}}\otimes (\nu \circ \mu))}&& F(M^{\vee})(-d_{X})
			\ar[u]^{\phi^{\vee}(-d_{X})}_{\isom}
		}
		\end{equation*}
		where in the left square, $\eta_{M}: \1\to M^{\vee}\otimes M$ is the unit of the
		duality for $M$ and similarly for $\h(X)$.
		Therefore, by definition, the isomorphism $\sigma$ in Diagram (\ref{diag:star1})
		is given by $$\sigma=\left(\id_{M^{\vee}}\otimes (\nu\circ \mu)\right)\circ
		\left(\eta_{M}\otimes \id_{M}\right).$$ As $\mu$, $\nu$ and $\eta_{M}$ are all
		symmetrically distinguished morphisms, so is $\sigma$ by Lemma~\ref{lemma:EquivalenceNum}$(i)$.
		
		Now let us show the equivalence between $(\starM)$ and the symmetric
		distinguishedness of $\mu$. Thanks to the above Claim and to Lemma
		\ref{lemma:Diagonal}, for both directions of implication one can suppose that
		$\sigma$ is symmetrically distinguished. Thus the following isomorphism,
		induced by composing with $\sigma^{\otimes 2}\otimes \id_{M}$, preserves the
		symmetrically distinguished elements\,:
		$$\CH^{2d_{X}}(M^{\otimes 3})=\Hom(\1, M(d_{X})^{\otimes 2}\otimes
		M)\xrightarrow{\isom}\Hom(\1, (M^{\vee})^{\otimes 2}\otimes M)=\Hom(M^{\otimes
			2}, M).$$
		We can conclude by observing that this isomorphism sends $\phi^{\otimes
			3}_{*}(\delta_{X})$ to $\mu$.
	\end{proof}
	
	Let us also mention the following convenient sufficient condition for
	$(\star_{G})$\,:
	\begin{lem}[$G$-invariant marking]\label{lemma:StarG}
		Let $X$ be a smooth projective variety endowed with an action of a finite group
		$G$. Let $\phi: \h(X)\lra{\isom} F(M)$ be a marking as above. If $\Delta_{X}$ is
		distinguished 
		and if for any $g\in G$, we have $\phi\circ g=F(\overline{g})\circ \phi$ for some symmetrically distinguished cycle $\bar g$, then $\phi$ satisfies
		$(\star_{G})$, where $g:\h(X)\to\h(X)$ is the automorphism induced by $g$.
	\end{lem}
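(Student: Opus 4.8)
The plan is to deduce the cycle-theoretic statement $(\star_G)$ — which concerns the class of the graph $g_X\in\CH^{d_X}(X\times X)$, with $d_X=\dim X$ — from the morphism-theoretic hypothesis on $\bar g$, by means of the elementary relation between the graph of $g$ and the diagonal. Under the K\"unneth isomorphism $\h(X^2)=\h(X)^{\otimes 2}$ I would view both $\Delta_X$ and $g_X$ as elements of $\CH^{d_X}(X^2)=\Hom_{\CHM}\big(\1(-d_X),\h(X)^{\otimes 2}\big)$, and use the standard identity
\begin{equation*}
g_X=\big(\id_{\h(X)}\otimes\, g\big)\circ\Delta_X,
\end{equation*}
coming from $g_X=(\id_X\times g)_*\Delta_X$, where $g\colon\h(X)\to\h(X)$ denotes the automorphism induced by $g$ (the opposite convention $g_X=(g\times\id_X)_*\Delta_X$ is treated identically).

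Next I would transport this through the marking. Since $\phi^{\otimes 2}$ is the tensor-product marking for $X^2$ (Proposition~\ref{prop:tensor}) and, by hypothesis, $\phi\circ g\circ\phi^{-1}=F(\bar g)$ with $\bar g\in\DHom(M,M)$, the identity above yields
\begin{equation*}
(\phi^{\otimes 2})_*(g_X)=\big(\id_{F(M)}\otimes F(\bar g)\big)\big((\phi^{\otimes 2})_*(\Delta_X)\big)\quad\text{in }\CH^{d_X}(M\otimes M).
\end{equation*}
It then suffices to check that the right-hand side lies in $\DCH^{d_X}(M\otimes M)=\DHom\big(\1(-d_X),M\otimes M\big)$, since this is exactly the assertion $g_X\in\DCH_{\phi^{\otimes 2}}(X^2)$. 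Here $(\phi^{\otimes 2})_*(\Delta_X)$ is symmetrically distinguished because $\Delta_X$ is distinguished by assumption (Lemma~\ref{lemma:Diagonal}); the morphism $\id_{F(M)}$ is symmetrically distinguished because the projector defining $M$ is a matrix of symmetrically distinguished cycles; and $\bar g$, hence $\id_{F(M)}\otimes F(\bar g)$, is symmetrically distinguished by hypothesis. By the stability of symmetrically distinguished morphisms under tensor product and composition (Lemma~\ref{lemma:EquivalenceNum}$(i)$), the composite $\big(\id_{F(M)}\otimes F(\bar g)\big)\circ(\phi^{\otimes 2})_*(\Delta_X)$ is symmetrically distinguished; as $g\in G$ was arbitrary, this shows $\phi$ satisfies $(\star_G)$.

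The only point that requires care is the correspondence bookkeeping behind the displayed identity $g_X=(\id_{\h(X)}\otimes g)\circ\Delta_X$ — i.e.\ matching the conventions for the graph $g_X$, the copairing $\Delta_X$, and the action of $\h(g)$ on one tensor factor of $\h(X)^{\otimes 2}$. This is routine and is essentially the manipulation already carried out in the proof of Lemma~\ref{lemma:InterpretationStar}; everything else is a direct application of Lemma~\ref{lemma:EquivalenceNum}$(i)$ together with Lemma~\ref{lemma:Diagonal}.
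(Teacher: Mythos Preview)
Your proof is correct and follows essentially the same approach as the paper: both express the graph as $g_X=(\id_{\h(X)}\otimes g)\circ\Delta_X$, transport via $\phi^{\otimes 2}$ using the hypothesis $\phi\circ g=F(\bar g)\circ\phi$, and conclude by the stability of symmetrically distinguished morphisms under tensor product and composition. The only minor remark is that your citation of Lemma~\ref{lemma:Diagonal} for the distinguishedness of $\Delta_X$ is superfluous, since that is already a direct hypothesis of the lemma being proved.
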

	\begin{proof}
		For any $g\in G$, consider the composition $$\1(-\dim X)\lra{\Delta_{X}}
		\h(X)\otimes \h(X)\lra{\id\otimes g}\h(X)\otimes \h(X)\lra{\phi\otimes \phi}
		M\otimes M.$$
		We obtain that $(\phi\otimes\phi)_* \Gamma_{g}=(\phi\otimes\phi)\circ
		(\id\otimes g)\circ\Delta_{X}=F(\id \otimes \bar g)\circ (\phi\otimes \phi)\circ \Delta_{X}$, where the latter term is
		symmetrically distinguished from the  assumption on $\Delta_X$. This means exactly that the graph
		$\Gamma_{g}$ is distinguished.
	\end{proof}

	\begin{rmk}[Another formulation]
		The following interpretation of the condition $(\starM)$ using the section $T$
		in Theorem \ref{thm:section} was kindly suggested to us by Peter
		O'Sullivan\footnote{
			The condition that $\h(X)\in \M^{ab}$ corresponds to the condition $X\in
			\mathscr V^{0}$ in \cite[\S 6.3]{MR2795752} and for such $X$, the existence of a
			marking satisfying $(\starM)$ corresponds to the condition $X\in \mathscr
			V^{00}$ in \cite[\S 6.3]{MR2795752}.}.
		For an algebraic variety $X$ with $\h(X)\in \M^{ab}$, the existence of a marking
		satisfying  $(\starM)$ is equivalent to the existence of an isomorphism of
		algebra objects $$\varphi: \h(X)\xrightarrow{\simeq} T(\bar \h(X)).$$ 
		As such an isomorphism induces a section of the epimorphism $\CH(X)\to
		\bar\CH(X)$. The condition $(\starC)$ can be translated into saying that the
		Chern classes belong to the image of the section. Similarly, in the presence of
		a $G$-action, the condition $(\star_{G})$ can be spelled out by its graphs.

		This formulation of $(\star)$ has the obvious advantage of being both natural and intrinsic.
		However, to work out examples, which is the main objective of this paper, as
		well as to prove theorems in practice (\S \ref{sect:examples},
		\S\ref{sect:Examples}), we find it more convenient to stick to Definition
		\ref{def:Star} together with its interpretation given in Lemma
		\ref{lemma:InterpretationStar}.
	\end{rmk}
	
	The motivation to study the condition $(\star)$ is the following\,:
	\begin{prop}[Subalgebra]\label{prop:Disting}
		Let $X$ be a smooth projective variety with motive of abelian type.
		If $X$ satisfies the condition $(\starM)$, then there is a section, as
		\emph{graded algebras}, for the natural surjective morphism $\CH(X)\surj
		\bar\CH(X)$. If moreover $(\starC)$ is satisfied, then all Chern classes of $X$
		are in the image of this section.
		
		In other words, under $(\star)$, we have a graded $\Q$-subalgebra $\DCH(X)$ of
		the Chow ring $\CH(X)$, which contains all the Chern classes of $X$ and is
		mapped isomorphically to $\bar\CH(X)$. We call elements of $\DCH(X)$
		\emph{distinguished cycles} of $X$.
	\end{prop}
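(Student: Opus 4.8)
The plan is to obtain the section from the subgroup $\DCH_{\phi}(X) := \phi_{*}^{-1}(\DCH(M)) \subseteq \CH(X)$ attached to a marking $\phi \colon \h(X) \xrightarrow{\isom} F(M)$ that satisfies $(\starM)$ (Definition~\ref{def:distinguished}). By Lemma~\ref{lemma:section} the composite $\DCH_{\phi}(X) \inj \CH(X) \surj \overline{\CH}(X)$ is already an isomorphism of graded $\Q$-vector spaces, so the entire content of the proposition is that, under $(\starM)$, this subgroup is in fact a \emph{subalgebra} of $\CH(X)$ — and that, under $(\starC)$, it contains the Chern classes. Once $\DCH_{\phi}(X)$ is known to be a graded $\Q$-subalgebra, the displayed isomorphism is automatically an isomorphism of graded $\Q$-algebras (the projection $\CH(X) \surj \overline{\CH}(X)$ being a ring homomorphism), and its inverse is the desired section, with image $\DCH_{\phi}(X)$; the assertion on Chern classes is then immediate, since $(\starC)$ puts every $c_{i}(T_{X})$ into $\DCH_{\phi}(X)$.

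To prove multiplicativity I would first note that $\1_{X} \in \DCH_{\phi}(X)$ by Remark~\ref{rmk:FundCl}, and then express the intersection product motivically: for $\alpha, \beta \in \CH(X) = \Hom_{\CHM}(\1, \h(X))$ one has $\alpha \cdot \beta = \delta_{X} \circ (\alpha \otimes \beta)$, where $\delta_{X} \colon \h(X)^{\otimes 2} \to \h(X)$ is the correspondence given by the small diagonal (the top morphism of \eqref{diag:starM}) and $\alpha \otimes \beta$ is the exterior product $\alpha \times \beta$ regarded in $\Hom_{\CHM}(\1, \h(X)^{\otimes 2})$. Transporting this identity through $\phi$ and using the commutativity of diagram~\eqref{diag:starM} gives
$$\phi_{*}(\alpha \cdot \beta) \;=\; F(\mu) \circ \big( (\phi_{*}\alpha) \otimes (\phi_{*}\beta) \big) \quad \text{in } \CH(M),$$
with $\mu \colon M^{\otimes 2} \to M$ the morphism of Lemma~\ref{lemma:InterpretationStar}. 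The decisive input is precisely that lemma: condition $(\starM)$ is equivalent to $\mu$ being a symmetrically distinguished morphism. Hence, if $\alpha, \beta \in \DCH_{\phi}(X)$, then $\phi_{*}\alpha$ and $\phi_{*}\beta$ are symmetrically distinguished cycles of $M$, their tensor product is symmetrically distinguished by Lemma~\ref{lemma:EquivalenceNum}$(i)$, and composing it with the symmetrically distinguished morphism $F(\mu)$ is again symmetrically distinguished by the same lemma. Therefore $\phi_{*}(\alpha \cdot \beta) \in \DCH(M)$, i.e.\ $\alpha \cdot \beta \in \DCH_{\phi}(X)$, and $\DCH_{\phi}(X)$ is a unital graded $\Q$-subalgebra of $\CH(X)$, which by the first paragraph yields the section.

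The main obstacle is essentially already overcome before reaching this proposition: it is the equivalence of $(\starM)$ with the symmetric distinguishedness of $\mu$, which is Lemma~\ref{lemma:InterpretationStar} (and which itself relies on the implication $(\starM) \Rightarrow \Delta_{X}$ distinguished of Lemma~\ref{lemma:Diagonal}, so that Poincar\'e duality on $M$ is compatible with symmetric distinguishedness). Granting that lemma, the remaining work here is bookkeeping: identifying the exterior product with the tensor product of morphisms, transporting the product through the marking, and invoking the stability of symmetrically distinguished morphisms under tensor product and composition (Lemma~\ref{lemma:EquivalenceNum}$(i)$). These stability properties, together with $\1_{X}$ being distinguished, are exactly what is needed to conclude that $\DCH_{\phi}(X)$ is closed under products and contains the unit; nothing beyond Lemma~\ref{lemma:section} is then required to upgrade the resulting ring isomorphism to a genuine section as graded algebras.
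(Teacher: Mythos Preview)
Your proposal is correct and follows essentially the same approach as the paper's own proof: both define the section via $\DCH_{\phi}(X)$, invoke Lemma~\ref{lemma:section} for the graded vector-space isomorphism, note $\1_X$ is distinguished by Remark~\ref{rmk:FundCl}, and then show closure under products by writing $\alpha\cdot\beta = \delta_X\circ(\alpha\otimes\beta)$, transporting through $\phi$, and using that $\mu$ is symmetrically distinguished (Lemma~\ref{lemma:InterpretationStar}) together with the stability of symmetrically distinguished morphisms under tensor and composition (Lemma~\ref{lemma:EquivalenceNum}$(i)$). Your explicit emphasis that the real work is already absorbed into Lemma~\ref{lemma:InterpretationStar} is an accurate reading of the argument.
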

	\begin{proof}
		Let $\phi: \h(X)\xrightarrow{\isom} F(M)$ be a marking, where $M\in \M^{ab}_{sd}$.
		If $\phi$ satisfies $(\star)$, then we define $\DCH(X):=\DCH_{\phi}(X)$ as in
		Definition \ref{def:distinguished}, and this provides a section to the epimorphism
		$\CH(X)\surj \bar\CH(X)$ as graded vector spaces by Lemma~\ref{lemma:section}.
		To show that it provides a section as \emph{algebras}, one has to show that
		$\DCH_{\phi}(X)$ is closed under the intersection product of $X$ (the unit
		$\1_{X}$ is automatically distinguished by Remark \ref{rmk:FundCl}). Let
		$\alpha\in \DCH^{i}_{\phi}(X)$ and $\beta\in \DCH^{j}_{\phi}(X)$. Then by
		definition the morphisms $\phi\circ\alpha:\1(-i)\to F(M)$ and
		$\phi\circ\beta:\1(-j)\to F(M)$ determine symmetrically distinguished morphisms.
		By Lemma \ref{lemma:EquivalenceNum}$(i)$, $(\phi^{\otimes 2})\circ
		(\alpha\otimes \beta)=(\phi\circ\alpha)\otimes(\phi\circ\beta): \1(-i-j)\to
		F(M^{\otimes 2})$ also determines a symmetrically distinguished morphism. 
		\begin{equation*}
		\xymatrix{
			\1(-i-j) \ar[r]^{\alpha \otimes \beta}\ar[dr] &\h(X)^{\otimes
				2}\ar[r]^{\delta_{X}} \ar[d]^{\phi^{\otimes 2}}_{\isom}&
			\h(X)\ar[d]^{\phi}_{\isom}\\
			& F(M^{\otimes 2}) \ar[r]^{F(\mu)} & F(M)
		}
		\end{equation*}
		Condition $(\star)$ implies that $\mu$, which is determined by the above
		commutative diagram, is a symmetrically distinguished morphism. Therefore, the
	 composition $\phi\circ\delta_{X}\circ(\alpha\otimes \beta)$ in the above
		diagram determines a symmetrically distinguished morphism, which means that
		$\alpha\cdot\beta=\delta_{X,*}(\alpha\otimes \beta)$ is in $\DCH_{\phi}(X)$. The
		assertion concerning Chern classes is tautological.
	\end{proof}
	
	We deduce that the condition $(\starM)$ actually already implies all the
	analogous statements for all sorts of diagonals on higher powers (note the
	analogy with \cite[Proposition 8.7$(iii)$]{SV} in the context of self-dual
	multiplicative Chow--K\"unneth decompositions)\,:
	\begin{cor}[Other diagonals]\label{C:diag}
		Let $X$ be a smooth projective variety with $\h(X)\in \M^{ab}$. If $X$ satisfies
		the condition $(\starM)$, then all the classes of the partial diagonals\footnote{A \emph{partial
				diagonal} of a self-product $X^{n}$ is a subvariety of the form $\{(x_{1},
			\cdots, x_{n}) \in X^{n}~\vert~~ x_{i}=x_{j}~~~ \text{for all}~~~ i\sim j \}$
			for an equivalence relation $\sim$ on $\{1, \cdots, n\}$.} in a self-product of
		$X$ are distinguished.
	\end{cor}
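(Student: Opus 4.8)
The plan is to express the class of a partial diagonal of $X^n$ as a product, in the Chow ring $\CH(X^n)$, of pullbacks of the ordinary diagonal $\Delta_X$ along coordinate projections, and then to conclude by the subalgebra property of distinguished cycles. Fix a marking $\phi\colon \h(X)\xrightarrow{\isom}F(M)$ witnessing $(\starM)$ for $X$. I would first check that $(\starM)$ is inherited by all self-powers $(X^n,\phi^{\otimes n})$: the small diagonal of $X^n$ sitting inside $(X^n)^3$ is, up to a permutation of the $3n$ factors, the exterior product $\delta_X\times\cdots\times\delta_X\subseteq(X^3)^n$, which is distinguished by $(\starM)$ for $X$ and the iterated use of Proposition~\ref{prop:tensor}. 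Here I rely on the fact that a permutation of the factors of $X^m$ takes distinguished cycles to distinguished cycles: under $\phi^{\otimes m}$ such a permutation corresponds to the symmetry constraint of $\M^{ab}_{sd}$, which is the graph of a permutation of the factors of a product of abelian varieties, hence a symmetrically distinguished morphism by the stability of symmetrically distinguished cycles under push-forward along homomorphisms of abelian varieties (Theorem~\ref{thm:SD}). Granting this, Proposition~\ref{prop:Disting} shows that $\DCH_{\phi^{\otimes n}}(X^n)$ is a graded $\Q$-subalgebra of $\CH(X^n)$, and combining Proposition~\ref{prop:tensor} with the same remark shows that projection of $X^n$ onto any subset of its factors sends distinguished cycles to distinguished cycles.

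Now let $\sim$ be an equivalence relation on $\{1,\dots,n\}$ with $k$ classes, and let $\Delta_\sim\subseteq X^n$ be the associated partial diagonal, which has codimension $(n-k)d$ where $d=\dim X$. Choose a spanning forest $E$ of the graph on $\{1,\dots,n\}$ whose edges are the pairs of indices lying in a common class of $\sim$, so that $|E|=n-k$. Then the codimensions $d$ of the classes $\pr_{ij}^{*}\Delta_X$, for $(i,j)\in E$, sum to $(n-k)d$, the codimension of $\Delta_\sim$; and a computation with tangent spaces along $\Delta_\sim$ shows that the subschemes $\pr_{ij}^{-1}(\Delta_X)$ meet transversally, their scheme-theoretic intersection being the reduced subvariety $\Delta_\sim$. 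Consequently $[\Delta_\sim]=\prod_{(i,j)\in E}\pr_{ij}^{*}[\Delta_X]$ in $\CH(X^n)$. Since $\Delta_X$ is distinguished in $X^2$ by Lemma~\ref{lemma:Diagonal}$(i)$, each factor $\pr_{ij}^{*}[\Delta_X]$ is distinguished in $X^n$ by the first paragraph, and therefore so is the product $[\Delta_\sim]$, because $\DCH_{\phi^{\otimes n}}(X^n)$ is a subalgebra. Specializing $\sim$ to the relation with a single class recovers in particular that every small diagonal $\delta_X^{(n)}\subseteq X^n$ is distinguished, which already strengthens $(\starM)$.

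I expect the point requiring the most care to be the assertion that permuting factors preserves distinguishedness — equivalently, that the symmetry constraints of $\M^{ab}_{sd}$ are symmetrically distinguished morphisms — which rests on O'Sullivan's Theorem~\ref{thm:SD}: one writes the graph of a coordinate permutation of a product of abelian varieties as the push-forward of a (symmetrically distinguished) diagonal along a homomorphism. Once this is in place, the rest is a formal consequence of Propositions~\ref{prop:tensor} and~\ref{prop:Disting}, Lemma~\ref{lemma:Diagonal}, and the elementary transverse-intersection description of partial diagonals.
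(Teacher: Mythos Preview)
Your proof is correct and follows essentially the same route as the paper: write a partial diagonal as an intersection of big diagonals $\pr_{ij}^{*}\Delta_X$, observe each is distinguished (as an exterior product of $\Delta_X$ with fundamental classes, using Lemma~\ref{lemma:Diagonal}, Remark~\ref{rmk:FundCl}, and Proposition~\ref{prop:tensor}), and conclude by the subalgebra property of Proposition~\ref{prop:Disting}. You are simply more explicit than the paper on two points it glosses over --- that $(X^n,\phi^{\otimes n})$ inherits $(\starM)$ (handled via the permutation/symmetry-constraint argument) and that one should pick a spanning forest of big diagonals so the intersection is transverse.
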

	\begin{proof}
		Let us fix a marking $\phi: \h(X)\xrightarrow{\isom} F(M)$ satisfying the
		condition $(\starM)$ and write $\DCH$ for $\DCH_{\phi^{\otimes ?}}$. Observe
		that any partial diagonal can be written as the intersection product of several
		big diagonals\footnote{A \emph{big diagonal} of a self-product $X^{n}$ is a
			subvariety of the form $\{(x_{1}, \cdots, x_{n}) \in X^{n}~\vert~~
			x_{i}=x_{j}\}$ for some $1\leq i\neq j\leq n$.}. By Proposition
		\ref{prop:Disting}, we only have to show that any big diagonal of a self-product
		is distinguished. However, a big diagonal is the exterior product of the distinguished
		class $\Delta_{X}\in \DCH(X\times X)$ (by Lemma \ref{lemma:Diagonal}) with copies of the fundamental class
		$\1_{X}\in \DCH(X)$ (see Remark \ref{rmk:FundCl}), and is henceforth distinguished,
		thanks to Proposition \ref{prop:tensor}.
	\end{proof}

\subsection{Distinguished morphisms and distinguished correspondences}


\begin{defn}[Distinguished morphisms and distinguished correspondences]\label{def:DisCor}
	Let $X$ and $Y$ be two smooth projective varieties equipped respectively with markings 
	$\phi: \h(X)\xrightarrow{\isom} F(M)$ and $\psi: \h(Y)\xrightarrow{\isom} F(N)$ with  
	$M,N \in \M^{ab}_{sd}$. A correspondence $\Gamma \in \CH(X\times Y)$
	 is said to be \emph{distinguished} if it is distinguished with respect to the product marking on $X\times Y$, \emph{i.e.}, $\Gamma \in \DCH_{\phi\otimes \psi} (X\times Y)$ or equivalently the morphism $(\phi\otimes \psi)(\Gamma) : M\to N$ is symmetrically distinguished in the sense of Definition \ref{def:SDmorph}. A morphism $f: X \to Y$ is said to be \emph{distinguished} if its graph belongs to $\DCH_{\phi\otimes \psi} (X\times Y)$.
\end{defn}
	
	The notion of distinguished morphisms and distinguished correspondences is only really relevant in the case where the markings satisfy the condition $(\starM)$\,:
	
	\begin{prop}\label{prop:distmorphism}
	Let $X$, $Y$ and $Z$ be smooth projective varieties equipped with markings 
that satisfy $(\starM)$, and let $\Gamma \in \DCH(X\times Y)$ and $\Gamma' \in \DCH(Y\times Z)$ be distinguished correspondences. Then
\begin{enumerate}[(i)]
	\item $\Gamma_*\DCH(X) \subseteq \DCH(Y)$ and $\Gamma^*\DCH(Y) \subseteq \DCH(X)$\,;
	\item $\Gamma'\circ \Gamma \in \DCH(X\times Z)$.
\end{enumerate}
	\end{prop}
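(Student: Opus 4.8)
The plan is to reduce both statements to the basic closure properties of symmetrically distinguished morphisms collected in Lemma~\ref{lemma:EquivalenceNum}$(i)$, using the characterization of $(\starM)$ from Lemma~\ref{lemma:InterpretationStar}. The point is that a distinguished correspondence $\Gamma \in \DCH_{\phi\otimes\psi}(X\times Y)$ corresponds, via $\phi$ and $\psi$, to a symmetrically distinguished morphism $\gamma := (\phi\otimes\psi)(\Gamma)$, which — unwinding the isomorphism $\CH(X\times Y)\xrightarrow{\isom}\CH(M\otimes N)$ and the duality $M\xrightarrow{\isom}M^\vee(-d_X)$ — should be regarded as a symmetrically distinguished morphism $M \to N$ in $\M^{ab}_{sd}$. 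Here it is essential that $\Delta_X$ is distinguished (which holds under $(\starM)$ by Lemma~\ref{lemma:Diagonal}$(i)$), so that the duality isomorphism $\sigma_X : M \xrightarrow{\isom} M^\vee(-d_X)$ is symmetrically distinguished and hence identifies correspondences in $\DCH(X\times Y)$ with symmetrically distinguished morphisms $M\to N$; the same applies to $Y$ and $Z$.

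For $(ii)$: once $\Gamma$ and $\Gamma'$ are viewed as symmetrically distinguished morphisms $\gamma : M\to N$ and $\gamma' : N\to P$ (writing $\psi: \h(Y)\xrightarrow{\isom}F(N)$, and $\theta: \h(Z)\xrightarrow{\isom}F(P)$), the composite correspondence $\Gamma'\circ\Gamma$ corresponds under $\phi\otimes\theta$ to the composite morphism $\gamma'\circ\gamma : M\to P$. This is symmetrically distinguished by Lemma~\ref{lemma:EquivalenceNum}$(i)$, which says symmetrically distinguished morphisms are closed under composition. Hence $\Gamma'\circ\Gamma \in \DCH_{\phi\otimes\theta}(X\times Z)$, i.e.\ it is distinguished.

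For $(i)$: the action $\Gamma_* : \CH(X)\to\CH(Y)$ on a cycle $\alpha\in\DCH^i_\phi(X)$ is, motivically, the composite $\1(-i)\xrightarrow{\alpha}\h(X)\xrightarrow{\Gamma}\h(Y)$ (after the Poincar\'e-duality identification), so $\phi\circ\Gamma\circ\alpha$ determines the morphism $\gamma\circ(\phi\circ\alpha)$; since $\phi\circ\alpha$ and $\gamma$ are both symmetrically distinguished, so is the composite by Lemma~\ref{lemma:EquivalenceNum}$(i)$, whence $\Gamma_*\alpha\in\DCH_\psi(Y)$. The statement $\Gamma^*\DCH(Y)\subseteq\DCH(X)$ follows the same way, either by working with the transpose ${}^t\Gamma$, which is distinguished because transposition preserves symmetric distinguishedness, or by the analogous composition argument using the unit of the duality. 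The only genuinely non-routine point is the bookkeeping in the first paragraph: verifying that the product marking on $X\times Y$ together with the duality isomorphism $\sigma_X$ matches "distinguished correspondence in $\CH(X\times Y)$" with "symmetrically distinguished morphism $M\to N$" in a way compatible with composition of correspondences and composition in $\M^{ab}_{sd}$ — but this is exactly the content of Lemma~\ref{lemma:InterpretationStar} and Proposition~\ref{prop:tensor}, so once that identification is set up the rest is immediate.
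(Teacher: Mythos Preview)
Your approach is correct but takes a different route from the paper's one-line proof, which simply reads ``This is a direct consequence of Proposition~\ref{prop:tensor} and Proposition~\ref{prop:Disting}.'' The paper works entirely at the cycle level: since $\delta_{X\times Y}=\delta_X\otimes\delta_Y$ (and similarly for triple products), Proposition~\ref{prop:tensor} gives that the product markings on $X\times Y$, $X\times Y\times Z$, etc.\ satisfy $(\starM)$, so by Proposition~\ref{prop:Disting} their $\DCH$ are subalgebras; then one just writes $\Gamma_*\alpha=p_{Y,*}(p_X^*\alpha\cdot\Gamma)$ and $\Gamma'\circ\Gamma=p_{XZ,*}(p_{XY}^*\Gamma\cdot p_{YZ}^*\Gamma')$ and uses that pull-backs and push-forwards along projections preserve distinguished cycles (Proposition~\ref{prop:tensor}).

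Your categorical argument instead transports everything through the duality isomorphism $\sigma_X$ to identify distinguished correspondences with symmetrically distinguished morphisms $M\to N$ in $\M^{ab}_{sd}$, and then applies closure under composition (Lemma~\ref{lemma:EquivalenceNum}$(i)$). This is conceptually clean and makes explicit \emph{why} $(\starM)$ is the right hypothesis (it is exactly what makes $\sigma_X$ symmetrically distinguished, Lemma~\ref{lemma:Diagonal}), but it requires you to carry the bookkeeping that the passage $\CH(X\times Y)\to\Hom(M,N)$ via $\sigma_X$ is compatible with composition of correspondences --- a standard but not entirely trivial verification. The paper's route sidesteps this by never leaving the cycle level, at the cost of invoking the slightly heavier Proposition~\ref{prop:Disting}.
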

	
\begin{proof}
This is a direct consequence of Proposition \ref{prop:tensor} and Proposition \ref{prop:Disting}.
\end{proof}
	
	\begin{cor}[Top Chern class]\label{cor:TopChern}
	Let $X$ be an $n$-dimensional smooth projective variety equipped with a marking satisfying $(\starM)$. Then the top Chern class of $X$ is distinguished, \emph{i.e.}, $c_{n}(T_{X})\in \DCH_{0}(X)$.\\
	In particular, for a smooth projective curve, $(\starC)$ is implied by $(\starM)$.
	\end{cor}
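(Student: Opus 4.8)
The plan is to realize the top Chern class $c_n(T_X)$ as a cycle obtained from the small diagonal by operations that preserve distinguishedness, and the key observation is that the top Chern class is (up to sign) the self-intersection of the diagonal, i.e. the pullback of the diagonal class along the diagonal embedding. First I would recall the standard self-intersection formula: if $\Delta \colon X \hookrightarrow X\times X$ denotes the diagonal embedding with class $\Delta_X \in \CH^n(X\times X)$, then $\Delta^*\Delta_X = c_n(N_{\Delta/X\times X}) = c_n(T_X)$ in $\CH^n(X)$. Equivalently, in terms of the small diagonal $\delta_X \in \CH(X^3)$ one may write $c_n(T_X)$ as a suitable push-pull expression: $c_n(T_X) = p_{13,*}\big( \delta_X \cdot (\text{something}) \big)$, but the cleanest route is via $\Delta^*\Delta_X$, since $\Delta_X$ itself is distinguished by Lemma \ref{lemma:Diagonal}$(i)$ (as $(\starM)$ holds), and the diagonal embedding is (after identifying with the small diagonal) a distinguished morphism.

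The key steps, in order, are as follows. (1) By Lemma \ref{lemma:Diagonal}$(i)$, the condition $(\starM)$ implies $\Delta_X \in \DCH_{\phi^{\otimes 2}}(X^2)$. (2) The diagonal morphism $\Delta \colon X \to X\times X$ has graph in $\CH(X\times X\times X)$ equal (up to reordering factors) to the small diagonal $\delta_X$; hence $(\starM)$ says precisely that $\Delta$ is a distinguished morphism in the sense of Definition \ref{def:DisCor}. (3) Apply Proposition \ref{prop:distmorphism}$(i)$ (or directly Proposition \ref{prop:tensor} together with Proposition \ref{prop:Disting}) to the distinguished morphism $\Delta$ and the distinguished cycle $\Delta_X$: the pullback $\Delta^*\Delta_X$ lands in $\DCH(X)$. (4) Identify $\Delta^*\Delta_X$ with $c_n(T_X)$ by the self-intersection formula, concluding $c_n(T_X) \in \DCH_0(X)$; the grading statement $c_n(T_X) \in \CH^n(X)_{(0)}$ follows since $\DCH(X)$ is mapped isomorphically to $\bar{\CH}(X)$ and lies in the bottom-weight part (compare Remark \ref{R:symdistbeauville}), and in any case we simply record $c_n(T_X) \in \DCH(X)$. (5) For the final assertion: if $X$ is a smooth projective curve, then $n=1$ and the only Chern class of $T_X$ is $c_1(T_X) = c_n(T_X)$, which we have just shown is distinguished; hence $(\starC)$ holds automatically.

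The main obstacle is step (2)/(3): one must be careful that the graph of the diagonal morphism $X \to X^2$, viewed inside $\CH(X\times X^2)$, really does agree with the small diagonal $\delta_X \in \CH(X^3)$ under the permutation of factors, so that the hypothesis $(\starM)$ — phrased as $\delta_X \in \DCH_{\phi^{\otimes 3}}(X^3)$ — translates exactly into "$\Delta$ is distinguished"; this requires knowing that permutations of factors are distinguished correspondences, which follows since the relevant permutation isomorphisms are induced by identity maps on abelian varieties and are therefore symmetrically distinguished (they lie in $\DHom$ by construction of $\M^{ab}_{sd}$). Once that bookkeeping is in place, everything reduces to Proposition \ref{prop:distmorphism} applied to $\Delta^*$, and the self-intersection formula $\Delta^*\Delta_X = c_n(T_X)$ is a classical fact requiring no argument. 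I do not expect any genuine difficulty here, only the need to present the factor-permutation identification cleanly.
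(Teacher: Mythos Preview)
Your proposal is correct and follows essentially the same route as the paper: view the small diagonal $\delta_X$ as the graph of the diagonal embedding $\Delta\colon X\to X\times X$ (hence a distinguished correspondence by $(\starM)$), then apply Proposition~\ref{prop:distmorphism}$(i)$ to pull back the distinguished cycle $\Delta_X$ and use $\Delta^*\Delta_X=c_n(T_X)$. Your worry about factor permutations is harmless bookkeeping, and your brief digression in step~(4) about a weight grading is unnecessary since $\DCH_0(X)$ here just denotes the codimension-$n$ piece of $\DCH(X)$.
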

	\begin{proof} Observe that the small diagonal $\delta_{X}$, viewed as a correspondence between $X$ and $X\times X$, is distinguished by hypothesis and it transforms $\Delta_X$ to $c_{n}(X)$\,:
	$$\delta_{X}^{*}\left(\Delta_{X}\right)=c_{n}(X).$$ Under the hypothesis $(\starM)$, we know that $\Delta_{X}\in \DCH(X\times X)$ by Lemma \ref{lemma:Diagonal}. Hence Proposition \ref{prop:distmorphism}$(i)$ yields that the top Chern class $c_{n}(X)$ is distinguished.\\
	As for the case of curves, it suffices to recall moreover that the fundamental class is automatically distinguished by Remark \ref{rmk:FundCl}.	
	\end{proof}

	\section{Operations preserving the condition $(\star)$}\label{sect:examples}
	In this section, we provide some standard operations on varieties that preserve
	$(\star)$. From now on, we systematically omit the functor $F:\M^{ab}_{sd}\to
	\M^{ab}$, which is an equivalence of categories (Lemma \ref{lemma:Equivalence}),
	in the notation of a marking.
	
	\subsection{Product varieties}
	
	Given two smooth projective varieties $X$ and $Y$ with markings $\phi:
	\h(X)\xrightarrow{\isom} M$ and $\psi: \h(Y)\xrightarrow{\isom} N$, their
	product will always be understood to be endowed with the marking 
	$$\phi \otimes \psi: \h(X\times_{k} Y)\cong \h(X)\otimes \h(Y)
	\xrightarrow{\isom} M\otimes N,$$
	which we will refer to as the \emph{product marking}. If $X$ and $Y$ are endowed
	with the action of a finite group $G$, then $X\times Y$ is endowed with the
	natural diagonal action of $G$.
	Our condition $(\star)$ (see Definition \ref{def:Star}) behaves well with
	respect to products\,:
	\begin{prop}[Products] \label{P:products}
		Assume $X$ and $Y$ are two smooth projective varieties satisfying the condition
		$(\star)$. Then  the natural marking on the product $X\times Y$ satisfies
		$(\star)$ and has the additional property that the graphs of the two natural
		projections are distinguished. 
		
		If in addition $X$ and $Y$ are equipped with the action of a finite group $G$
		and the respective markings satisfy $(\star_G)$,  then the product marking on
		$X\times Y$ satisfies $(\star_G)$.
	\end{prop}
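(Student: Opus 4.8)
The plan is to verify each of the three conditions $(\starM)$, $(\starC)$, and $(\star_G)$ for the product marking $\phi \otimes \psi$ on $X \times Y$, using the compatibility of symmetrically distinguished morphisms with tensor products (Lemma \ref{lemma:EquivalenceNum}$(i)$) as the main engine, together with Proposition \ref{prop:tensor} which already tells us that $\phi \otimes \psi$ is a marking and that exterior products and projections along $X \times Y \to X$, $X \times Y \to Y$ respect distinguished cycles. First I would record the easy consequences: since the projections $p : X \times Y \to X$ and $q : X \times Y \to Y$ have graphs that are (up to the canonical identification of motives) built from identity morphisms and fundamental classes via tensor product, Proposition \ref{prop:tensor} and Remark \ref{rmk:FundCl} immediately give that $\Gamma_p$ and $\Gamma_q$ are distinguished. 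So the real content is $(\starM)$ and, granting that, the rest follows.

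For $(\starM)$, the key observation is that the small diagonal of a product splits as an exterior product of small diagonals: under the natural isomorphism $(X \times Y)^3 \cong X^3 \times Y^3$ (reordering factors), one has $\delta_{X \times Y} = \delta_X \times \delta_Y$, or more precisely $\delta_{X \times Y}$ is the image of $\delta_X \boxtimes \delta_Y$ under the permutation isomorphism. By hypothesis $\delta_X \in \DCH_{\phi^{\otimes 3}}(X^3)$ and $\delta_Y \in \DCH_{\psi^{\otimes 3}}(Y^3)$. Applying the exterior-product statement of Proposition \ref{prop:tensor} to the markings $\phi^{\otimes 3}$ on $X^3$ and $\psi^{\otimes 3}$ on $Y^3$, the exterior product $\delta_X \times \delta_Y$ is distinguished with respect to $\phi^{\otimes 3} \otimes \psi^{\otimes 3}$. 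The only thing that needs care here is that the ambient marking $(\phi \otimes \psi)^{\otimes 3}$ on $(X \times Y)^3$ agrees, after the permutation of the six tensor factors, with $\phi^{\otimes 3} \otimes \psi^{\otimes 3}$ on $X^3 \times Y^3$, and that permutations of tensor factors are symmetrically distinguished morphisms (they come from graphs of isomorphisms of abelian varieties, or are built from the symmetry constraint of the tensor structure on $\M^{ab}_{sd}$, hence are s.d.\ by construction and by Lemma \ref{lemma:EquivalenceNum}$(i)$). Equivalently, one can invoke Lemma \ref{lemma:InterpretationStar}: the multiplication morphism for $M \otimes N$ is the tensor product, suitably permuted, of the multiplication morphisms for $M$ and for $N$, both of which are s.d.\ by $(\starM)$ for $X$ and $Y$; composing with the permutation (which is s.d.) and using closure under composition and tensor product (Lemma \ref{lemma:EquivalenceNum}$(i)$) gives that $\mu_{M \otimes N}$ is s.d., which is exactly $(\starM)$ for $X \times Y$.

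Once $(\starM)$ holds, $(\starC)$ follows from the Whitney formula $c(T_{X \times Y}) = p^* c(T_X) \cdot q^* c(T_Y)$: the Chern classes $c_i(T_X) \in \DCH_\phi(X)$ and $c_j(T_Y) \in \DCH_\psi(Y)$ pull back to distinguished classes on $X \times Y$ by Proposition \ref{prop:tensor}, and products of distinguished classes are distinguished by Proposition \ref{prop:Disting} (which applies since $(\starM)$ is now established for $X \times Y$). For the equivariant refinement, given $g \in G$ acting diagonally on $X \times Y$, its graph $\Gamma_g^{X \times Y}$ is, after the permutation isomorphism $(X \times Y)^2 \cong X^2 \times Y^2$, the exterior product $\Gamma_g^X \times \Gamma_g^Y$ of the graphs on $X$ and $Y$, which are distinguished by $(\star_G)$ for the two factors; Proposition \ref{prop:tensor} together with the fact that the reordering permutation is a distinguished correspondence then gives that $\Gamma_g^{X \times Y}$ is distinguished, i.e.\ $(\star_G)$ holds for the product marking.

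The main obstacle, such as it is, is entirely bookkeeping: one must be scrupulous about the canonical isomorphisms $(X \times Y)^n \cong X^n \times Y^n$ and the corresponding reorderings of tensor factors in $\M^{ab}_{sd}$, and check that these reordering isomorphisms are symmetrically distinguished so that they preserve $\DCH$. This is where a sloppy argument could go wrong, but it is not a conceptual difficulty: the symmetry and associativity constraints of the monoidal structure on $\M^{ab}_{sd}$ are, by the very definition of that category, built from symmetrically distinguished correspondences (graphs of the obvious isomorphisms of abelian varieties), so Lemma \ref{lemma:EquivalenceNum}$(i)$ closes the loop. No genuinely new idea is needed beyond Proposition \ref{prop:tensor}, Proposition \ref{prop:Disting}, and Lemma \ref{lemma:InterpretationStar}.
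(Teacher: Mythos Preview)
Your proposal is correct and follows essentially the same approach as the paper's own proof: both reduce $(\starM)$ and $(\star_G)$ to the exterior-product identities $\delta_{X\times Y}=\delta_X\otimes\delta_Y$ and $g_{X\times Y}=g_X\otimes g_Y$ together with Proposition~\ref{prop:tensor}, and both handle the graphs of the projections via $\Gamma_p=\Delta_X\otimes\1_Y$. The only minor difference is that for $(\starC)$ the paper invokes the exterior-product formula $c_i(X\times Y)=\sum_j c_j(X)\otimes c_{i-j}(Y)$ directly through Proposition~\ref{prop:tensor}, whereas you pull back along $p,q$ and then intersect using Proposition~\ref{prop:Disting}; these are of course the same cycle, and your extra care about the reordering isomorphisms being symmetrically distinguished is a point the paper leaves implicit.
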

	\begin{proof}
		By assumption, there are markings $\phi: \h(X)\xrightarrow{\isom} M$ and $\psi:
		\h(Y)\xrightarrow{\isom} N$ satisfying $(\star)$.
		The assertion $(\starM)$ (\emph{resp.} $(\star_G)$) follows from Proposition
		\ref{prop:tensor} applied to $X$ and $Y$ replaced by $X^{3}$ and $Y^{3}$
		(\emph{resp.} $X^{2}$ and $Y^{2}$). Indeed, $\delta_{X\times
			Y}=\delta_{X}\otimes \delta_{Y}$ (\emph{resp.} $g_{X\times Y} = g_X\otimes
		g_Y$).\\ 
		The assertion $(\starC)$ concerning the Chern classes follows directly from the
		formula $$c_{i}(X\times Y)=\sum_{j=0}^{i}c_{j}(X)\otimes c_{i-j}(Y)$$ and
		Proposition \ref{prop:tensor}.\\
		Finally, as the diagonal $\Delta_{X}\in \CH(X\times X)$ and fundamental class
		$\1_{Y}$ of $Y$ are distinguished (Lemma \ref{lemma:Diagonal}, Remark \ref{rmk:FundCl}),
		Proposition \ref{prop:tensor} tells us that the graph of the projection $X\times
		Y\to X$, which is equal to $\Delta_{X}\otimes \1_{Y}\in \CH(X\times X\times Y)$,
		is distinguished. The proof is similar for the other projection $X\times Y\to
		Y$.
	\end{proof}

	\begin{rmk}[Permutations]\label{R:perm}
		Suppose $X$ has a marking that satisfies $(\star)$. Then any permutation of the
		factors of $X^n$ defines a distinguished correspondence in $\DCH(X^{2n})$ for
		the product marking by Corollary \ref{C:diag}.
	\end{rmk}
	
	\begin{rmk}
		Assume $X$ and $Y$ are two smooth projective varieties endowed with the action
		of the finite groups $G$ and $H$, respectively. The product $G\times H$ acts
		naturally on the product $X\times Y$. Suppose  $X$ and $Y$  satisfy $(\star_G)$
		and $(\star_H)$, respectively.  Then the same arguments as above show that
		product marking on $X\times Y$ satisfies $(\star_{G\times H})$.
	\end{rmk}
	
	\subsection{Projective bundles}
	We show in this subsection that the condition $(\star)$ is stable by forming
	projective bundles as long as the Chern classes of the vector bundle are
	distinguished. 
	
	Let $X$ be a smooth projective variety of dimension $d$ and $E$ be a vector
	bundle over $X$ of rank $(r+1)$. Let $\pi: \PP(E)\to X$ be the associated
	projective bundle\footnote{The $\PP$ we are using here is the space of
		1-dimensional subspaces, thus different from Grothendieck's convention.}.  Let
	$\xi$ be the first Chern class of $\calO_{\pi}(1)$.

	Recall the \emph{projective bundle formula} (see \cite[\S 4.3.2]{MR2115000})\,:
	\begin{equation}\label{eqn:projbun}
	b: \bigoplus_{k=0}^{r}\h(X)(-k)\xrightarrow{\isom}\h(\PP E) ,
	\end{equation}
	which is given factor-wise by $\xi^{k}\cdot \pi^{*}:\h(X)(-k)\to \h(\PP E)$ for $0\leq
	k\leq r$.
	
	The following lemma\footnote{This should be known but the authors could not
		find a proper reference.} computes the small diagonal for $\PP E$. A piece of
	notation is convenient\,: for an element $\omega\in\CH^{k}(X)$, viewed as a
	morphism $\1\to \h(X)(k)$, we will talk about the morphism \emph{multiplication
		by $\omega$}, denoted by $\cdot\omega:\h(X)\to \h(X)(k)$, which is by definition
	the following composition\,:
	$$\h(X)\xrightarrow{\id\otimes \omega}
	\h(X)\otimes\h(X)(k)\xrightarrow{\delta_{X}(k)}\h(X)(k).$$ With a
	marking being fixed, if $\omega$ belongs to $\DCH(X)$ and $X$ satisfies
	$(\starM)$, then by Proposition~\ref{prop:distmorphism} multiplication by $\omega$ is a distinguished morphism.
	
	
	\begin{lem}[Small diagonal of projective bundles]\label{lemma:sdiagPB}
		Notation is as above. The intersection product $$\delta_{\PP E}: \h(\PP
		E)\otimes \h(\PP E)\to\h(\PP E)$$ induces, \emph{via} (\ref{eqn:projbun}), a
		morphism $\left(\bigoplus_{k=0}^{r}\h(X)(-k)\right)^{\otimes 2}\to
		\bigoplus_{m=0}^{r}\h(X)(-m)$, such that for any $0\leq k, l, m\leq r$, the
		morphism $$\h(X)(-k)\otimes \h(X)(-l)\to \h(X)(-m)$$ is described as\,:
		\begin{itemize}
			\item If $m>k+l$ or $m>r$, it is the zero map.
			\item If $m=k+l\leq r$, it is induced by the intersection product of
			$X$, namely,~$\delta_{X}$.
			\item If $k+l\leq r$ and $m\neq k+l$, it is the zero map.
			\item If $m\leq r< k+l$, then it is the composition $$\h(X)(-k)\otimes
			\h(X)(-l)\xrightarrow{\delta_{X}(-k-l)}\h(X)(-k-l)\xrightarrow{\cdot \omega}
			\h(X)(-m),$$ where the second morphism is the multiplication by the following
			characteristic class  (with $s$ being the Segre class\footnote{The total Segre class is by definition the inverse of the total Chern class, \emph{cf.} \cite[Chapter 3]{MR1644323}.}) 
			$$\omega:=\sum_{t=0}^{r-m}c_{t}(E)s_{k+l-m-t}(E) \in
			\CH^{k+l-m}(X).$$
		\end{itemize}
	\end{lem}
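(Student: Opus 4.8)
The plan is to compute the intersection product on $\h(\PP E)$ by pulling it back through the isomorphism $b$ of \eqref{eqn:projbun} and using that $b$ is, factor-wise, multiplication by powers of $\xi$ followed by $\pi^*$. First I would recall the basic multiplicative relations: $\pi^*$ is a ring homomorphism and the classes $1, \xi, \dots, \xi^r$ form a basis of $\CH(\PP E)$ as a $\CH(X)$-module, subject to the single defining relation
\begin{equation*}
\xi^{r+1} = -\sum_{t=1}^{r+1} \pi^*c_t(E)\cdot \xi^{r+1-t},
\end{equation*}
so that $\xi^{r+1} + \pi^*c_1(E)\xi^r + \cdots + \pi^*c_{r+1}(E) = 0$. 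The product of the $k$-th and $l$-th summands sends $(\alpha, \beta) \mapsto \xi^{k}\pi^*\alpha \cdot \xi^{l}\pi^*\beta = \xi^{k+l}\pi^*(\alpha\beta)$, which already gives the first three bullets: if $k+l \leq r$ the result lands purely in the $(k+l)$-th summand (zero in all other summands $m \neq k+l$, in particular zero whenever $m > k+l$), and the structural map there is $\delta_X$; and if $m > r$ there is no such summand.

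The only real content is the last bullet, where $k+l > r$ and one must rewrite $\xi^{k+l}$ in terms of the basis $1, \xi, \dots, \xi^r$. Here I would invoke the standard fact (this is exactly how the Segre classes are characterized, \emph{cf.} \cite[Chapter 3]{MR1644323}) that
\begin{equation*}
\pi_*(\xi^{r+j}) = s_j(E) \qquad \text{for all } j \geq 0,
\end{equation*}
together with $\pi_*(\xi^{i}) = 0$ for $0 \leq i < r$ and $\pi_*(\xi^r) = 1$. Writing $\xi^{k+l} = \sum_{m=0}^{r} \pi^*(\omega_m)\cdot \xi^{m}$ for suitable $\omega_m \in \CH^{k+l-m}(X)$, one recovers $\omega_m$ by applying $\pi_*$ after multiplying by $\xi^{r-m}$: indeed $\pi_*(\xi^{k+l}\cdot\xi^{r-m}) = \pi_*(\xi^{k+l+r-m}) = s_{k+l-m}(E)$ on one side, while on the other side $\pi_*\big(\sum_{m'} \pi^*(\omega_{m'})\xi^{m'+r-m}\big) = \sum_{m'} \omega_{m'}\,s_{m'-m}(E)$ with the convention $s_{<0}=0$. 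This yields a triangular linear system for the $\omega_m$; solving it — equivalently, using that the total Segre class is inverse to the total Chern class, $s(E)c(E)=1$ — gives the closed form. The cleanest way to present the solution is to note that $\sum_{m} \pi^*(\omega_m)\xi^m = \xi^{k+l} = \xi^{k+l-r}\cdot\xi^r$ and that modulo the ideal generated by the defining relation one has $\xi^{k+l-r} \equiv \sum_{j} \pi^* s_{k+l-r-?}(E)\,\dots$; to avoid the bookkeeping I would instead directly verify the claimed formula $\omega = \sum_{t=0}^{r-m} c_t(E)\,s_{k+l-m-t}(E)$ by checking it satisfies the triangular system, i.e. that $\sum_{m} \omega\cdot s_{m'-m}(E)$ telescopes to $s_{k+l-m'}(E)$ using $\sum_t c_t(E)s_{n-t}(E) = \delta_{n,0}$.

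Finally I would combine the two cases into the statement: the composite map in the $m$-th summand is $\pi^*$-linear and equals multiplication by the scalar class $\omega \in \CH^{k+l-m}(X)$ (which is $1$ when $k+l = m \leq r$, consistent with the $\delta_X$ description, and $0$ when $k+l \neq m$, $k+l \leq r$), precomposed with the intersection product $\delta_X$ of $X$; the vanishing for $m > k+l$ or $m > r$ is immediate. The main obstacle is purely combinatorial — correctly extracting the coefficients $\omega_m$ from the relation $\xi^{r+1} = -\sum \pi^*c_t(E)\xi^{r+1-t}$ and recognizing the resulting sum as the stated combination of Chern and Segre classes — and this is handled by the identity $s(E)c(E) = 1$ rather than by an induction on $k+l$.
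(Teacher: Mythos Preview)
Your approach is essentially the same as the paper's: both compute the product of the $k$-th and $l$-th summands as $\xi^{k+l}\pi^*(\alpha\beta)$, and for $k+l>r$ extract the coefficient $\omega_m$ of $\xi^{k+l}$ in the basis $1,\xi,\ldots,\xi^r$ using $\pi_*(\xi^{r+j})=s_j(E)$ together with the inversion $s(E)c(E)=1$. The paper organizes the last step by first writing down $b^{-1}$ explicitly as $z_m=\sum_{t=0}^{r-m}c_t(E)\cdot\pi_*(\gamma\cdot\xi^{r-m-t})$ and then applying it to $\gamma=\pi^*(z\cdot z')\xi^{k+l}$, but this is the same computation you set up as a triangular system. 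One small omission: the statement concerns morphisms of Chow \emph{motives}, not just Chow groups, and the paper invokes Manin's identity principle to justify working on Chow groups; you should add that reduction.
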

	\begin{proof}
		By Manin's identity principle (\cite[\S 4.3.1]{MR2115000}), we only have to
		prove the lemma for Chow groups. Let us first compute the inverse $b^{-1}$ of
		the isomorphism in the projective bundle formula $$b:
		\bigoplus_{k=0}^{r}\CH^{*-k}(X)\xrightarrow{\isom} \CH^{*}(\PP E).$$ Assume
		$\gamma\in \CH^{*}(\PP E)$ is the image of $(z_{0}, z_{1}, \cdots, z_{r}) \in
		\oplus_{k=0}^{r}\CH^{*-k}(X)$, \emph{i.e.},
		$$\gamma=\sum_{k=0}^{r}\pi^{*}(z_{k})\cdot \xi^{k}.$$
		For any $t\geq 0$, $\pi_{*}(\gamma\cdot
		\xi^{t})=\sum_{k=0}^{r}\pi_{*}(\pi^{*}(z_{k})\cdot
		\xi^{k+t})=\sum_{k=0}^{r}z_{k}\cdot s_{k+t-r}(E)$. Since the total Segre class
		is the inverse of the total Chern class, we have for any $0\leq k \leq r$,
		$$z_{k}=\sum_{t=0}^{r-k}c_{t}(E)\cdot \pi_{*}(\gamma\cdot \xi^{r-k-t}).$$ This
		gives $b^{-1}$. Now let us go back to the product formula. We have to compute
		the composition $b^{-1}\circ (b\otimes b)$, whose $(k,l,m)$-th component for any
		$0\leq k, l, m\leq r$ is the composition\,:
		$$\CH(X)\otimes \CH(X)\xrightarrow{(\xi^{k}\cdot\pi^{*}, \xi^{l}\cdot\pi^{*})}
		\CH(\PP E)\otimes \CH(\PP E)\xrightarrow{\cdot} \CH(\PP
		E)\xrightarrow{b^{-1}_{m}} \CH(X),$$ where the last morphism is
		$\sum_{t=0}^{r-m}c_{t}(E)\cdot \pi_{*}(\bullet\cdot \xi^{r-m-t})$ by the formula
		for $b^{-1}$.
		Now, for any $z, z'\in \CH(X)$, the $m$-th component of 
		$\pi^{*}(z)\cdot \xi^{k}\cdot \pi^{*}(z')\cdot \xi^{l}=\pi^{*}(z\cdot z')\cdot
		\xi^{k+l}$ is 
		$\sum_{t=0}^{r-m}c_{t}(E)\cdot \pi_{*}(\pi^{*}(z\cdot z')\cdot \xi^{k+l}\cdot
		\xi^{r-m-t})=z\cdot z'\cdot(\sum_{t=0}^{r-m}c_{t}(E)s_{k+l-m-t}(E))$. We can
		conclude in all cases easily.
	\end{proof}
	
	\begin{prop}[$(\star)$ and projective bundles]\label{prop:PB}
		
		Let $X$ be a smooth projective variety and let $E$ be a vector bundle over $X$
		of rank $(r+1)$. Let $\pi: \PP(E)\to X$ be the associated projective bundle. If we
		have a marking for $X$ satisfying $(\star)$ such that all Chern classes of $E$
		are distinguished, then $\PP E$ has a natural marking such that $\PP E$
		satisfies $(\star)$ and such that the projection $\pi: \PP E \to X$
		is distinguished.
		
		If in addition  $X$ is equipped with the action of a finite group $G$ such that
		$E$ is $G$-equivariant and such that the marking of $X$ satisfies $(\star_G)$,
		then the natural marking of $\PP E$ satisfies $(\star_G)$.
	\end{prop}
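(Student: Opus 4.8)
The plan is to push the marking of $X$ forward to $\PP E$ through the projective bundle formula \eqref{eqn:projbun}, and then to verify $(\star)$ by reading the small diagonal of $\PP E$ off Lemma \ref{lemma:sdiagPB}. Write the given marking as $\phi\colon \h(X)\xrightarrow{\isom} M$ with $M\in\M^{ab}_{sd}$. Since $\bigoplus_{k=0}^{r}M(-k)$ is again an object of $\M^{ab}_{sd}$ and \eqref{eqn:projbun} identifies $\h(\PP E)$ with $\bigoplus_{k=0}^{r}\h(X)(-k)$ (so in particular $\h(\PP E)\in\M^{ab}$ and markings make sense), we get a natural marking $$\tilde\phi\colon\ \h(\PP E)\ \xrightarrow{\isom}\ \bigoplus_{k=0}^{r}\h(X)(-k)\ \xrightarrow{\oplus_{k}\phi(-k)}\ \bigoplus_{k=0}^{r}M(-k).$$ The first thing I would record is that the structural morphisms of $\M^{ab}_{sd}$ attached to this direct-sum decomposition---the inclusion of, and the projection onto, any summand $M(-k)$---are symmetrically distinguished, since their matrices have $\id_{M}$-type entries, which are symmetrically distinguished by the very definition of objects of $\M^{ab}_{sd}$. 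Under \eqref{eqn:projbun} the pullback $\pi^{*}$ is the inclusion of the $k=0$ summand and the pushforward $\pi_{*}$ is the projection onto the $k=r$ summand; hence $\pi\colon\PP E\to X$ is a distinguished morphism (Definition \ref{def:DisCor}), and by Proposition \ref{prop:distmorphism} $\pi^{*}$ carries distinguished cycles to distinguished cycles. Moreover $\xi=c_{1}(\calO_{\pi}(1))$ equals $b(0,\1_{X},0,\dots,0)$, i.e.\ the image of the distinguished cycle $\1_{X}$ (Remark \ref{rmk:FundCl}) under the inclusion of the $k=1$ summand, so $\xi$ is distinguished.

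Next I would prove $(\starM)$ for $\tilde\phi$. By Lemma \ref{lemma:InterpretationStar} this is equivalent to showing that the morphism $\mu_{\PP E}$ induced via $\tilde\phi$ by the intersection product $\delta_{\PP E}$ is symmetrically distinguished. Transporting the description of $\delta_{\PP E}$ in Lemma \ref{lemma:sdiagPB} first through \eqref{eqn:projbun} and then through $\oplus_{k}\phi(-k)$, the morphism $\mu_{\PP E}$ becomes block-decomposed into components $M(-k)\otimes M(-l)\to M(-m)$, each of which is either zero, or a Tate twist of the morphism $\mu_{X}$ induced via $\phi$ by $\delta_{X}$, or the composite of such a $\mu_{X}$ with multiplication by the characteristic class $\omega=\sum_{t}c_{t}(E)\,s_{k+l-m-t}(E)$. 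Now $\mu_{X}$ is symmetrically distinguished because $X$ satisfies $(\starM)$; the class $\omega$ lies in $\DCH(X)$ because the $c_{t}(E)$ are distinguished by hypothesis, the Segre classes are polynomials in the Chern classes, and $\DCH(X)$ is a $\Q$-subalgebra by Proposition \ref{prop:Disting}; hence multiplication by $\omega$ is a distinguished morphism by Proposition \ref{prop:distmorphism}. Since symmetrically distinguished morphisms are closed under composition and direct sum (Lemma \ref{lemma:EquivalenceNum}$(i)$), every block, and therefore $\mu_{\PP E}$ itself, is symmetrically distinguished, which proves $(\starM)$.

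It then follows from Proposition \ref{prop:Disting} that $\DCH_{\tilde\phi}(\PP E)$ is a $\Q$-subalgebra of $\CH(\PP E)$, reducing $(\starC)$ to exhibiting the Chern classes of $T_{\PP E}$ as polynomials in distinguished classes. For this I would combine the exact sequence $0\to T_{\PP E/X}\to T_{\PP E}\to\pi^{*}T_{X}\to 0$ with the relative Euler sequence, which yield $c(T_{\PP E})=\pi^{*}c(T_{X})\cdot c(T_{\PP E/X})$ with $c(T_{\PP E/X})$ a polynomial in $\xi$ and the $\pi^{*}c_{i}(E)$. By the first paragraph $\xi$ and the $\pi^{*}c_{i}(E)$ are distinguished, and the $\pi^{*}c_{i}(T_{X})$ are distinguished since $X$ satisfies $(\starC)$ and $\pi^{*}$ preserves distinguished cycles; hence $(\starC)$ holds for $\PP E$. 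For the equivariant statement, the $G$-equivariant structure on $E$ makes the tautological subbundle, hence $\calO_{\pi}(1)$, canonically $G$-equivariant, so that $g^{*}\xi=\xi$ for every $g\in G$; consequently $g$ acts on $\bigoplus_{k}\h(X)(-k)$ block-diagonally through the automorphism induced by $g$ on $\h(X)$, which is distinguished since $X$ satisfies $(\star_G)$. This block-diagonal automorphism has symmetrically distinguished blocks, hence is symmetrically distinguished, so $g_{\PP E}$ is distinguished and $\tilde\phi$ satisfies $(\star_G)$ (alternatively, one may invoke Lemma \ref{lemma:StarG}).

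The only genuinely computational ingredient is Lemma \ref{lemma:sdiagPB}, which is already available; the remaining work is bookkeeping inside $\M^{ab}_{sd}$. The point that requires care---and where the hypothesis that the Chern classes of $E$ be distinguished is indispensable---is checking that the characteristic classes $\omega$ occurring in Lemma \ref{lemma:sdiagPB}, as well as the Chern classes of the relative tangent bundle, actually belong to $\DCH(X)$, so that multiplication by them is distinguished; everything else reduces to the facts that inclusions, projections and identities in $\M^{ab}_{sd}$ are symmetrically distinguished and that symmetrically distinguished morphisms form a tensor subcategory.
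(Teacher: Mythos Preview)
Your proof is correct and follows essentially the same route as the paper: define the marking on $\PP E$ via the projective bundle formula \eqref{eqn:projbun}, verify $(\starM)$ by transporting the block description of $\delta_{\PP E}$ from Lemma~\ref{lemma:sdiagPB} through Lemma~\ref{lemma:InterpretationStar}, then deduce $(\starC)$ from the Euler sequence and the subalgebra property, and handle $(\star_G)$ by noting that $g^{*}\xi=\xi$ forces the action to be block-diagonal. One minor point of presentation: your appeal to Proposition~\ref{prop:distmorphism} in the first paragraph is premature, since that proposition assumes $(\starM)$ for $\PP E$, which you only establish in the second paragraph; but the claim that $\pi^{*}$ preserves distinguished cycles is immediate from the fact that the inclusion of a direct summand in $\M^{ab}_{sd}$ is symmetrically distinguished together with Lemma~\ref{lemma:EquivalenceNum}$(i)$, and in any case you only use it after $(\starM)$ is in place.
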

	\begin{proof}
		Let $\phi: \h(X)\xrightarrow{\isom} M$ be a marking that satisfies $(\star)$ and
		is such that $c_{k}(E)\in \DCH(X)$. Using the projective bundle formula
		(\ref{eqn:projbun}), we obtain a marking for $\PP E$\,:
		$$\lambda: \h(\PP E) \xrightarrow{\isom} \bigoplus_{k=0}^{r}M(-k).$$
		Let us show that $\lambda$ satisfies $(\star)$.\\
		For $(\starM)$, one uses the interpretation of $(\starM)$ given in Lemma
		\ref{lemma:InterpretationStar}. Since $\delta_{X}$ as well as the
		Chern classes and Segre classes of $E$ are distinguished, the condition $(\starM)$ follows from Lemma~\ref{lemma:sdiagPB}.\\
		For $(\starC)$, we first claim that for any $k$, the cycle $\pi^{*}(\alpha)\cdot
		\xi^{k}$ is distinguished if $\alpha\in \CH(X)$ is so. For $k\leq r$, this
		is by definition, while for $k>r$, we use the equality
		$\xi^{r+1}+\pi^{*}(c_{1}(E))\xi^{r}+\cdots+\pi^{*}(c_{r+1}(E))=0$ and the
		distinguishedness of the Chern classes of $E$ to reduce to the treated cases. Now
		from the short exact sequences
		$$0\to \calO_{\PP E}\to \pi^{*}(E)\otimes \calO_{\pi}(1)\to T_{\PP E/X}\to
		0\,;$$
		$$0\to T_{\PP E/X}\to T_{\PP E}\to \pi^{*}T_{X}\to 0,$$
		we see that all the Chern characters of $\PP E$ are linear combinations of terms of
		the form $\pi^{*}(\alpha)\cdot \xi^{k}$, where $\alpha$ is a polynomial of
		Chern classes of $X$ and of $E$. By assumption $\alpha$ is distinguished hence
		so are the Chern characters of $\PP E$. With $(\starM)$ being proven for $\PP E$,
		we know that $\DCH(\PP E)$ is a subalgebra by Proposition~\ref{prop:Disting}.
		We are then done because Chern classes are polynomials of Chern characters.\\
		The distinguishedness of (the graph of) the projection $\pi: \PP(E)\to X$ is
		obvious\,: \emph{via} the markings $\phi$ and $\lambda$, it is equivalent to saying
		that the inclusion of the first summand $$M\inj M\oplus M(-1)\oplus\cdots\oplus
		M(-r)$$ is a symmetrically distinguished morphism.\\
		Finally, assume that $X$ is equipped  with the action of a finite group $G$ such
		that $E$ is $G$-equivariant. Note that with the induced action of $G$ on $\PP
		E$, we have that $\pi$ is $G$-equivariant and we have that $(g_{\PP E})_*\xi =
		\xi$ (since $G$ preserves $\mathcal{O}_\pi(1)$). Thus the action of $G$ commutes
		with $b$ and $b^\vee$. Since we are assuming that the marking $\phi$ of $X$
		satisfies $(\star_G)$, we find that the marking $\lambda$ satisfies $(\star_G)$.
	\end{proof}
	
	\begin{ex} \label{ex:Schur} 
	If $X$ is a smooth projective variety with a marking that satisfies
$(\star)$, then natural
examples of vector bundles with distinguished Chern classes are given by the
tangent bundle $T_{X}$ as well as other vector bundles obtained from it by
performing duals, tensor products, and direct sums. More generally, one may consider direct sums of vector bundles of the form $\mathbb
	S_{\lambda}T_{X}$, where $\lambda$ is a non-increasing sequence of integers and
		$\mathbb S_{\lambda}$ is the associated Schur functor. By Proposition
\ref{prop:PB}, the projective bundle associated to any such vector bundle has a marking that
satisfies~$(\star)$.
	\end{ex}

	\subsection{Blow-ups}
	We will show in this subsection that the condition $(\star)$ in Definition~\ref{def:Star} passes to a blow-up in the expected way.
	
	We fix the following notation throughout this subsection. Let $X$ be a smooth
	projective variety of dimension $d$, $i:Y\inj X$ be a closed immersion of a smooth subvariety
	of codimension $c$ and $\calN:=\calN_{Y/X}$ be the normal bundle. Let $\tilde X$
	be the blow-up of $X$ along $Y$ and $E$ the exceptional divisor, which is
	identified with $\PP(\calN)$. Denote by $\xi$ the first Chern class of
	$\calO_{p}(1)=\calN^{\vee}_{E/{\tilde X}}$. The names of some relevant morphisms
	are in the following cartesian diagram\,:
	\begin{equation}\label{E:cartblowup}
	\xymatrix{
		E\ar@{^{(}->}[r]^{j} \ar[d]_{p} & \tilde X\ar[d]^{\tau}\\
		Y \ar@{^{(}->}[r]^{i} & X
	}
	\end{equation}
	Recall the blow-up formula (see \cite[\S 4.3.2]{MR2115000})
	\begin{equation}\label{eqn:blowup}
	b: \h(X)\oplus \bigoplus_{k=1}^{c-1}\h(Y)(-k)\xrightarrow{\isom}\h(\tilde X) ,
	\end{equation}
	which is given by\,:
	\begin{itemize}
		\item $\tau^{*}: \h(X)\to \h(\tilde X)$\,;
		\item for any $1\leq k\leq c-1$, $  j_{*}(\xi^{k-1}\cdot p^{*}(-)):\h(Y)(-k)\to
		\h(\tilde X)$.
	\end{itemize}
	The following lemma\footnote{This should be known but the authors could not
		find a proper reference.} computes the small diagonal of  $\tilde X^{3}$.  

	\begin{lem}[Small diagonal of blow-ups]\label{lemma:sdiagBl}
		The intersection product $$\delta_{\tilde X}: \h(\tilde X)\otimes \h(\tilde
		X)\to\h(\tilde X)$$ is described \emph{via} the isomorphism (\ref{eqn:blowup})
		as follows\,:
		\begin{itemize}
			\item $\h(X)\otimes \h(X)\to \h(X)$ is the intersection product (induced by
			$\delta_{X}$)\,;
			\item For any $1\leq k\leq c-1$, $\h(X)\otimes \h(Y)(-k)\to \h(Y)(-k)$ is the
			composition $$\h(X)\otimes \h(Y)(-k)\xrightarrow{i^{*}\otimes \id} \h(Y)\otimes
			\h(Y)(-k) \xrightarrow{\delta_{Y}(-k)} \h(Y)(-k)\,;$$
			\item For any $1\leq k, l\leq c-1$, $$\h(Y)(-k)\otimes \h(Y)(-l)\to \h(X)$$ is
			the composition\,:
			$$\h(Y)(-k)\otimes
			\h(Y)(-l)\xrightarrow{\delta_{Y}(-k-l)}\h(Y)(-k-l)\xrightarrow{-\cdot
				s_{k+l-c}(\calN)}\h(Y)(-c)\xrightarrow{i_{*}}\h(X)$$
			where in second morphism, $s$ stands for the Segre class.
			\item For any $1\leq k, l, m\leq c-1$,  $$\h(Y)(-k)\otimes \h(Y)(-l)\to
			\h(Y)(-m)$$ is as follows\,:
			\begin{itemize}
				\item if $m\geq c$ or $m>k+l$, it is the zero map.
				\item if $m=k+l\leq c-1$, then it is induced by $-\delta_{Y}$.
				\item if $m\neq k+l \leq c-1$, then it is the zero map.
				\item if $m\leq c-1<k+l$, it is the composition $$\h(Y)(-k)\otimes
				\h(Y)(-l)\xrightarrow{\delta_{Y}(-k-l)} \h(Y)(-k-l)\xrightarrow{\cdot \omega}
				\h(Y)(-m),$$
				where the second morphism is the multiplication by the following characteristic
				class with $s$ standing for the Segre class.
				$$\omega:=-\sum_{t=1}^{c-m}s_{k+l-m-t+1}(\calN)\cdot c_{t-1}(\calN)\in
				\CH^{k+l-m}(Y).$$
			\end{itemize}
		\end{itemize}
	\end{lem}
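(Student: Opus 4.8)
The plan is to follow the strategy of the proof of Lemma~\ref{lemma:sdiagPB}. By Manin's identity principle (\cite[\S 4.3.1]{MR2115000}), it is enough to describe, functorially in the test scheme, the map on Chow groups induced by the intersection product $\delta_{\tilde X}$ after transport through the blow-up isomorphism \eqref{eqn:blowup}. The only facts I would use are the elementary identities attached to the square \eqref{E:cartblowup}: the projection formula, $\tau_{*}\tau^{*}=\id$, $j^{*}\tau^{*}=p^{*}i^{*}$, $\tau_{*}j_{*}=i_{*}p_{*}$, the self-intersection formula $j^{*}j_{*}(-)=c_{1}(\calN_{E/\tilde X})\cdot(-)=-\xi\cdot(-)$ (using $\calN_{E/\tilde X}=\calO_{p}(-1)$, which is where the relevant signs come from), together with the projective-bundle identities $p_{*}(\xi^{c-1+a})=s_{a}(\calN)$ for $a\geq 0$ and $p_{*}(\xi^{a})=0$ for $0\leq a\leq c-2$. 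Notably, Fulton's excess-intersection formula for $\tau^{*}i_{*}$ is not needed.

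First I would make the inverse of $b$ in \eqref{eqn:blowup} explicit. Writing a class as $\gamma=\tau^{*}z_{0}+\sum_{k=1}^{c-1}j_{*}(\xi^{k-1}p^{*}z_{k})$, applying $\tau_{*}$ gives $z_{0}=\tau_{*}\gamma$, since every other term dies ($p_{*}(\xi^{k-1})=0$ for $1\leq k\leq c-1$). Applying $p_{*}(\xi^{c-1-m}j^{*}(-))$ and using $j^{*}\gamma=p^{*}i^{*}z_{0}-\sum_{k}\xi^{k}p^{*}z_{k}$ gives a triangular linear system in $z_{1},\dots,z_{c-1}$, which I would invert by means of the total Segre class to obtain $z_{m}=-\sum_{t=0}^{c-1-m}c_{t}(\calN)\cdot p_{*}(\xi^{c-1-m-t}j^{*}\gamma)$ for $1\leq m\leq c-1$.

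Next I would compute $\delta_{\tilde X}$ on the four types of pairs of summands and read off the components using the formula for $b^{-1}$. On $(\tau^{*}z,\tau^{*}z')$ it is $\tau^{*}(zz')$, i.e.\ the intersection product of $X$. On $(\tau^{*}z,j_{*}(\xi^{k-1}p^{*}w))$, the projection formula together with $j^{*}\tau^{*}=p^{*}i^{*}$ gives $j_{*}(\xi^{k-1}p^{*}(i^{*}z\cdot w))$, already in standard form, which produces the second bullet. On $(j_{*}(\xi^{k-1}p^{*}w_{1}),j_{*}(\xi^{l-1}p^{*}w_{2}))$, the projection and self-intersection formulas give $-j_{*}(\xi^{k+l-1}p^{*}(w_{1}w_{2}))$; feeding this into the expression for $b^{-1}$ yields $z_{0}=-i_{*}(w_{1}w_{2}\cdot s_{k+l-c}(\calN))$ and $z_{m}=w_{1}w_{2}\cdot\omega$ with $\omega=-\sum_{t=0}^{c-1-m}c_{t}(\calN)s_{k+l-m-t}(\calN)$, which after re-indexing $t\mapsto t+1$ is exactly the characteristic class in the statement; the degenerate cases ($m>k+l$; $m=k+l\leq c-1$; and $m\neq k+l$ when $k+l\leq c-1$) then fall out of the identity $c(\calN)s(\calN)=1$ combined with the truncation of the $t$-sum.

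The main obstacle I expect is purely the bookkeeping: keeping track of all the signs (they all originate in the self-intersection formula via $\calN_{E/\tilde X}=\calO_{p}(-1)$) and keeping track of exactly where a finite truncation of the product $c(\calN)s(\calN)$ differs from the total identity $c(\calN)s(\calN)=1$. That truncation is precisely what distinguishes the regime $k+l\leq c-1$, in which the reduction back to a basis vector is automatic, from the regime $k+l\geq c$, in which a genuinely nonzero characteristic class $\omega$ appears. Beyond this careful accounting I do not anticipate any conceptual difficulty.
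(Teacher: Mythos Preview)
Your proposal is correct and follows essentially the same approach as the paper: reduce to Chow groups via Manin's identity principle, make $b^{-1}$ explicit (your formula for $z_m$ is the paper's after the re-indexing $t\mapsto t+1$), compute the product of two basis elements using the projection formula together with $j^*j_*(-)=-\xi\cdot(-)$, and feed the result back through $b^{-1}$. The paper only writes out the $(k,l,m)$-case with $1\leq k,l,m\leq c-1$ in detail and leaves the remaining cases to the reader, whereas you indicate them as well, but there is no difference in method.
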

	\begin{proof}
		We only have to prove the lemma for Chow groups thanks to Manin's identity principle
		(\cite[\S 4.3.1]{MR2115000}). As in Lemma \ref{lemma:sdiagPB}, we compute the
		inverse of 
		$$b: \CH^{*}(X)\oplus \bigoplus_{k=1}^{c-1}\CH^{*-k}(Y) \to \CH^{*}(\tilde X).$$
		Assume $\gamma=\tau^{*}(z_{0})+\sum_{k=1}^{c-1}j_{*}(p^{*}(z_{k})\cdot
		\xi^{k-1})$ where $z_{0}\in \CH(X)$ and $z_{k}\in \CH(Y)$ for all $1\leq k\leq
		c-1$. Then $b^{-1}$ is given by
		$z_{0}=\tau_{*}(\gamma)$\,; and for all $1\leq k\leq c-1$,
		$$z_{k}=-\sum_{t=1}^{c-k}p_{*}(j^{*}(\gamma)\cdot \xi^{c-k-t})\cdot
		c_{t-1}(\calN).$$ Now concerning intersection products, we have to compute $b^{-1}\circ
		(b\otimes b)$. We only give the computation of the $(k,l,m)$-th component
		when $1\leq k, l, m\leq c-1$ and leave the other cases to the reader. Let $z,
		z'\in \CH(Y)$, then the $m$-th component of the product $j_{*}(p^{*}(z)\cdot
		\xi^{k-1})\cdot j_{*}(p^{*}(z')\cdot \xi^{l-1})=j_{*}\left(p^{*}(z)\cdot
		\xi^{k-1}\cdot j^{*}(j_{*}(p^{*}(z')\cdot
		\xi^{l-1}))\right)=-j_{*}\left(p^{*}(z\cdot z')\cdot \xi^{k+l-1}\right)$ is 
		\begin{eqnarray*}
			&&\sum_{t=1}^{c-m}p_{*}\left(j^{*}j_{*}\left(p^{*}(z\cdot z')\cdot
			\xi^{k+l-1}\right)\cdot \xi^{c-m-t}\right)\cdot c_{t-1}(\calN)\\
			&=&-\sum_{t=1}^{c-m}p_{*}\left(p^{*}(z\cdot z')\cdot \xi^{k+l+c-m-t}\right)\cdot
			c_{t-1}(\calN)\\
			&=& -\sum_{t=1}^{c-m} z\cdot z'\cdot s_{k+l-m-t+1}(\calN)\cdot c_{t-1}(\calN).
		\end{eqnarray*}
		Then all cases follow easily.
	\end{proof}
	
	\begin{prop}[$(\star)$ and blow-ups]\label{prop:Blup}
		Let $X$ be a smooth projective variety and let $i:Y\inj X$ be a smooth closed 
		subvariety. If we have markings satisfying the condition $(\star)$ for $X$ and
		$Y$ such that the inclusion morphism $i:Y\inj X$ is distinguished (Definition \ref{def:DisCor}), then $\tilde X$, the
		blow-up of $X$ along $Y$, has a natural marking that satisfies $(\star)$ and is
		such that the morphisms in Diagram \eqref{E:cartblowup} are all
		distinguished \footnote{The exceptional divisor $E$ is endowed with the natural
			marking of Proposition \ref{prop:PB} by its projective bundle structure over
			$Y$.}.
		
		If in addition $X$ is equipped with the action of a finite group $G$ such that
		$G\cdot Y = Y$ and such that the markings of $X$ and $Y$ satisfy $(\star_G)$,
		then the natural marking of $\tilde X$ also satisfies $(\star_G)$.
	\end{prop}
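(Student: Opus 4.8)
The plan is to equip $\tilde X$ with the marking obtained by transporting the blow-up formula \eqref{eqn:blowup} through the given markings $\phi:\h(X)\xrightarrow{\isom}M$ and $\psi:\h(Y)\xrightarrow{\isom}N$, namely
$$\lambda:=\Big(\phi\oplus\bigoplus_{k=1}^{c-1}\psi(-k)\Big)\circ b^{-1}:\ \h(\tilde X)\xrightarrow{\isom}M\oplus\bigoplus_{k=1}^{c-1}N(-k),$$
whose target is an object of $\M^{ab}_{sd}$ because this category is closed under direct sums and Tate twists. One then checks, in turn, $(\starM)$ for $\lambda$, the distinguishedness of the four morphisms of \eqref{E:cartblowup}, $(\starC)$ for $\lambda$, and finally $(\star_G)$ in the equivariant case.

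For $(\starM)$ I would appeal to the reformulation of Lemma \ref{lemma:InterpretationStar}: it is enough to show that the multiplication morphism $\mu:\h(\tilde X)^{\otimes2}\to\h(\tilde X)$ becomes symmetrically distinguished once transported by $\lambda$. By Lemma \ref{lemma:sdiagBl}, the blocks of $\mu$ are built from $(i)$ the multiplications $\delta_X$ and $\delta_Y$, symmetrically distinguished since $X$ and $Y$ satisfy $(\starM)$; $(ii)$ the Gysin maps $i_*$ and $i^*$, which induce symmetrically distinguished morphisms of motives because $i$ is distinguished by hypothesis (so that both the graph $\Gamma_i$ and its transpose are distinguished correspondences, the interchange of the factors of a product of two marked varieties being itself distinguished); and $(iii)$ the morphisms ``multiplication by $c_t(\calN)$'' and ``multiplication by a Segre class $s_j(\calN)$'' on $\h(Y)$. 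For $(iii)$ one first notes that the conormal sequence $0\to T_Y\to i^*T_X\to\calN\to0$ gives $c(\calN)=i^*c(T_X)\cdot c(T_Y)^{-1}$, and this lies in $\DCH_\psi(Y)$: indeed $X$ and $Y$ satisfy $(\starC)$, $i^*$ preserves distinguished cycles (Proposition \ref{prop:distmorphism}$(i)$), and $\DCH_\psi(Y)$ is a subalgebra (Proposition \ref{prop:Disting}, using $(\starM)$ for $Y$); hence every $c_t(\calN)$ and every $s_j(\calN)$ is distinguished on $Y$, and multiplication by such a class is a distinguished morphism. Going through the finitely many cases of Lemma \ref{lemma:sdiagBl} then shows $\mu$ to be symmetrically distinguished, so $\lambda$ satisfies $(\starM)$; in particular, by Proposition \ref{prop:Disting} and Lemma \ref{lemma:Diagonal}, $\DCH_\lambda(\tilde X)$ is a subalgebra and $\Delta_{\tilde X}$ is distinguished.

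Turning to the four morphisms of \eqref{E:cartblowup}: $i$ is distinguished by hypothesis, and $\tau$ is distinguished because in \eqref{eqn:blowup} $\tau^*$ (resp.\ $\tau_*$) is the inclusion (resp.\ projection) of the $\h(X)$-summand, which is plainly a symmetrically distinguished morphism. Since the $c_t(\calN)$ are distinguished on $Y$, Proposition \ref{prop:PB} applies to $E=\PP(\calN)\to Y$, equipping $E$ with a marking satisfying $(\star)$ for which $p$ is distinguished and for which $\xi=c_1(\calO_p(1))$, and more generally every product $p^*(\alpha)\cdot\xi^k$ with $\alpha$ distinguished, is distinguished on $E$; using the explicit shapes of \eqref{eqn:blowup} and \eqref{eqn:projbun}, the morphism $j_*:\h(E)\to\h(\tilde X)$ becomes, after transport, a matrix whose entries are Tate twists of $\id_{\h(Y)}$ and of multiplications by characteristic classes of $\calN$, all symmetrically distinguished, so $j$ is distinguished. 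For $(\starC)$ I would invoke the standard short exact sequence on $\tilde X$ for the tangent bundle of a blow-up along a smooth centre, $0\to T_{\tilde X}\to\tau^*T_X\to j_*\calF\to0$, where $\calF$ is a vector bundle on $E$ built from $p^*T_Y$, $p^*\calN$ and $\calO_p(1)$; Grothendieck--Riemann--Roch then gives $\mathrm{ch}(T_{\tilde X})=\tau^*\mathrm{ch}(T_X)-j_*\big(\mathrm{ch}(\calF)\cdot\mathrm{td}(\calN_{E/\tilde X})^{-1}\big)$. Here $\mathrm{ch}(\calF)$ and $\mathrm{td}(\calN_{E/\tilde X})=\mathrm{td}(\calO_p(-1))$ lie in the subalgebra $\DCH(E)$ by the preceding remarks, so $j_*$ of their product lies in $\DCH_\lambda(\tilde X)$ (as $j$ is distinguished), while $\tau^*\mathrm{ch}(T_X)$ lies in $\DCH_\lambda(\tilde X)$ (as $\tau$ is distinguished and $X$ satisfies $(\starC)$). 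Since $\DCH_\lambda(\tilde X)$ is a subalgebra, $\mathrm{ch}(T_{\tilde X})\in\DCH_\lambda(\tilde X)$, and therefore every $c_t(T_{\tilde X})$, being a polynomial with rational coefficients in the components of $\mathrm{ch}(T_{\tilde X})$, is distinguished; this proves $(\star)$ for $\lambda$.

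For the equivariant statement, $G$ acts on $\tilde X$ and on $E=\PP(\calN)$ compatibly with $\tau$, $j$ and $p$, and $g_E^*\xi=\xi$ for every $g\in G$ since $\calO_p(1)=\calO_{\tilde X}(-E)\vert_E$ is $G$-equivariant; hence, read through $\lambda$, the automorphism of $M\oplus\bigoplus_k N(-k)$ induced by $g$ is block-diagonal, with blocks $\bar g_X$ on $M$ and $\bar g_Y(-k)$ on $N(-k)$. These blocks are symmetrically distinguished because the markings of $X$ and $Y$ satisfy $(\star_G)$, hence so is the entire block matrix; together with the distinguishedness of $\Delta_{\tilde X}$ obtained above, Lemma \ref{lemma:StarG} yields $(\star_G)$ for $\lambda$. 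The step I expect to be the main obstacle is the verification of $(\starM)$: one must transport the small-diagonal computation of Lemma \ref{lemma:sdiagBl} through $\lambda$ and check symmetric distinguishedness block by block, the delicate---and genuinely $(\starC)$-dependent---point being that the characteristic classes $c_t(\calN)$ and $s_j(\calN)$ of the normal bundle are distinguished on $Y$, which needs $(\starC)$ for \emph{both} $X$ and $Y$, the distinguishedness of $i$, and the fact that $\DCH_\psi(Y)$, being a subalgebra, is closed under forming $c(T_Y)^{-1}$.
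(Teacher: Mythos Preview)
Your proposal is correct and follows essentially the same approach as the paper: define $\lambda$ via the blow-up formula, verify $(\starM)$ block-by-block using Lemma~\ref{lemma:sdiagBl} after first checking that the Chern and Segre classes of $\calN$ are distinguished (from the conormal sequence and $(\starC)$ for $X$ and $Y$), then handle the four morphisms, then $(\starC)$, then $(\star_G)$.

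Two small points of comparison. First, for the distinguishedness of $j$ the paper computes $j^{*}$ rather than $j_{*}$: via the two markings one finds the clean formula $j^{*}=(i^{*},-\id,\dots,-\id)$, which is visibly symmetrically distinguished. Your description of $j_{*}$ as a matrix of identities and characteristic-class multiplications is slightly incomplete---the top summand $N(-c+1)$ of $\h(E)$ also feeds into $M$ via $i_{*}$ (as one sees from $\tau_{*}j_{*}(p^{*}\alpha\cdot\xi^{c-1})=i_{*}\alpha$)---but since $i_{*}$ is distinguished by hypothesis, the conclusion stands. Second, for $(\starC)$ the paper appeals directly to Fulton's Chern-class formula for blow-ups \cite[Theorem~15.4]{MR1644323}, reducing at once to showing that each $j_{*}(p^{*}\alpha\cdot\xi^{k})$ with $\alpha\in\DCH(Y)$ is distinguished; your route through the exact sequence $0\to T_{\tilde X}\to\tau^{*}T_{X}\to j_{*}\calF\to 0$ (with $\calF$ the universal quotient on $\PP(\calN)$) and GRR is equivalent and lands on the same reduction. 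The $(\star_G)$ argument is likewise the same in substance: the paper simply notes that $G$ commutes with $b$ and $b^{\vee}$, which is exactly your block-diagonal statement, and then invokes $(\star_G)$ for $X$ and $Y$.
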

	\begin{proof}
		Let $\phi: \h(X)\xrightarrow{\isom} M$ and $\psi: \h(Y)\xrightarrow{\isom} N$ be
		markings satisfying $(\star)$. 
		Using the blow-up formula (\ref{eqn:blowup}),
		$\phi$ and $\psi$ induce a marking for $\tilde X$\,:
		\begin{equation}\label{eqn:MarkingBl}
		\lambda: \h(\tilde X)\xrightarrow{\isom} M\oplus \bigoplus_{k=1}^{c-1}N(-k),
		\end{equation}
		which we will show to satisfy $(\star)$.
		
		
		Using the short exact sequence $0\to T_{Y} \to T_{X}\vert_{Y}\to \calN\to 0$, we
		see that the Chern classes of $\calN$ can be expressed as polynomials of Chern classes of $Y$
		and Chern classes of $X$ restricted to $Y$, which are all in $\DCH(Y)$ by
		hypothesis $(\starC)$ for $X$ and $Y$. Since $\DCH(Y)$ is a subalgebra
		(Proposition \ref{prop:Disting}), all Chern classes of $\calN$ are distinguished
		on $Y$. 
%
		The condition $(\starM)$ then follows from Lemma~\ref{lemma:sdiagBl} (together with Proposition~\ref{prop:distmorphism}), since all Segre
		and Chern classes as well as the morphisms $i^{*}:\h(X)\to \h(Y)$,
		$i_{*}:\h(Y)\to \h(X)(c)$, the intersection products $\delta_{X}: \h(X)^{\otimes
			2}\to \h(X)$ and $\delta_{Y}:\h(Y)^{\otimes 2}\to \h(Y)$ are all distinguished
		by assumption.
		
	That the morphisms in Diagram \eqref{E:cartblowup} are all
	distinguished in the sense of Definition \ref{def:DisCor} is straightforward\,:	the inclusion morphism $i:Y\inj X$ is distinguished by assumption\,; the projective bundle $p: E\to
		Y$ is distinguished thanks to Proposition \ref{prop:PB}\,; the distinguishedness
		of of $\tau$ is equivalent to say that (\emph{via} the markings $\phi$
		and $\lambda$) the inclusion of the first summand $M\inj M\oplus
		\bigoplus_{k=1}^{c-1}N(-k)$ is symmetrically distinguished, which is obvious\,;
		finally, one checks easily that \emph{via} the natural markings, the morphism
		$j^{*}:\h(\tilde X)\to\h(E)$ corresponds to the morphism $$(i^{*}, -\id, \cdots,
		-\id): M\oplus N(-1)\oplus \cdots \oplus N(-c+1)\to N\oplus N(-1)\oplus\cdots
		\oplus N(-c+1),$$ which is obviously symmetrically distinguished. 
		
		Now for $(\starC)$, we use the formula for Chern classes of a blow-up given in
		\cite[Theorem 15.4]{MR1644323}. Given the distinguishedness of the Chern classes of
		$T_{X}$, $T_{Y}$ and $\calN$, we only have to show that for any $\alpha\in
		\DCH(Y)$ and $k\in \N$, the class $j_{*}(p^{*}(\alpha)\cdot \xi^{k})\in
		\CH(\tilde X)$ is distinguished. But that is immediate, because each of $j, p, \alpha$, and $\xi = -j^*j_*(1)$ is distinguished by the above.
		
		
		Finally, assume that  $X$ is equipped with the action of a finite group $G$ such
		that $G\cdot Y = Y$. Note that with the induced action of $G$ on $E$ and $\tilde
		X$, we have that the morphisms in diagram \eqref{E:cartblowup} are
		$G$-equivariant. Thus the action of $G$ commutes with $b$ and $b^\vee$. Since we
		are assuming that the markings  of $X$ and $Y$ satisfy $(\star_G)$, we find that
		the marking $\lambda$ satisfies $(\star_G)$.
	\end{proof}

	\subsection{Generically finite morphism}\label{subsect:quotient}
	In this subsection, we show that the condition $(\star)$ passes from the source
	variety of a surjective and generically finite morphism to the target variety
	under natural assumptions.
	
	\begin{prop}\label{prop:cover}
		Let $\pi: X\to Y$ be a generically finite and surjective morphism between smooth
		projective varieties. If $X$ has a marking satisfying $(\starM)$ and such that
		the cycle ${}^{t}\Gamma_{\pi}\circ\Gamma_{\pi}$ is distinguished in $\CH(X\times
		X)$, then $Y$ has a natural marking that satisfies $(\starM)$ and is such that
		the graph of $\pi$ is distinguished.
	\end{prop}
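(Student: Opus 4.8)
The plan is to realise $\h(Y)$ as a distinguished direct summand of $\h(X)$ and transport the marking of $X$ through it. Let $n=\dim X=\dim Y$ and let $d$ be the degree of $\pi$, so that by the projection formula $\Gamma_\pi\circ{}^t\Gamma_\pi=\pi_*\pi^*=d\cdot\id_{\h(Y)}$. Hence $p:=\tfrac1d\,{}^t\Gamma_\pi\circ\Gamma_\pi\in\End_{\CHM}(\h(X))$ is an idempotent (it is $\tfrac1d\pi^*\pi_*$), the morphism $\Gamma_\pi\colon\h(X)\to\h(Y)$ is a split epimorphism with section $\tfrac1d\,{}^t\Gamma_\pi$, and $\h(Y)\isom\im(p)$. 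First I would transport $p$ through the given marking $\phi\colon\h(X)\xrightarrow{\isom}F(M)$ to the endomorphism $\bar p:=\phi\circ p\circ\phi^{-1}$ of $M$. By hypothesis ${}^t\Gamma_\pi\circ\Gamma_\pi$ is distinguished, and $\DHom$ is a $\Q$-vector space, so $\bar p$ is a symmetrically distinguished idempotent endomorphism of $M$ in the category $(\M^{ab}_{sd},\text{ s.d.\,morphisms})$. Since that category is pseudo-abelian (Lemma~\ref{lemma:PsAb}), $\bar p$ splits there: there are an object $N\in\M^{ab}_{sd}$ and symmetrically distinguished morphisms $\iota\colon N\to M$, $\rho\colon M\to N$ with $\rho\circ\iota=\id_N$ and $\iota\circ\rho=\bar p$. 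As $F(N)\isom\im(\bar p)\isom\h(Y)$, the composite
\[
\psi:=\rho\circ\phi\circ\bigl(\tfrac1d{}^t\Gamma_\pi\bigr)\colon\ \h(Y)\xrightarrow{\isom}F(N),\qquad \psi^{-1}=\Gamma_\pi\circ\phi^{-1}\circ\iota,
\]
is the desired natural marking of $Y$.

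Next I would check that the graph of $\pi$ is distinguished for the product marking $\phi\otimes\psi$. Unwinding the construction, $\psi\circ\Gamma_\pi\circ\phi^{-1}=\rho\circ\phi\circ p\circ\phi^{-1}=\rho\circ\bar p=\rho\circ\iota\circ\rho=\rho$, which is symmetrically distinguished by construction; hence $\Gamma_\pi\in\DCH_{\phi\otimes\psi}(X\times Y)$. (Symmetrically, $\phi\circ{}^t\Gamma_\pi\circ\psi^{-1}=d\,\iota$, so $\pi^*$ is distinguished as well, although this is not needed for the statement.)

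It remains to verify $(\starM)$ for $\psi$, for which I would use Lemma~\ref{lemma:InterpretationStar}: it suffices to show that the multiplication morphism $\mu_N\colon N^{\otimes2}\to N$ induced by $\delta_Y$ is symmetrically distinguished. The one genuinely non-formal input is the identity of morphisms $\h(Y)^{\otimes2}\to\h(Y)$
\[
\delta_Y\;=\;\tfrac1d\,(\pi\times\pi\times\pi)_*(\delta_X)\;=\;\tfrac1d\,\Gamma_\pi\circ\delta_X\circ\bigl({}^t\Gamma_\pi\otimes{}^t\Gamma_\pi\bigr),
\]
expressing the compatibility of the ring structures of $\h(X)$ and $\h(Y)$: the first equality holds because $\pi$ restricts to a degree-$d$ morphism from the small diagonal of $X$ to the small diagonal of $Y$, and the second is the standard formula for pushing a correspondence forward along $\pi\times\pi$ on the source and $\pi$ on the target; by Manin's identity principle it is enough to check it on Chow groups, where it reduces to $\pi_*(\pi^*u\cdot\pi^*v)=d\,uv$. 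Transporting this identity through $\phi$ and $\psi$ and using $\psi\circ\Gamma_\pi\circ\phi^{-1}=\rho$ and $\phi\circ{}^t\Gamma_\pi\circ\psi^{-1}=d\,\iota$ yields
\[
\mu_N\;=\;d\cdot\rho\circ\mu_M\circ(\iota\otimes\iota),
\]
where $\mu_M\colon M^{\otimes2}\to M$ is the multiplication on $M$, which is symmetrically distinguished precisely because $X$ satisfies $(\starM)$ (Lemma~\ref{lemma:InterpretationStar}). Since symmetrically distinguished morphisms form a $\Q$-linear tensor subcategory closed under composition (Lemma~\ref{lemma:EquivalenceNum}$(i)$), $\mu_N$ is symmetrically distinguished, and hence $\psi$ satisfies $(\starM)$. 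I do not expect a serious obstacle here: once the displayed compatibility of ring structures is in place, everything else is a diagram chase, and the only real care needed is in keeping the identifications $\h(Y)\isom\im(p)\isom F(N)$ coherent so that the factors of $d$ and the roles of $\iota,\rho,\Gamma_\pi,{}^t\Gamma_\pi$ are correctly matched.
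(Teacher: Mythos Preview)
Your proof is correct and follows essentially the same route as the paper: realise $\h(Y)$ as the image of the symmetrically distinguished idempotent $\bar p$ on $M$ via Lemma~\ref{lemma:PsAb}, transport the marking, and deduce $(\starM)$ from the fact that $\pi^*$ is a ring homomorphism. The only differences are cosmetic: your marking $\psi$ differs from the paper's $\lambda$ by a scalar factor $1/d$ (whence the extra $d$ in your formula $\mu_N=d\cdot\rho\circ\mu_M\circ(\iota\otimes\iota)$ versus the paper's $\mu_Y=p\circ\mu_X\circ(i\otimes i)$), and you phrase the key compatibility as $\delta_Y=\tfrac1d\,\Gamma_\pi\circ\delta_X\circ({}^t\Gamma_\pi\otimes{}^t\Gamma_\pi)$ whereas the paper uses the equivalent form $\pi^*\circ\delta_Y=\delta_X\circ(\pi^*\otimes\pi^*)$ inside a commutative diagram.
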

	\begin{proof}
		Let $d$ be the degree of $\pi$ and $n$ be the dimension of $X$ and $Y$.
		Let $M\in \M^{ab}_{sd}$ and $\phi: \h(X)\xrightarrow{\isom} M$ be a marking
		satisfying $(\starM)$. The graph of $\pi$ and its transpose induce respectively 
		$$\pi_{*}:=\Gamma_{\pi}: \h(X)\to \h(Y),$$
		$$\pi^{*}:={}^{t}\Gamma_{\pi}: \h(Y)\to \h(X)$$
		such that $\pi_{*}\circ\pi^{*}=d\cdot \Delta_{Y}\in \End(\h(Y))$.
		Therefore
		$\frac{1}{d}{}^{t}\Gamma_{\pi}\circ\Gamma_{\pi}=\frac{1}{d}\pi^{*}\circ
		\pi_{*}\in \End(\h(X))$ is a projector and $\pi^{*}$ induces an isomorphism of
		Chow motives $$\h(Y)\lra{\isom} \left(X,
		\frac{1}{d}{}^{t}\Gamma_{\pi}\circ\Gamma_{\pi}, 0\right).$$
		Consider the projector $$q:=\phi\circ\frac{1}{d}{}^{t}\Gamma_{\pi}\circ\Gamma_{\pi}\circ
		\phi^{-1}$$ in $\End(M)$.
		Since ${}^{t}\Gamma_{\pi}\circ\Gamma_{\pi}$ is distinguished by assumption, $q$
		is a symmetrically distinguished idempotent endomorphism of $M$. By Lemma
		\ref{lemma:PsAb}, we have a canonical image $$N:=\im\left(q: M\to M\right),$$
		with $N\in \M^{ab}_{sd}$ and such that the projection $p:M\surj N$ and the inclusion
		$i: N\inj M$ are symmetrically distinguished morphisms in $\M^{ab}_{sd}$. 
		
		By definition, we have $p\circ i=\id$ and $i\circ p=q=\phi\circ\frac{1}{d}\pi^{*}\circ \pi_{*}\circ\phi^{-1}$. Therefore the composition $$\lambda:=p\circ \phi\circ \pi^{*}: \h(Y)\to N$$ is an isomorphism with inverse $\lambda^{-1}=\frac{1}{d}\pi_{*}\circ \phi^{-1}\circ i$.
		Note that $\lambda$ is nothing else but the following composition of isomorphisms\,:
		$$\h(Y)\xrightarrow{\isom} \left(X,
		\frac{1}{d}{}^{t}\Gamma_{\pi}\circ\Gamma_{\pi}, 0\right)\xrightarrow{p\circ\phi}
		N.$$
		We now show that the marking for $Y$ provided by the isomorphism $\lambda$ satisfies $(\starM)$.
		We consider the following commutative diagram,
		\begin{equation*}
		\xymatrix{
			\h(Y)\otimes\h(Y) \ar[r]^-{\delta_{Y}} \ar@{^{(}->}[d]^{\pi^{*}\otimes \pi^{*}}
			& \h(Y) \ar@{^{(}->}[d]^{\pi^{*}}
			\ar@/^4pc/[ddd]_{\lambda}\\
			\h(X)\otimes\h(X)  \ar[r]^-{\delta_{X}} \ar[d]_{\isom}^{\phi\otimes \phi}&
			\h(X)\ar[d]_{\isom}^{\phi}\\
			M\otimes M\ar[r]^-{\mu_{X}}  & M
			\ar@{->>}[d]^{p} \\
			N\otimes N \ar@{^(->}[u]^{i\otimes i} \ar[r]^-{\mu_{Y}}& N
		}
		\end{equation*}
		where $\mu_{Y}:=p\circ \mu_{X}\circ (i\otimes i)$ is clearly symmetrically distinguished as $\mu_{X}$, $i\otimes i$ and $p$ are so. 	
		By Lemma \ref{lemma:InterpretationStar}, it suffices to check that $\mu_{Y}\circ (\lambda\otimes \lambda)=\lambda\circ \delta_{Y}$. This is straightforward\,:
		\begin{eqnarray*}
		 \mu_{Y}\circ (\lambda\otimes \lambda)
		 &=& p\circ \mu_{X}\circ (i\otimes i)\circ (\lambda\otimes\lambda)\\
		 &=& p\circ \mu_{X}\circ (\phi\otimes \phi)\circ (\pi^{*}\otimes \pi^{*})\\
		 &=& p\circ \phi\circ \pi^{*}\circ \delta_{Y}\\
		 &=& \lambda\circ \delta_{Y},
		\end{eqnarray*}
		where the second equality uses $i\circ \lambda=i\circ p\circ \phi\circ \pi^{*}=q\circ \phi\circ \pi^{*}=\phi\circ \pi^{*}$ and the third equality uses the commutativity of the previous diagram.\\
		That the graph of $\pi: X\to Y$ is distinguished is equivalent to the condition
		that the natural inclusion $N\inj M$,  or equivalently $p:M\surj N$, is a
		symmetrically distinguished morphism.
	\end{proof}
	
	\begin{rmk}[$(\star)$ and semi-small morphisms]\label{rmk:semismall}
		When $\pi: X\to Y$ is semi-small (\emph{cf.} \S\ref{subsubsect:semismall}), then
		the condition on the cycle ${}^{t}\Gamma_{\pi}\circ\Gamma_{\pi}$ in Proposition
		\ref{prop:cover} is equivalent to the more explicit condition that the class of
		$X\times_{Y}X$ in $\CH_{n}(X\times X)$ is distinguished. 
	\end{rmk}

	\begin{prop}[$(\star)$ and \'etale covers]\label{prop:etale}
		Notation and assumptions are as in Proposition \ref{prop:cover}.
		If moreover,  $\pi$ is \'etale and the marking for $X$ satisfies $(\starC)$,
		then the natural marking for $Y$ also satisfies $(\starC)$.
	\end{prop}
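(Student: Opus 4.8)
The plan is to exploit the fact that the natural marking $\lambda : \h(Y) \xrightarrow{\isom} N$ produced in Proposition~\ref{prop:cover} already makes the pushforward $\pi_* = \Gamma_\pi : \h(X)\to\h(Y)$ a distinguished morphism, and then to transport the Chern classes of $X$ down to $Y$ along $\pi_*$. The only genuinely new ingredient compared with Proposition~\ref{prop:cover} is that étaleness — as opposed to mere generic finiteness — identifies the tangent bundles.

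First I would record the étale input: since $\pi$ is étale, the relative tangent sheaf vanishes, so $T_X \cong \pi^*T_Y$ and hence $c_i(T_X) = \pi^*c_i(T_Y)$ in $\CH(X)$ for all $i$. By the projection formula, $\pi_*\circ\pi^* = d\cdot\id$ on $\CH(Y)$, where $d = \deg\pi$, so
\begin{equation*}
c_i(T_Y) \;=\; \tfrac1d\,\pi_*\bigl(\pi^*c_i(T_Y)\bigr) \;=\; \tfrac1d\,\pi_*\bigl(c_i(T_X)\bigr).
\end{equation*}

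Next I would check that $\pi_* : \h(X)\to\h(Y)$ is a distinguished morphism for the markings $\phi$ and $\lambda$. This is exactly the assertion, already established in Proposition~\ref{prop:cover}, that the graph of $\pi$ is distinguished; alternatively, one computes directly that $\lambda\circ\pi_*\circ\phi^{-1} = p\circ\phi\circ(\pi^*\circ\pi_*)\circ\phi^{-1} = p\circ(d\,q) = d\cdot(p\circ q) = d\cdot p$, using $p\circ q = p\circ i\circ p = p$, so that $\pi_*$ corresponds via the markings to the scalar multiple $d\cdot p$ of the symmetrically distinguished projection $p : M\twoheadrightarrow N$, hence is symmetrically distinguished. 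Since by Proposition~\ref{prop:cover} both the marking $\phi$ on $X$ and the marking $\lambda$ on $Y$ satisfy $(\starM)$, Proposition~\ref{prop:distmorphism}$(i)$ applies and yields $\pi_*\,\DCH_\phi(X)\subseteq\DCH_\lambda(Y)$.

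Finally, combining the two steps: since the marking for $X$ satisfies $(\starC)$ we have $c_i(T_X)\in\DCH_\phi(X)$ for every $i$, and therefore $c_i(T_Y) = \tfrac1d\,\pi_*\bigl(c_i(T_X)\bigr)\in\DCH_\lambda(Y)$, which is precisely $(\starC)$ for the natural marking $\lambda$. The step I expect to require the most care is not the motivic bookkeeping, which is a formal consequence of Propositions~\ref{prop:cover} and~\ref{prop:distmorphism}, but rather making sure that it is genuinely the étale hypothesis that is used, namely through the identification $\pi^*T_Y = T_X$ — for a generically finite morphism that is merely ramified, $c_i(T_X)$ differs from $\pi^*c_i(T_Y)$ by a correction term supported on the ramification locus, and the argument above breaks down.
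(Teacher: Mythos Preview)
Your proof is correct and follows essentially the same route as the paper: use \'etaleness to write $c_i(T_Y)=\tfrac{1}{d}\pi_*c_i(T_X)$, invoke that $\pi$ is a distinguished morphism (from Proposition~\ref{prop:cover}), and conclude via Proposition~\ref{prop:distmorphism}$(i)$. Your additional direct computation $\lambda\circ\pi_*\circ\phi^{-1}=d\cdot p$ and the closing remark on why \'etaleness is needed are extra commentary rather than a different strategy.
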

	\begin{proof}
		Let $d$ be the degree of $\pi$. For any $i\in \N$, 
		$c_{i}(Y)=\frac{1}{d}\pi_{*}\pi^{*}(c_{i}(Y))=\frac{1}{d}\pi_{*}c_{i}(X)$ is
		distinguished since $c_{i}(X)$ is distinguished and $\pi$ is a distinguished
		morphism. 
	\end{proof}
	
	\begin{prop}[$(\star)$ and finite group quotients]\label{P:quotient}
		Let $X$ be a smooth projective variety endowed with an action of a finite group
		$G$, such that the quotient $Y:=X/G$ is smooth. If there is a marking for
		$(X,G)$ satisfying $(\starM)$ and $(\star_G)$, then $Y$ has a natural marking
		that satisfies $(\starM)$ and is such that the quotient morphism
		$\pi : X \to Y$ is distinguished.
		
		Moreover, if $\pi: X\to Y$ is \'etale or a cyclic covering along a divisor $D$
		such that $D\in \DCH(X)$ and if the marking for $X$ satisfies $(\starC)$, then
		the natural marking for $Y$ also satisfies $(\starC)$.
	\end{prop}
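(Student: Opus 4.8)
The plan is to deduce everything from Propositions~\ref{prop:cover} and~\ref{prop:etale}, applied to the quotient morphism $\pi\colon X\to Y$, which is finite and surjective of degree $d:=|G|$, in particular generically finite and surjective. The one input that must be checked by hand is that ${}^{t}\Gamma_{\pi}\circ\Gamma_{\pi}\in\CH(X\times X)$ is distinguished. For this I would use the standard identity ${}^{t}\Gamma_{\pi}\circ\Gamma_{\pi}=\sum_{g\in G}\Gamma_{g}$, valid because $Y=X/G$ (equivalently, $\tfrac1d\,{}^{t}\Gamma_{\pi}\circ\Gamma_{\pi}$ is exactly the projector of $\h(X)$ onto its $G$-invariant part $\h(Y)\cong\h(X)^{G}$; concretely $[X\times_{Y}X]=\sum_{g\in G}\Gamma_{g}$ since $X\times_{Y}X\to X$ is finite flat of degree $d$ with the $\Gamma_{g}$ pairwise distinct). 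Since the marking $\phi\colon\h(X)\xrightarrow{\isom}M$ satisfies $(\star_{G})$, each graph $\Gamma_{g}$ is a distinguished correspondence, and since $\DCH(X\times X)$ is a $\Q$-vector subspace of $\CH(X\times X)$, the finite sum $\sum_{g\in G}\Gamma_{g}$ is distinguished. As $\phi$ also satisfies $(\starM)$, Proposition~\ref{prop:cover} then applies verbatim and produces a natural marking $\lambda\colon\h(Y)\xrightarrow{\isom}N$ satisfying $(\starM)$ for which the graph of $\pi$ is distinguished.

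For the supplementary statement on $(\starC)$, the \'etale case is nothing but Proposition~\ref{prop:etale}, whose hypotheses we have just verified. In the cyclic case, write $G=\Z/n$, let $D\subset X$ be the ramification divisor (so $[D]\in\DCH(X)$ by hypothesis) and $B\subset Y$ the branch divisor, and recall $\pi^{*}B=nD$; presenting the cover by a section of $L^{\otimes n}$ for a line bundle $L$ on $Y$, one has $\calO_{X}(D)\cong\pi^{*}L$, hence $\pi^{*}c_{1}(L)=[D]$ is distinguished \emph{even though} $c_{1}(L)$ itself need not be. Since the graph of $\pi$ is distinguished, $\pi_{*}$ maps $\DCH(X)$ into $\DCH(Y)$ by Proposition~\ref{prop:distmorphism}$(i)$, and $\tfrac1d\,\pi_{*}\pi^{*}=\id$ on $\CH(Y)$ by the projection formula; so it suffices to prove $\pi^{*}c_{i}(T_{Y})\in\DCH(X)$ for every $i$, after which $c_{i}(T_{Y})=\tfrac1d\,\pi_{*}\bigl(\pi^{*}c_{i}(T_{Y})\bigr)$ is automatically distinguished.

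To establish $\pi^{*}c_{i}(T_{Y})\in\DCH(X)$ I would use the exact sequence $0\to\pi^{*}\Omega_{Y}\to\Omega_{X}\to\Omega_{X/Y}\to0$, where $\Omega_{X/Y}$ is a torsion $\calO_{D}$-module (indeed $\omega_{X/Y}\cong\calO_{X}((n-1)D)$), so it admits a two-term locally free resolution with terms powers of $\calO_{X}(D)=\pi^{*}L$; by Grothendieck--Riemann--Roch and the self-intersection formula its Chern character is therefore a polynomial in $[D]$ and $\pi^{*}c_{1}(L)=[D]$, hence distinguished because $\DCH(X)$ is a subalgebra of $\CH(X)$ (Proposition~\ref{prop:Disting}, using $(\starM)$ for $X$). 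As $\mathrm{ch}(\Omega_{X})$ is a polynomial in the distinguished classes $c_{i}(T_{X})$, the exact sequence gives $\mathrm{ch}(\pi^{*}\Omega_{Y})$, and therefore each $\pi^{*}c_{i}(T_{Y})$, distinguished, completing the argument. The main obstacle I anticipate is exactly this last cyclic-cover computation: one must pin down the $\calO_{D}$-module structure of $\Omega_{X/Y}$ precisely enough to see that its Chern character involves only $[D]$, the crucial point being the identification $\calO_{X}(D)\cong\pi^{*}L$ which makes the relevant twist $\pi^{*}c_{1}(L)$ distinguished although $c_{1}(L)$ is not controlled. Everything else --- the correspondence identity and the invocations of Propositions~\ref{prop:cover} and~\ref{prop:etale} --- is formal.
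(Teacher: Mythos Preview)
Your proof is correct and follows essentially the same strategy as the paper: reduce to Proposition~\ref{prop:cover} by checking that ${}^{t}\Gamma_{\pi}\circ\Gamma_{\pi}=\sum_{g\in G}\Gamma_{g}$ is distinguished via $(\star_G)$, then handle $(\starC)$ in the cyclic case via the exact sequence comparing $T_X$ and $\pi^*T_Y$ (the paper works with tangent bundles rather than cotangent bundles, which is immaterial). The only notable simplification in the paper is that it identifies the torsion cokernel explicitly as $\calO_D(dD)$ and reads off its Chern character from the resolution $0\to\calO_X((d-1)D)\to\calO_X(dD)\to\calO_D(dD)\to 0$, avoiding your detour through the line bundle $L$, the relative dualizing sheaf $\omega_{X/Y}$, and the unnecessary invocation of Grothendieck--Riemann--Roch.
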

	\begin{proof}
		The assertions concerning $(\starM)$ and the distinguishedness of ${\pi}$
		follow from Proposition \ref{prop:cover}. Indeed, by Remark \ref{rmk:semismall},
		in order to apply Proposition \ref{prop:cover}, it suffices to check that the class of
		$X\times_{Y} X$ is distinguished. In the present situation of finite group
		quotient, $X\times_{Y} X$ is nothing but $\sum_{g}\Gamma_{g}$, which is
		distinguished in $\CH(X\times X)$ by $(\star_{G})$.
		
		As for the condition $(\starC)$, the \'etale case is treated in Proposition
		\ref{prop:etale}.
		Suppose $\pi: X\to Y$ is a degree $d$ cyclic covering branched along a divisor $D$ such that
		$D\in \DCH(X)$.
		In order to show that the natural marking on $Y$ satisfies
		$(\starC)$, it suffices to show by the projection formula that
		$\pi^*\operatorname{ch}(T_Y)$ is distinguished. We have a short exact sequence
		$$0 \longrightarrow T_X \longrightarrow \pi^*T_Y \longrightarrow O_D(dD)
		\longrightarrow 0.$$ Since $X$ satisfies $(\starC)$, it is enough to show that
		$\operatorname{ch}(O_D(dD))$ belongs to $\DCH(X)$. Now $O_D(dD)$ fits into the
		short exact sequence 
		$$0\longrightarrow O_X((d-1)D) \longrightarrow O_X(dD) \longrightarrow O_D(dD)
		\longrightarrow 0.$$  Since the class of the divisor $D$ is assumed to belong to
		the $\Q$-subalgebra $\DCH(X)$, we find that indeed $\operatorname{ch}(O_D(dD)) =
		\operatorname{ch}(O_X(dD)) - \operatorname{ch}(O_X((d-1)D))$ belongs to
		$\DCH(X)$, which concludes the proof.
	\end{proof}

	\subsection{Hilbert squares and nested Hilbert schemes}
	
	\begin{prop}[Hilbert squares]\label{prop:Hilb2}
		Assume $X$ is a smooth projective variety with a marking that satisfies
		$(\star)$. Then $X^{[2]}$ has a natural marking that satisfies $(\star)$ and is
		such that the universal family $\{(x,z) : x\in \mathrm{Supp}(z) \} \subseteq
		X\times X^{[2]}$ is distinguished (with respect to the product marking).
	\end{prop}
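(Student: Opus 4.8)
The plan is to present $X^{[2]}$ as a quotient of a blow-up and then apply Propositions \ref{P:products}, \ref{prop:Blup} and \ref{P:quotient} in succession; we may assume $X$ connected. Recall the standard picture: let $B:=\mathrm{Bl}_{\Delta_X}(X\times X)$ be the blow-up along the diagonal, with blow-down $\tau:B\to X\times X$ and exceptional divisor $E\cong\PP(\calN_{\Delta_X/X\times X})$, and let $\iota$ be the lift to $B$ of the involution of $X\times X$ swapping the two factors. Because the differential of that involution is $-\mathrm{id}$ on $\calN_{\Delta_X/X\times X}\cong T_X$, the lift $\iota$ fixes $E$ pointwise and acts freely on $B\setminus E$; hence $B/\iota$ is smooth and is canonically $X^{[2]}$, with $q:B\to X^{[2]}$ a degree-$2$ cyclic covering ramified along $E$. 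Moreover the universal subscheme $\mathcal Z\subset X\times X^{[2]}$ (which coincides with the nested Hilbert scheme $X^{[1,2]}$) is the image of the closed immersion $f:=(p_1\circ\tau,\,q):B\hookrightarrow X\times X^{[2]}$, where $p_1:X\times X\to X$ is the first projection; in particular $[\mathcal Z]=f_*(\1_B)$.

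First I would fix a marking $\phi$ of $X$ satisfying $(\star)$ and build a marking of $B$. By Proposition \ref{P:products} the product marking $\phi^{\otimes2}$ on $X\times X$ satisfies $(\star)$, and by Remark \ref{R:perm} the swap of $X\times X$ is a distinguished morphism, so $(X\times X,\Z/2)$ satisfies $(\star_{\Z/2})$. The crucial observation is that the diagonal embedding $\Delta_X\hookrightarrow X\times X$, regarded through $\Delta_X\cong X$ as a correspondence in $\CH(X^{3})$, is precisely the small diagonal $\delta_X$; hence it is a distinguished morphism because $X$ satisfies $(\starM)$. Since the swap restricts to the identity on $\Delta_X\cong X$, the pair $(\Delta_X,\Z/2)$ also satisfies $(\star_{\Z/2})$ (the graph of the identity being the diagonal, distinguished by Lemma \ref{lemma:Diagonal}). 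The $\Z/2$-equivariant form of Proposition \ref{prop:Blup} then supplies a natural marking of $B$ satisfying $(\star_{\Z/2})$ — in particular $(\star)$ — and makes all morphisms in diagram \eqref{E:cartblowup} distinguished, in particular $\tau:B\to X\times X$ and $j:E\hookrightarrow B$; and $E$ is $\Z/2$-stable.

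Next I would descend to $X^{[2]}$. The divisor $E=j_*(\1_E)$ lies in $\DCH(B)$ since $j$ is distinguished and $\1_E\in\DCH(E)$ (Remark \ref{rmk:FundCl}, Proposition \ref{prop:distmorphism}). Applying Proposition \ref{P:quotient} to $(B,\Z/2)$ gives $X^{[2]}$ a natural marking satisfying $(\starM)$ with $q$ distinguished, and — invoking the final clause of Proposition \ref{P:quotient}, since $q$ is a cyclic covering along $E\in\DCH(B)$ and $B$ satisfies $(\starC)$ — that marking also satisfies $(\starC)$; hence $X^{[2]}$ satisfies $(\star)$. For the universal family, I would use $[\mathcal Z]=f_*(\1_B)$ and show that $f$ is a distinguished morphism. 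Its two components are distinguished: $p_1\circ\tau:B\to X$ is the composite of the distinguished morphisms $\tau$ and $p_1$ (Propositions \ref{prop:distmorphism} and \ref{P:products}), and $q:B\to X^{[2]}$ was just shown distinguished. A morphism to a product with distinguished components is distinguished — factor $f$ as the diagonal $B\to B\times B$ (distinguished by Corollary \ref{C:diag}) followed by the product $(p_1\circ\tau)\times q:B\times B\to X\times X^{[2]}$ of the two component morphisms (distinguished by Proposition \ref{prop:tensor} and Remark \ref{R:perm}). Therefore $[\mathcal Z]=f_*(\1_B)\in\DCH(X\times X^{[2]})$ by Proposition \ref{prop:distmorphism}$(i)$, completing the proof.

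The bulk of the argument is motivic bookkeeping, resting on two classical geometric inputs: the description of $X^{[2]}$ as $B/\iota$ (with $q$ a cyclic covering ramified along $E$), and the identification of the universal subscheme with $f(B)$. The one step carrying real content is the observation that the diagonal embedding $\Delta_X\hookrightarrow X\times X$ is the small-diagonal correspondence $\delta_X$ — this is exactly where hypothesis $(\starM)$ on $X$, rather than mere membership of $\h(X)$ in $\M^{ab}$, is used, and everything downstream (blow-up, quotient, universal family) hinges on it. A subsidiary point to be careful about is that the swap involution induces $-\mathrm{id}$ on the normal bundle of $\Delta_X$, which is what guarantees that $B/\iota$ is smooth and that Proposition \ref{P:quotient} genuinely applies.
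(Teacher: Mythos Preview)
Your proof is correct and follows essentially the same route as the paper: present $X^{[2]}$ as the $\Z/2$-quotient of the blow-up of $X\times X$ along the diagonal, then apply Propositions \ref{P:products}, \ref{prop:Blup} and \ref{P:quotient} in succession, and finally identify the universal family with the blow-up embedded via its two component maps. Your write-up is in fact more careful than the paper's on two points it leaves implicit: that the diagonal embedding $\Delta_X\hookrightarrow X\times X$ is distinguished (being the small diagonal $\delta_X$), and that the branch divisor $E\in\DCH(B)$ so that the cyclic-covering clause of Proposition \ref{P:quotient} applies.
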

	\begin{proof}
		The product $X\times X$ is naturally endowed with the action of $G:=\Z/2$ that
		switches the factors, and the locus of fixed points is the diagonal, which is
		isomorphic to $X$. By Remark \ref{R:perm}, the product marking on $X\times X$
		satisfies $(\star_G)$. Therefore, we may apply Proposition \ref{prop:Blup} to
		obtain a marking on the blow-up $\widetilde{X\times X}$ of $X\times X$ along the
		diagonal that satisfies $(\star)$ and $(\star_G)$. Now $X^{[2]}$ is the quotient
		of the latter blow-up by the cyclic action of $\Z/2$. Thus Proposition
		\ref{P:quotient} provides a marking for $X^{[2]}$ that satisfies $(\star)$. 
		
		Finally, we show that the universal family $Y:= \{(x,z) : x\in \mathrm{Supp}(z)
		\}$ is distinguished. First note that $Y$ is isomorphic to $\widetilde{X\times
			X}$, so that $Y$ is endowed with the natural marking coming from that of $X$. In
		order to conclude, we only need to show that the graph $\Gamma$ of the inclusion
		morphism $i:Y \hookrightarrow X\times X^{[2]}$ is distinguished. 
 This is clear because the components $Y\to X$ and $Y\to X^{[2]}$ of $i$, which consist of the composition $\widetilde{X\times
 	X} \to X\times X \to X$ and the quotient morphism $\widetilde{X\times X} \to X^{[2]}$, are distinguished.
	\end{proof}
	
	Recall that by a result of Cheah \cite{MR1616606}, for a smooth projective
	variety $X$ of dimension $\geq 3$, the only smooth nested Hilbert schemes of
	finite length subschemes on $X$ are $X^{[2]}$, $X^{[3]}$, $X^{[1, 2]}$ and
	$X^{[2,3]}$. By the same method, we have\,:
	\begin{prop}[Nested Hilbert schemes]\label{prop:NestHilb}
		Assumption is as in Proposition \ref{prop:Hilb2}.  Then $X^{[1,2]}$ and
		$X^{[2,3]}$ have natural markings satisfying $(\star)$ and are such that the
		classes of the universal subschemes are distinguished.
	\end{prop}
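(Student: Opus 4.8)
The plan is to follow the template of the proof of Proposition~\ref{prop:Hilb2}: realize $X^{[1,2]}$ and $X^{[2,3]}$ as blow-ups of varieties that already satisfy $(\star)$ along distinguished centers, and then read off the universal subschemes from distinguished morphisms. For $X^{[1,2]}$ there is essentially nothing new: the flag Hilbert scheme $X^{[1,2]}=\{z_{1}\subseteq z_{2}\}$ is the universal family $\{(x,z):x\in\Supp(z)\}\subseteq X\times X^{[2]}$ of Proposition~\ref{prop:Hilb2}, which was there identified with the blow-up $\widetilde{X\times X}$ of $X\times X$ along the diagonal and equipped with a marking satisfying $(\star)$ for which all the morphisms in~\eqref{E:cartblowup} are distinguished. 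With this marking, the two universal subschemes $\mathcal Z_{1}\subseteq\mathcal Z_{2}\subseteq X\times X^{[1,2]}$ (flat of lengths $1$ and $2$ over $X^{[1,2]}$) are distinguished: $\mathcal Z_{1}$ is the graph of the morphism $X^{[1,2]}\to X$, $(z_{1}\subseteq z_{2})\mapsto z_{1}$, which under $X^{[1,2]}\isom\widetilde{X\times X}$ is the composite of the blow-down $\widetilde{X\times X}\to X\times X$ with a projection, hence distinguished; and $\mathcal Z_{2}$ is the pull-back of the universal subscheme $\mathcal Z\subseteq X\times X^{[2]}$, distinguished by Proposition~\ref{prop:Hilb2}, along the morphism $X^{[1,2]}\to X^{[2]}$, which under $X^{[1,2]}\isom\widetilde{X\times X}$ is the quotient morphism $\widetilde{X\times X}\to X^{[2]}$, distinguished by Proposition~\ref{P:quotient}; so $\mathcal Z_{2}$ is distinguished by Proposition~\ref{prop:distmorphism}$(i)$.

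For $X^{[2,3]}$ one uses the analogue of the isomorphism $X^{[1,2]}=\mathrm{Bl}_{\Delta_{X}}(X\times X)$: there is a canonical isomorphism $X^{[2,3]}\isom\mathrm{Bl}_{\mathcal Z}(X^{[2]}\times X)$, where $\mathcal Z=\{(z,x):x\in\Supp(z)\}\subseteq X^{[2]}\times X$ is the universal subscheme and the morphism to $X^{[2]}\times X$ records the pair consisting of $z_{2}$ and the residual point of $z_{3}$ (cf.~\cite{MR1616606}, and~\cite{SV2},~\cite{VialHilb}). Now $X^{[2]}\times X$ satisfies $(\star)$ by Proposition~\ref{P:products} (using Proposition~\ref{prop:Hilb2} and the hypothesis on $X$), and $\mathcal Z\isom X^{[1,2]}\isom\widetilde{X\times X}$ satisfies $(\star)$ by the previous paragraph. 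The inclusion $\mathcal Z\inj X^{[2]}\times X$ is distinguished, as its two components are the quotient morphism $\widetilde{X\times X}\to X^{[2]}$ (distinguished by Proposition~\ref{P:quotient}) and the residual-point morphism, which equals the composite of the blow-down $\widetilde{X\times X}\to X\times X$ with a projection (hence distinguished). Proposition~\ref{prop:Blup} then provides a marking on $X^{[2,3]}$ satisfying $(\star)$ for which all the morphisms in~\eqref{E:cartblowup} are distinguished; in particular the structure morphisms $\alpha\colon X^{[2,3]}\to X^{[2]}$ and $\rho\colon X^{[2,3]}\to X$ (remembering $z_{2}$, respectively the residual point), obtained by composing the blow-down $X^{[2,3]}\to X^{[2]}\times X$ with the two projections, are distinguished. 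Finally, for the universal subschemes $\mathcal Z_{2}\subseteq\mathcal Z_{3}\subseteq X\times X^{[2,3]}$ of $X^{[2,3]}$: the class of $\mathcal Z_{2}$ is the pull-back of the distinguished cycle $\mathcal Z\subseteq X\times X^{[2]}$ along the distinguished morphism $\alpha$, and $[\mathcal Z_{3}]=[\mathcal Z_{2}]+\Gamma_{\rho}$ as cycle classes (the $0$-cycle of a length-$3$ member of a flag being the $0$-cycle of its length-$2$ submember together with the residual point), so both are distinguished by Propositions~\ref{prop:distmorphism} and~\ref{prop:Disting}.

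The only genuinely geometric input — and hence the main obstacle — is the blow-up presentation $X^{[2,3]}\isom\mathrm{Bl}_{\mathcal Z}(X^{[2]}\times X)$ together with the induced description of its universal subschemes; granting this, everything reduces formally to the stability of $(\star)$ under products (Proposition~\ref{P:products}), under blow-ups along distinguished centers (Proposition~\ref{prop:Blup}), and under finite quotients (Proposition~\ref{P:quotient}), which are already at our disposal. Should this presentation not be available in the required generality, one can instead work with the full flag Hilbert scheme $X^{[1,2,3]}$, realized as an iterated blow-up of $X^{3}$ carrying an $S_{3}$-action, verify $(\star)$ for it by the same stability results, and descend $(\star)$ to $X^{[2,3]}$ along a suitable distinguished morphism — more bookkeeping, but the same scheme.
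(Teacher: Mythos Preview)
Your argument is correct and follows essentially the same route as the paper: identify $X^{[1,2]}$ with $\widetilde{X\times X}$ and $X^{[2,3]}$ with the blow-up of $X^{[2]}\times X$ along the universal subscheme, then invoke the stability of $(\star)$ under products and blow-ups along distinguished centers. Your verification that the inclusion $\mathcal Z\hookrightarrow X^{[2]}\times X$ is distinguished (via its two components) is exactly what was already established at the end of Proposition~\ref{prop:Hilb2}, so you could simply cite that; conversely, your treatment of the universal subschemes is more explicit than the paper's terse appeal to Corollary~\ref{C:diag}, and the identity $[\mathcal Z_3]=[\mathcal Z_2]+\Gamma_\rho$ is a clean way to handle them. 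Two small remarks: calling the map $\mathcal Z\to X$ the ``residual-point morphism'' is a slight misnomer (it is the support map, not a residual), and the final backup paragraph about $X^{[1,2,3]}$ is unnecessary and potentially problematic, since by Cheah's result this scheme need not be smooth in the generality considered.
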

	\begin{proof}It is clear that $X^{[1,2]}$ is isomorphic to $\widetilde{X\times
			X}$, the blow-up of $X\times X$ along the diagonal, hence satisfies $(\star)$ by
		Proposition \ref{prop:Blup}. Similarly, $X^{[2,3]}$ is isomorphic to the blow-up
		of $X\times X^{[2]}$ along the universal subscheme $Y$. As is mentioned in the
		proof of the previous proposition, $Y$ is isomorphic to $X^{[1,2]}$ hence to
		$\widetilde{X\times X}$, thus it has a marking satisfying $(\star)$. As
		$X^{[2]}$ is endowed with the marking in Proposition \ref{prop:Hilb2}, $X\times
		X^{[2]}$ is endowed with the product marking satisfying $(\star)$ by Proposition
		\ref{P:products}. Moreover, the Chern classes of the normal bundle of $Y$ in
		$X\times X^{[2]}$ are distinguished since they are polynomials of the Chern classes
		of $T_{Y}$, of $T_{X}$ pulled-back to $Y=\widetilde{X\times X}$ \emph{via} the
		first projection and of $T_{X^{[2]}}$ pulled-back to $Y$ \emph{via} the $\Z/2$
		quotient map (\emph{cf.} the computation in \cite[Theorem 6.1]{SV2}), which are
		all distinguished by Propositions \ref{prop:Blup} and \ref{P:quotient}. Again by
		Proposition \ref{prop:Blup}, $X^{[2,3]}$ has a marking satisfying $(\star)$. The
		assertions about the universal subschemes follow from Corollary \ref{C:diag}.
	\end{proof}
	\begin{rmk}[Hilbert cubes]
		An argument similar as above combined with the explicit description of the
		Hilbert cube $X^{[3]}$ in \cite{SV2} shows that $X^{[3]}$ satisfies $(\star)$
		once $X$ does. Indeed, $X^{[3]}$ is constructed from $X^{3}$ in five steps
		(\emph{cf.} \cite{SV2} or \cite{MR1219698})\,: the first three are successive
		blow-ups of $X^{3}$, each time along a center satisfying $(\star)$ with normal
		bundle having distinguished Chern classes\,; the fourth step is a quotient map
		by a distinguished cyclic $\Z/3$-action\,; the final step is a blow-down of
		divisor with distinguished normal bundle to a center satisfying $(\star)$. Thus
		using Propositions \ref{P:products}, \ref{prop:PB}, \ref{prop:Blup},
		\ref{P:quotient} and Corollary \ref{C:diag} repeatedly in the first four steps,
		and using in the final step the analogue of the technical \cite[Lemma~6.4]{SV2}
		(with $\CH(-)_{(0)}$ replaced by $\DCH(-)$), one can obtain a marking of
		$X^{[3]}$ satisfying $(\star)$. The details are left to the interested reader.
	\end{rmk}
	
	\subsection{Birational transforms for hyper-K\"ahler varieties}\label{s:birational}
	Using Huybrechts' fundamental result \cite{MR1664696} on deformation equivalence
	between birational hyper-K\"ahler varieties, Rie{\ss}  \cite{MR3268859} shows
	that the Chow rings of birational hyper-K\"ahler varieties are isomorphic.
	Actually her proof yields the following more precise result\,:
	
	\begin{thm}[{Rie{\ss}  \cite[\S 3.3 and Lemma 4.4]{MR3268859}}]\label{thm:Riess}
		Let $X$ and $Y$ be $d$-dimensional irreducible holomorphic symplectic varieties.
		If they are birational, then there exists a correspondence $Z\in \CH_{d}(X\times
		Y)$ such that 
		\begin{enumerate}[$(i)$]
			\item $(Z\times Z)_{*}: \CH_{d}(X\times X)\to \CH_{d}(Y\times Y)$ sends
			$\Delta_{X}$ to $\Delta_{Y}$;
			\item $(Z\times Z\times Z)_{*}: \CH_{d}(X\times X\times X)\to \CH_{d}(Y\times
			Y\times Y)$ sends $\delta_{X}$ to $\delta_{Y}$.
			\item $Z_{*}: \CH(X)\to \CH(Y)$ sends $c_i(X)$ to $c_{i}(Y)$ for any $i\in \N$;
			\item $Z$ induces an isomorphism of algebra objects $\h(X)\to \h(Y)$ in $\CHM$
			with inverse given by ${}^{t}Z$.
		\end{enumerate}
		In particular, $Z$ induces an isomorphism between their Chow rings (\emph{resp.}
		cohomology rings).
	\end{thm}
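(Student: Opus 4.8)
The plan is to deduce everything from a one-parameter degeneration argument of the type due to Huybrechts \cite{MR1664696}. Since $X$ and $Y$ are birational irreducible holomorphic symplectic varieties, they can be placed in a smooth proper family over a pointed smooth curve (equivalently, a small disk) $(\Delta,0)$: there are smooth projective morphisms $p\colon \mathcal X\to\Delta$ and $q\colon \mathcal Y\to\Delta$ with $\mathcal X_0\isom X$ and $\mathcal Y_0\isom Y$, together with an isomorphism $\psi\colon \mathcal X|_{\Delta^*}\xrightarrow{\isom}\mathcal Y|_{\Delta^*}$ over the punctured base whose limit as $t\to 0$ is the given birational map $X\dra Y$. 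Let $\overline{\Gamma_\psi}\subseteq \mathcal X\times_\Delta\mathcal Y$ be the closure of the graph of $\psi$; it is flat over $\Delta$, and I would define $Z\in\CH_d(X\times Y)$ to be its specialization to the central fibre, that is $Z:=i_0^![\overline{\Gamma_\psi}]$, where $i_0\colon X\times Y\inj \mathcal X\times_\Delta\mathcal Y$ is the inclusion of the fibre over $0$.

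The four assertions would then all follow from a single specialization principle, carried out in the calculus of relative correspondences over $\Delta$ (note that $\mathcal X$, $\mathcal Y$ and all their fibre products over $\Delta$ are smooth, so relative composition of correspondences is available). Over $\Delta^*$ the correspondence $\Gamma_\psi$ is the graph of an isomorphism of families, hence satisfies the relative versions of the claimed identities: ${}^t\Gamma_\psi\circ\Gamma_\psi=\Delta_{\mathcal X/\Delta}$ and $\Gamma_\psi\circ{}^t\Gamma_\psi=\Delta_{\mathcal Y/\Delta}$ over $\Delta^*$; $\Gamma_\psi$ transforms the relative small diagonal $\delta_{\mathcal X/\Delta}$ into $\delta_{\mathcal Y/\Delta}$ over $\Delta^*$; and $(\Gamma_\psi)_*$ sends $c_i(T_{\mathcal X/\Delta})$ to $c_i(T_{\mathcal Y/\Delta})$ over $\Delta^*$. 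I would lift each of these to identities of classes on the total spaces over all of $\Delta$, replacing $\Gamma_\psi$ by $\overline{\Gamma_\psi}$ and keeping the relative diagonals and relative Chern classes, which are globally defined. In each case the two sides agree after restriction to $\Delta^*$, so by the localization sequence their difference is the pushforward of a cycle supported on the central fibre. Now apply $i_0^!$: on the one hand $i_0^!$ is a ring homomorphism compatible with flat pullback and with proper pushforward along $\Delta$-morphisms, hence commutes with composition of correspondences and with exterior powers, and it sends $\Delta_{\mathcal X/\Delta}$, $\delta_{\mathcal X/\Delta}$, $c_i(T_{\mathcal X/\Delta})$ and $\overline{\Gamma_\psi}$ to $\Delta_X$, $\delta_X$, $c_i(X)$ and $Z$ respectively; on the other hand $i_0^!$ kills any class pushed forward from the central fibre, because the normal bundle of a fibre over a smooth curve is a pullback of a line bundle from a point, hence trivial, so its top Chern class vanishes. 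This yields ${}^tZ\circ Z=\Delta_X$, $Z\circ{}^tZ=\Delta_Y$, $(Z\times Z\times Z)_*\delta_X=\delta_Y$ and $Z_*c_i(X)=c_i(Y)$, that is $(iv)$ (the $\delta$-compatibility upgrading the motive isomorphism to an isomorphism of algebra objects, with inverse ${}^tZ$), together with $(ii)$ and $(iii)$. Statement $(i)$ is then formal: under the isomorphism $Z$ the identity endomorphism $\Delta_X$ of $\h(X)$ corresponds to the identity endomorphism $\Delta_Y$ of $\h(Y)$, which is exactly $(Z\times Z)_*\Delta_X=\Delta_Y$; and the final sentence follows because an isomorphism of algebra objects in $\CHM$ induces a ring isomorphism on Chow groups and, after realization, on cohomology.

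The substantive content is $(iv)$ and $(ii)$; $(i)$ and the last sentence are formal consequences, and $(iii)$ is a variant of the same argument. I expect the main obstacle to be the bookkeeping behind the specialization principle: \emph{(a)} invoking Huybrechts' deformation result in exactly the form above --- a smooth proper one-parameter degeneration in which $X$ and $Y$ are both fibres, isomorphic over the punctured base, with the birational map recovered in the limit; \emph{(b)} setting up composition of relative correspondences over $\Delta$ on the smooth total spaces, and checking that refined Gysin specialization $i_0^!$ to the central fibre is a ring map compatible with proper pushforward and flat pullback, hence with composition of correspondences and with $(-)^{\otimes k}$; and \emph{(c)} the easy but essential triviality of the normal bundle of the central fibre, which is what makes all fibre-supported error terms disappear. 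These are standard ingredients of Fulton's refined intersection theory together with the formalism of relative Chow motives, and carrying them through carefully is precisely the content of Rie{\ss}' proof in \cite{MR3268859}.
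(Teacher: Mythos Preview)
The paper does not give its own proof of this theorem: it is quoted verbatim from Rie{\ss} \cite[\S 3.3 and Lemma~4.4]{MR3268859} and used as a black box to derive Corollary~\ref{cor:BirHK}, so there is nothing in the present paper to compare your sketch against. That said, your outline is an accurate summary of Rie{\ss}'s argument --- $Z$ is the specialization to the central fibre of the closure of the graph of the generic isomorphism in a Huybrechts-type one-parameter family, and the identities $(i)$--$(iv)$ are obtained by specializing the trivially valid identities for the graph of an isomorphism, using that refined Gysin pullback to a fibre over a smooth curve base is compatible with composition of relative correspondences and kills cycles supported on that fibre.
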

	
	\begin{cor}\label{cor:BirHK}
		Let $X$ and $Y$ be $d$-dimensional irreducible holomorphic symplectic varieties
		that are birationally equivalent. If $X$ has a marking that satisfies $(\star)$,
		then so does $Y$.
	\end{cor}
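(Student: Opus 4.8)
The plan is to transport the marking of $X$ to $Y$ via the correspondence $Z$ provided by Theorem~\ref{thm:Riess}, and then check that the three conditions $(\starM)$, $(\starC)$ (and $(\star_G)$ if relevant, though for birational hyper-Kähler varieties no group action is in play) are preserved. Concretely, suppose $\phi: \h(X)\xrightarrow{\isom} M$ is a marking satisfying $(\star)$, with $M\in \M^{ab}_{sd}$. By Theorem~\ref{thm:Riess}$(iv)$, $Z$ induces an isomorphism of Chow motives $\h(Y)\xrightarrow{\isom}\h(X)$ with inverse ${}^{t}Z$; composing with $\phi$ gives an isomorphism $$\psi := \phi\circ {}^{t}Z : \h(Y)\xrightarrow{\isom} M,$$ which is a marking for $Y$ in the sense of Definition~\ref{def:marking}. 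It remains to verify that $\psi$ satisfies $(\star)$.

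For $(\starM)$: by Lemma~\ref{lemma:InterpretationStar}, $\phi$ satisfies $(\starM)$ iff the induced multiplication morphism $\mu_X: M^{\otimes 2}\to M$ (making the square with $\delta_X$ commute) is symmetrically distinguished. One checks that the analogous morphism $\mu_Y$ for $\psi$ equals $\mu_X$. Indeed, by Theorem~\ref{thm:Riess}$(iv)$ the map $Z_*$ is an isomorphism of \emph{algebra objects}, so it intertwines $\delta_X$ and $\delta_Y$ (this is exactly part $(ii)$, upgraded to the level of the algebra structure on the motives); chasing the two commuting squares — the one defining $\mu_X$ via $(\phi,\delta_X)$ and the one defining $\mu_Y$ via $(\psi,\delta_Y)$ — together with $\psi^{\otimes 2} = \phi^{\otimes 2}\circ ({}^{t}Z)^{\otimes 2}$ and the compatibility of ${}^{t}Z$ with the products shows $\mu_Y = \mu_X$. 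Hence $\mu_Y$ is symmetrically distinguished, i.e.\ $(\starM)$ holds for $\psi$. Equivalently, and perhaps more cleanly, one may use the intrinsic reformulation mentioned in the remark after Lemma~\ref{lemma:StarG}: $(\starM)$ for $\phi$ means there is an isomorphism of algebra objects $\h(X)\xrightarrow{\isom} T(\bar\h(X))$, and precomposing with the algebra isomorphism $Z_*:\h(Y)\xrightarrow{\isom}\h(X)$ of Theorem~\ref{thm:Riess}$(iv)$ yields an isomorphism of algebra objects $\h(Y)\xrightarrow{\isom} T(\bar\h(Y))$ (using that $\bar Z$ is the canonical isomorphism $\bar\h(X)\simeq\bar\h(Y)$ in $\bar{\M^{ab}}$, so $T(\bar Z)$ is the comparison between $T(\bar\h(X))$ and $T(\bar\h(Y))$).

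For $(\starC)$: we must show $c_i(Y)\in\DCH_\psi(Y)$ for all $i$. By construction $\psi_* = \phi_*\circ (Z_*)^{-1}$ on Chow groups, and $Z_*$ sends $c_i(X)$ to $c_i(Y)$ by Theorem~\ref{thm:Riess}$(iii)$; since ${}^{t}Z$ gives the inverse, $\psi_*(c_i(Y)) = \phi_*({}^{t}Z_* c_i(Y)) = \phi_*(c_i(X)) \in \DCH^i(M)$ because $\phi$ satisfies $(\starC)$. Hence $c_i(Y)$ is distinguished with respect to $\psi$, completing the verification that $\psi$ satisfies $(\star)$.

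The proof is essentially a bookkeeping exercise once Theorem~\ref{thm:Riess} is granted — the real content is already packed into Rie\ss's result, in particular the fact (her $(iv)$, refining $(i)$–$(iii)$) that $Z$ is not merely a Chow-ring isomorphism but an isomorphism of algebra objects in $\CHM$ carrying $\delta_X$ to $\delta_Y$ and the Chern classes across. The only point requiring any care is to make sure that ``transporting the marking via $Z$'' is legitimate, i.e.\ that $\psi=\phi\circ{}^{t}Z$ is genuinely a marking (an isomorphism onto $F(M)$ for an object $M\in\M^{ab}_{sd}$) — but this is immediate since $M$ is unchanged and ${}^{t}Z$, $\phi$ are isomorphisms in $\CHM$. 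I expect no serious obstacle; the subtlety, if any, is simply citing the correct strengthened form of Rie\ss's theorem rather than its published statement about Chow rings alone, which is why Theorem~\ref{thm:Riess} is recorded above in the precise form needed.
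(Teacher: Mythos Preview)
Your proposal is correct and follows essentially the same approach as the paper: define $\psi=\phi\circ {}^{t}Z$ (the paper writes $\phi\circ Z^{*}$) and then invoke Theorem~\ref{thm:Riess} to check $(\starM)$ and $(\starC)$. The only cosmetic difference is that the paper appeals directly to parts~$(ii)$ and~$(iii)$ (the small diagonal and Chern classes are carried across by $Z$), whereas you route $(\starM)$ through Lemma~\ref{lemma:InterpretationStar} and part~$(iv)$; these are equivalent, since $(iv)$ is precisely the algebra-object upgrade of~$(ii)$.
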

	\begin{proof}
		Let $Z\in \CH_{d}(X\times Y)=\Hom(\h(X), \h(Y))$ be the correspondence in
		Theorem \ref{thm:Riess}.
		Let $\phi: \h(X)\lra{\isom} M$ be a marking satisfying $(\star)$. Then we
		consider the marking $\psi=\phi\circ Z^{*}: \h(Y)\lra{\isom} M$. The fact that
		$\psi$ satisfies the condition $(\starM)$ and $(\starC)$ follows from Theorem
		\ref{thm:Riess} $(ii), (iii)$ respectively, together with the corresponding
		property of $\phi$.
	\end{proof}

	\section{Examples of varieties satisfying the condition
		$(\star)$}\label{sect:Examples}
	We provide in this section some examples of varieties satisfying the condition
	$(\star)$. Together with the operations in Section \ref{sect:examples}, we obtain
	even more examples. Thanks to Proposition \ref{prop:Disting}, the rational Chow
	ring of each of them possesses a subalgebra consisting of distinguished cycles,
	which is mapped isomorphically to the numerical Chow ring and contains all Chern
	classes of the variety.

	\subsection{Easy examples}\label{subsect:EasyEx}

	First of all, as $(\star)$ is certainly a property preserved by isomorphisms of
	algebraic varieties, we have by O'Sullivan's Theorem \ref{thm:SD}\,:
	\begin{lem}
		Any abelian torsor, that is, a variety isomorphic to an abelian variety,
		satisfies $(\star)$.
	\end{lem}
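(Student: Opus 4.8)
The plan is to exploit that the condition $(\star)$ is evidently invariant under isomorphisms of varieties, so that it is enough to produce a marking satisfying $(\star)$ on an abelian variety $A$ with a fixed origin; an isomorphism from an arbitrary abelian torsor to such an $A$ then transports it. The natural candidate is the tautological marking: regard $\h(A)$ as the object $M:=(A,\Delta_{A},0)$ of $\M^{ab}_{sd}$ and take $\phi=\id\colon\h(A)\xrightarrow{\isom}F(M)=\h(A)$.

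The first step is to check that $M$ really is an object of $\M^{ab}_{sd}$, i.e.\ that the diagonal $\Delta_{A}\in\CH^{\dim A}(A\times A)$ is symmetrically distinguished in the sense of Definition~\ref{def:SD}. This is a direct consequence of Theorem~\ref{thm:SD}: the diagonal embedding $\delta\colon A\to A\times A$ is a homomorphism of abelian varieties, the fundamental class $\1_{A}\in\DCH^{0}(A)$ is symmetrically distinguished (it lies in the $\Q$-subalgebra $\DCH(A)$, which maps isomorphically onto $\overline{\CH}(A)$, and $\overline{\CH}^{0}(A)=\Q$), and $\Delta_{A}=\delta_{*}(\1_{A})$ is the push-forward of a symmetrically distinguished cycle along a homomorphism. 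Idempotency of $\Delta_{A}$ as a correspondence is clear, so $M\in\M^{ab}_{sd}$ and $\phi$ is a marking.

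Next I would verify $(\starM)$ and $(\starC)$. For $(\starM)$: under $\phi^{\otimes 3}$ the motive $\h(A^{3})$ is identified with $M^{\otimes 3}=(A^{3},\Delta_{A}^{\otimes 3},0)$, and under this identification the projector $\Delta_{A}^{\otimes 3}$ is the class of the diagonal of $A^{3}$, so $\DCH(M^{\otimes 3})$ is precisely the group $\DCH(A^{3})$ of Definition~\ref{def:SD}; hence $(\starM)$ asks that the small diagonal $\delta_{A}=\{(a,a,a):a\in A\}\subseteq A^{3}$ be symmetrically distinguished. This is immediate, since $\delta_{A}$ is the push-forward of $\1_{A}$ along the homomorphism $A\to A^{3}$, $a\mapsto(a,a,a)$, so Theorem~\ref{thm:SD} applies again. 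As for $(\starC)$, it is vacuous: the tangent bundle $T_{A}$ of an abelian variety is trivial, whence $c_{i}(T_{A})=0$ for $i\geq 1$, and $0\in\DCH(A)$ trivially.

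There is no genuine obstacle here. The only points that deserve a line of care are the identification, inside $\M^{ab}_{sd}$, of $\h(A^{n})$ with the $n$-th tensor power of $M$ (one checks the projectors involved are diagonals of $A^{n}$, so no spurious permutation of factors appears) and the fact that the fundamental class of an abelian variety is symmetrically distinguished; both are formal consequences of Theorem~\ref{thm:SD}.
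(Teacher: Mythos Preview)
Your proposal is correct and follows essentially the same approach as the paper: reduce to an abelian variety with origin via invariance of $(\star)$ under isomorphisms, and then invoke O'Sullivan's Theorem~\ref{thm:SD}. The paper's proof is a one-liner (``by O'Sullivan's Theorem~\ref{thm:SD}''), while you have spelled out explicitly the tautological marking $M=(A,\Delta_A,0)$ and verified $(\starM)$ and $(\starC)$ directly; both the small diagonal computation and the vanishing of Chern classes are correct.
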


	Another set of examples generalizes the projective spaces\,:
	\begin{prop} \label{prop:trivial}
		Let $X$ be a smooth projective variety over a field $k$ and let $\Omega$ be a universal domain containing $k$. Assume that $X$ satisfies at least one of the following
		conditions\,:
		\begin{enumerate}
			\item $X\isom G/P$ is a homogeneous variety, where $G$ is a linear algebraic
			group and $P$ is a parabolic subgroup.
			\item $X$ is a toric variety.
			\item The bounded derived category $D^{b}_{coh}(X)$ has a full exceptional collection.
			\item The cycle class map $\CH^{*}(X_\Omega)\to H^{*}(X_\Omega, \Q_\ell)$ is injective for some prime $\ell \neq \mathrm{char}\, k$.
			\item The Chow group $\CH^*(X_\Omega)$ is a finite-dimensional $\Q$-vector space.
		\end{enumerate}
		Then $X$ satisfies $(\star)$.
	\end{prop}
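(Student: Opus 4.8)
The plan is to reduce everything to a single claim: under any one of the five hypotheses, the Chow motive $\h(X)$ is isomorphic in $\CHM$ to a finite direct sum of Tate motives $\bigoplus_{i}\1(-n_{i})$. Granting this, $X$ satisfies $(\star)$ for cheap reasons. Indeed, $\Spec k$ is an abelian variety (the trivial one) and $\1(m)=(\Spec k,\Spec k,m)$ already lies in $\M^{ab}_{sd}$ (Definition \ref{def:SDMotive}), so the object $M:=\bigoplus_{i}(\Spec k,\Spec k,-n_{i})$, with the identity matrix as idempotent, belongs to $\M^{ab}_{sd}$ and $F(M)\isom\h(X)$; any such isomorphism $\phi$ is a marking. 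Since every algebraic cycle on a self-product of $\Spec k$ is symmetrically distinguished, every morphism of $\M^{ab}_{sd}$ between direct sums of Tate objects is symmetrically distinguished, whence $\DCH(M^{\otimes n})=\CH(M^{\otimes n})$ for all $n$. Therefore $\DCH_{\phi}(X)=\CH(X)$, so $(\starC)$ holds trivially, and $\phi^{\otimes3}_{*}(\delta_{X})$ automatically lies in $\CH(M^{\otimes3})=\DCH(M^{\otimes3})$, so $(\starM)$ holds. I would also observe that such an isomorphism $\h(X_{\Omega})\isom\bigoplus_{i}\1(-n_{i})$ is witnessed by a pair of mutually inverse correspondences on $X_{\Omega}\times X_{\Omega}$, which spread out over a finitely generated $k$-subalgebra of $\Omega$ and specialize at a $k$-point (one exists since $k=\bar k$); so it suffices to produce the decomposition over $\Omega$.

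It then remains to establish the Tate decomposition under each of (1)--(5). For (1), the Bruhat cells exhibit $G/P$ as a cellular variety, so $\h(X)\isom\bigoplus_{w}\1(-\ell(w))$ directly over $k$; for (2), a generic one-parameter subgroup of the torus gives a Bia\l{}ynicki--Birula affine paving of the smooth projective toric variety, with the same conclusion. For (3), a full exceptional collection in $D^{b}_{coh}(X)$ yields a decomposition of $\h(X)$ into Tate motives by a theorem of Marcolli and Tabuada; alternatively, it base-changes to $\Omega$ and forces $\CH^{*}(X_{\Omega})$ to be finite-dimensional via $K_{0}$ and the Chern character, so (3) can simply be absorbed into (5).

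The substantive cases are (4) and (5), which I would treat via a decomposition of the diagonal. In both, the cycle class map on $0$-cycles is the degree map, so $\CH_{0}(X_{\Omega})=\Q$, and the classical Bloch--Srinivas argument splits off the summand $\1$ of $\h(X)$. One then bootstraps downward through $\CH_{1}(X_{\Omega}),\CH_{2}(X_{\Omega}),\dots$ --- each of which, under hypothesis (4) or (5), is ``trivial'' in the relevant sense --- by the iterated decomposition-of-the-diagonal technique, arriving at $\h(X)\isom\bigoplus_{i}\1(-n_{i})$; combined with the reduction above, this completes the argument.

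The hard part will be exactly this last step: upgrading a merely homological hypothesis (injectivity of the cycle class map), respectively a finite-dimensionality hypothesis on the Chow groups, to a \emph{complete} splitting of $\h(X)$ into Tate pieces. Unlike the case of trivial $\CH_{0}$, this forces one to control all codimensions simultaneously --- in particular to rule out surviving Griffiths-group or transcendental contributions --- and that is where the real work lies; the spreading-out and specialization needed to descend the decomposition from $\Omega$ to $k$ is, by contrast, routine.
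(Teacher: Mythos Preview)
Your proposal is correct and follows the same strategy as the paper: reduce to showing $\h(X)\isom\bigoplus_i\1(a_i)$ is a direct sum of Tate motives, after which $(\star)$ is automatic for the reasons you give. The only difference is that for cases (4) and (5) the paper simply cites existing results in the literature --- Lewis \cite{MR1316539} (see also \cite[\S 2.2]{MR3186044}) for (4), and Kimura \cite{KimuraRep} and Vial \cite{VialRep} for (5) --- rather than redoing the iterated decomposition-of-the-diagonal argument you sketch. So the ``hard part'' you flag at the end is already established in the literature and can be quoted rather than reproved; your spreading-out worry is likewise handled there.
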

	\begin{proof}
		Actually any of these conditions ensures that the Chow motive of $X$ is of
		Lefschetz-Tate type\,: 
		$$\h(X)\isom \bigoplus_{i}\1(a_{i}),$$
		with $a_{i}\in \Z$. It is well-known for (1) and (2)\,; while for (3) it is
		established in \cite{MR3050698} and \cite{MR3331729}. For (4), it is the main
		result of \cite{MR1316539}, see also \cite[\S 2.2]{MR3186044} for a recent
		account, and for (5), it is proven in \cite{KimuraRep}, \cite{VialRep}. 
	\end{proof}

	\subsection{Curves}
	Recall that the smooth projective curves of genus 0 and 1 are covered in \S
	\ref{subsect:EasyEx}. We consider in this subsection curves of higher genera.
	
	Let $C$ be a smooth projective curve with genus $g\geq 2$.  Its Jacobian variety
	$JC$ is a principally polarized abelian variety of dimension $g$ with origin
	denoted by $O$ and theta divisor denoted by $\Theta\in \CH^{1}(JC)$, which is
	always assumed to be symmetric. By choosing a base point $z\in C$,
	we have the Abel--Jacobi embedding\,:
	\begin{eqnarray*}
		\iota_{z}:C&\inj& JC\\
		x&\mapsto& \calO_{C}(x-z).
	\end{eqnarray*}
	Associated to $z$, there is also the motivic decomposition of $C$\,:
	$$\h(C)=\h^{0}(C)\oplus \h^{1}(C)\oplus \h^{2}(C),$$
	where $\h^{0}(C):=(C, z\times C,0)\isom \1$, $\h^{2}(C):=(C, C\times z,0)\isom
	\1(-1)$ and $\h^{1}(C):=(C, \Delta_{C}-z\times C-C\times z,0)$.
	
	\begin{prop}\label{prop:curve}
		Let $C$ be a smooth projective curve of genus $g\geq 2$. If there exists a
		point $z\in C$ 
		such that $\iota_{z}(C)\in \CH_{1}(JC)$ is symmetrically
		distinguished \footnote{By Remark \ref{R:symdistbeauville},
			this condition is equivalent to $\iota_z(C) \in \CH_1(JC)_{(0)}$.}, then $C$ satisfies the condition $(\star)$.
	\end{prop}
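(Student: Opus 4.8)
The plan is to pass first to $(\starM)$ and then to build a marking from the Abel--Jacobi map. By Corollary~\ref{cor:TopChern} the condition $(\starC)$ is, for a curve, implied by $(\starM)$, so it suffices to produce a marking $\phi\colon\h(C)\xrightarrow{\isom}M$ with $M\in\M^{ab}_{sd}$ satisfying $(\starM)$. Set $A:=JC$, $g:=\dim A$, and $W:=\iota_{z}(C)=\iota_{z,*}[C]\in\CH^{g-1}(A)$; recall $\iota_{z}$ is a closed immersion with $\iota_{z}(z)=o$, the origin of $A$. I would take
$$M:=\1\,\oplus\,\h^{1}(A)\,\oplus\,\1(-1),\qquad \h^{1}(A):=(A,\pi^{A}_{1},0),$$
with $\pi^{A}_{1}\in\CH^{g}(A\times A)$ the first Deninger--Murre projector. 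The first thing to check is that $M$ is really an object of $\M^{ab}_{sd}$, i.e.\ that $\pi^{A}_{1}$ is symmetrically distinguished: by Vandermonde inversion each $\pi^{A}_{i}$ is a $\Q$-linear combination of the cycles $\Gamma_{[n]}=(\id_{A},[n])_{*}[A]\in\CH^{g}(A\times A)$, $0\le n\le 2g$, each of which is the push-forward of the fundamental class along the homomorphism $(\id_{A},[n])\colon A\to A\times A$, hence symmetrically distinguished by Theorem~\ref{thm:SD}. Then the classical isomorphism of Chow motives $\iota^{*}_{z}\colon\h^{1}(A)\xrightarrow{\isom}\h^{1}(C)$, together with the ($z$-dependent) identifications $\h^{0}(C)\cong\1$ and $\h^{2}(C)\cong\1(-1)$, assemble into a marking $\phi\colon\h(C)\xrightarrow{\isom}M$.

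To verify $(\starM)$ I would check directly that $\delta_{C}\in\DCH_{\phi^{\otimes 3}}(C^{3})$. Write $\delta_{C}=\sum_{a,b,c}\delta^{a,b,c}_{C}$ for the decomposition induced by the three Chow--K\"unneth projectors $\pi^{C}_{0}=z\times C$, $\pi^{C}_{1}=\Delta_{C}-z\times C-C\times z$, $\pi^{C}_{2}=C\times z$ of $C$. Since $\delta_{C}$ is the small diagonal, each $\delta^{a,b,c}_{C}$ is a $\Q$-combination of partial diagonals of $C^{3}$ with some coordinates set equal to $z$, and applying $\phi^{\otimes 3}$ collapses the coordinates carrying an $\h^{0}$- or $\h^{2}$-factor to a point. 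The crucial point is that, \emph{under the hypothesis} $W\in\DCH(A)$, the inverse of $\iota^{*}_{z}\vert_{\h^{1}}$ is — up to a rational scalar — the composite of the push-forward $\iota_{z,*}\colon\h^{1}(C)\to\h^{2g-1}(A)$ with the inverse Lefschetz isomorphism $\h^{2g-1}(A)\to\h^{1}(A)$ (i.e.\ the inverse of multiplication by $\Theta^{g-1}$): indeed $\iota_{z,*}\circ\iota^{*}_{z}=\tfrac1{(g-1)!}(\cdot\,\Theta^{g-1})$ holds in $\CH(A)$ precisely because $W=\Theta^{g-1}/(g-1)!$ there (this is equivalent to the hypothesis, by uniqueness of symmetrically distinguished lifts), and the inverse Lefschetz operator is itself symmetrically distinguished, being the inverse of a symmetrically distinguished isomorphism built from the symmetric divisor $\Theta$. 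Consequently the transport $(\phi^{\otimes 3})_{*}\delta^{a,b,c}_{C}$ is, up to composition with the symmetrically distinguished morphisms $\pi^{A}_{1}$ and the above Lefschetz operators, a $\Q$-combination of cycles $f_{*}(W)\in\CH(A^{m})$ with $f\colon A\to A^{m}$ a homomorphism (a multidiagonal $a\mapsto(a,\dots,a)$ with some entries the origin), because the relevant push-forward $C\to A^{m}$ factors as $f\circ\iota_{z}$. Each such cycle is symmetrically distinguished: $W\in\DCH(A)$ by hypothesis, $[o]=(e_{A})_{*}(1)\in\DCH^{g}(A)$ for the homomorphic origin $e_{A}\colon\Spec k\to A$, push-forward along homomorphisms of abelian varieties preserves $\DCH$ (Theorem~\ref{thm:SD}), and $\DCH$ is stable under exterior products. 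Hence $(\phi^{\otimes 3})_{*}\delta_{C}\in\DCH(M^{\otimes 3})$, so $(\starM)$ holds, and therefore so does $(\star)$.

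As a consistency check (using Lemma~\ref{lemma:InterpretationStar}): the only genuinely non-formal component of the induced multiplication $M^{\otimes 2}\to M$ is the intersection pairing $\h^{1}(C)\otimes\h^{1}(C)\to\h^{2}(C)\cong\1(-1)$, which is carried by $\phi$ to the morphism $\h^{1}(A)^{\otimes 2}\to\1(-1)$ represented by $(\iota_{z}\times\iota_{z})_{*}(\Delta_{C}-z\times C-C\times z)=(\Delta_{A})_{*}W-[o]\times W-W\times[o]\in\CH^{2g-1}(A\times A)$, with $\Delta_{A}\colon A\to A\times A$ the diagonal homomorphism — manifestly symmetrically distinguished given $W\in\DCH(A)$. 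The component coming from $(a,b,c)=(1,1,1)$ is governed by the Gross--Schoen modified diagonal cycle of $(C,z)$, which need not vanish in $\CH(C^{3})$; the content of the argument is that even so its transport to $M^{\otimes 3}$ lands in $\DCH$.

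The main obstacle is making the second paragraph rigorous: one must unwind the transport $(\phi^{\otimes 3})_{*}$ through the Chow--K\"unneth decomposition of $C$ and through the (normalized) inverse of $\iota^{*}_{z}\vert_{\h^{1}}$, and recognize each transported component of the small diagonal as a push-forward of $W$ (or of a partial diagonal involving $o$) along a homomorphism of a power of $A$, composed with symmetrically distinguished operators. This is a finite but fiddly computation with correspondences; it is also exactly where the hypothesis on $\iota_{z}(C)$ is indispensable, since without it the marking $\phi$ still exists but the transported intersection pairing need not be symmetrically distinguished — consistent with the expectation that not every curve of genus $\ge 2$ satisfies $(\star)$.
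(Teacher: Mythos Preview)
Your approach is sound in outline, but the paper's route is considerably cleaner and sidesteps the ``fiddly computation'' you flag at the end. The crucial difference is the choice of marking.

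You take $M = \1 \oplus \h^1(A) \oplus \1(-1)$ and define the middle component of $\phi$ as the \emph{inverse} of the pull-back $\iota_z^* : \h^1(A) \to \h^1(C)$. This forces you to identify $(\iota_z^*)^{-1}$ explicitly --- as $(g-1)!\, L^{-(g-1)}\circ \iota_{z,*}$ --- and then to decompose $\delta_C$ into its K\"unneth components and track each one through the inverse Lefschetz operator. Doable, but it multiplies the bookkeeping, and you acknowledge the computation is not actually carried out.

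The paper instead takes the marking to be the push-forward $\phi := \iota_{z,*}$ itself. The target is the direct summand
\[
M := \Big(JC,\ JC\times[O] \,+\, \pi^{2g-1} \,+\, \tfrac{1}{g!}\,\Theta\times\Theta^{g-1},\ g-1\Big)
\]
of $\h(JC)(g-1)$, cut out by a symmetrically distinguished projector (the last summand is the K\"unnemann--Lefschetz piece $L^{g-1}\h^0(JC)(g-1)$). That $\iota_{z,*}$ is an isomorphism onto $M$ is checked via conservativity of $\M^{ab}\to\bar{\M^{ab}}$. Now $(\starM)$ becomes a single line: since the inclusion $M\hookrightarrow \h(JC)(g-1)$ is symmetrically distinguished, it suffices that $(\iota_z^{\,3})_*(\delta_C) \in \CH_1(JC^3)$ be symmetrically distinguished, and the commuting square $\iota_z^{\,3} \circ \delta_C = \delta_{JC} \circ \iota_z$ yields $(\iota_z^{\,3})_*(\delta_C) = (\delta_{JC})_*(W)$, a push-forward of the hypothesized $W\in\DCH(JC)$ along a homomorphism.

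What each approach buys: your marking makes the \emph{target} $M$ simple (the standard weight decomposition of a curve), at the cost of an implicit $\phi$ that has to be unwound. The paper's marking makes the target slightly exotic (a twisted Lefschetz summand) but keeps $\phi$ a pure push-forward, so the transport of $\delta_C$ is computed by one commuting square rather than a case-by-case K\"unneth analysis. Both rest on the same geometric input, namely $W\in\DCH(JC)$.
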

	\begin{proof}
		Let us fix $z$ and simply write $\iota:=\iota_{z}$ and $C:=\iota_{z}(C)$.
		Assume that $C\in \CH_{1}(JC)$ is symmetrically distinguished. Since the
		1-cycles $C$ and $ \frac{1}{(g-1)!}\Theta^{g-1}$ are numerically equivalent and
		symmetrically distinguished, they are actually equal (\emph{i.e.}, rationally
		equivalent), thanks to Theorem \ref{thm:SD}. 
		
		Deninger and Murre construct in \cite{DM} a canonical motivic
		decomposition $$\h(JC)=\oplus_{i=0}^{2g}\h^{i}(JC).$$ Let $\pi^{i} \in
		\CH^{g}(JC\times JC)$ be the projector corresponding to $\h^{i}(JC)$. For
		example, $\pi^{0}=[O]\times JC$ and $\pi^{2g}=JC\times [O]$. See
		\cite{MR1265530} for the explicit formulae of the other projectors $\pi^{i}$.
		One important feature, easily seen from Theorem \ref{thm:SD}, is that they are
		all symmetrically distinguished.
		
		We claim that $\Gamma_{\iota}=:\iota_{*}: \h(C)\to \h(JC)(g-1)$ induces
		isomorphisms\,:
		\begin{itemize}
			\item $\h^{2}(C)\xrightarrow{\isom} \h^{2g}(JC)(g-1):=(JC, JC\times [O],
			g-1)$\,;
			\item $\h^{1}(C)\xrightarrow{\isom} \h^{2g-1}(JC)(g-1):=(JC, \pi^{2g-1},
			g-1)$\,;
			\item $\h^{0}(C)\xrightarrow{\isom} L^{g-1}\h^{0}(JC)(g-1):=(JC,
			\frac{1}{g!}\Theta\times\Theta^{g-1}, g-1)$\,; the latter is a direct summand of
			$\h^{2g-2}(JC)(g-1)$ in the Lefschetz decomposition constructed by K\"unnemann
			in \cite{MR1223225},
		\end{itemize}
		where $L$ is the Lefschetz operator (see \cite{MR1223225}). Indeed, all these
		morphisms are in the Kimura category $\M^{ab}$ (see \cite{MR2107443}). The
		functor $\M^{ab}\to \bar{\M^{ab}}$ is therefore conservative (\emph{cf.}
		\cite[Corollary 3.16]{MR2167204}). One checks easily that these morphisms are
		isomorphisms modulo homological, thus \emph{a fortiori} numerical, equivalence. 
		
		Putting them together, we have a marking for $C$\,:
		$$\phi:=\iota_{*}: \h(C)\xrightarrow{\isom} M:=(JC, JC\times
		[O]+\pi^{2g-1}+\frac{1}{g!}\Theta\times\Theta^{g-1}, g-1).$$
		Let us show $(\starM)$\,: since the inclusion of the direct summand $M$ into
		$\h(JC)$ is clearly symmetrically distinguished, to show that $\phi^{\otimes
			3}_{*}(\delta_{C})$ is symmetrically distinguished, it suffices to show that
		$\iota^{3}_{*}:\CH_{1}(C^{3})\to \CH_{1}(JC^{3})$ sends the small diagonal
		$\delta_{C}$ to a symmetrically distinguished cycle of $JC\times JC\times JC$.
		However, by the following commutative diagram
		\begin{equation*}
		\xymatrix{
			C \ar@{^{(}->}[r]^-{\delta_{C}} \ar@{^{(}->}[d]_{\iota} & C\times C\times C
			\ar@{^{(}->}[d]^{\iota^{3}}\\
			JC \ar@{^{(}->}[r]^-{\delta_{JC}}& JC\times JC\times JC
		}
		\end{equation*}
		we have that $\iota^{3}_{*}(\delta_{C})=\delta_{JC,*}(\iota(C))$ is symmetrically
		distinguished by the assumption and Theorem \ref{thm:SD}.  
		
		The condition $(\starC)$ on Chern classes follows from $(\starM)$ since $C$ is a curve (Corollary \ref{cor:TopChern}). 
	\end{proof}
	
	\begin{cor}\label{cor:hyperelliptic}
		All hyperelliptic curves satisfy the condition $(\star)$.
	\end{cor}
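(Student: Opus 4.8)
The statement is an immediate consequence of Proposition~\ref{prop:curve}, so the whole game is to exhibit, for a hyperelliptic curve $C$, a base point $z\in C$ for which the Abel--Jacobi image $\iota_z(C)\in\CH_1(JC)$ is symmetrically distinguished. The cases of genus $0$ and $1$ are already disposed of in \S\ref{subsect:EasyEx} (a genus-$0$ curve is a projective line, a genus-$1$ curve is an abelian torsor), so I would assume $C$ has genus $g\geq 2$, with hyperelliptic involution $\sigma$ and hyperelliptic double cover $h\colon C\to C/\sigma=\PP^1$, and take $z$ to be a Weierstrass point, i.e.\ a ramification point of $h$. Recall, via the footnote to Proposition~\ref{prop:curve} and Remark~\ref{R:symdistbeauville}, that what must be checked is the membership $\iota_z(C)\in\CH_1(JC)_{(0)}$ in Beauville's decomposition.

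The first step is the observation that with $z$ a Weierstrass point the Abel--Jacobi embedding intertwines $\sigma$ with the sign involution, that is $\iota_z\circ\sigma=[-1]_{JC}\circ\iota_z$. Indeed, for $x\in C$ the divisor $x+\sigma(x)$ is a fibre of $h$, hence linearly equivalent to the fibre $2z$ over the branch point $h(z)$; subtracting $x+z$ gives $\sigma(x)-z\sim z-x$, so $\iota_z(\sigma(x))=\calO_C(\sigma(x)-z)=\calO_C(z-x)=-\iota_z(x)$. Pushing forward along $\iota_z$ and using that $\sigma$ is an automorphism of $C$, this yields $[-1]_*\iota_z(C)=\iota_z(C)$ in $\CH_1(JC)$. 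In particular all odd-degree components of $\iota_z(C)$ in Beauville's decomposition vanish, so $\iota_z(C)\in\bigoplus_{s\geq 0}\CH_1(JC)_{(2s)}$, and by Poincaré's formula together with the uniqueness of symmetrically distinguished lifts (Theorem~\ref{thm:SD}) its degree-zero component must equal $\frac{1}{(g-1)!}\Theta^{g-1}$.

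The substantive — and I expect the only genuinely delicate — step is then to show that the remaining even-degree components also vanish, equivalently that $\iota_z(C)=\frac{1}{(g-1)!}\Theta^{g-1}$ already in $\CH_1(JC)$; the $[-1]$-invariance by itself is not enough, since $\CH^{g-1}(JC)_{(2s)}$ need not vanish for $s\geq 1$, and one must exploit the hyperelliptic geometry more seriously. The route I would take is to invoke Gross and Schoen's theorem that the modified diagonal cycle in $\CH^2(C^3)$ vanishes for a hyperelliptic curve pointed at a Weierstrass point, and to transport it through $\iota_z^{\times 3}$ (alternatively, one may cite the classical fact that the tautological subvarieties $W_d$ of a hyperelliptic Jacobian based at a Weierstrass point realize their expected classes $\tfrac{1}{(g-d)!}\Theta^{g-d}$ already rationally, the case $d=1$ being what is needed). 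Once $\iota_z(C)\in\CH_1(JC)_{(0)}$ is in hand, $\iota_z(C)$ is symmetrically distinguished by Remark~\ref{R:symdistbeauville} and Proposition~\ref{prop:curve} applies directly: its condition $(\starM)$ amounts to $\delta_{JC,*}\iota_z(C)$ being symmetrically distinguished on $JC^3$, which holds because it is the push-forward of a symmetrically distinguished cycle along the group homomorphism $JC\to JC^3$ (Theorem~\ref{thm:SD}), while $(\starC)$ is automatic for curves by Corollary~\ref{cor:TopChern}. This shows that every hyperelliptic curve satisfies $(\star)$.
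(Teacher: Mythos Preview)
Your proposal is correct and follows essentially the same route as the paper: choose a Weierstrass point as base point, establish that $\iota_z(C)\in\CH_1(JC)_{(0)}$, note that this coincides with $\DCH_1(JC)$ by Remark~\ref{R:symdistbeauville} (the Fourier transform argument), and then apply Proposition~\ref{prop:curve}. The only real difference is the reference invoked for the key membership $\iota_z(C)\in\CH_1(JC)_{(0)}$: the paper cites Tavakol directly, whereas you outline the argument via Gross--Schoen's vanishing of the modified diagonal for a hyperelliptic curve at a Weierstrass point, pushed forward through $\iota_z^{\times 3}$ (this is exactly the computation appearing in the proof of Proposition~\ref{prop:ceresa}, which shows that $[k]_*$ acts by $k^{2+s}$ on $\CH_1(JC)_{(s)}$ and the polynomial $3^{2+s}-3\cdot 2^{2+s}+3$ does not vanish for $s\geq 1$).

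Two minor remarks: first, once you invoke Gross--Schoen, the separate $[-1]$-invariance argument killing the odd Beauville components is redundant, since the modified-diagonal computation already forces \emph{all} components with $s\geq 1$ to vanish; second, your closing paragraph re-derives part of what Proposition~\ref{prop:curve} already contains (the distinguishedness of $\delta_{JC,*}\iota_z(C)$ and the automatic $(\starC)$ for curves), so you could simply stop after invoking that proposition.
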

	\begin{proof}
		For a hyperelliptic curve $C$, choose any Weierstrass point to define the
		Abel--Jacobi embedding, then the involution $[-1]$ on $JC$ preserves $C$ and acts
		on $C$ by the hyperelliptic involution. By \cite[Proposition 2.1]{Tavakol}, in
		the Beauville decomposition of $\CH^{g-1}(JC)$, the class of $C$ belongs to
		$\CH^{g-1}(JC)_{(0)}$. On the other hand, $\CH^{g-1}(JC)_{(0)}$ is the Fourier
		transform \cite{MR826463} of $\CH^{1}(JC)_{(0)}$ which maps isomorphically to
		$\bar\CH^{1}(JC)$. Therefore, the natural cycle class map
		$\CH^{g-1}(JC)_{(0)}\to \bar\CH^{g-1}(JC)$ is also an isomorphism. Consequently,
		all cycles in $\CH^{g-1}(JC)_{(0)}$, in particular the class of $C$, are
		symmetrically distinguished. One can now conclude by invoking Proposition
		\ref{prop:curve}.
	\end{proof}
	\begin{rmk}
		The case of hyperelliptic curves is mentioned in \cite[\S 6.3]{MR2795752}.
	\end{rmk}

	\begin{rmk}[Hilbert schemes of a hyperelliptic curve] \label{rem:hyperelliptic}
		Recall that the Hilbert scheme of length-$n$ subschemes on a smooth curve $C$ is
		nothing but the $n$-th symmetric power $C^{(n)}$ of $C$. Now if $C$ satisfies
		$(\starM)$, then by Proposition \ref{P:products}, $C^n$ satisfies $(\starM)$ and
		by Proposition \ref{P:quotient}, $C^{(n)}$ satisfies $(\starM)$. By Corollary \ref{cor:TopChern}, $C$ also satisfies $(\starC)$, and the same computation as in \cite[p.\,95]{SV} shows
		that $C^{(n)}$ satisfies $(\starC)$. Therefore, it follows from Corollary
		\ref{cor:hyperelliptic} that the Hilbert schemes of a hyperelliptic curve
		satisfy $(\star)$.
	\end{rmk}

	\subsection{Fermat hypersurfaces}\label{s:Fermat}
	An important class of (higher dimensional) varieties whose motive is known to be
	of abelian type is provided by the Fermat hypersurfaces, by using the inductive
	structure discovered by Shioda--Katsura \cite{MR526513}. Note that Proposition
	\ref{prop:trivial} implies that smooth quadric hypersurfaces satisfy $(\star)$
	since their Chow groups are finite dimensional vector spaces.
	
	In the sequel of this subsection, we fix a degree $d\geq 3$ and, for any $r\in
	\N$, we let $X_{r}$ denote the Fermat hypersurface of degree $d$ in
	$\PP^{r+1}$\,:
	$$X_{r}:=\{(x_{0}, \cdots, x_{r+1}) ~\vert~ x_{0}^{d}+\cdots+x_{r+1}^{d}=0\}\subset \PP^{r+1}.$$
	
	Recall the inductive structure\, (\emph{cf.} \cite[Theorem 1]{MR526513})\,: let
	$\epsilon$ be a (fixed) $d$-th root of $-1$ and $\zeta$ be a (fixed) $d$-th root
	of unity. For any $r, s\in \N$, we have the following commutative diagram\,:
	\begin{equation}\label{diag:Induction}
	\xymatrix{
		E \cart\ar@{^{(}->}[r]  \ar[d]& Z  \ar[r] \ar[dr]^{\psi}\ar[d]^{\beta}&
		Z/\mu_{d}  \cart\ar[d]^{\tau}& \ar@{_{(}->}[l] X_{r-1}\times \PP^{s} \coprod
		\PP^{r} \times X_{s-1} \ar[d]\\
		X_{r-1}\times X_{s-1} \ar@{^{(}->}[r]^-{i_{r}\times i_{s}}& X_{r}\times X_{s}
		\ar@{-->}[r]^{\varphi}&  X_{r+s} & \ar@{_{(}->}[l] X_{r-1} \coprod X_{s-1} 
	}
	\end{equation}
	where 
	$i_{r}: X_{r-1} \inj X_{r}$ is the embedding given by
	$(x_{0}, \ldots, x_{r})\mapsto (x_{0}, \ldots, x_{r}, 0)$\,;
	$\varphi : \left((x_{0}, \ldots, x_{r+1}), (y_{0}, \ldots,
	y_{s+1})\right)\mapsto (y_{s+1}x_{0}, \ldots, y_{s+1}x_{r}, \epsilon
	x_{r+1}y_{0}, \ldots, \epsilon x_{r+1}y_{s})$\,;  \linebreak 
	$\beta$ and $\tau$ are blow-ups\,; the action of $\mu_{d}$ on the blow-up $Z$ is
	induced by its action on $X_{r}$ and $X_{s}$ given by $(x_{0}, \ldots,
	x_{r+1})\mapsto (x_{0}, \ldots, x_{r}, \zeta x_{r+1})$ and $(y_{0}, \ldots,
	y_{s+1}) \mapsto (y_{0}, \ldots, y_{s},\zeta y_{s+1})$, respectively.\medskip
	
	The main result of this subsection is the following.
	
	\begin{prop}[Fermat cubics]\label{prop:Fermat}
		If $d=3$, then there exist, for all $r\in \N$, a marking
		$\phi_{r}: \h(X_{r})\lra{\isom} M_{r}$, for the cubic Fermat hypersurface
		$X_{r}$, such that 
		\begin{enumerate}[$(i)$]
			\item The embedding $i_{r}: X_{r-1}\inj X_{r}$ is distinguished (Definition \ref{def:DisCor})\,;
			\item The action of $\mu_{d}$ on $X_{r}$ is distinguished, \emph{i.e.}, $\phi_r$ satisfies
			$(\star_{\mu_{d}})$\,;
			\item $\phi_{r}$ satisfies the condition $(\star)$ of Definition \ref{def:Star}.
		\end{enumerate}
		In particular, all Fermat cubic hypersurfaces satisfy the condition $(\star)$.
	\end{prop}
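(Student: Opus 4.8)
The plan is to proceed by \emph{strong induction on $r$}, using the Shioda--Katsura inductive structure \eqref{diag:Induction}. It is convenient to strengthen the induction hypothesis so that, in addition to $(i)$--$(iii)$, the marking $\phi_{r}$ satisfies $(\star_{G_{r}})$ for the group $G_{r}$ of \emph{evident} automorphisms of $X_{r}$, namely those induced by permuting the coordinates of $\PP^{r+1}$ and scaling them by $d$-th roots of unity; note that $(ii)$ is then a special case. For the base case $r=0$ the variety $X_{0}$ is a reduced finite set of points, so $\h(X_{0})$ is a sum of unit motives, $X_{0}$ satisfies $(\star)$ by Proposition~\ref{prop:trivial}, and $(i)$ (where $X_{-1}=\emptyset$) and $(\star_{G_{0}})$ hold since every correspondence among self-powers of $X_{0}$ is distinguished. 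The essential base case is $r=1$: here $X_{1}\subset\PP^{2}$ is a smooth plane cubic, which for $d=3$ is an \emph{elliptic curve}; choosing its origin to be a coordinate flex, $X_{1}$ becomes an abelian variety and satisfies $(\star)$ by O'Sullivan's Theorem~\ref{thm:SD}, while every evident automorphism is the composite of a group homomorphism with a translation by a torsion point (the remaining flexes), hence is distinguished by Theorems~\ref{thm:SD} and \ref{thm:SD2}, giving $(\star_{G_{1}})$; and $i_{1}:X_{0}\inj X_{1}$ is distinguished because the three coordinate points are $3$-torsion, hence rationally equivalent to the origin with $\Q$-coefficients. This is precisely where $d=3$ is used: for $d\geq 4$ the curve $X_{1}$ has genus $\geq 3$ and is not known to satisfy $(\star)$.

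For the inductive step, given $n\geq 2$, apply \eqref{diag:Induction} with $r=n-1$ and $s=1$, so that $X_{n}=X_{r+s}$ is built from $X_{r}, X_{s}, X_{r-1}, X_{s-1}$, all covered by the induction hypothesis. First, $X_{r}\times X_{s}$ carries the product marking, which satisfies $(\star)$ and $(\star_{G_{r}\times G_{s}})$ by Proposition~\ref{P:products}; in particular it satisfies $(\star_{\mu_{d}})$ for the diagonal $\mu_{d}$ acting through the last-coordinate scaling on each factor. The blow-up centre $X_{r-1}\times X_{s-1}$ is the image of $i_{r}\times i_{s}$, distinguished by the induction hypothesis $(i)$ together with Proposition~\ref{prop:tensor}, and it is $\mu_{d}$-stable with the induced action being trivial, so the equivariant hypothesis of Proposition~\ref{prop:Blup} is in force. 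Proposition~\ref{prop:Blup} then endows the blow-up $Z$ with a marking satisfying $(\star)$ and $(\star_{\mu_{d}})$, for which all morphisms in the blow-up square --- in particular the exceptional divisor $E\inj Z$ --- are distinguished.

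The crux, and the step I expect to be the main obstacle, is to transfer $(\star)$ across the \emph{blow-down} in \eqref{diag:Induction}, for which there is no ready converse to Proposition~\ref{prop:Blup}. The plan is to apply Proposition~\ref{prop:cover} to the composite $\psi:Z\to Z/\mu_{d}\xrightarrow{\tau}X_{n}$, which is surjective, generically finite of degree $d$, and (one checks) semismall; by Remark~\ref{rmk:semismall} its hypothesis amounts to the distinguishedness, in $\CH(Z\times Z)$ with the product marking, of the class of $Z\times_{X_{n}}Z$. One decomposes this fibre product: the dominant part is $\bigcup_{g\in\mu_{d}}\Gamma_{g}$, the union of the graphs of the $\mu_{d}$-action on $Z$, distinguished by $(\star_{\mu_{d}})$ for $Z$; the remaining components lie over the centres $X_{r-1}, X_{s-1}$ and inside $E\times E$, and are distinguished because they are assembled from $E$ (distinguished on $Z$), from the projective-bundle structure of $E$ over $X_{r-1}\times X_{s-1}$ (Proposition~\ref{prop:PB}), and from the strict transforms of the divisors $X_{r-1}\times X_{s}$ and $X_{r}\times X_{s-1}$ (whose classes are distinguished because $i_{r}$ and $i_{s}$ are). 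Carefully identifying these components and verifying that each is distinguished is where the real work lies. Granting it, Proposition~\ref{prop:cover} yields a marking $\phi_{n}:\h(X_{n})\isom M_{n}$ satisfying $(\starM)$ and making the graph of $\psi$ distinguished.

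It remains to upgrade $(\starM)$ to the full list $(i)$--$(iii)$ for $\phi_{n}$. For $(\starC)$: since $\psi$ is distinguished, the projection formula reduces the claim to $\psi^{*}\operatorname{ch}(T_{X_{n}})\in\DCH(Z)$, and chasing the tangent-bundle exact sequences --- for the cyclic cover $Z\to Z/\mu_{d}$, whose ramification divisor is $E$, and for the blow-up $\tau$, whose exceptional divisor pulls back on $Z$ to a combination of $E$ and the strict transforms above --- expresses $\psi^{*}\operatorname{ch}(T_{X_{n}})$ through $\operatorname{ch}(T_{Z})$ and distinguished divisor classes; as $(\starM)$ holds, $\DCH(X_{n})$ is a subalgebra (Proposition~\ref{prop:Disting}), so the Chern classes, being polynomials in the Chern characters, lie in it. For $(i)$ and the strengthened $(ii)$: the graph of $i_{n}:X_{n-1}\inj X_{n}$, and the graph of any evident automorphism of $X_{n}$, are read off from \eqref{diag:Induction} and the formula for $\varphi$ as composites of correspondences already shown to be distinguished (the graph of $\psi$, the blow-up morphisms, the $i_{r}$ and $i_{s}$, and the evident automorphisms of $X_{r}$ and $X_{s}$ supplied by $(\star_{G_{r}})$ and $(\star_{G_{s}})$), hence are distinguished by Proposition~\ref{prop:distmorphism}. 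This establishes $(iii)$, and the final assertion of the proposition follows.
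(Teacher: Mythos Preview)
Your overall strategy---induction on $r$ via the Shioda--Katsura diagram, building the marking on $X_{n}$ from those on lower Fermat hypersurfaces through blow-up, quotient, and Proposition~\ref{prop:cover}---is exactly the paper's. The base case and the treatment of $(i)$ and $(ii)$ are also in the same spirit, and your strengthening to $(\star_{G_{r}})$ is harmless and even clarifies why the ``other'' $\mu_{3}$-action on $X_{1}$ needed for step $(ii)$ is distinguished.

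There is, however, a genuine gap at the heart of the inductive step. You assert that $\psi:Z\to X_{n}$ is semismall and then invoke Remark~\ref{rmk:semismall} to reduce to the distinguishedness of the class of $Z\times_{X_{n}}Z$. But with $s=1$ the centre $X_{s-1}=X_{0}$ consists of three points, each with $\tau$-fibre $\PP^{n-1}$; since $X_{0}$ has codimension $n$ in $X_{n}$, semismallness requires $n\geq 2(n-1)$, which fails for $n\geq 3$. Thus $\psi$ (and $\tau$) are \emph{not} semismall in general, ${}^{t}\Gamma_{\psi}\circ\Gamma_{\psi}$ is not simply the fibre-product class, and your decomposition of $Z\times_{X_{n}}Z$ into graphs $\Gamma_{g}$ plus pieces ``inside $E\times E$'' does not compute the correspondence you need. (Note also that $E$ is the exceptional divisor of $\beta$, lying over $X_{r-1}\times X_{s-1}$; the exceptional locus of $\psi$ is a different locus, namely the preimage of the exceptional divisor of $\tau$.)

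The paper circumvents this by splitting the passage $Z\to X_{n}$ into two steps. First, Proposition~\ref{P:quotient} gives $Z/\mu_{3}$ a marking satisfying $(\star)$. Then Proposition~\ref{prop:cover} is applied to the \emph{birational} morphism $\tau:Z/\mu_{3}\to X_{n}$, computing ${}^{t}\Gamma_{\tau}\circ\Gamma_{\tau}=(\tau\times\tau)^{*}\Delta_{X_{n}}$ directly. The key observation is that the excess lies entirely over the zero-dimensional centre $X_{0}$: the corresponding exceptional components are copies of $\PP^{n-1}$, so the excess contribution $\alpha$ is a cycle on a product of projective spaces, where \emph{every} cycle is distinguished (Proposition~\ref{prop:trivial}). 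This bypasses the failure of semismallness. Your approach can be repaired along the same lines, but as written the semismall claim is false and the component analysis does not deliver the required correspondence.
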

	\begin{proof}
		We proceed by induction on $r$.
		For $r=1$, $X_{1}=\{x_{0}^{3}+x_{1}^{3}+x_{2}^{3}=0\}$ is a cubic curve in
		$\PP^{2}$\,; by fixing an origin, it becomes an elliptic curve. We fix $(-1, 1,
		0)$ as its origin. Trivially, $X_1$ satisfies $(\star)$
		(\S\ref{subsect:EasyEx}). The embedding  $X_{0}\inj X_{1}$ is given by three
		points $(-1, 1,0), (-\zeta, 1, 0), (-\zeta^{2}, 1, 0)$, which are of 3-torsion\footnote{In fact, the nine 3-torsion points of the Fermat elliptic curve are
			exactly its intersection with the coordinate axes $(x_{0}=0), (x_{1}=0)$ and
			$(x_{2}=0)$. Indeed, these nine points lie on 12 lines. Each line contains three
			of these points and each point lies on four lines. Now use the fact that the sum
			of the three points in the intersection of any line with the elliptic curve is
			the hyperplane section class, we easily deduce that 3 times any of the nine
			points is the hyperplane section class. Hence they are all 3-torsion points if
			any one of them is fixed as the origin.}, therefore distinguished. As for the
		action of $\mu_{d}$, which is given by $(x_{0}, x_{1}, x_{2})\mapsto (x_{0},
		x_{1}, \zeta x_{2})$, it is clearly an automorphism of abelian variety hence
		also distinguished.
		
		Assuming the assertions $(i) - (iii)$ for $r \leq n$, let us establish them for
		$r = n+1$. We set in the sequel $s=1$ in the diagram (\ref{diag:Induction}) and also $\epsilon=-1$. By
		the induction hypothesis and the fact that distinguished morphisms are stable
		under products, the embedding $X_{n-1}\times X_{0}\inj X_{n}\times X_{1}$ is
		distinguished. Therefore $Z$ satisfies $(\star)$ by Proposition \ref{prop:Blup}.
		Again by the induction hypothesis, the action of $\mu_{d}$ on $X_{n}\times
		X_{1}$ is distinguished with distinguished ramification locus, which implies by
		Proposition \ref{P:quotient} that $Z/\mu_{d}$ satisfies $(\star)$. 
		We now claim that the marking on $X_{n+1}$ defined via~$\tau$ satisfies $(\star)$. We thank the referee for providing the following argument. For $(\starM)$ it is enough by Proposition~\ref{prop:cover} to show that
		$${}^t\Gamma_\tau \circ \Gamma_\tau =(\tau \times \tau)^*(\Delta_{X_{n+1}})$$
		is distinguished. The exceptional divisors for $\tau$ are $E_0, E_1, E_2, E_3$ with
		$$E_0 =X_{n-1} \times \PP^1 = X_{n-1} \times (X_1/\mu_3)$$ 
		and $E_i$ for $i>0$ a component $\PP^n$ of $\PP^n \times X_0$. We have
		$$(\tau \times \tau)^*(\Delta_{X_{n+1}}) = \Delta_{Z/\mu_3} +\alpha_0+\alpha,$$		
		where $\alpha_0$ is the push-forward along $E_0 \times E_0 \to (Z/\mu_3) \times (Z/\mu_3)$ of 
		$$E_0 \times_{X_{n-1}} E_0 = \Delta_{X_{n-1}} \times \PP^1\times \PP^1,$$
		and $\alpha$ is supported on $\coprod_{i>0} E_i \times E_i$. Both $\Delta_{Z/\mu_3}$ and $\alpha_0$ are distinguished, and $\alpha$ is distinguished because for $i > 0$ every cycle on $E_i \times E_i = \PP^n \times \PP^n$ is. Finally $(\starC)$ follows from  \cite[Theorem~15.4]{MR1644323}.
%
%
%
%
%
%
%
		In particular, $(iii)$ for $r=n+1$ is proven.\\
		For $(i)$, we have the following commutative diagram, where $i$ is the embedding
		determined by the point $(1,0,-\zeta)\in X_{1}$.
		\begin{equation*}
		\xymatrix{
			Z \ar[d]^{\beta} \ar[dr]^{\psi}&\\
			X_{n}\times X_{1} \ar@{-->}[r]^{\varphi} & X_{n+1}\\
			X_{n} \ar@{^{(}->}[u]^{i} \ar@{^{(}->}[ur]_{i_{n+1}}&
		}
		\end{equation*}
		Since $(1,0,-\zeta)$ is a torsion point of $X_{1}$, $i^{*}$ is distinguished.
		Therefore, with $\psi$ and $\beta$ being distinguished by construction,
		$i_{n+1}^{*}=i_{n+1}^{*}\circ \psi_{*}\circ\psi^{*}=i^{*}\circ\beta_{*}\circ
		\psi^{*}$ is also distinguished.\\
		Finally for $(ii)$, the action of $\mu_{d}$ on $X_{n+1}$ comes, \emph{via} the
		diagram (\ref{diag:Induction}), from the action of $\mu_{d}$ on $X_{1}$ which is
		given by
		$(y_{0}, y_{1}, y_{2})\mapsto (y_{0}, \zeta y_{1}, y_{2})$. It is clearly an
		automorphism of abelian variety hence is distinguished.
	\end{proof}
	
	So far, we are not able to determine whether other Fermat hypersurfaces satisfy
	$(\star)$ but we would like to make the following conjecture\,:
	\begin{conj}\label{conj:Fermat}
		The Fermat hypersurfaces which are Calabi--Yau or Fano, \emph{i.e.}, $d\leq r+2$,
		satisfy the condition $(\star)$.
	\end{conj}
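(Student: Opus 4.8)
The plan is to carry out, for an arbitrary degree $d$, the induction used in the proof of Proposition~\ref{prop:Fermat}; this reduces the conjecture to the single case of the Fermat \emph{curve} of degree $d$. Fix $d$ and put $s=1$ in the inductive diagram~\eqref{diag:Induction}. Assume inductively that the Fermat hypersurface $X_{r-1}$ of degree $d$ carries a marking satisfying $(\star)$ for which the embedding $X_{r-2}\inj X_{r-1}$ and the $\mu_d$-action are distinguished. Then, exactly as in the $d=3$ case: the embedding $X_{r-2}\times X_0\inj X_{r-1}\times X_1$ is distinguished (distinguished morphisms being stable under products), so the blow-up $Z$ satisfies $(\star)$ by Proposition~\ref{prop:Blup}; the $\mu_d$-action on $Z$ is distinguished with distinguished ramification, so $Z/\mu_d$ satisfies $(\star)$ by Proposition~\ref{P:quotient}; and $X_r$ is obtained from $Z/\mu_d$ by the blow-down $\tau$, for which the referee's argument reproduced in the proof of Proposition~\ref{prop:Fermat} goes through with only notational changes — the exceptional divisors of $\tau$ over the $d$ points of $X_0$ are now copies of $\PP^{r-1}$, and every cycle on $\PP^{r-1}\times\PP^{r-1}$ is distinguished since its Chow groups are finite-dimensional (Proposition~\ref{prop:trivial}) — so that ${}^{t}\Gamma_\tau\circ\Gamma_\tau=(\tau\times\tau)^{*}\Delta_{X_r}$ is distinguished, Proposition~\ref{prop:cover} yields $(\starM)$ for $X_r$, and $(\starC)$ follows from the Chern-class formula for blow-ups; the distinguishedness of $X_{r-1}\inj X_r$ and of the $\mu_d$-action on $X_r$ propagate just as in the $d=3$ case. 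Parity of $d$ plays no role (for $d$ even one replaces the choice $\epsilon=-1$ by an arbitrary $d$-th root of $-1$).

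There remains the base case: the Fermat curve $X_1$ of degree $d$, with a $\mu_d$-equivariant marking for which the $d$ coordinate points forming $X_0\subset X_1$ are distinguished. Choosing as origin of $JX_1$ a coordinate point fixed by the relevant scaling automorphism makes that automorphism a homomorphism of the abelian variety $JX_1$, whose graph is then symmetrically distinguished by Theorem~\ref{thm:SD}; and since the cuspidal divisor class group of the Fermat curve is finite (Rohrlich), the other coordinate points differ from the origin by torsion and so define symmetrically distinguished $0$-cycles. By Proposition~\ref{prop:curve} together with Remark~\ref{R:symdistbeauville}, the problem thus reduces to the single condition
$$\iota_z(X_1)\in \CH_1(JX_1)_{(0)}$$
for $z$ the chosen coordinate point; equivalently, since $\iota_z(X_1)$ and $\frac{1}{(g-1)!}\Theta^{g-1}$ are numerically equivalent by Poincar\'e's formula, that $\iota_z(X_1)$ equals $\frac{1}{(g-1)!}\Theta^{g-1}$ in $\CH_1(JX_1)$, with $g=(d-1)(d-2)/2$ the genus of $X_1$.

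This last vanishing is the main obstacle, and I expect it to be the hard part. For $d=3$ it is trivial, $X_1$ being an elliptic curve, but for $d\ge 4$ the curve $X_1$ has genus $\ge 3$ and the condition is open and delicate: it implies in particular the vanishing in $\CH_1(JX_1)\otimes\Q$ of the Ceresa-type cycle $\iota_z(X_1)-[-1]_{*}\iota_z(X_1)$, which is nontrivial for a general curve. The natural line of attack would be to exploit the complex multiplication and large automorphism group of the Fermat curve: $JX_1$ is isogenous to a product of abelian varieties with CM cut out by Jacobi-sum Hecke characters, and one would try to combine this decomposition with the $S_3$ of coordinate permutations and the $(\Z/d)^{2}$ of diagonal scalings — possibly together with Gross--Rohrlich-type arguments on special values — to force all positive Beauville components of $\iota_z(X_1)$ to vanish. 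One should be prepared for this strategy to fail, in which case a genuinely new idea would be required. Finally, note that should it succeed, it proves more than Conjecture~\ref{conj:Fermat}: it gives $(\star)$ for \emph{every} Fermat hypersurface of degree $d$, since the induction above passes through Fermat hypersurfaces of all dimensions.
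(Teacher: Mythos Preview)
The statement is a \emph{conjecture} in the paper; there is no proof to compare against, and the authors explicitly say they ``are not able to determine whether other Fermat hypersurfaces satisfy $(\star)$''. So the question is whether your proposed strategy could succeed.

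It cannot, and the paper itself tells you why. Your inductive step (with $s=1$) is fine, but it bottoms out at the Fermat \emph{curve} $X_1$ of degree $d$, and you correctly reduce $(\star)$ for $X_1$ to the condition $\iota_z(X_1)\in\CH_1(JX_1)_{(0)}$, which in turn forces the Ceresa cycle $\iota_z(X_1)-[-1]_*\iota_z(X_1)$ to vanish in $\CH_1(JX_1)$ (Proposition~\ref{prop:ceresa}). This is not merely ``open and delicate'': Proposition~\ref{prop:Fermatnot}, citing Harris, Bloch and Otsubo, shows that the Ceresa cycle of the degree-$d$ Fermat curve is algebraically \emph{non}trivial for $4\leq d\leq 1000$. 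Hence $X_1$ does not satisfy $(\star)$ for these $d$, and your base case fails outright. Your closing observation that success would yield $(\star)$ for \emph{every} Fermat hypersurface of degree $d$ should have been the red flag: the remark immediately following Conjecture~\ref{conj:Fermat} says precisely that this stronger conclusion is false in general type, pointing to Proposition~\ref{prop:Fermatnot}.

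Any approach to the conjecture must therefore avoid passing through Fermat varieties of general type. In particular, the Shioda--Katsura induction with $s=1$ is unusable for $d\geq 4$, since $X_1$ is then of general type. One would need either a direct argument for the Calabi--Yau base case $X_{d-2}$ (and then perhaps induct with larger $s$, though this too requires care about which intermediate varieties appear), or a genuinely different mechanism altogether.
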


	\begin{rmk}
		The conclusion of Conjecture \ref{conj:Fermat} can not hold in general for Fermat hypersurfaces
		of general type\,; \emph{cf.} Proposition \ref{prop:Fermatnot} (together with
		Proposition \ref{prop:multmarking}) below for counter-examples in the case of Fermat curves
		starting from degree 4.
	\end{rmk}
	
	\begin{rmk}
		It is interesting to notice that for $d=4$, we know that the quartic Fermat
		surface satisfies $(\star)$ for a different reason\,: it is a Kummer surface
		(\emph{cf.} \cite[Chapter 14, Example 3.18]{MR3586372}) and Proposition
		\ref{prop:kummer} applies. One could therefore show Conjecture~\ref{conj:Fermat} for $d=4$
		by a similar induction argument  as in Proposition \ref{prop:Fermat} once we
		know the case of Fermat quartic threefold (and some natural compatibilities with
		the Fermat quartic surface). \end{rmk}

	\subsection{K3 surfaces with large Picard number}\label{s:k3large}
	
	While K3 surfaces are expected to have motive of abelian type \emph{via} the
	Kuga--Satake construction, this has only been established in scattered cases.
	This includes Kummer surfaces, and \cite[Theorem~2]{Pedrini}  K3 surfaces with
	Picard number $\geq 19$.

	\subsubsection{Kummer surfaces} \label{S:kumsurface}
	By definition the Kummer surface $K_1(A)$ attached to the abelian surface $A$ is
	the fiber over $0$ of the morphism $A^{[2]} \to A^{(2)} \to A$, which is the
	composition of the sum morphism $A^{(2)}\to A$ with the Hilbert--Chow morphism
	$A^{[2]} \to A^{(2)}$.

	\begin{prop}\label{prop:kummer}
		A Kummer surface admits a marking that satisfies $(\star)$.
	\end{prop}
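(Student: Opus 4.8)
The plan is to realise the Kummer surface $K_1(A)$ as the quotient, by the involution $[-1]$, of the blow-up of $A$ along its sixteen two-torsion points, and then to apply Proposition~\ref{prop:Blup} (for blow-ups) followed by Proposition~\ref{P:quotient} (for finite quotients). Concretely, let $\sigma\colon\tilde A\to A$ be the blow-up of $A$ along the reduced subscheme $A[2]$ of its sixteen two-torsion points. The involution $[-1]$ of $A$ lifts to an involution of $\tilde A$ whose fixed locus is the smooth divisor $E$, the disjoint union of the sixteen exceptional $\PP^1$'s; hence $\tilde A/[-1]$ is smooth. It is canonically identified with $K_1(A)$: the fibre over $0$ of the composite $\widetilde{A\times A}\to A\times A\xrightarrow{+}A$ is precisely $\tilde A$ (the blow-up of $\{(x,-x)\}\cong A$ along its intersection $A[2]$ with the diagonal), on which the residual $\mathfrak S_2$-action acts as $[-1]$, so that passing to the $\mathfrak S_2$-quotient identifies the fibre over $0$ of $A^{[2]}\to A^{(2)}\to A$ with $\tilde A/[-1]$.

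The first step is to verify the hypotheses of Proposition~\ref{prop:Blup} for $X=A$, $Y=A[2]$ and $G=\Z/2$ acting via $[-1]$. The abelian variety $A$ satisfies $(\star)$ by O'Sullivan's theorem (Theorem~\ref{thm:SD}), and it satisfies $(\star_G)$ because the graph $\Gamma_{[-1]}$ is the push-forward of the (symmetric, hence symmetrically distinguished) diagonal $\Delta_A$ along the homomorphism $\mathrm{id}\times[-1]$ of $A\times A$, and push-forwards along homomorphisms of abelian varieties preserve symmetric distinguishedness (Theorem~\ref{thm:SD}). The centre $Y=A[2]$ is a disjoint union of sixteen points, each of which is, with $\Q$-coefficients, rationally equivalent to the origin of $A$, hence symmetrically distinguished; so the closed immersion $A[2]\inj A$ is distinguished. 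Moreover $A[2]$ trivially satisfies $(\star)$ and $(\star_G)$ (for the trivial $G$-action, every cycle on a disjoint union of points being distinguished), and $[-1]$ preserves $A[2]$. Proposition~\ref{prop:Blup} then produces a marking on $\tilde A$ satisfying $(\star)$ and $(\star_G)$ for which every morphism in the blow-up diagram \eqref{E:cartblowup} is distinguished; in particular $j\colon E\inj\tilde A$ is distinguished, whence $[E]=j_*\1_E\in\DCH(\tilde A)$.

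Finally, I would apply Proposition~\ref{P:quotient} to $(\tilde A,G)$ with $G=\Z/2$: since $K_1(A)=\tilde A/[-1]$ is smooth, it inherits a natural marking satisfying $(\starM)$ for which the quotient morphism $\pi\colon\tilde A\to K_1(A)$ is distinguished; and as $\pi$ is a cyclic double covering branched along the divisor $E$ with $[E]\in\DCH(\tilde A)$, while $\tilde A$ satisfies $(\starC)$, the marking on $K_1(A)$ also satisfies $(\starC)$. Hence $K_1(A)$ satisfies $(\star)$, which is the assertion.

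The only non-formal ingredients are O'Sullivan's theorem and the symmetric distinguishedness of torsion points on an abelian variety, both already available; accordingly, the delicate points are the geometric identification $K_1(A)\cong\tilde A/[-1]$ and the bookkeeping — crucially using, through condition $(\starC)$, that the Chern classes of $A$ and hence of the relevant normal bundles are distinguished — that guarantees the exceptional/branch divisor $E$ is distinguished.
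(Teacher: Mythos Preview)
Your proof is correct and follows essentially the same route as the paper's own argument: realise $K_1(A)$ as the $\Z/2$-quotient of the blow-up of $A$ along $A[2]$, apply Proposition~\ref{prop:Blup} to obtain a marking on $\tilde A$ satisfying $(\star)$ and $(\star_{\Z/2})$, and then apply Proposition~\ref{P:quotient}. You have in fact been more careful than the paper in explicitly checking the hypotheses (distinguishedness of the torsion points, of the graph of $[-1]$, and of the ramification divisor $E$ needed for the cyclic-cover clause in Proposition~\ref{P:quotient}), which the paper leaves to the reader.
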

	\begin{proof}
		The Kummer surface $K_1(A)$ has the following alternative description\,: the
		$[-1]$-involution on $A$ induces an involution, denoted $\iota$, on the blow-up
		$\widetilde{A}$ of $A$ along its subgroup of $2$-torsion points, and $K_1(A)$ 
		is the $\Z/2$-quotient of $\widetilde{A}$ for that action. By Proposition
		\ref{prop:Blup}, $(\widetilde{A},\Z/2)$ has a marking that satisfies $(\star)$.
		We can then conclude from  Proposition \ref{P:quotient} that $K_1(A)$ has a
		marking that satisfies $(\star)$.
	\end{proof}
	
	Later on (\emph{cf.} Proposition \ref{P:genK}), we will generalize Proposition
	\ref{prop:kummer} by establishing that generalized Kummer varieties admit a
	marking that satisfies $(\star)$.
	
	\subsubsection{ K3 surfaces with Picard number $\geq 19$}
	Such K3 surfaces admit \cite{Morrison} a Nikulin involution (that is, a
	symplectic involution) with quotient birationally equivalent to a Kummer
	surface. 
	
	\begin{prop}\label{prop:k3large}
		A K3 surface with Picard number $\geq 19$ admits a marking that satisfies
		$(\star)$.
	\end{prop}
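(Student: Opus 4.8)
The plan is to connect $S$ to a Kummer surface through its Nikulin involution, to promote the $(\star)$-property from that Kummer surface to an auxiliary blow-up of $S$, and then to descend it to $S$ itself by means of Proposition~\ref{prop:cover}. So let $S$ be a K3 surface with Picard number $\geq 19$. By Morrison~\cite{Morrison}, $S$ carries a Nikulin (symplectic) involution $\iota$; it has exactly eight fixed points $p_1,\dots,p_8$, and the minimal resolution $K$ of $S/\iota$ is a K3 surface birational — hence, being a K3, isomorphic — to a Kummer surface, so $K$ admits a marking satisfying $(\star)$ by Proposition~\ref{prop:kummer}. Let $b\colon \widetilde S\to S$ be the blow-up at $p_1,\dots,p_8$, with exceptional $(-1)$-curves $F_1,\dots,F_8$. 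Since $\iota$ acts as $-\id$ on each tangent space $T_{p_i}S$, its lift $\sigma$ to $\widetilde S$ fixes every $F_i$ pointwise; therefore $\sigma$ has a divisorial fixed locus, $\widetilde S/\sigma$ is smooth, and the quotient $q\colon \widetilde S\to K=\widetilde S/\sigma$ is a double cover branched along the eight disjoint $(-2)$-curves $C_i\subset K$ resolving the nodes of $S/\iota$, with $q^*C_i=2\,[F_i]$.

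\emph{Step 1: a marking on $\widetilde S$ satisfying $(\starM)$.} As $q$ is a double cover, $q^*$ gives an isomorphism of algebra objects $\h(K)\xrightarrow{\isom}\h(\widetilde S)^{\sigma}$ onto the $\sigma$-invariant summand. The complementary summand $\h(\widetilde S)^{\sigma=-1}$ is purely of Tate type: $\iota$ acts trivially on $H^{2,0}(S)$, so $\sigma_*-\id$ is numerically trivial on the transcendental motive $\mathfrak t(S)=\mathfrak t(\widetilde S)$, hence nilpotent since $\h(S)$ is finite-dimensional (\cite{Pedrini}), hence zero because $\sigma_*$ has order two; and by Nikulin--Morrison the $(-1)$-eigenspace of $\iota^*$ on $\mathrm{NS}(S)_{\Q}$ is $E_8(-2)_{\Q}$, so $\h(\widetilde S)^{\sigma=-1}\cong \1(-1)^{\oplus 8}$, spanned by the classes $b^*\gamma_i$ for a basis $(\gamma_i)$ of that eigenspace. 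Transporting the $(\star)$-marking of $K$ along $q^*$ and adjoining this Tate part yields a marking $\phi\colon \h(\widetilde S)\xrightarrow{\isom}M_K\oplus \1(-1)^{\oplus 8}$ in $\M^{ab}_{sd}$. To verify $(\starM)$ one decomposes the multiplication $\mu_{\widetilde S}$ (Lemma~\ref{lemma:InterpretationStar}) into blocks: on $q^*\h(K)$ it is $q^*$ of the distinguished multiplication of the Kummer surface; the mixed block $\1(-1)^{\oplus 8}\otimes \h(\widetilde S)^{\sigma}$ vanishes, a product of an anti-invariant class with an invariant class of positive degree being anti-invariant and landing in $\CH^3(\widetilde S)=0$ or in $\mathfrak h^4(\widetilde S)^{\sigma=-1}=0$ (using $\iota_*=\id$ on $\CH_0(S)$, again from finite-dimensionality); and the block $\1(-1)^{\oplus 8}\otimes\1(-1)^{\oplus 8}$ equals, by Beauville--Voisin~\cite{BVK3} on $S$, the scalar matrix $\bigl(\tfrac12(\gamma_i,\gamma_j)\bigr)$ followed by inclusion of the Beauville--Voisin $0$-cycle $b^*o_S=\tfrac12 q^*o_K$, which is distinguished because $o_K$ is distinguished on $K$. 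Every block is a symmetrically distinguished morphism — in particular any morphism between sums of Tate objects is — so $\mu_{\widetilde S}$ is symmetrically distinguished and $\phi$ satisfies $(\starM)$. One also records that $q$, $\sigma$ and the divisor classes $[F_i]=\tfrac12 q^*[C_i]$ are distinguished (divisors on the regular surface $\widetilde S$ are automatically distinguished, by Lemma~\ref{lemma:section}).

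\emph{Step 2: descent to $S$.} The morphism $b$ is generically finite and surjective of degree one, and ${}^t\Gamma_b\circ \Gamma_b=b^*b_*\in\End(\h(\widetilde S))$ is the projector onto $b^*\h(S)$. Under $\phi$ this projector is block-diagonal: it is the identity on the Tate summand $\1(-1)^{\oplus 8}=\h(\widetilde S)^{\sigma=-1}\subseteq b^*\h(S)$, and on $q^*\h(K)$ it is $q^*$ of the projector of $\h(K)$ killing the distinguished subspace $\langle [C_1],\dots,[C_8]\rangle\subseteq \CH^1(K)$, since the part of $q^*\h(K)$ outside $b^*\h(S)$ is exactly $\langle [F_i]\rangle=\langle \tfrac12 q^*[C_i]\rangle$; both blocks are symmetrically distinguished, so $b^*b_*$ is distinguished. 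Proposition~\ref{prop:cover} then endows $S$ with a marking satisfying $(\starM)$ for which $b$ is distinguished. Finally $(\starC)$ for $S$ is automatic: $c_1(S)=0$, and $c_2(S)=c_{\mathrm{top}}(S)$ is distinguished by Corollary~\ref{cor:TopChern}. Hence $S$ satisfies $(\star)$.

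\emph{The main obstacle} is Step~1: passing from the Kummer quotient $K$ to its double cover $\widetilde S$ is not an instance of any of the operations established in Section~\ref{sect:examples} (those all run from a cover, or a blow-up, to the target), so it must be done by hand. It succeeds precisely because the branch locus of $q$ is fixed pointwise by $\sigma$ — which forces the anti-invariant part of $\h(\widetilde S)$ to be purely of Tate type — and because $\iota$, being symplectic, acts as the identity on $\CH_0(S)$ and on $\mathfrak t(S)$; the only genuinely non-formal input is the Beauville--Voisin compatibility $b^*o_S=\tfrac12 q^*o_K$ between the distinguished $0$-cycles of $S$ and of $K$, which controls the single non-trivial product appearing in $\h(\widetilde S)$.
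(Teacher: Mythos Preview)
Your approach is correct and genuinely different from the paper's. The paper marks $S$ directly: it uses Pedrini's isomorphism $\mathfrak t^2(S)\cong\mathfrak t^2(K)$ to transport the Kummer marking onto the transcendental motive of $S$, adjoins the Tate-type algebraic part $\h^{alg}(S)$, and then verifies $(\starM)$ by pushing $\delta_S$ to $K^3$ via $(f,f,f)^*(\pi,\pi,\pi)_*$ and analysing the support of the resulting cycle on $\delta_K\cup\bigcup_i C_i^3$ through refined intersection. You instead pass through the blow-up $\widetilde S$, mark it as the $\sigma$-invariant part $q^*\h(K)$ plus an anti-invariant Tate piece, verify $(\starM)$ for $\widetilde S$ block-by-block via Lemma~\ref{lemma:InterpretationStar}, and only then descend along the birational contraction $b$ using Proposition~\ref{prop:cover}. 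Your route is longer but leans more systematically on the operations of Section~\ref{sect:examples} and avoids quoting Pedrini's motivic isomorphism explicitly (the finite-dimensionality of $\h(S)$ is still needed to force $\iota_*=\id$ on $\CH_0(S)$); the paper's route is shorter but demands an \emph{ad hoc} support computation on $K^3$.

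Two small points. In Step~1, the mixed block does not literally vanish: the unit part $T\otimes\h^0(K)\to T$ is the identity map on $T$, not zero. This is harmless, since the identity on $\1(-1)^{\oplus 8}$ is trivially symmetrically distinguished, but your phrasing overstates it. More importantly, the compatibility $b^*o_S=\tfrac12 q^*o_K$ --- which you rightly flag as the only non-formal input --- deserves one line of justification: pick an $\iota$-invariant divisor $D$ on $S$ with $D^2\neq 0$ (possible since $\mathrm{rk}\,\mathrm{NS}(S)^\iota\geq 11$); then $b^*D=q^*D'$ for some $D'\in\CH^1(K)$, so $q_*b^*(D\cdot D)=q_*q^*(D'\cdot D')=2(D')^2$, and Beauville--Voisin on both $S$ and $K$ turns this into $D^2\cdot q_*b^*o_S=2(D')^2\cdot o_K$; comparing degrees gives $q_*b^*o_S=o_K$. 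Finally, Step~2 can be shortened: since $[F_i]\in\DCH^1(\widetilde S)$ (divisors on a regular surface are automatically distinguished by Lemma~\ref{lemma:section}) one has $b^*b_*=\Delta_{\widetilde S}+\sum_i[F_i]\times[F_i]$, which is distinguished by Lemma~\ref{lemma:Diagonal} and Proposition~\ref{prop:tensor}, so Proposition~\ref{prop:cover} applies immediately without dissecting the projector into blocks.
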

	\begin{proof} Let $X$ be a K3 surface with a Nikulin involution\,; by \cite[\S
		5]{Nikulin} $X$ has eight isolated fixed points, which we denote $Q_1,\ldots,
		Q_8$. Let $\pi : X \to X/\iota$ be the quotient morphism\,; $X/\iota$ has
		ordinary double points at the points $P_i := \pi(Q_i)$, so that if $f : Y \to
		X/\iota$ denotes the minimal resolution, then the exceptional divisors of $f$
		are smooth rational $(-2)$-curves $C_i := f^{-1}(P_i)$.
		
		Let $X$ now be a K3 surface with Picard number $\geq 19$. According to
		\cite[Corollary~6.4]{Morrison}, $X$ admits a Shioda--Inose structure, meaning
		that $X$ admits a Nikulin involution $\iota$ such that $Y$ is a Kummer surface
		and such that $f^*\pi_*$ induces a Hodge isometry $T_X(2) \simeq T_Y$, where
		$T_X$ refers to the transcendental lattice of $X$. The latter was upgraded to an
		isomorphism of Chow motives by Pedrini \cite[Theorem~2]{Pedrini}. Precisely,
		given $S$ a K3 surface, let us denote $o_S$ the Beauville--Voisin zero-cycle\,;
		\emph{cf.}~\cite{BVK3}. We fix a basis $\{D_j\}$ of $\CH^1(S)$, and denote
		$\{D_j^\vee\}$ the dual basis with respect to the intersection product. We then
		define the idempotent correspondences  $\pi^0_S := o_S\times S$, $\pi^4_S :=
		S\times o_S$,  $\pi_S^{2,alg} := \sum_j D_j^\vee\times D_j$, and $\pi^{2,tr}_S
		:= \Delta_S - \pi_S^0 - \pi_S^4 - \pi_S^{2,alg}$. The motive $\h^{alg}(S) :=
		(S,\pi^0_S + \pi^{2,alg}_S + \pi_S^4)$ is the algebraic motive of $S$ (it is
		isomorphic to a direct sum of Lefschetz--Tate motives), and the motive
		$\mathfrak{t}^2(S) := (S,\pi_S^{2,tr})$ is the transcendental motive of $S$.
		Pedrini \cite{Pedrini} showed that $f^*\pi_*$ induces an isomorphism of motives
		$\mathfrak{t}^2(X) \simeq \mathfrak{t}^2(Y)$ (with inverse
		$\frac{1}{2}\pi^*f_*$).
		
		We fix a marking for the Kummer surface $Y$ that satisfies $(\star)$\,; such a
		marking does exist by Proposition \ref{prop:kummer}. Since $\DCH^1(Y) = \CH^1(Y)$,
		we have that the classes of the smooth rational curves $C_i$ are distinguished,
		and we also have that
		the projectors $\pi^0_Y, \pi^4_Y, \pi_Y^{2,alg}$ and $\pi^{2,tr}_Y$ are
		distinguished.
		Then we claim that the marking given by the decomposition $\h(X) \simeq
		\pi^*f_* \mathfrak{t}^2(Y) \oplus \h^{alg}(X)$ satisfies $(\star)$. That it
		satisfies $(\starC)$ is obvious since $c_1(X) = 0$ and since by \cite{BVK3},
		$c_2(X)$ is a multiple of $o_X$ and hence is mapped to zero in
		$\CH^2(\mathfrak{t}^2(Y))$. 
		By refined intersection \cite{MR1644323}, the cycle
		$(f,f,f)^*(\pi,\pi,\pi)_*\delta_X$ is supported on
		$(f,f,f)^{-1}(\pi,\pi,\pi)(\delta_X) = \delta_Y \cup \bigcup_i C_i\times
		C_i\times C_i$. Since $C_i$ is a smooth rational curve, we have that
		$\CH_{2}(C_i \times C_i\times C_i)$ admits $c_i\times C_i\times C_i$, $C_i\times
		c_i\times C_i$ and $C_i\times C_i\times c_i$ as a basis, where $c_i$ is any
		point on $C_i$. 
		The cycle $(f,f,f)^*(\pi,\pi,\pi)_*\delta_X$ is therefore a linear combination
		of $\delta_Y$ and, for $1\leq i\leq 8$, of $c_i\times C_i\times C_i$, $C_i\times
		c_i\times C_i$ and $C_i\times C_i\times c_i$. By \cite{BVK3}, the class of $c_i$
		in $\CH^2(Y)$ is the Beauville--Voisin zero-cycle $o_Y$\,; thus $c_i \in
		\DCH^2(Y)$. The cycles $c_i\times C_i\times C_i$, $C_i\times c_i\times C_i$ and
		$C_i\times C_i\times c_i$ therefore belong to $\DCH(Y\times Y\times Y)$ by
		Proposition \ref{prop:tensor}. Since $\delta_Y$ is distinguished, this
		establishes $(\starM)$, \emph{i.e.}, that $\delta_X$ is distinguished. 
	\end{proof}

	\subsection{(Nested) Hilbert schemes of surfaces, generalized Kummer varieties}
	In this subsection, we produce series of varieties satisfying $(\star)$.
	The first series of examples is given by the Hilbert schemes and (two-step)
	nested Hilbert schemes of points on a surface that satisfies $(\star)$,
	\emph{e.g.} an abelian surface, a Kummer surface (Proposition
	\ref{prop:kummer}), a K3 surface with Picard rank $\geq 19$ (Proposition
	\ref{prop:k3large}) or the product of two hyperelliptic curves (Corollary
	\ref{cor:hyperelliptic}). Note that by a result of Cheah \cite{MR1616606} the
	only nested Hilbert schemes of a smooth surface $S$ that are smooth are the
	Hilbert schemes $S^{[n]}$ and the nested Hilbert schemes $S^{[n,n+1]}$ for $n\in \N$.
	
	\begin{prop}\label{P:hilb}
		Let $S$ be a smooth projective surface that satisfies $(\star)$. Then, for any
		$n\in \N$, the Hilbert scheme of length-$n$ subschemes on $S$, denoted
		$S^{[n]}$, and the nested Hilbert scheme $S^{[n, n+1]}$, satisfy the condition
		$(\star)$.
	\end{prop}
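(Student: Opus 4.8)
The plan is to transport to the setting of distinguished cycles the argument showing that $S^{[n]}$ and $S^{[n,n+1]}$ admit a self-dual multiplicative Chow--K\"unneth decomposition when $S$ does, as carried out in \cite{VialHilb} (see also \cite[Theorem~6.1]{SV2} for the nested schemes), replacing throughout the degree-$0$ part $\CH(-)_{(0)}$ of the Chow ring by the subalgebra $\DCH(-)$ of distinguished cycles. The structural input is the motivic decomposition of de Cataldo--Migliorini: there is an isomorphism in $\CHM$
\[
\Phi:\ \bigoplus_{\lambda \vdash n}\h\big(S^{(\lambda)}\big)(\ell(\lambda)-n)\ \xrightarrow{\isom}\ \h\big(S^{[n]}\big),
\]
where, for a partition $\lambda$ having $m_i$ parts equal to $i$, one sets $S^{(\lambda)}=\prod_i \Sym^{m_i}(S)$ and $\ell(\lambda)=\sum_i m_i$, and where $\Phi$ and $\Phi^{-1}$ are given by explicit correspondences supported on incidence subvarieties.

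First I would produce a marking for $S^{[n]}$. Since $S$ satisfies $(\star)$ it has motive of abelian type, and for each $m$ the self-product $S^m$ satisfies $(\star)$ by Proposition~\ref{P:products}, with every permutation of its factors defining a distinguished self-correspondence by Remark~\ref{R:perm}; hence, by Lemma~\ref{lemma:PsAb}, the averaging projector $\tfrac{1}{m!}\sum_{\sigma}\Gamma_\sigma$ is distinguished and cuts out a direct summand of $\h(S^m)$ that is canonically an object of $\M^{ab}_{sd}$, which I take as the motive of $\Sym^m(S)$ (note that $\Sym^m S$ is itself singular for $m\geq 2$, so one must argue motivically rather than invoke Proposition~\ref{P:quotient} directly). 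Tensoring and Tate-twisting, $M:=\bigoplus_{\lambda}\h(S^{(\lambda)})(\ell(\lambda)-n)$ is naturally an object of $\M^{ab}_{sd}$, and $\phi:=\Phi^{-1}:\h(S^{[n]})\xrightarrow{\isom}M$ is a marking for $S^{[n]}$.

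Next I would check Condition $(\star)$ for this marking. For $(\starM)$ I would use Lemma~\ref{lemma:InterpretationStar}: it suffices to show that, transported through $\phi$, the intersection product $\mu:\h(S^{[n]})^{\otimes 2}\to\h(S^{[n]})$ becomes a symmetrically distinguished morphism $M^{\otimes 2}\to M$. In the de Cataldo--Migliorini basis this morphism is described, following Lehn--Sorger and as worked out motivically in \cite{VialHilb}, as a sum over \emph{gluing patterns} of correspondences on self-products of $S$, each of which is a composition of permutations, exterior products of the diagonal $\Delta_S$ and the small diagonal $\delta_S$, and multiplications by Chern classes of $S$ and of $T_S$; all of these are distinguished because $S$ satisfies $(\star)$ (by Lemma~\ref{lemma:Diagonal}, by the hypotheses $(\starM)$ and $(\starC)$ for $S$, by Remark~\ref{R:perm}, and since distinguished correspondences are closed under composition and exterior product, cf.\ Proposition~\ref{prop:tensor}). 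Hence $\mu$ is distinguished and $(\starM)$ holds. Granting $(\starM)$, Proposition~\ref{prop:Disting} makes $\DCH(S^{[n]})$ a subalgebra, so for $(\starC)$ it is enough to see that the tautological classes are distinguished; this reduces to the distinguishedness of the class of the universal subscheme $Z_n\subseteq S^{[n]}\times S$ for the product marking, obtained exactly as the distinguishedness of the universal families in Propositions~\ref{prop:Hilb2} and \ref{prop:NestHilb} (e.g.\ by identifying $[Z_n]$ with a distinguished cycle in the de Cataldo--Migliorini decomposition). Lehn's formula then writes the Chern classes of $T_{S^{[n]}}$ as polynomials in tautological classes, so they lie in $\DCH(S^{[n]})$.

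For $S^{[n,n+1]}$ one proceeds analogously, by induction on $n$: either through the corresponding motivic decomposition of the nested Hilbert scheme (again over products of symmetric products of $S$, with distinguished incidence correspondences), or by realizing $S^{[n,n+1]}$ from $S^{[n]}$ and $S$ through the operations of Section~\ref{sect:examples}---blow-ups along centres with distinguished embedding and distinguished normal bundle, quotients, and projective bundles---as in \cite[Theorem~6.1]{SV2}; in either route $(\starM)$ reduces to the distinguishedness of incidence correspondences on self-products of $S$, and $(\starC)$ follows from Lehn's Chern-class computations together with Proposition~\ref{prop:Disting}. The main obstacle is the verification of $(\starM)$ for $S^{[n]}$, i.e.\ that the entire Lehn--Sorger ring structure is realised by distinguished correspondences; but this is precisely the content of the multiplicative Chow--K\"unneth statement of \cite{VialHilb} with $\CH(-)_{(0)}$ replaced by $\DCH(-)$, and one checks that every correspondence appearing there is distinguished whenever $S$ satisfies $(\star)$.
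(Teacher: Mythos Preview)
Your choice of marking---the de Cataldo--Migliorini isomorphism \eqref{eq:hilb} (and \eqref{eq:nestedhilb} for the nested scheme)---is exactly the one used in the paper, and your handling of the symmetric products via the averaging projector and Lemma~\ref{lemma:PsAb} is correct. The divergence is in how you verify $(\starM)$ and $(\starC)$.

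The paper does \emph{not} pass through an explicit Lehn--Sorger description of the ring structure. Instead it invokes Voisin's theorem on universally defined cycles \cite[Theorem~5.12]{VoisinDiag}: the correspondences $\Gamma^{(\lambda)}$ (and $\Gamma_1^{(\lambda,j)}$), the small diagonal $\delta_{S^{[n]}}$, and the Chern classes $c_i(S^{[n]})$ are all universally defined for surfaces, so the cycles $(\Gamma\otimes\Gamma\otimes\Gamma)_*\delta_{S^{[n]}}$ and $\Gamma_*c_i(S^{[n]})$ on self-products of $S$ are automatically polynomials in pull-backs of $\Delta_S$ and the Chern classes of $S$ along projections. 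Since $S$ satisfies $(\star)$, these building blocks are distinguished, and one is done. This is also what \cite[\S 3.2]{VialHilb} does---your attribution of an explicit Lehn--Sorger computation to that reference is not accurate.

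Your proposed route, by contrast, would require a Chow-level lift of the Lehn--Sorger ring structure expressed entirely through distinguished correspondences; this is considerably deeper and is not carried out in the references you cite. Likewise, your alternative for $S^{[n,n+1]}$ via blow-ups along the universal subscheme $Z_n\subset S^{[n]}\times S$ would need $Z_n$ to be smooth and to satisfy $(\star)$ with a distinguished embedding, which you do not establish (and which is not straightforward for $n\geq 3$). The paper avoids this by using the semi-small resolution $S^{[n,n+1]}\to S^{(n)}\times S$ and the decomposition \eqref{eq:nestedhilb} directly, again reducing everything to Voisin's theorem. In short: right marking, but the key device you are missing is Voisin's universally defined cycle theorem, which replaces all of the explicit ring-structure computations you propose.
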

	
	The second series of example is built from an abelian surface $A$\,: the
	associated Kummer K3 surface as well as its higher dimensional generalizations.
	Recall that the $n$-th \emph{generalized Kummer} variety (see \cite{MR730926})
	is the symplectic resolution of the quotient $A^{n+1}_{0}/{\fS_{n+1}}$, where
	$A^{n+1}_{0}$ is the abelian variety $\ker\left(+: A^{n+1}\to A\right)$, upon
	which the symmetric group acts naturally by permutations. 
	\begin{prop}\label{P:genK}
		For any $n\in \N$, the generalized Kummer variety $K_{n}(A)$ associated to an abelian surface $A$ satisfies the
		condition $(\star)$.
	\end{prop}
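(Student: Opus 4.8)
The plan is to rephrase, in the language of markings, the construction of \cite{MHRCKummer}, where $K_{n}(A)$ was endowed with a multiplicative self-dual Chow--K\"unneth decomposition whose Chern classes lie in degree $0$. Write $\rho\colon K_{n}(A)\to A^{n+1}_{0}/\fS_{n+1}$ for the symplectic resolution. By the Motivic HyperK\"ahler Resolution Conjecture, established for $K_{n}(A)$ in \cite{MHRCKummer}, there is an isomorphism of commutative algebra objects in $\CHM$
\[
\Theta\colon\ \h(K_{n}(A))\ \xrightarrow{\ \isom\ }\ \h_{\mathrm{orb}}(A^{n+1}_{0},\fS_{n+1}),
\]
where, by definition,
\[
\h_{\mathrm{orb}}(A^{n+1}_{0},\fS_{n+1})\ =\ \bigoplus_{[\sigma]}\Big(\h\big((A^{n+1}_{0})^{\sigma}\big)(-a(\sigma))\Big)^{C(\sigma)},
\]
the sum running over the conjugacy classes $[\sigma]$ of $\fS_{n+1}$, with $a(\sigma)$ the age and $C(\sigma)$ the centralizer, and the algebra structure given by the orbifold cup product $\mu_{\mathrm{orb}}$. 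It therefore suffices to realise $\h_{\mathrm{orb}}(A^{n+1}_{0},\fS_{n+1})$ as $F'(M)$ for some object $M\in\M^{atts}_{sd}$ in such a way that $\mu_{\mathrm{orb}}$ is a symmetrically distinguished morphism and the Chern classes of $K_{n}(A)$ are carried into $\DCH(M)$\,; transporting this marking back along $\Theta$ then yields, by Lemma~\ref{lemma:InterpretationStar} (and Lemma~\ref{lemma:Equivalence}, to pass freely between $\M^{atts}_{sd}$ and $\M^{ab}_{sd}$), a marking $\psi$ of $K_{n}(A)$ satisfying $(\star)$.

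First I would recall the structure of the sectors from \cite[\S 6]{MHRCKummer}. For $\sigma\in\fS_{n+1}$ of cycle type $(\ell_{1},\dots,\ell_{k})$, the fixed locus $(A^{n+1}_{0})^{\sigma}$, which consists of the tuples that are constant along each $\sigma$-orbit and sum to $0$, equals the kernel of the homomorphism $A^{k}\to A$, $(b_{j})\mapsto\sum_{j}\ell_{j}b_{j}$\,; so it is a disjoint union of translates of a $2(k-1)$-dimensional abelian subvariety of $A^{n+1}_{0}$. Each of its components carries a canonical set of torsion points, namely the torsion points of $A^{n+1}_{0}$ lying on it, making it an a.t.t.s\,; moreover $C(\sigma)$ acts on $(A^{n+1}_{0})^{\sigma}$ through morphisms of a.t.t.s's, so $\tfrac{1}{|C(\sigma)|}\sum_{g\in C(\sigma)}\Gamma_{g}$ is a symmetrically distinguished idempotent by Theorem~\ref{thm:SD2}. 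By Lemma~\ref{lemma:PsAb} its image lies in $(\M^{atts}_{sd},\text{ s.d.\,morphisms})$, so assembling the sectors exhibits $\h_{\mathrm{orb}}(A^{n+1}_{0},\fS_{n+1})$ as $F'(M)$ for a canonical $M\in\M^{atts}_{sd}$, together with a distinguished splitting into sectors. Observe finally that, the tangent bundle of an abelian variety being trivial, all normal bundles of the $(A^{n+1}_{0})^{\sigma}$ inside $A^{n+1}_{0}$, and of the various nested fixed loci inside one another, are trivial\,; hence their Chern and Segre classes, being multiples of fundamental classes, are symmetrically distinguished.

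Next I would verify $(\starM)$ and $(\starC)$. By the definition of the orbifold product (\emph{cf.}~\cite{MHRCKummer}), $\mu_{\mathrm{orb}}$ is a finite sum, over pairs $([\sigma],[\tau])$, of morphisms obtained by composing pull-backs and push-forwards along the natural maps relating $(A^{n+1}_{0})^{\sigma}$, $(A^{n+1}_{0})^{\tau}$, $(A^{n+1}_{0})^{\sigma\tau}$ and $(A^{n+1}_{0})^{\langle\sigma,\tau\rangle}$ with multiplication by the Euler class of the relevant obstruction bundle. The former are morphisms of a.t.t.s's, hence have symmetrically distinguished graphs by Theorem~\ref{thm:SD2}\,; the latter is symmetrically distinguished because, by the explicit description in \emph{loc.\,cit.}, the obstruction bundles are assembled out of the trivial normal bundles above. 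Since the composition and sum of symmetrically distinguished morphisms remain symmetrically distinguished (Lemma~\ref{lemma:EquivalenceNum}$(i)$), the morphism $\mu_{\mathrm{orb}}$ is symmetrically distinguished, and Lemma~\ref{lemma:InterpretationStar} gives $(\starM)$ for $\psi$\,; in particular $\Delta_{K_{n}(A)}$ is distinguished (Lemma~\ref{lemma:Diagonal}) and $\DCH_{\psi}(K_{n}(A))$ is a subalgebra of $\CH(K_{n}(A))$ (Proposition~\ref{prop:Disting}). For $(\starC)$, one has $c_{1}(K_{n}(A))=0$, while the higher Chern classes, transported to the orbifold side as in the Chern class computation of \cite{MHRCKummer}, are polynomials in the (trivial) Chern classes of $A^{n+1}_{0}$ and in the classes of the exceptional divisors of the crepant resolution $\rho$ and their mutual intersections, which are distinguished since these strata are built from projective bundles over a.t.t.s's (Theorem~\ref{thm:SD2} and the a.t.t.s analogues of Propositions~\ref{prop:PB} and~\ref{P:quotient}).

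The hard part will be the input from \cite{MHRCKummer} underlying the previous paragraph, which has to be carried out uniformly in $n$. Two points require care. First, every fixed-locus inclusion and projection occurring in the orbifold cup product must be shown to be a morphism of a.t.t.s's, that is, to be compatible with the canonical torsion structures on the (possibly disconnected) fixed loci\,; this is a genuinely non-formal point, and is precisely the subject of \cite[\S 6]{MHRCKummer}. Second, the obstruction bundles must be identified explicitly enough to see that their Chern classes are distinguished, which I have reduced above to the triviality of the normal bundles of abelian subvarieties and to the structure of the exceptional divisors of $\rho$. Once these two inputs are in place, the remaining passage from ``multiplicative Chow--K\"unneth decomposition with Chern classes in degree $0$'' to ``the marking satisfies $(\star)$'' is essentially formal, via Lemmas~\ref{lemma:EquivalenceNum}, \ref{lemma:InterpretationStar} and~\ref{lemma:PsAb}. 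As a sanity check, for $n=1$ this reproduces a marking of the Kummer surface equivalent to the one of Proposition~\ref{prop:kummer}.
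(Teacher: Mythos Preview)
Your approach is correct but takes a genuinely different route from the paper's. Both use the same underlying marking (the de Cataldo--Migliorini/orbifold decomposition coming from \cite{MHRCKummer}), but the arguments for $(\starM)$ diverge. The paper does \emph{not} invoke the full Motivic HyperK\"ahler Resolution Conjecture as an algebra isomorphism. Instead it works with the marking \eqref{eq:gk} directly and shows that $(\Gamma_0^{\otimes 3})_*\delta_{K_{n-1}(A)}$ is symmetrically distinguished by restricting from $A^{[n]}$\,: one uses that $\delta_{K_{n-1}(A)}$ is the pull-back of $\delta_{A^{[n]}}$ along the inclusion, applies Voisin's universally-defined-cycles theorem to see that $(\Gamma^{\otimes 3})_*\delta_{A^{[n]}}$ is a polynomial in big diagonals of powers of $A$ (hence symmetrically distinguished), and then runs a base-change argument through the cartesian diagram displayed in \S\ref{S:proof} together with the a.t.t.s.\ machinery of \cite[\S6]{MHRCKummer} to descend. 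The same restriction-from-$A^{[n]}$ trick handles $(\starC)$, via $c_i(K_{n-1}(A)) = c_i(A^{[n]})|_{K_{n-1}(A)}$.

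Your route is more conceptual: you take the algebra isomorphism $\Theta$ of \cite{MHRCKummer} as a black box, so that $(\starM)$ reduces (via Lemma~\ref{lemma:InterpretationStar}) to checking that the orbifold product $\mu_{\mathrm{orb}}$ is a symmetrically distinguished morphism, which you read off from its explicit formula since all tangent and normal bundles involved are trivial and all structure maps are morphisms of a.t.t.s. This is legitimate and, once $\Theta$ is granted, avoids re-running the Voisin/base-change argument. The trade-off is that the heavy lifting (proving $\Theta$ is an \emph{algebra} isomorphism) is precisely what \cite{MHRCKummer} establishes using those very techniques, so you are not really bypassing them, only relocating them into the citation. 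Your treatment of $(\starC)$ is the weakest part\,: the sentence about ``exceptional divisors of $\rho$'' mixes the two sides of $\Theta$ and is not how one controls $\Theta_*c_i(K_n(A))$. The clean statement, which you should use instead, is the one the paper uses\,: the Chern classes of $K_{n-1}(A)$ are restrictions of those of $A^{[n]}$, and then the same restriction argument (as in \cite[Proposition~7.13]{MHRCKummer}) applies.
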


	The proofs of Propositions \ref{P:hilb}  and \ref{P:genK} will be given
	concomitantly in full in \S \ref{S:proof}. Note that the case of Kummer surfaces
	(which are the generalized Kummer varieties of dimension 2) was already treated
	in Proposition \ref{prop:kummer}. We start by recalling some results of de
	Cataldo and Migliorini \cite{cm} concerning the motives of Hilbert schemes of
	surfaces, or more generally that of a semi-small resolution.

	\subsubsection{The motive of semi-small resolutions}\label{subsubsect:semismall}
	
	Recall that a morphism $f: Y\to X$ is called \emph{semi-small} if for all
	integer $k\geq 0$, the codimension of the locus $\left\{x\in X : \dim
	f^{-1}(x)\geq k\right\}$ is at least $2k$. In particular, $f$ is generically
	finite. In \cite{cm}, assuming $f : Y\to X$ is a semi-small resolution with $Y$
	smooth and projective, de Cataldo and Migliorini computed the Chow motive of $Y$
	in terms of the Chow motives of projective compactifications of \emph{relevant
		strata} of
	$f$ provided these are finite group quotients of smooth varieties\,; we refer to
	\cite{cm} for a precise statement. In our case of interest, this has the
	following consequence. Suppose $S$ is a smooth projective surface and suppose
	$A$ is an abelian surface. Let us make some standard construction and fix the
	notation.
	
	Given a partition $\lambda=(\lambda_{1}\geq \cdots \geq
	\lambda_{|\lambda|})=(1^{a_{1}}\cdots n^{a_{n}})$ of a positive integer $n$
	where
	$a_{i}=\#\{j :  1\leq j\leq n\,;\lambda_{j}=i\}$ and where $|\lambda|
	:=a_1+\cdots+a_{n}$ denotes the length of $\lambda$, we define
	$\mathfrak{S}_\lambda :=  
	\mathfrak{S}_{a_{1}}\times\cdots\times\mathfrak{S}_{a_{n}}$. 
	We define   $S^{\lambda}$ to be $S^{|\lambda|}$, equipped with the natural
	action of $\mathfrak{S}_{\lambda}$ and with the natural morphism to $S^{(n)}$ by
	sending $(x_{1}, \cdots, x_{|\lambda|})$ to
	$\sum_{j=1}^{|\lambda|}\lambda_{j}[x_{j}]$.
	We denote the quotient $$S^{(\lambda)} := S^\lambda/\mathfrak{S}_\lambda\isom
	S^{(a_{1})}\times \cdots \times S^{(a_{n})}$$  and we
	define the incidence correspondence $$\Gamma^\lambda := (S^{[n]}
	\times_{S^{(n)}} S^\lambda)_{\mathrm{red}} \subset S^{[n]} \times S^\lambda.$$
	The correspondence $\Gamma^{(\lambda)} \subset S^{[n]}\times S^{(\lambda)}$ is
	then the quotient $\Gamma^{\lambda}/\mathfrak{S}_\lambda$. Similarly, the
	correspondence $\Gamma^{(\lambda, j)}_{1}\subset S^{[n,n+1]}\times
	S^{(\lambda)}\times S$ is defined to be the incidence subvariety
	$$\Gamma^{(\lambda, j)}_{1}:=\left\{(\xi\subset\xi',z, x)~\vert~(\xi, z)\in
	\Gamma^{(\lambda)}\,; x=\xi'/\xi \text{ has multiplicity} \geq j \text{ in }
	\xi\right\}.$$ For an integer $a\geq 0$, the motive of the quotient $S^{(a)}$ is
	thought of as the direct summand of the motive of $S^a$ with respect to the
	idempotent $\frac{1}{a!}\sum_{\sigma \in \mathfrak{S}_a} \sigma$. When $S=A$ is
	an abelian surface, this idempotent is symmetrically distinguished, while in the
	case when $S$ is a smooth projective surface satisfying $(\star)$ it is also
	distinguished (see Remark \ref{R:perm}).
	In the case $S=A$ an abelian surface, taking the fiber over $0$ of the sum map
	$A^{n} \to A$ and of the sum map
	composed with the Hilbert--Chow morphism $A^{[n]} \to A^{(n)} \to A$, we
	define likewise $A_0^\lambda, A_0^{(\lambda)},  \Gamma_0^\lambda$, and
	$\Gamma_0^{(\lambda)}$.
	
	Then the strata associated to the semi-small resolutions
	$$S^{[n]} \to S^{(n)},   \quad K_{n-1}(A) \to A^{(n)}_0 \quad \text{and} \quad
	S^{[n, n+1]}\to S^{(n)}\times S$$ 
	are indexed by the set $\mathscr{P}(n)$ of partitions of $n$ in the first two
	cases and $\coprod_{\lambda\in \mathscr{P}(n)}I_{\lambda}$ with
	$I_{\lambda}=\{0\}\coprod \{j~|~a_{j}\neq 0\}$ in the last case\,; and we have
	morphisms (in fact, isomorphisms by Theorem \ref{T:dCM} below) of Chow motives 
	\begin{equation}\label{eq:hilb}
	\Gamma := \bigoplus_{\lambda\in\mathscr{P}(n)} \Gamma^{(\lambda)} \ :\
	\mathfrak{h}(S^{[n]}) \stackrel{}{\longrightarrow}
	\bigoplus_{\lambda\in\mathscr{P}(n)}\mathfrak{h}(S^{(\lambda)})(|\lambda|-n)
	\end{equation}
	
	\begin{equation}\label{eq:gk}
	\Gamma_0 := \bigoplus_{\lambda\in\mathscr{P}(n)} \Gamma^{(\lambda)}_0\ : \
	\mathfrak{h}(K_{n-1}(A)) \stackrel{}{\longrightarrow}
	\bigoplus_{\lambda\in\mathscr{P}(n)}\mathfrak{h}(A^{(\lambda)}_{0})(|\lambda|-n).
	\end{equation}
	
	\begin{equation}\label{eq:nestedhilb}
	\Gamma_{1} := \bigoplus_{\lambda\in\mathscr{P}(n)}\bigoplus_{j\in I_{\lambda}}
	\Gamma_{1}^{(\lambda, j)} \ :\
	\mathfrak{h}(S^{[n, n+1]}) \stackrel{}{\longrightarrow}
	\bigoplus_{\lambda\in\mathscr{P}(n)}\bigoplus_{j\in
		I_{\lambda}}\mathfrak{h}(S^{(\lambda)}\times S)(|\lambda|-n-\delta_{0,j}),
	\end{equation}
	where $\delta_{0,j}$ is 0 if $j=0$ and is 1 if $j\neq 0$.
	
	\begin{thm}[de Cataldo and Migliorini] \label{T:dCM}
		The morphisms of Chow motives $\Gamma$, $\Gamma_0$ and $\Gamma_{1}$ are
		isomorphisms with
		inverses given respectively by 
		$$\Gamma' := \sum_{\lambda\in\mathscr{P}(n)} \frac{1}{m_\lambda} {}^t
		\Gamma^{(\lambda)},  \quad \Gamma'_0 :=
		\sum_{\lambda\in\mathscr{P}(n)} \frac{1}{m_\lambda} {}^t \Gamma_0^{(\lambda)}
		\quad \text{and} \quad \Gamma_{1}' := \sum_{\lambda\in\mathscr{P}(n)} \sum_{j\in
			I_{\lambda}}\frac{1}{m_{\lambda,j}} {}^t
		\Gamma_{1}^{(\lambda, j)}$$
		where the superscript `$t$' indicates transposition, and where $m_\lambda := 
		(-1)^{n-|\lambda|} \prod_{i=1}^{|\lambda|} \lambda_i$ and $m_{\lambda,
			j}:=(-1)^{n-|\lambda|} a_{j}\prod_{i=1}^{|\lambda|} \lambda_i$ are non-zero
		constants, where $a_{j}=1$ if $j=0$ and $a_{j}=\#\{i :  1\leq i\leq
		n\,;\lambda_{i}=j\}$ if $j\neq 0$.
	\end{thm}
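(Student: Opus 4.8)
The plan is to realise all three statements as instances of de Cataldo and Migliorini's computation of the Chow motive of a semi-small resolution \cite{cm}, applied respectively to the Hilbert--Chow morphism $\pi_{n}\colon S^{[n]}\to S^{(n)}$, to its base change $K_{n-1}(A)\to A_{0}^{(n)}$ along $\{0\}\hookrightarrow A$, and to the residual-point morphism $\rho_{n}\colon S^{[n,n+1]}\to S^{(n)}\times S$ sending $\xi\subset\xi'$ to $(\mathrm{HC}(\xi),\Supp(\xi'/\xi))$. In each case the argument has the same four-step shape: (i) prove the map is semi-small and identify its relevant strata; (ii) realise the closure of each stratum, together with its ``expected'' local system, as a symmetric self-product of $S$ (or of $A_{0}$), so that the incidence correspondences $\Gamma^{(\lambda)}$ (resp.\ $\Gamma_{0}^{(\lambda)}$, $\Gamma_{1}^{(\lambda,j)}$) are the candidate building blocks, living in the Chow group of the predicted degree; (iii) prove the orthogonality relations $\Gamma^{(\mu)}\circ{}^{t}\Gamma^{(\lambda)}=m_{\lambda}\,\delta_{\lambda\mu}\,\Delta_{S^{(\lambda)}}$ and their nested/Kummer analogues; (iv) prove the completeness relation $\Gamma'\circ\Gamma=\Delta_{S^{[n]}}$. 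Steps (iii) and (iv) together say precisely that $\Gamma$, $\Gamma_{0}$, $\Gamma_{1}$ are isomorphisms with the stated inverses.

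For (i)--(ii): stratify $S^{(n)}$ by cycle type, the stratum of type $\lambda=(1^{a_{1}}\cdots n^{a_{n}})$ being the locus of $0$-cycles $\sum_{i}\lambda_{i}[x_{i}]$ with the $x_{i}$ distinct; over it the fibre of $\pi_{n}$ is the product of punctual Hilbert schemes $\prod_{i}\mathrm{Hilb}^{\lambda_{i}}(S)_{x_{i}}$, of dimension $\sum_{i}(\lambda_{i}-1)=n-|\lambda|$, which is exactly half the codimension of the stratum, so $\pi_{n}$ is semi-small and \emph{every} stratum is relevant. The closure of this stratum is the image of the birational finite morphism $S^{(\lambda)}=S^{(a_{1})}\times\cdots\times S^{(a_{n})}\to S^{(n)}$, and the trivial local system on it makes $\mathfrak{h}(S^{(\lambda)})$ the motive attached to the stratum, the Tate twist $|\lambda|-n$ recording the fibre dimension. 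The generalized Kummer case is obtained by base changing the entire picture along $\{0\}\hookrightarrow A$, replacing $S^{(\lambda)}$ by $A_{0}^{(\lambda)}$. For $\rho_{n}$ one checks semi-smallness the same way, noting that the residual point $\Supp(\xi'/\xi)$ is either disjoint from $\Supp(\xi)$ (index $j=0$) or lies on one of the $a_{j}$ clusters of $\xi$ of size $j$ (index $j$, necessarily with $a_{j}\neq 0$), whence the set $I_{\lambda}$, the compactified stratum $S^{(\lambda)}\times S$, and the extra twist $-\delta_{0,j}$ which comes from the one extra dimension of the nested punctual Hilbert scheme $\mathrm{Hilb}^{j,j+1}(S)_{0}$.

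For (iii): the composite $\Gamma^{(\mu)}\circ{}^{t}\Gamma^{(\lambda)}$ is a cycle on $S^{(\lambda)}\times S^{(\mu)}$ whose support lies in $S^{(\lambda)}\times_{S^{(n)}}S^{(\mu)}$. Semi-smallness forces this fibre product to have dimension strictly less than $\dim S^{(\lambda)}=2|\lambda|$ unless $\lambda=\mu$, and when $\lambda=\mu$ its unique top-dimensional component is the diagonal; hence the composite vanishes for $\lambda\neq\mu$ and is a multiple of $\Delta_{S^{(\lambda)}}$ for $\lambda=\mu$. The multiplicity $m_{\lambda}$ is then computed by localising at a general point of the stratum and evaluating the self-intersection of the fibre $\prod_{i}\mathrm{Hilb}^{\lambda_{i}}(S)_{x_{i}}$ inside $S^{[n]}$; this reduces to the local model $S=\A^{2}$ and to the classical fact (Ellingsrud--Str\o mme, Brian\c{c}on) that the corresponding number for a single length-$m$ punctual Hilbert scheme is $(-1)^{m-1}m$, giving $m_{\lambda}=(-1)^{n-|\lambda|}\prod_{i}\lambda_{i}$, with the additional factor $a_{j}$ in $m_{\lambda,j}$ recording the $a_{j}$ clusters on which the residual point may sit. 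This already yields $\Gamma\circ\Gamma'=\mathrm{id}$, $\Gamma_{0}\circ\Gamma_{0}'=\mathrm{id}$, $\Gamma_{1}\circ\Gamma_{1}'=\mathrm{id}$.

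For (iv): regard $\Delta_{S^{[n]}}$ as a cycle on $W:=S^{[n]}\times_{S^{(n)}}S^{[n]}$; by semi-smallness $W$ has pure dimension $2n$, with irreducible components the closures of the loci over the various strata, and the difference $\Delta_{S^{[n]}}-\sum_{\lambda}\frac{1}{m_{\lambda}}\,{}^{t}\Gamma^{(\lambda)}\circ\Gamma^{(\lambda)}$ is a $2n$-cycle on $W$ which, by an induction on $n$ that peels off the open stratum and treats the deeper strata via their own de Cataldo--Migliorini decompositions, is supported on a proper closed subset of $W$ and therefore vanishes. I expect this to be the real obstacle: the orthogonality relations of (iii) are local and essentially self-contained, but showing that the relevant-stratum correspondences actually \emph{span} the diagonal in $\CH$ — not merely modulo homological or numerical equivalence — is exactly the substance of de Cataldo--Migliorini's motivic decomposition theorem and relies on their careful geometric analysis of the fibre products; for the Kummer variant one checks the bookkeeping restricts verbatim along $\{0\}\hookrightarrow A$, and for the nested variant one checks it extends verbatim with the strata indexed by $\coprod_{\lambda}I_{\lambda}$. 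Combining (iii) and (iv) gives that $\Gamma$, $\Gamma_{0}$ and $\Gamma_{1}$ are isomorphisms with inverses $\Gamma'$, $\Gamma_{0}'$ and $\Gamma_{1}'$.
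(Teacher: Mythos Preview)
Your proposal is correct and is in fact considerably more detailed than what the paper provides. The paper's own ``proof'' of Theorem~\ref{T:dCM} is simply a list of citations: the Hilbert scheme case is referred to \cite{MR1919155} (or \cite{cm}), the generalized Kummer case to \cite[Corollary~6.3]{MHRCKummer}, and the nested Hilbert scheme case to \cite[Theorem~3.3.1 and the proof of Theorem~2.3.8]{cm}. What you have written is an accurate outline of the content of those cited proofs --- the semi-smallness, the stratification by partitions, the orthogonality relations coming from the dimension count on the fibre products $S^{(\lambda)}\times_{S^{(n)}}S^{(\mu)}$, the local computation of $m_{\lambda}$ via the self-intersection of the punctual Hilbert scheme, and the inductive ``span'' argument for the diagonal --- and you correctly flag step~(iv) as the substantive part that cannot be reduced to formal manipulation. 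For the generalized Kummer variant, the base change along $\{0\}\hookrightarrow A$ is indeed the mechanism, though in \cite{MHRCKummer} there is some additional care taken with the a.t.t.s.\ structures on the $A_{0}^{(\lambda)}$; this does not affect the validity of your sketch.
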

	\begin{proof}
		The proof that the morphism \eqref{eq:hilb} is an isomorphism with inverse
		given by
		$\Gamma'$ can be found in \cite{MR1919155} (or \cite{cm}),  the proof that  the
		morphism
		\eqref{eq:gk}  is an isomorphism with inverse given by $\Gamma'_0$ can be found
		in
		\cite[Corollary 6.3]{MHRCKummer} and the proof that the morphism
		\eqref{eq:nestedhilb}  is an isomorphism in \cite[Theorem 3.3.1]{cm} while the
		fact that its inverse is given by $\Gamma'_1$ follows from the proof of
		\cite[Theorem 2.3.8]{cm}.
	\end{proof}

	\subsubsection{Proof of Propositions \ref{P:hilb} and \ref{P:genK}}
	\label{S:proof}
	The argument is based on Voisin's universally defined cycle  theorem on
	self-products of surfaces \cite[Theorem 5.12]{VoisinDiag}. Let us write $X$ for
	either (i) the Hilbert scheme of length-$n$ subschemes on a surface $S$
	satisfying $(\star)$ (Proposition \ref{P:hilb}), (ii) the $n$-th nested Hilbert
	scheme of a surface $S$ satisfying $(\star)$ (Proposition \ref{P:hilb}), or
	(iii) a generalized Kummer variety $K_n(A)$ (Proposition \ref{P:genK}).
	We are going to show that the markings given by \eqref{eq:hilb}, \eqref{eq:gk}
	and \eqref{eq:nestedhilb} satisfy $(\star)$. For that purpose, we have to show
	that the class of the small diagonal $\delta_X$ (resp. the Chern classes of $X$)
	are mapped  in cases (i) and (ii) to a distinguished cycle on self-products of
	$S$ under the correspondences $\Gamma \otimes
	\Gamma\otimes \Gamma$ and $\Gamma_{1} \otimes
	\Gamma_{1}\otimes \Gamma_{1}$ (resp. $\Gamma$ and $\Gamma_{1}$), where $\Gamma$
	(resp. $\Gamma_{1}$) is the isomorphism \eqref{eq:hilb} (resp.
	\eqref{eq:nestedhilb}), and in case (iii)  to a symmetrically distinguished
	cycle  on an a.t.t.s. under the correspondence   $\Gamma_0 \otimes
	\Gamma_0\otimes \Gamma_0$ (resp. $\Gamma_0$), where $\Gamma_0$ is the
	isomorphism \eqref{eq:gk}.

	In cases (i) and (ii), one argues as in \cite[\S 3.2]{VialHilb} or as in
	\cite[Proposition 5.7]{MHRCKummer}. 
	The main idea is that, thanks to Voisin's theorem \cite[Theorem
	5.12]{VoisinDiag}, $\Gamma_* c_i(X)$ and  $(\Gamma \otimes \Gamma \otimes
	\Gamma)_*\delta_X$ (resp. $\Gamma_{1,*} c_i(X)$ and  $(\Gamma_1 \otimes \Gamma_1
	\otimes \Gamma_1)_*\delta_X$) are cycles that are polynomials in pull-backs
	along projections of Chern classes of $S$ and the diagonal $\Delta_S$. Since $S$
	is assumed to satisfy $(\star)$, diagonals and Chern classes are distinguished,
	and hence the above cycles are all distinguished.

	In case (iii), this is achieved for the small diagonal by arguing as in the
	proof of
	\cite[Proposition~6.12]{MHRCKummer} and for the Chern classes as in the proof of
	\cite[Proposition~7.13]{MHRCKummer}.  A key point to establish  $(\starM)$ is
	that the small diagonal $\delta_{K_n(A)}$ is the restriction of the small
	diagonal $\delta_{A^{[n+1]}}$ under the 3-fold product of the inclusion
	$K_{n-1}(A) \to A^{[n]}$. The proof of $(\starC)$ is similar once one has
	observed that 
	the Chern classes $c_i(K_{n-1}(A))$ are the restrictions of the Chern classes
	$c_i({A^{[n]}})$. One cannot invoke Voisin's theorem directly here, and one has
	to utilize the commutativity of the following diagram, whose squares are all
	cartesian and without excess intersections,
	\begin{equation*}\label{eqn:diagram}
	\xymatrix{
		(A^{[n]})^{3} \cart& \Gamma^{\lambda}\times \Gamma^{\mu}\times \Gamma^{\nu}
		\cart\ar[l]_-{p''}\ar[r]^-{q''} & A^{\lambda}\times A^{\mu}\times A^{\nu}\\
		(A^{[n]})^{3/A}\cart \ar@{^{(}->}[u]& \Gamma^{\lambda}\times_{A}
		\Gamma^{\mu}\times_{A} \Gamma^{\nu} \cart\ar[l]_-{p'}\ar[r]^-{q'}
		\ar@{^{(}->}[u] &
		A^{\lambda}\times_{A}A^{\mu}\times_{A}A^{\nu}\ar@{^{(}->}[u]_{j}\\
		K_{n-1}(A)^{3} \ar@{^{(}->}[u] & \Gamma_0^{\lambda}\times \Gamma_0^{\mu}\times
		\Gamma_0^{\nu} \ar@{^{(}->}[u] \ar[l]_-{p}\ar[r]^-{q} & A_0^{\lambda}\times
		A_0^{\mu}\times A_0^{\nu} \ar@{^{(}->}[u]_{i}
	}
	\end{equation*}
	Here $\lambda, \mu, \nu$ are partitions of $n$\,; all fiber products in the
	second row are over $A$\,; the second row is the base change by the inclusion of
	small diagonal $A\inj A^{3}$ of the first row\,; the third row is the base change by $\{O_{A}\}\inj A$ of the second row.
	
	We need to show that $ (\Gamma_0^{\lambda}\times \Gamma_0^{\mu}\times
	\Gamma_0^{\nu})_*(\delta_{K_{n-1}(A)}) = q_*p^*(\delta_{K_{n-1}(A)})$ is
	symmetrically distinguished on the a.t.t.s. $A_0^{\lambda}\times A_0^{\mu}\times
	A_0^{\nu}$ for all partitions $\lambda, \mu, \nu$ of $n$.
	
	As in the proof of \cite[Proposition 6.12]{MHRCKummer}, we have thanks to
	\cite[Lemma 6.6]{MHRCKummer} that 
	$A^{\lambda}\times_{A}A^{\mu}\times_{A}A^{\nu}$ and $A_0^{\lambda}\times
	A_0^{\mu}\times A_0^{\nu} $ are naturally disjoint unions of a.t.t.s. and the
	inclusions $i$ and $j$ are morphisms of a.t.t.s. on each component in the sense of Definition \ref{def:atts}.
	
	Denote $\delta_{A^{[n]}/A}$ the small diagonal inside the relative fiber product
	$(A^{[n]})^{3/A}$.
	Now by functorialities and the base change formula (\emph{cf.} \cite[Theorem
	6.2]{MR1644323}), we have
	$$j_{*}\circ q'_{*}\circ p'^{*} (\delta_{A^{[n]}/A})=q''_{*}\circ
	p''^{*}(\delta_{A^{[n]}}),$$
	which is a polynomial of big diagonals of $A^{|\lambda|+|\mu|+|\nu|}$ by
	Voisin's result \cite[Proposition 5.6]{VoisinDiag}, thus symmetrically
	distinguished in particular. By \cite[Lemma 6.10]{MHRCKummer}, $q'_{*}\circ
	p'^{*}(\delta_{A^{[n]}/A})$ is symmetrically distinguished on each component of
	$A^{\lambda}\times_{A}A^{\mu}\times_{A}A^{\nu}$.
	Again by functorialities and the base change formula,  we have 
	$$q_{*}\circ p^{*}(\delta_{K_{n-1}(A)})=i^{*}\circ q'_{*}\circ
	p'^{*}(\delta_{A^{[n]}/A}).$$
	Since $i$ is a morphism of a.t.t.s on each component, one concludes that
	$q_{*}\circ p^{*}(\delta_{K_{n-1}(A)})$ is symmetrically distinguished on each
	component, which concludes the proof. \qed
	
\begin{rmk}[Self-dual multiplicative Chow--K\"unneth decomposition for nested Hilbert schemes] The 
arguments of the proof of Proposition \ref{P:hilb} can be used to show that if a smooth projective surface $S$ has a self-dual multiplicative Chow--K\"unneth decomposition (see Section~\ref{S:mck} for the definition), then so do the nested Hilbert schemes $S^{[n,n+1]}$. Thus one may add the operation of taking nested Hilbert schemes of surfaces to \cite[Theorem~2]{SV2}.
\end{rmk}

	
	\section{Link with multiplicative Chow--K\"unneth decompositions}\label{S:mck}
	
	A \emph{Chow--K\"unneth decomposition} on a smooth projective variety $X$ of
	dimension $d$ is a set  $\{\pi^i_X : 0\leq i\leq 2d\}$ of mutually orthogonal
	idempotent correspondences in $X\times X$ that add up to $\Delta_X$ and whose
	cohomology classes in $H^{2d}(X\times X)$ are the components of the diagonal in
	$H^{2d-i}(X)\otimes H^i(X)$ for the K\"unneth decomposition.
	The notion of Chow--K\"unneth decomposition was introduced by Murre, who
	conjectured that all smooth projective varieties should admit such a
	decomposition \cite{Murre}. Murre's conjecture is intimately linked to the
	conjectures of Beilinson and Bloch\,; \emph{cf.} \cite{MR2115000}.
	
	The notion of \emph{multiplicative} Chow--K\"unneth (MCK) decomposition was
	introduced in \cite{SV} and further studied in \cite{MHRCKummer}, \cite{SV2},
	\cite{VialHilb} and \cite{MHRCK3}.
	A Chow--K\"unneth decomposition $\{\pi^i_X : 0\leq i\leq 2d\}$ on a smooth
	projective variety $X$ of dimension $d$ induces a bigrading decomposition of the
	Chow groups of self-powers of $X$ \emph{via} the formula 
	\begin{equation}\label{E:grading}
	\CH^i(X^n)_{(j)} := (\pi_{X^n}^{2i-j} )_* \CH^i(X^n),
	\end{equation}
	where by definition $X^n$ is endowed with the product Chow--K\"unneth
	decomposition
	$$\pi_{X^n}^{k} := \sum_{k_1+\cdots + k_n = k} \pi_X^{k_1}\otimes\cdots
	\otimes \pi_X^{k_n}. $$
	A Chow--K\"unneth decomposition $\{\pi^i_X : 0\leq i\leq 2d\}$ is
	\emph{multiplicative} if
	$\delta_X$ belongs to $\CH^{2d}(X\times X\times X)_{(0)}$. 
	As pointed out by the referee, this multiplicative condition 
	implies\footnote{Indeed, if $a$ is the structural morphism of $X$, we have $a_*\circ {\pi^{2d}_X}_* = a_*$, so that projecting $\delta_X = (\pi_{X^3}^{4d})_*\delta_X$ onto $X\times X$ gives $\Delta_X= (\pi^{2d}_{X^2})_*\Delta_X$. From the latter, it follows that $\pi_X^i = (\Delta_X\otimes \pi_X^i)_*\Delta_X = (\pi^{2d-i}_X\otimes \pi_X^i)_*\Delta_X =  (\pi^{2d-i}_X\otimes\Delta_X)_*\Delta_X = {}^t\pi^{2d-i}_X$\,; and conversely from $\pi_X^i = {}^t\pi^{2d-i}_X$ for all $i$, it follows that $\Delta_X = \sum_i\pi^i_X \circ \pi_X^i = \sum_i  ({}^t\pi^{i}_X\otimes \pi_X^i)_*\Delta_X = \sum_i  (\pi^{2d-i}_X\otimes \pi_X^i)_*\Delta_X = (\pi^{2d}_{X^2})_*\Delta_X$.}
	 that the diagonal $\Delta_X$ belongs to $\CH^d(X\times X)_{(0)}$, or equivalently, that the Chow--K\"unneth
	decomposition $\{\pi^i_X : 0\leq i\leq 2d\}$ is 
	\emph{self-dual}, meaning that $\pi_X^i = {}^t\pi_X^{2d-i}$ for all $i$.
	(In particular, the above remark makes it possible to simplify some of the arguments of \cite[\S 3]{SV2}). The existence of a multiplicative Chow--K\"unneth decomposition for $X$ ensures that $\CH^*(X)_{(0)}$ defines a graded subalgebra of $\CH^*(X)$. Finally,
	a natural condition that appeared in \cite{SV2} is that the Chern classes of $X$
	belongs to $\CH^*(X)_{(0)}$. As is apparent from the above and from the previous sections, the
	theory for $\DCH^*$ is in every way similar to that of $\CH^*(-)_{(0)}$ (compare
	with \cite{SV2}). 
	
	According to Murre's conjecture (D), for any choice of a Chow--K\"unneth
	decomposition $\{\pi^i_X : 0\leq i\leq 2d\}$, we should have that the
	restriction of the projection morphism $\CH^*(X) \to \overline\CH^*(X)$ to
	$\CH^*(X)_{(0)}$ is an isomorphism\,; see \cite{Murre}. Thus conjecturally the
	existence of a self-dual multiplicative Chow--K\"unneth decomposition for $X$
	provides a splitting to the algebra homomorphism $\CH^*(X) \to
	\overline\CH^*(X)$, in the same that a marking that satisfies $(\star)$ does. 
	
	\begin{prop}[$(\star)$ and MCK decomposition]\label{prop:multmarking}
		Let $X$ be a smooth projective variety with a marking $\phi$ that satisfies
		$(\starM)$. Then $X$ has a self-dual multiplicative Chow--K\"unneth
		decomposition with the property that $\DCH_{\phi^{\otimes n}}^*(X^n) \subseteq
		\CH^*(X^n)_{(0)}$.
		Moreover, equality holds if Murre's conjecture $(D)$ in  \cite{Murre} is true.
	\end{prop}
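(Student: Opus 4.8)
The plan is to extract a Chow--K\"unneth decomposition directly from the marking $\phi: \h(X)\xrightarrow{\isom} F(M)$ with $M\in \M^{ab}_{sd}$. The category $\M^{ab}_{sd}$ is built out of abelian varieties (with fixed origins), and the motive of an abelian variety carries the canonical Deninger--Murre Chow--K\"unneth decomposition $\h(A)=\oplus_j \h^j(A)$, whose projectors $\pi^j_A\in\CH^{\dim A}(A\times A)$ are \emph{symmetrically distinguished} (this follows from Theorem \ref{thm:SD}, or can be read off from the explicit formulae; it is already used in the proof of Proposition \ref{prop:curve}). Hence, writing $M=(A_1\sqcup\cdots\sqcup A_r, P, (n_1,\dots,n_r))$, the idempotents $\bigoplus_i \pi_{A_i}^{j+2n_i}$ (suitably Tate-twisted so that indices match) are symmetrically distinguished morphisms of $M$ in the sense of Definition \ref{def:SDmorph}, and since $P$ itself is a matrix of symmetrically distinguished cycles, conjugating and composing with $P$ yields symmetrically distinguished idempotents $p^j_M\in\DHom(M,M)$ which are mutually orthogonal, sum to $\id_M$, and realize the weight grading. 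Transporting back via $\phi$, we set $\pi^i_X := \phi^{-1}\circ p^i_M\circ\phi\in\CH^{d}(X\times X)$. By construction these are mutually orthogonal idempotents summing to $\Delta_X$, and their cohomology classes are the K\"unneth components (because $\bar F$ realizes the numerical/homological weight grading via Lemma \ref{lemma:EquivalenceNum}), so $\{\pi^i_X\}$ is a Chow--K\"unneth decomposition; moreover each $\pi^i_X$ is a \emph{distinguished} correspondence for the marking $\phi^{\otimes 2}$.

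Next I would verify self-duality and multiplicativity. Self-duality $\pi^i_X={}^t\pi^{2d-i}_X$ follows from the fact that the Deninger--Murre projectors satisfy $\pi^j_A={}^t\pi^{2\dim A-j}_A$ and that the duality on $\M^{ab}_{sd}$ (Definition \ref{def:SDMotive}(vi)) is compatible with transposition of symmetrically distinguished cycles; concretely, $(\star_{\mathrm{Mult}})$ already implies $\Delta_X$ is distinguished (Lemma \ref{lemma:Diagonal}(i)), and combined with the distinguishedness of each $\pi^i_X$ one gets $\pi^i_X\circ\pi^i_X=\pi^i_X={}^t\pi^{2d-i}_X$ exactly as in the footnote computation in Section \ref{S:mck} — all the correspondences involved are distinguished, hence so is the conclusion, and one does not even need that intermediate step once the projectors are known to be weight projectors matching their transposes. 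For multiplicativity, I must show $\delta_X\in\CH^{2d}(X^3)_{(0)}$, i.e.\ $(\pi^{4d}_{X^3})_*\delta_X=\delta_X$. By Lemma \ref{lemma:InterpretationStar}, $(\star_{\mathrm{Mult}})$ says the induced intersection-product morphism $\mu:M^{\otimes2}\to M$ is symmetrically distinguished; the point is then that a symmetrically distinguished morphism $M^{\otimes 2}\to M$ automatically commutes with the weight projectors $p^\bullet$ in the appropriate graded sense — equivalently, it lands in weight zero — because the Deninger--Murre decomposition of $\h(A_1\times A_2)$ is multiplicative (this is exactly the statement, recalled in the Introduction, that abelian varieties admit an MCK decomposition, proved by checking compatibility of Deninger--Murre with the product) and this property is inherited by direct summands cut out by symmetrically distinguished idempotents. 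Transporting back gives $\delta_X\in\CH^{2d}(X^3)_{(0)}$.

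The inclusion $\DCH_{\phi^{\otimes n}}^*(X^n)\subseteq\CH^*(X^n)_{(0)}$ is then essentially formal: a distinguished cycle on $X^n$ corresponds to a symmetrically distinguished morphism $\1(-i)\to M^{\otimes n}$, which (again because symmetrically distinguished morphisms out of the unit into an abelian motive factor through weight $2i$, by Remark \ref{R:symdistbeauville} applied componentwise and the multiplicativity of Deninger--Murre) factors through $(\pi^{2i}_{X^n})_*$, i.e.\ lies in $\CH^i(X^n)_{(0)}$. Conversely, equality under Murre's conjecture (D) is immediate: both $\DCH^*(X^n)$ and $\CH^*(X^n)_{(0)}$ map isomorphically onto $\overline{\CH}^*(X^n)$ — the former by Lemma \ref{lemma:section}, the latter by conjecture (D) — so the inclusion between them must be an equality. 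The main obstacle I anticipate is the second paragraph: making precise that "symmetrically distinguished morphism of $M^{\otimes 2}\to M$" forces weight-zero behaviour of $\delta_X$. The cleanest route is to avoid re-deriving this and instead cite that the Deninger--Murre Chow--K\"unneth decomposition on a product of abelian varieties is multiplicative (as recalled in the Introduction following \cite{DM}, \cite{SV}), observe that $F(M)$ is a direct summand of $\oplus_i\h(A_i)(n_i)$ cut out by the symmetrically distinguished idempotent $P$, note that the weight projectors $\pi^\bullet$ commute with $P$ (since $P$ is a morphism of abelian motives and weight is functorial), so the induced decomposition on $F(M)$ is multiplicative, and finally transport the whole package along $\phi$ using that $\phi$ intertwines $\delta_X$ with the symmetrically distinguished $F(\mu)$ by $(\star_{\mathrm{Mult}})$.
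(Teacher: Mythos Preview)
Your overall approach matches the paper's: construct distinguished Chow--K\"unneth projectors $\pi^i_X$ by transporting the Deninger--Murre projectors (which are symmetrically distinguished) through the marking, then argue that such a distinguished CK decomposition is automatically self-dual and multiplicative. Your treatment of Murre's conjecture (D) is also the same.

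Where you diverge from the paper is in the justification of multiplicativity, and here your reasoning is muddled. You write that $\mu$ respects the weight grading ``because the Deninger--Murre decomposition of $\h(A_1\times A_2)$ is multiplicative \dots\ and this property is inherited by direct summands cut out by symmetrically distinguished idempotents.'' But the algebra structure on $M$ is $\mu=\phi\circ\delta_X\circ(\phi^{-1})^{\otimes 2}$, which comes from $\delta_X$, not from $\delta_A$; the fact that $\delta_A$ is compatible with the Deninger--Murre grading says nothing directly about $\mu$. The relevant fact, which the paper uses, is that the K\"unneth projectors are \emph{central} modulo homological (hence numerical) equivalence: any morphism in $\bar{\M^{ab}}$ preserves the weight grading. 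Consequently $p^k_M\circ\mu\circ(p^i_M\otimes p^j_M)$ is symmetrically distinguished and numerically zero for $k\neq i+j$, hence zero in $\CHM$. The paper compresses this into one sentence: ``since a K\"unneth decomposition is always self-dual and multiplicative, any distinguished Chow--K\"unneth decomposition is self-dual and multiplicative.'' Everything in sight is distinguished, the desired identities hold numerically, and Lemma~\ref{lemma:EquivalenceNum}(ii) lifts them.

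The same simplification applies to your inclusion argument. Rather than invoking Remark~\ref{R:symdistbeauville} ``componentwise'' together with multiplicativity of Deninger--Murre, simply note (as the paper does) that for $\alpha\in\DCH^j_{\phi^{\otimes n}}(X^n)$ and $i\neq 2j$, the cycle $(\pi^i_{X^n})_*\alpha$ is distinguished (the product projectors are distinguished and act by a distinguished correspondence) and numerically trivial, hence zero; so $\alpha=(\pi^{2j}_{X^n})_*\alpha\in\CH^j(X^n)_{(0)}$.
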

	\begin{proof}
		The proof of Proposition \ref{P:products} shows that if $X$ and $Y$ are two
		smooth projective varieties endowed each with markings satisfying $(\starM)$,
		then the product marking on $X\times Y$ also satisfies $(\starM)$. Moreover, the
		graphs of the projection morphisms are distinguished for the product markings.
		Therefore, composition of distinguished correspondences are distinguished.
		
		Let $A$ be an abelian variety, and let $p \in \DCH(A\times A)$ be a
		symmetrically distinguished projector. The Deninger--Murre Chow--K\"unneth
		projectors $\pi_A^i$ in \cite{DM} of $A$ are symmetrically distinguished. Since
		the Chow--K\"unneth projectors are central modulo homological equivalence, we
		see that $p\circ \pi_A^i = \pi_A^i \circ p \in \CH^*(A\times A)$ and in
		particular that these provide distinguished Chow--K\"unneth projectors for
		$(A,p)$.
		
		It follows that, assuming $X$ has a marking $\phi$ that satisfies $(\starM)$,
		$X$ admits a distinguished Chow--K\"unneth decomposition. We conclude that $X$
		has a self-dual multiplicative Chow--K\"unneth decomposition by noting that
		since a K\"unneth decomposition is always self-dual and multiplicative, any
		distinguished Chow--K\"unneth decomposition is self-dual and multiplicative.
		
		Finally, the inclusion $\DCH_{\phi^{\otimes n}}^*(X^n) \subseteq
		\CH^*(X^n)_{(0)}$ is due to the following three facts\,: the product
		Chow--K\"unneth decomposition $\{\pi^i_{X^n}\}$ is distinguished, the cycle
		$(\pi^i_{X^n})_*\alpha$ is homologically trivial (and hence numerically trivial)
		for all $\alpha \in \CH^j(X^n)$ and all $i\neq 2j$, and $(\pi^i_{X^n})_*\alpha$ 
		is distinguished if $\alpha$ is. Murre's conjecture (D) for $X^n$ stipulates
		that $\CH^i(X^n)_{(0)}$ should inject in cohomology \emph{via} the cycle class
		map, and in particular that the surjective quotient morphism $\CH^i(X^n) \to
		\bar\CH^*(X^n)$ is an isomorphism when restricted to $\CH^i(X^N)_{(0)}$. Since
		the quotient morphism is surjective when restricted to $\DCH_{\phi^{\otimes
				n}}^*(X^n)$, Murre's conjecture implies $\DCH_{\phi^{\otimes n}}^*(X^n) =
		\CH^*(X^n)_{(0)}$.
	\end{proof}

	\section{Varieties with motive of abelian type that do not satisfy
		$(\star)$}\label{S:counterex}
	The previous sections raise the question of determining a natural class of varieties which satisfy the condition $(\star)$ of Definition \ref{def:Star} or more weakly, the \emph{Section Property}. Beyond the case of hyper-K\"ahler varieties, which we expect to satisfy the Section Property, the answer is unfortunately not clear to us at this stage.
	To give some hint, in this section, we provide some examples of varieties with motive of abelian type (\emph{i.e.}, in $\M^{ab}$) which fail to satisfy $(\star)$ and/or the Section Property.
	
	\subsection{The Ceresa cycle and the condition $(\star)$} 
	Let $C$ be a smooth projective curve. In this section we give a necessary
	condition on the Ceresa cycle of $C$ for $C$ to admit a marking that satisfies
	$(\star)$. In fact, we give a necessary condition on the Ceresa cycle of $C$ for
	$C$ to admit a self-dual multiplicative Chow--K\"unneth decomposition\,; see
	Proposition \ref{prop:multmarking}.
	
	Fix a zero-cycle $\alpha$ of degree~1 on $C$, and denote $\iota : C\to J(C)$
	the Abel--Jacobi map which maps a point $c\in C$ to the divisor class $[c] -
	\alpha$. We denote $[C]$ the class of the image of $C$ under $\iota$. Denote
	$[k] : J(C) \to J(C)$ the multiplication-by-$k$ homomorphism. The \emph{Ceresa
		cycle} is then the one-cycle $[C] - [-1]_*[C]$\,; it is numerically trivial, and
	its class modulo algebraic equivalence does not depend on the choice of the
	degree 1 zero-cycle $\alpha$.

	\begin{prop} \label{prop:ceresa}
		Let $C$ be a smooth projective curve. If $C$ has a self-dual multiplicative
		Chow--K\"unneth decomposition, then the Ceresa cycle is algebraically trivial.
	\end{prop}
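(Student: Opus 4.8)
The plan is to reduce the statement, via a choice of Chow--K\"unneth decomposition, to the vanishing of the Gross--Schoen modified diagonal cycle, and then to invoke the classical relation between that cycle and the Ceresa cycle. Since the Ceresa cycle is classically trivial (even in the Chow group) when the genus $g$ of $C$ is at most $2$, I may and do assume $g\geq 3$. Recall first that a Chow--K\"unneth decomposition of a smooth projective curve $C$ is the same datum as a choice of degree-one zero-cycle $\alpha\in\CH^1(C)$: one puts $\pi_C^0:=\alpha\times C$, $\pi_C^2:=C\times\alpha$ and $\pi_C^1:=\Delta_C-\pi_C^0-\pi_C^2$, and any such decomposition is automatically self-dual, since ${}^t\pi_C^0=\pi_C^2$ and ${}^t\pi_C^1=\pi_C^1$. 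Thus the hypothesis amounts to the existence of a degree-one zero-cycle $\alpha$ on $C$ for which the associated decomposition is multiplicative, that is, $\delta_C\in\CH^2(C^3)_{(0)}$ for the induced bigrading \eqref{E:grading} (\emph{cf.}\ Proposition~\ref{prop:multmarking}).

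Next I would extract from multiplicativity the vanishing of the modified diagonal. Consider the component
\[
\Gamma:=\bigl(\pi_C^1\otimes\pi_C^1\otimes\pi_C^1\bigr)_*\delta_C\ \in\ \CH^2(C^3),
\]
which is nothing but the $\h^1(C)\otimes\h^1(C)\to\h^1(C)$ component of the product morphism $\h(C)^{\otimes 2}\to\h(C)$ induced by $\delta_C$ (a morphism of Chow motives which, for degree reasons, is cohomologically --- hence numerically --- trivial, but need not be rationally trivial). Each summand $\pi_C^a\otimes\pi_C^b\otimes\pi_C^c$ of $\pi^4_{C^3}$ has $(a,b,c)\neq(1,1,1)$, and $\pi_C^1\circ\pi_C^i=0$ for $i\neq1$; hence $(\pi_C^1\otimes\pi_C^1\otimes\pi_C^1)\circ\pi^4_{C^3}=0$, and since multiplicativity means $\delta_C=(\pi^4_{C^3})_*\delta_C$, we conclude $\Gamma=0$ in $\CH^2(C^3)$. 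A direct computation, expanding $\pi_C^1=\Delta_C-\alpha\times C-C\times\alpha$, then identifies $\Gamma$ with the modified diagonal cycle of $(C,\alpha)$,
\[
\delta_{123}\ -\ (\delta_{12}+\delta_{13}+\delta_{23})\ +\ (\delta_1+\delta_2+\delta_3),
\]
where $\delta_{123}=\delta_C$, where $\delta_{ij}$ is the class of $\{x_i=x_j\}$ with the remaining coordinate pinned to $\alpha$, and where $\delta_i$ is $C$ placed in the $i$-th coordinate with the two other coordinates pinned to $\alpha$. In particular this cycle vanishes in $\CH^2(C^3)$, hence a fortiori modulo algebraic equivalence; and because any two degree-one zero-cycles on a curve are algebraically equivalent, the Gross--Schoen modified diagonal of any pointed model $(C,e)$ of $C$ is algebraically trivial.

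Finally I would appeal to the classical relation, due to Gross and Schoen, between the modified diagonal and the Ceresa cycle. Letting $\iota:C\hookrightarrow JC$ be the Abel--Jacobi embedding attached to $e$, one pushes the modified diagonal forward along the maps $C^3\to JC$, $(x_1,x_2,x_3)\mapsto\pm\iota(x_1)\pm\iota(x_2)\pm\iota(x_3)$, and, using the Beauville decomposition of $\CH_1(JC)$ together with the Fourier transform, extracts from the results the Ceresa cycle $\iota_*[C]-[-1]_*\iota_*[C]$ up to a nonzero rational scalar; thus algebraic triviality of the modified diagonal forces the Ceresa cycle to be algebraically trivial, as desired.

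The only non-formal ingredient is this last step, and the work there is to make the correspondence from the modified diagonal to the Ceresa cycle explicit and to bookkeep the Tate twists, the relevant Beauville components $\CH_1(JC)_{(s)}$, and the dependence on the auxiliary zero-cycle $\alpha$ (which is harmless modulo algebraic equivalence). However, only the ``easy'' implication ``modified diagonal algebraically trivial $\Rightarrow$ Ceresa cycle algebraically trivial'' is required, which is exactly what the Gross--Schoen construction provides, so no new idea beyond careful bookkeeping is needed.
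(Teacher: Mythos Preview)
Your argument is correct and follows essentially the same route as the paper: reduce to the vanishing of the Gross--Schoen modified diagonal, then push to the Jacobian. Two points are worth flagging. First, your opening assertion that a Chow--K\"unneth decomposition of a curve is the same as a choice of degree-one zero-cycle requires justification; the paper invokes Kimura finite-dimensionality to conclude that any idempotent homologically equivalent to $[pt]\times C$ is actually of the form $\alpha\times C$ (and self-duality then forces $\pi^2=C\times\alpha$ --- your claim that \emph{every} CK decomposition of a curve is automatically self-dual is not quite right, since a priori one could take $\pi^0=\alpha\times C$ and $\pi^2=C\times\beta$ with $\alpha\neq\beta$). Second, the paper carries out your final ``Gross--Schoen'' step explicitly and more simply than your sketch suggests: one only needs the single pushforward along the sum map $\iota^3:C^3\to J(C)$, which yields $[3]_*[C]-3[2]_*[C]+3[C]=0$; since $[k]_*$ acts on $\CH_1(J(C))_{(s)}$ by $k^{2+s}$ and $3^{2+s}-3\cdot 2^{2+s}+3>0$ for $s>0$, this forces $[C]\in\CH_1(J(C))_{(0)}$, hence $[C]=[-1]_*[C]$ already in the Chow group --- a conclusion strictly stronger than the algebraic triviality you deduce.
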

	\begin{proof}
		Since a smooth projective curve has finite-dimensional motive in the sense of
		Kimura \cite{MR2107443}, any idempotent that is homologically equivalent to the
		K\"unneth projector on $H^0(C)$ is rationally equivalent to $\alpha\times C$ for
		some zero-cycle $\alpha$ of degree~1. Thus if $C$ has a  self-dual
		multiplicative Chow--K\"unneth decomposition, it must be of the form $\pi_C^0 :=
		\alpha \times C$, $\pi_C^2 := C\times \alpha$, $ \pi_C^1 := \Delta_C - \pi_C^0 -
		\pi_C^2$  for some zero-cycle $\alpha$ of degree~1. According to
		\cite[Proposition~8.14]{SV} this decomposition is multiplicative if and only if
		the modified diagonal cycle
		$$\mathfrak{z}:= \delta_{C} - \{(x,x,\alpha)\}- \{(x,\alpha,x)\} -
		\{(\alpha,x,x)\} + \{(x,\alpha,\alpha)\} + \{(\alpha,x,\alpha)\} +
		\{(\alpha,\alpha,x)\}$$ is zero in $\CH_1(C\times C\times C)$. Now we argue as
		in the proof of \cite[Proposition~3.2]{BVK3}. Let $\iota : C \to J(C)$ be the
		Abel--Jacobi map which maps a point $c\in C$ to the divisor class $[c] -
		\alpha$, and let $\iota^3 : C^3 \to J(C)$ be the map deduced from $\iota$ by
		summation.  We have 
		$$(\iota^3)_*(\mathfrak{z}) = [3]_*[C] - 3[2]_*[C] + 3[C] = 0 \quad \text{in}\
		\CH_1(J(C)).$$
		According to the Beauville decomposition \cite{MR826463}, we have $$\CH_1(J(C))
		= \CH_1(J(C))_{(0)} \oplus \cdots \oplus \CH_1(J(C))_{(g-1)},$$ where $g$ is the
		dimension of $J(C)$, and where $[k]_*$ acts on $\CH_1(J(C))_{(s)}$ by
		multiplication by $k^{2+s}$. Since $3^{2+s} - 3\cdot 2^{2+s} + 3>0$ for $s>0$,
		we find that $[C]$ belongs to $\CH_1(J(C))_{(0)}$. In particular, taking $k=-1$,
		we see that $[C] - [-1]_*[C] = 0$ in $\CH_1(J(C))$, and hence that the Ceresa
		cycle is algebraically trivial.
	\end{proof}

	\subsection{A very general curve of genus $>2$ does not satisfy $(\star)$}
	%
	
	Although motives of curves are of abelian type, they do not necessarily have a
	marking that satisfies $(\star)$\,:

	\begin{prop} \label{prop:gencurve}Let $C$ be a curve, and let $\alpha$ be a
		degree $1$ zero-cycle on $C$. 
		If $C$ is very general  of genus $>2$, then the self-dual Chow--K\"unneth
		decomposition $\pi_C^0 := \alpha \times C$, $\pi_C^2 := C\times \alpha$, $
		\pi_C^1 := \Delta_C - \pi_C^0 - \pi_C^2$ is not multiplicative, and $C$ does not
		satisfy $(\star)$.
	\end{prop}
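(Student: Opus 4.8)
The plan is to reduce both assertions to Ceresa's theorem, namely that the one-cycle $[C]-[-1]_*[C]$ is not algebraically trivial when $C$ is very general of genus $>2$, and then feed this into the criteria already established in this section.

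First I would dispose of the statement about the particular decomposition $\pi_C^0=\alpha\times C$, $\pi_C^2=C\times\alpha$, $\pi_C^1=\Delta_C-\pi_C^0-\pi_C^2$. By \cite[Proposition 8.14]{SV} this Chow--K\"unneth decomposition is multiplicative if and only if the modified diagonal cycle
$$\mathfrak z:=\delta_C-\{(x,x,\alpha)\}-\{(x,\alpha,x)\}-\{(\alpha,x,x)\}+\{(x,\alpha,\alpha)\}+\{(\alpha,x,\alpha)\}+\{(\alpha,\alpha,x)\}$$
vanishes in $\CH_1(C\times C\times C)$. Arguing exactly as in the proof of Proposition~\ref{prop:ceresa}, pushing $\mathfrak z$ forward along the summation map $\iota^3\colon C^3\to J(C)$ yields $[3]_*[C]-3[2]_*[C]+3[C]$; in terms of the Beauville decomposition $\CH_1(J(C))=\bigoplus_{s=0}^{g-1}\CH_1(J(C))_{(s)}$, on which $[k]_*$ acts by multiplication by $k^{2+s}$, and using that $3^{2+s}-3\cdot 2^{2+s}+3>0$ for $s>0$, the vanishing of $(\iota^3)_*\mathfrak z$ would force $[C]\in\CH_1(J(C))_{(0)}$, hence $[C]=[-1]_*[C]$ in $\CH_1(J(C))$ and the Ceresa cycle would be rationally, in particular algebraically, trivial. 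Since $C$ is very general of genus $>2$, this contradicts Ceresa's theorem, so $\mathfrak z\neq 0$ and the decomposition is not multiplicative.

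For the second assertion I would argue by contradiction: if $C$ admitted a marking satisfying $(\star)$, then since $C$ is a curve Corollary~\ref{cor:TopChern} shows $(\starC)$ is automatic, so that marking already satisfies $(\starM)$; Proposition~\ref{prop:multmarking} then furnishes a self-dual multiplicative Chow--K\"unneth decomposition of $C$, and Proposition~\ref{prop:ceresa} forces the Ceresa cycle of $C$ to be algebraically trivial, again contradicting Ceresa's theorem. Hence $C$ does not satisfy $(\star)$.

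The only external ingredient is Ceresa's theorem on the non-triviality modulo algebraic equivalence of $[C]-[-1]_*[C]$ for a very general curve of genus $>2$; everything else is a bookkeeping application of \cite[Proposition~8.14]{SV}, Corollary~\ref{cor:TopChern}, Proposition~\ref{prop:multmarking} and Proposition~\ref{prop:ceresa}. The one subtlety to keep in mind --- and what I would regard as the only genuine point in the argument --- is that the class of the Ceresa cycle modulo algebraic equivalence is independent of the auxiliary degree-$1$ zero-cycle used to embed $C$ into $J(C)$; this is what makes the argument insensitive to the choice of $\alpha$ in the first part, and, in the second part, to the choice of degree-$1$ zero-cycle cutting out the $H^0$-projector of whatever multiplicative Chow--K\"unneth decomposition Proposition~\ref{prop:multmarking} produces (recall from the proof of Proposition~\ref{prop:ceresa} that, by finite-dimensionality of the motive of a curve, any such projector has exactly this form).
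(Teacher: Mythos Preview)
Your proposal is correct and follows essentially the same route as the paper, which simply invokes Ceresa's theorem together with Proposition~\ref{prop:ceresa} and Proposition~\ref{prop:multmarking}. Your treatment of the first assertion redoes the computation inside the proof of Proposition~\ref{prop:ceresa} rather than just citing it, and the detour through Corollary~\ref{cor:TopChern} in the second assertion is unnecessary since $(\star)$ already includes $(\starM)$ by definition, but neither is a mathematical issue.
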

	\begin{proof}
		Ceresa \cite{Ceresa} proves that the Ceresa cycle of a very general curve of
		genus $>2$ is not algebraically trivial. The proposition follows then from
		Proposition \ref{prop:ceresa} (together with Proposition
		\ref{prop:multmarking}).
	\end{proof}
	
	\begin{rmk}
		This example involving the Ceresa cycle is mentioned in \cite[\S
		6.3]{MR2795752}.
	\end{rmk}

	\subsection{The Fermat quartic curve does not satisfy $(\star)$}

	\begin{prop}\label{prop:Fermatnot}
		Let $C$ be a Fermat curve of degree $d$ with $d\geq 4$, and let $\alpha$ be a
		zero-cycle of degree one on $C$. 
		If $d\leq 1000$, then the self-dual Chow--K\"unneth decomposition $\pi_C^0 :=
		\alpha \times C$, $\pi_C^2 := C\times \alpha$, $ \pi_C^1 := \Delta_C - \pi_C^0 -
		\pi_C^2$ is not multiplicative, and $C$ does not satisfy $(\star)$.
	\end{prop}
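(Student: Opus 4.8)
The plan is to argue exactly as in the proof of Proposition~\ref{prop:gencurve}, replacing Ceresa's non-triviality theorem for a very general curve by the corresponding non-triviality statement for Fermat curves. The single ingredient needed from outside the methods developed above is the following fact: \emph{for a smooth plane Fermat curve $C$ of degree $d$ over $\C$ with $4\le d\le 1000$, the Ceresa cycle $[C]-[-1]_{*}[C]\in\CH_{1}(J(C))$ is not algebraically trivial.} Everything else will be a short formal combination of results already in hand.

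Granting this input, both assertions follow at once. The decomposition exhibited in the statement is self-dual, since ${}^{t}\pi_{C}^{0}=\pi_{C}^{2}$ and ${}^{t}\pi_{C}^{1}=\pi_{C}^{1}$; hence, were it multiplicative, it would be a self-dual multiplicative Chow--K\"unneth decomposition of $C$, and Proposition~\ref{prop:ceresa} would force the Ceresa cycle of $C$ to be algebraically trivial, contradicting the input --- so it is not multiplicative. Likewise, suppose $C$ admitted a marking satisfying $(\star)$; since $C$ is a curve, by Corollary~\ref{cor:TopChern} one may assume only that the marking satisfies $(\starM)$, whereupon Proposition~\ref{prop:multmarking} provides a self-dual multiplicative Chow--K\"unneth decomposition of $C$ (which, since a curve has finite-dimensional Chow motive in the sense of Kimura~\cite{MR2107443}, is automatically of the shape displayed in the statement for some degree-$1$ zero-cycle), and Proposition~\ref{prop:ceresa} again forces the Ceresa cycle to be algebraically trivial --- a contradiction. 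Therefore $C$ does not satisfy $(\star)$.

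The hard part is entirely contained in the non-triviality input, which lies outside the reach of the present paper's machinery and is also the source of the bound $d\le 1000$. For the Fermat quartic ($d=4$) one would invoke the classical theorem of B.~Harris, who evaluated the Archimedean Abel--Jacobi image of the Ceresa cycle (its pointed harmonic volume) and found it to be nonzero. For higher degree one exploits the large automorphism group of the Fermat curve: the cohomology of $C$ decomposes, over a cyclotomic field, into one-dimensional eigenspaces whose Hodge and Galois structures are governed by Jacobi sums, and this reduces the relevant Abel--Jacobi invariant of the Ceresa cycle to an explicit transcendental period --- or, $\ell$-adically, to an explicit class in Galois cohomology assembled from cyclotomic (Soul\'e-type) elements --- whose non-vanishing can be tested one degree at a time. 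The range $4\le d\le 1000$ in the statement is precisely where this test has been carried out; no conceptual obstruction is anticipated for larger $d$, but the verification is computational. In sum, within the framework developed here the proof is a three-line deduction from Proposition~\ref{prop:multmarking}, Proposition~\ref{prop:ceresa} and finite-dimensionality of the motive of a curve, while the substantive content --- the non-vanishing of the Abel--Jacobi image of the Ceresa cycle of Fermat curves of degree up to $1000$ --- is imported from the literature.
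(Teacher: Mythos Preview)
Your proposal is correct and follows essentially the same route as the paper's proof: invoke the algebraic non-triviality of the Ceresa cycle for Fermat curves of degree $4\le d\le 1000$ (the paper cites B.~Harris and S.~Bloch for $d=4$ and Otsubo for the full range), then apply Proposition~\ref{prop:ceresa} together with Proposition~\ref{prop:multmarking}. Your additional commentary on the nature of the external input is accurate but goes beyond what the paper records.
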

	\begin{proof}
		B.~Harris \cite{HarrisFermat} and S.~Bloch \cite{BlochL} prove that the
		Ceresa cycle of quartic Fermat curves is algebraically non-trivial, and Otsubo
		\cite{Otsubo} proves that the Ceresa cycle of Fermat curves of degree $4\leq d
		\leq 1000$ is not algebraically trivial. We can now apply Proposition
		\ref{prop:ceresa} (together with Proposition \ref{prop:multmarking}).
	\end{proof}

	\subsection{Varieties with motive of abelian type that do not admit a section}
	By considering a K3 surface of Picard rank $\geq 19$, the following proposition
	provides a simple example of a variety $X$ whose motive is of abelian type but
	for which the $\Q$-algebra epimorphism $\CH(X) \twoheadrightarrow
	\overline{\CH}(X)$ does not admit a section. In particular, by Proposition
	\ref{prop:Disting}, such a variety $X$ does not satisfy $(\star)$.
	
	\begin{prop}
		Let $S$ be a complex K3 surface and $P$ be a point of $S$ not representing
		the Beauville--Voisin zero-cycle. Denote $\tilde S$ the
		blow-up of $S$ along $P$. Then the $\Q$-algebra epimorphism $\CH(\tilde S)
		\twoheadrightarrow \overline{\CH}(\tilde S)$ does not admit a section.
	\end{prop}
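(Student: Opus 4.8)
The plan is to show that any section $s : \overline{\CH}(\tilde S) \to \CH(\tilde S)$ of algebras would force the class of the point $P$ to represent the Beauville--Voisin zero-cycle $o_S$, contradicting the hypothesis. First I would record the motivic decomposition coming from the blow-up formula \eqref{eqn:blowup}: since we are blowing up a point, $\h(\tilde S) \cong \h(S) \oplus \1(-1)$, which on Chow groups gives $\CH(\tilde S) = \tau^*\CH(S) \oplus \Q\cdot E$ with $E$ the exceptional $\PP^1$, and $E^2 = -[pt]_E$ where $[pt]_E$ is a point on $E$, equivalently $\tau^*$ of a point on $S$. The relevant piece is $\CH^2$: it is spanned by $\tau^*\CH^2(S)$ (which is huge, infinite-dimensional) together with $E^2$, and $E^2 = -\tau^*(P)$ as a $0$-cycle class on $\tilde S$ — here I use that $E^2$ is the pushforward to $\tilde S$ of the self-intersection of the exceptional divisor of a point blow-up, which is $-1$ times a point of $E$ mapping to $P$. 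Meanwhile $\overline{\CH}^2(\tilde S) = \Q$, generated by the class of any point.

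**The contradiction.** Now suppose $s$ is an algebra section. In $\overline{\CH}^1(\tilde S)$ we have the two classes $\bar E$ and $\bar H := \tau^*\bar{\mathcal O_S(1)}$ for an ample class on $S$; since $\CH^1(\tilde S) \to \overline{\CH}^1(\tilde S)$ is an isomorphism (as $S$ is regular), $s$ is forced on $\CH^1$, so $s(\bar E) = E$ and $s(\bar H) = \tau^*H$. Applying $s$ to $\bar E \cdot \bar E$ and using multiplicativity, $s(\bar E^2) = s(\bar E)^2 = E^2 = -\tau^*(P)$ in $\CH^2(\tilde S)$. But $s(\bar E^2)$ lies in the image of $s$, and $\bar E^2 = (\deg E^2)\cdot \bar{[pt]} = -\bar{[pt]}$, while $\bar H^2 = (H^2)\cdot\bar{[pt]}$, so comparing, $s(\bar{[pt]})$ is simultaneously $\tau^*(P)$ (up to the nonzero scalar from $\bar E^2$) and a multiple of $s(\bar H^2) = (\tau^*H)^2 = (H\cdot H)\,\tau^*(o_S)$, since by Beauville--Voisin \cite{BVK3} the product of two divisor classes on a K3 is a multiple of $o_S$. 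Matching the degree-one normalizations, this yields $\tau^*(P) = \tau^*(o_S)$ in $\CH^2(\tilde S)$, and pushing forward by $\tau_*$ gives $P = o_S$ in $\CH^2(S)$, contradicting the choice of $P$.

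**Carrying it out carefully.** The only real subtlety — and the step I expect to require the most care — is bookkeeping the identification $\CH^2(\tilde S) \cong \CH^2(S)\oplus \Q$ compatibly with the ring structure, i.e.\ verifying that $E\cdot \tau^*(z) = 0$ for $z\in \CH^2(S)$ (true since $E\cap \tau^{-1}(z)=\emptyset$ for $z$ a generic point, and by linearity for all $z$), that $\tau^*H \cdot E = 0$ (the divisor $H$ on $S$ avoids the point $P$ after moving in its linear system, so $\tau^*H$ meets $E$ in the empty set), and the normalization $E^3 = +[pt]$, hence $E\cdot E^2 = E\cdot(-\tau^*(P))$... — here one must instead directly use $E^2 = j_*(\text{point of }E)$ and $\tau_* E^2 = P$. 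Concretely: $\tau_*(E^2) = P$ and $\tau_*(\tau^*\alpha \cdot \tau^*\beta) = \alpha\cdot\beta$ for $\alpha,\beta\in\CH^1(S)$, so applying $\tau_*$ to the identity $s(\bar E^2)=E^2$ and to $s(\bar H^2) = (\tau^*H)^2$ and using that $\bar E^2$ and $\bar H^2$ are proportional in $\overline{\CH}^2(\tilde S)=\Q$ (hence their images under the section are proportional in $\CH^2(\tilde S)$), we get $P$ proportional to $H^2 = (H\cdot H)\,o_S$ in $\CH^2(S)$; comparing degrees forces $P = o_S$. This is the desired contradiction, so no algebra section exists. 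Finally, the last sentence of the statement is then immediate from Proposition~\ref{prop:Disting}: if $\tilde S$ satisfied $(\star)$ it would admit an algebra section, which we have just ruled out.
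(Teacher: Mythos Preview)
Your argument is correct and is essentially the same as the paper's: both use that $\CH^1(\tilde S)\to\overline{\CH}^1(\tilde S)$ is an isomorphism to force the section on divisors, then observe that $E^2$ and $(\tau^*H)^2$ give two linearly independent classes $\tau^*(P)$ and $\tau^*(o_S)$ in $\CH^2(\tilde S)$ (this is the paper's ``rank~$2$'' assertion), whereas a section would force them to be proportional. One tiny point: an algebra section need not be graded, so strictly speaking $s(\bar E)=E+z$ with $z\in\CH^2_{\hom}(\tilde S)$; but since $\dim\tilde S=2$ the cross-terms live in $\CH^{\geq 3}=0$, and your identity $s(\bar E^2)=E^2$ survives unchanged.
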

	\begin{proof}
		The theorem of Beauville--Voisin \cite{BVK3} asserts that $\im(\CH^1(S)\otimes
		\CH^1(S) \to \CH^2(S))$ has rank one and is spanned by the class of any point
		lying on a rational curve on $S$. Such a class is called the Beauville--Voisin
		zero-cycle. Since $\dim_\Q \CH^2(S) = \infty$, there exists a point $P$ on $S$
		whose class is not rationally equivalent to the Beauville--Voisin zero-cycle. It
		is then straightforward to check that $\im(\CH^1(\tilde S)\otimes \CH^1(\tilde
		S) \to \CH^2(\tilde S))$ has rank 2 and is spanned by the class of $P$ and the
		Beauville--Voisin zero-cycle. Since $\CH^1(\tilde S) \twoheadrightarrow
		\overline{\CH}^1(\tilde S)$ is an isomorphism, if $\CH(\tilde S)
		\twoheadrightarrow \overline{\CH}(\tilde S)$ had a section, then
		$\im(\CH^1(\tilde S)\otimes \CH^1(\tilde S) \to \CH^2(\tilde S))$  would have
		rank 1 (equal to $\rk \overline{\CH}^2(\tilde S)$). This is a contradiction.
	\end{proof}

	\bibliographystyle{amsplain}

\begin{thebibliography}{10}
	
	\bibitem{Ancona}
	Giuseppe Ancona, \emph{D\'ecomposition du motif d'un sch\'ema ab\'elien
		universel}, Ph.D. thesis, Universit\'e Paris 13, 2012, Th\`ese de doctorat
	Math\'ematiques, p.~60.
	
	\bibitem{MR2115000}
	Yves Andr{\'e}, \emph{Une introduction aux motifs (motifs purs, motifs mixtes,
		p{\'e}riodes)}, Panoramas et Synth{\`e}ses [Panoramas and Syntheses],
	vol.~17, Soci{\'e}t{\'e} Math{\'e}matique de France, Paris, 2004. \MR{2115000
		(2005k:14041)}
	
	\bibitem{MR2167204}
	Yves Andr\'e, \emph{Motifs de dimension finie (d'apr\`es {S}.-{I}. {K}imura,
		{P}. {O}'{S}ullivan{$\dots$})}, Ast\'erisque (2005), no.~299, Exp. No. 929,
	viii, 115--145, S\'eminaire Bourbaki. Vol. 2003/2004. \MR{2167204}
	
	\bibitem{MR1956434}
	Yves Andr\'e and Bruno Kahn, \emph{Nilpotence, radicaux et structures mono\"\i
		dales}, Rend. Sem. Mat. Univ. Padova \textbf{108} (2002), 107--291, With an
	appendix by Peter O'Sullivan. \MR{1956434}
	
	\bibitem{MR726428}
	Arnaud Beauville, \emph{Quelques remarques sur la transformation de {F}ourier
		dans l'anneau de {C}how d'une vari{\'e}t{\'e} ab{\'e}lienne}, Algebraic
	geometry ({T}okyo/{K}yoto, 1982), Lecture Notes in Math., vol. 1016,
	Springer, Berlin, 1983, pp.~238--260. \MR{726428 (86e:14002)}
	
	\bibitem{MR730926}
	\bysame, \emph{Vari{\'e}t{\'e}s {K}{\"a}hleriennes dont la premi{\`e}re classe
		de {C}hern est nulle}, J. Differential Geom. \textbf{18} (1983), no.~4,
	755--782 (1984). \MR{730926 (86c:32030)}
	
	\bibitem{MR826463}
	\bysame, \emph{Sur l'anneau de {C}how d'une vari{\'e}t{\'e} ab{\'e}lienne},
	Math. Ann. \textbf{273} (1986), no.~4, 647--651. \MR{826463 (87g:14049)}
	
	\bibitem{MR2187148}
	\bysame, \emph{On the splitting of the {B}loch-{B}eilinson filtration},
	Algebraic cycles and motives. {V}ol. 2, London Math. Soc. Lecture Note Ser.,
	vol. 344, Cambridge Univ. Press, Cambridge, 2007, pp.~38--53. \MR{2187148
		(2009c:14007)}
	
	\bibitem{BVK3}
	Arnaud Beauville and Claire Voisin, \emph{On the {C}how ring of a {$K3$}
		surface}, J. Algebraic Geom. \textbf{13} (2004), no.~3, 417--426. \MR{2047674
		(2005b:14011)}
	
	\bibitem{MR3050698}
	Marcello Bernardara and Michele Bolognesi, \emph{Categorical representability
		and intermediate {J}acobians of {F}ano threefolds}, Derived categories in
	algebraic geometry, EMS Ser. Congr. Rep., Eur. Math. Soc., Z\"urich, 2012,
	pp.~1--25. \MR{3050698}
	
	\bibitem{BlochL}
	Spencer Bloch, \emph{Algebraic cycles and values of {$L$}-functions}, J. Reine
	Angew. Math. \textbf{350} (1984), 94--108. \MR{743535}
	
	\bibitem{Ceresa}
	G.~Ceresa, \emph{{$C$}\ is not algebraically equivalent to {$C^{-}$}\ in its
		{J}acobian}, Ann. of Math. (2) \textbf{117} (1983), no.~2, 285--291.
	\MR{690847}
	
	\bibitem{MR1616606}
	Jan Cheah, \emph{Cellular decompositions for nested {H}ilbert schemes of
		points}, Pacific J. Math. \textbf{183} (1998), no.~1, 39--90. \MR{1616606}
	
	\bibitem{MR1919155}
	Mark Andrea~A. de~Cataldo and Luca Migliorini, \emph{The {C}how groups and the
		motive of the {H}ilbert scheme of points on a surface}, J. Algebra
	\textbf{251} (2002), no.~2, 824--848. \MR{1919155 (2003h:14006)}
	
	\bibitem{cm}
	Mark Andrea~A. de~Cataldo and Luca Migliorini, \emph{The {C}how motive of
		semismall resolutions}, Math. Res. Lett. \textbf{11} (2004), no.~2-3,
	151--170. \MR{2067464}
	
	\bibitem{DM}
	Christopher Deninger and Jacob Murre, \emph{Motivic decomposition of abelian
		schemes and the {F}ourier transform}, J. Reine Angew. Math. \textbf{422}
	(1991), 201--219. \MR{1133323}
	
	\bibitem{MR1219698}
	Barbara Fantechi and Lothar G\"ottsche, \emph{The cohomology ring of the
		{H}ilbert scheme of {$3$} points on a smooth projective variety}, J. Reine
	Angew. Math. \textbf{439} (1993), 147--158. \MR{1219698}
	
	\bibitem{MR3356741}
	Lie Fu, \emph{Beauville-{V}oisin conjecture for generalized {K}ummer
		varieties}, Int. Math. Res. Not. IMRN (2015), no.~12, 3878--3898.
	\MR{3356741}
	
	\bibitem{MHRCK3}
	Lie Fu and Zhiyu Tian, \emph{Motivic {H}ype-{K}{\"a}hler {R}esolution
		{C}onjecture : {II}. {H}ilbert schemes of {K}3 surfaces}, preprint (2017).
	
	\bibitem{MHRCKummer}
	Lie Fu, Zhiyu Tian, and Charles Vial, \emph{Motivic hyper{K}{\"a}hler
		resolution conjecture: I. generalized {K}ummer varieties}, To appear in
	Geometry and Topology, arXiv:1608.04968 (2018).
	
	\bibitem{FranchettaHK}
	Lie Fu, Charles Vial, and Robert Laterveer, \emph{The generalized {F}ranchetta
		conjecture for some hyper-{K}{\"a}hler varieties}, preprint, arXiv:1708.02919
	(2017).
	
	\bibitem{MR1644323}
	William Fulton, \emph{Intersection theory}, second ed., Ergebnisse der
	Mathematik und ihrer Grenzgebiete. 3. Folge. A Series of Modern Surveys in
	Mathematics [Results in Mathematics and Related Areas. 3rd Series. A Series
	of Modern Surveys in Mathematics], vol.~2, Springer-Verlag, Berlin, 1998.
	\MR{1644323 (99d:14003)}
	
	\bibitem{HarrisFermat}
	Bruno Harris, \emph{Homological versus algebraic equivalence in a {J}acobian},
	Proc. Nat. Acad. Sci. U.S.A. \textbf{80} (1983), no.~4 i., 1157--1158.
	\MR{689846}
	
	\bibitem{MR1664696}
	Daniel Huybrechts, \emph{Compact hyper-{K}{\"a}hler manifolds: basic results},
	Invent. Math. \textbf{135} (1999), no.~1, 63--113. \MR{1664696 (2000a:32039)}
	
	\bibitem{MR3586372}
	Daniel Huybrechts, \emph{Lectures on {K}3 surfaces}, Cambridge Studies in
	Advanced Mathematics, vol. 158, Cambridge University Press, Cambridge, 2016.
	\MR{3586372}
	
	\bibitem{MR2107443}
	Shun-Ichi Kimura, \emph{Chow groups are finite dimensional, in some sense},
	Math. Ann. \textbf{331} (2005), no.~1, 173--201. \MR{2107443 (2005j:14007)}
	
	\bibitem{KimuraRep}
	Shun-ichi Kimura, \emph{Surjectivity of the cycle map for {C}how motives},
	Motives and algebraic cycles, Fields Inst. Commun., vol.~56, Amer. Math.
	Soc., Providence, RI, 2009, pp.~157--165. \MR{2562457}
	
	\bibitem{MR1223225}
	Klaus K{\"u}nnemann, \emph{A {L}efschetz decomposition for {C}how motives of
		abelian schemes}, Invent. Math. \textbf{113} (1993), no.~1, 85--102.
	\MR{1223225 (95d:14004)}
	
	\bibitem{MR1265530}
	\bysame, \emph{On the {C}how motive of an abelian scheme}, Motives ({S}eattle,
	{WA}, 1991), Proc. Sympos. Pure Math., vol.~55, Amer. Math. Soc., Providence,
	RI, 1994, pp.~189--205. \MR{1265530 (95d:14009)}
	
	\bibitem{KSV}
	Nikon Kurnosov, Andrey Soldatenkov, and Misha Verbitsky, \emph{Kuga--{S}atake
		construction and cohomology of hyperk{\"a}hler manifolds}, preprint,
	arXiv:1703.07477 (2017).
	
	\bibitem{LatVial}
	Robert Laterveer and Charles Vial, \emph{{On the Chow ring of Cynk--Hulek
			Calabi--Yau varieties and Schreieder varieties}}, preprint (2017).
	
	\bibitem{MR1316539}
	James~D. Lewis, \emph{A generalization of {M}umford's theorem. {II}}, Illinois
	J. Math. \textbf{39} (1995), no.~2, 288--304. \MR{1316539}
	
	\bibitem{MR3331729}
	Matilde Marcolli and Gon\c{c}alo Tabuada, \emph{From exceptional collections to
		motivic decompositions via noncommutative motives}, J. Reine Angew. Math.
	\textbf{701} (2015), 153--167. \MR{3331729}
	
	\bibitem{MR3456759}
	Ben Moonen, \emph{On the {C}how motive of an abelian scheme with non-trivial
		endomorphisms}, J. Reine Angew. Math. \textbf{711} (2016), 75--109.
	\MR{3456759}
	
	\bibitem{Morrison}
	David~R. Morrison, \emph{On {$K3$}\ surfaces with large {P}icard number},
	Invent. Math. \textbf{75} (1984), no.~1, 105--121. \MR{728142}
	
	\bibitem{Murre}
	Jakob~P. Murre, \emph{On a conjectural filtration on the {C}how groups of an
		algebraic variety. {I}. {T}he general conjectures and some examples}, Indag.
	Math. (N.S.) \textbf{4} (1993), no.~2, 177--188. \MR{1225267 (94j:14006a)}
	
	\bibitem{Nikulin}
	V.~V. Nikulin, \emph{Finite groups of automorphisms of {K}\"ahlerian {$K3$}\
		surfaces}, Trudy Moskov. Mat. Obshch. \textbf{38} (1979), 75--137.
	\MR{544937}
	
	\bibitem{MR2795752}
	Peter O'Sullivan, \emph{Algebraic cycles on an abelian variety}, J. Reine
	Angew. Math. \textbf{654} (2011), 1--81. \MR{2795752 (2012e:14009)}
	
	\bibitem{Otsubo}
	Noriyuki Otsubo, \emph{On the {A}bel-{J}acobi maps of {F}ermat {J}acobians},
	Math. Z. \textbf{270} (2012), no.~1-2, 423--444. \MR{2875842}
	
	\bibitem{Pedrini}
	Claudio Pedrini, \emph{On the finite dimensionality of a {K}3 surface},
	Manuscripta Math. \textbf{138} (2012), no.~1-2, 59--72. \MR{2898747}
	
	\bibitem{MR3268859}
	Ulrike Rie\ss, \emph{On the {C}how ring of birational irreducible symplectic
		varieties}, Manuscripta Math. \textbf{145} (2014), no.~3-4, 473--501.
	\MR{3268859}
	
	\bibitem{Ries2016}
	\bysame, \emph{On {B}eauville's conjectural weak splitting property}, Int.
	Math. Res. Not. IMRN (2016), no.~20, 6133--6150. \MR{3579961}
	
	\bibitem{SV}
	Mingmin Shen and Charles Vial, \emph{The {F}ourier transform for certain
		hyperk\"ahler fourfolds}, Mem. Amer. Math. Soc. \textbf{240} (2016),
	no.~1139, vii+163. \MR{3460114}
	
	\bibitem{SV2}
	\bysame, \emph{The motive of the {H}ilbert cube {$X^{[3]}$}}, Forum Math. Sigma
	\textbf{4} (2016), e30, 55. \MR{3570075}
	
	\bibitem{MR526513}
	Tetsuji Shioda and Toshiyuki Katsura, \emph{On {F}ermat varieties}, T\^ohoku
	Math. J. (2) \textbf{31} (1979), no.~1, 97--115. \MR{526513}
	
	\bibitem{Tavakol}
	Mehdi Tavakol, \emph{Tautological classes on the moduli space of hyperelliptic
		curves with rational tails}, preprint (2014).
	
	\bibitem{VialRep}
	Charles Vial, \emph{Pure motives with representable {C}how groups}, C. R. Math.
	Acad. Sci. Paris \textbf{348} (2010), no.~21-22, 1191--1195. \MR{2738925}
	
	\bibitem{VialHilb}
	\bysame, \emph{On the motive of some hyper{K}\"ahler varieties}, J. Reine
	Angew. Math. \textbf{725} (2017), 235--247. \MR{3630123}
	
	\bibitem{MR2435839}
	Claire Voisin, \emph{On the {C}how ring of certain algebraic hyper-{K}{\"a}hler
		manifolds}, Pure Appl. Math. Q. \textbf{4} (2008), no.~3, part 2, 613--649.
	\MR{2435839 (2009m:14004)}
	
	\bibitem{MR3186044}
	\bysame, \emph{Chow rings, decomposition of the diagonal, and the topology of
		families}, Annals of Mathematics Studies, vol. 187, Princeton University
	Press, Princeton, NJ, 2014. \MR{3186044}
	
	\bibitem{VoisinDiag}
	\bysame, \emph{Some new results on modified diagonals}, Geom. Topol.
	\textbf{19} (2015), no.~6, 3307--3343. \MR{3447105}
	
	\bibitem{MR3524175}
	\bysame, \emph{Remarks and questions on coisotropic subvarieties and 0-cycles
		of hyper-{K}\"ahler varieties}, K3 surfaces and their moduli, Progr. Math.,
	vol. 315, Birkh\"auser/Springer, [Cham], 2016, pp.~365--399. \MR{3524175}
	
	\bibitem{MR3351754}
	Qizheng Yin, \emph{Finite-dimensionality and cycles on powers of {$K3$}
		surfaces}, Comment. Math. Helv. \textbf{90} (2015), no.~2, 503--511.
	\MR{3351754}
	
\end{thebibliography}
\providecommand{\bysame}{\leavevmode\hbox to3em{\hrulefill}\thinspace}
\providecommand{\MR}{\relax\ifhmode\unskip\space\fi MR }
\providecommand{\MRhref}[2]{%
	\href{http://www.ams.org/mathscinet-getitem?mr=#1}{#2}
}
\providecommand{\href}[2]{#2}

\end{document}